\newtheorem{theorem}{Theorem}
\newtheorem{lemma}{Lemma}
\newtheorem{proposition}{Proposition}
\newtheorem{corollary}{Corollary}
\newtheorem{claim}{Claim}
 \theoremstyle{definition}
 \theoremstyle{remark}
 \numberwithin{equation}{section}
\newcommand{\vertiii}[1]{{\left\vert\kern-0.25ex\left\vert\kern-0.25ex\left\vert #1
    \right\vert\kern-0.25ex\right\vert\kern-0.25ex\right\vert}}
\newcommand{\f}[2]{\frac{#1}{#2}}
\newcommand{\cl}{{\mathcal L}}
\newcommand{\al}{\alpha}
\newcommand{\be}{\beta}
\newcommand{\ga}{\gamma}
\newcommand{\de}{\delta}
\newcommand{\De}{\Delta}
\newcommand{\ka}{\kappa}
\newcommand{\la}{\lambda}
\newcommand{\si}{\sigma}
\newcommand{\vp}{\varphi}
\newcommand{\om}{\omega}
\newcommand{\rn}{{\mathbf R}^n}
\newcommand{\rone}{\mathbf R}
\newcommand{\rtwo}{\mathbf R^2}
\newcommand{\dpr}[2]{\langle #1,#2 \rangle}
\newcommand{\eps}{\epsilon}
\newcommand{\cq}{\mathcal Q}
\newcommand{\cp}{\mathcal P}
\newcommand{\p}{\partial}
\newcommand{\beq}{\begin{equation}}
\newcommand{\eeq}{\end{equation}}
\newcommand{\beqna}{\begin{eqnarray*}}
\newcommand{\eeqna}{\end{eqnarray*}}
\newcommand{\beqn}{\begin{equation*}}
\newcommand{\eeqn}{\end{equation*}}
\newcommand{\bp}{\begin{proof}}
\newcommand{\ep}{\end{proof}}
\newcommand{\bprop}{\begin{proposition}}
\newcommand{\eprop}{\end{proposition}}
\newcommand{\bt}{\begin{theorem}}
\newcommand{\et}{\end{theorem}}
\newcommand{\bex}{\begin{Example}}
\newcommand{\eex}{\end{Example}}
\newcommand{\bc}{\begin{corollary}}
\newcommand{\ec}{\end{corollary}}
\newcommand{\bcl}{\begin{claim}}
\newcommand{\ecl}{\end{claim}}
\newcommand{\bl}{\begin{lemma}}
\newcommand{\el}{\end{lemma}}
\begin{document}

\title[Sharp time decay rates for SQG and Boussinesq]
 {On the sharp time decay rates for  the 2D generalized quasi-geostrophic  equation and the Boussinesq system}

\thanks{ Hadadifard is supported in part by a graduate fellowship under NSF-DMS,    \# 1614734. Stefanov  is partially  supported by  NSF-DMS   under  \# 1614734.}

\author[Atanas Stefanov]{\sc Atanas G. Stefanov}
\address{ Department of Mathematics,
	University of Kansas,
	1460 Jayhawk Boulevard,  Lawrence KS 66045--7523, USA}
\email{stefanov@ku.edu}

 \author[Fazel Hadadifrad]{\sc Fazel Hadadifard}
 \address{ Department of Mathematics,
 	University of Kansas,
 	1460 Jayhawk Boulevard,  Lawrence KS 66045--7523, USA}
 \email{f.hadadi@ku.edu}

\subjclass{Primary 35Q35, 35B40; 76D03   Secondary 76B03, 76D07, }

\keywords{time decay,  quasi-geostrophic equation, Boussinesq system}

\date{\today}

\begin{abstract}
We compute the sharp time decay rates of the solutions of the IVP for quasi-geostrophic equation and the Boussinesq model, subject to fractional dissipation. Moreover, we explicitly  identify   the asymptotic profiles, the kernel of the $\alpha$ stable processes, which are analogues of the Oseen vortices.

\end{abstract}

\maketitle

\section{Introduction}
The initial value problem for the Navier-Stokes equation  
 \begin{equation}
 \label{10} 
 \left\{ 
 \begin{array}{l}
u_t+u \cdot \nabla u - \De u=\nabla p,  \ \ x\in \rn, t>0 \\
u(0,x):=u_0(x),  \nabla \cdot u=0
 \end{array}  
 \right.
 \end{equation}
 where $u$ is the fluid velocity and $p$ is the pressure, is ubiquitous and much studied model in the modern 
 PDE theory. Basic issues like global well-posedness remain elusively unresolved in spatial dimensions $n\geq 3$.  In the case of two spatial dimensions though, the problem is globally well-posed. This is mostly due to the following representation, which eliminates the pressure term  and leads  to equivalent {\it vorticity formulation}, 
 \begin{equation}
 \label{20} 
 \left\{ 
 \begin{array}{l}
 \om_t+u \cdot \nabla \om - \De \om=0,  \ \ x\in \rtwo, t>0 \\
 \om(0,x):=\om_0(x),  
 \end{array}  
 \right.
 \end{equation}
 where the vorticity $\om$, a  scalar quantity,  is given by  $\om=\p_1 u_2 - \p_2 u_1$. We denote $\nabla^\perp=\left(\begin{array}{c}
 -\p_2 \\  \p_1 \end{array}\right)$ for future reference, so that $\om=\nabla^\perp \vec{u}$. Many generalizations of this model have been considered, in particular to respond to modeling situations where the actual physical 
 dissipation is different than the one provided by the   Laplacian, in particular in large scale atmospheric models and large scale ocean modeling, see  \cite{AH, C1, JMWZ}. In particular,  we consider the following ``umbrella'' model 
  	\begin{eqnarray}\label{qq}
  	\begin{cases}
  	\partial_t z + u\cdot \nabla z
  	+   |\nabla|^{\al} z =0,\qquad x\in \mathbb{R}^2, \,\, t>0, \\ 
  	u = (|\nabla|^{\perp})^{- \be} z, \nabla \cdot u=0.
  	\end{cases}
  	\end{eqnarray}
  where $\al>1$ and $\be \geq  0$,  $(|\nabla|^{\perp})^{- \be} = \nabla^\perp m_{-\be-1}(|\nabla|)={\mathbf m}_{-\be}(\xi)$, where   ${\mathbf m}_a$ is a  symbol of order $a$, see Section \ref{sec:2.1} for precise definition\footnote{Note that it is a requirement  that $ m_{-\be-1}(|\nabla|)$ is a {\it radial symbol} of order $-\be-1$.}. These  type of equations frequently arise in fluid dynamics and as such, they  have been widely studied, especially so in the last twenty  years. We refer the reader to the works \cite{AH, B, C, C1, CC, GK, JMWZ, NS, W4, YJW} and references therein. 
   
    A few  examples, that we would like to emphasize as model cases, are as follows.    The 2D Fractional Navier-Stokes equation arises, if we take $z= \omega$ and $\be= 1$,
  \begin{equation}
  \label{w1}
  \om_t + u \cdot \nabla \om+ |\nabla|^{\al} \om = 0.
  \end{equation}expl
  If we let $z= \theta$ be the temperature of a flow, $\al> 1$ and $\be= 0$ the resulting equation is the so-called active scalar equation,
  \begin{equation}\label{w2}
  \theta_t + u \cdot \nabla \theta+ |\nabla|^{\al} \theta= 0,
  \end{equation}
  where $u_1=-R_2 \theta, u_2=R_1 \theta$, and $R_j, j=1,2$ are the Riesz transforms, given by the symbols $m_j(\xi)=i \f{\xi_j}{|\xi|}$.  
  
 The Boussinesq system, with general dissipations, reads
 \begin{eqnarray}\label{BSQ10}
 \begin{cases}
 \partial_t u + u\cdot \nabla u
 +  |\nabla|^{\si_1} u = - \nabla p+ \theta \vec{e_2},\qquad x\in \mathbb{R}^2, \,\, t>0,\\
 \partial_t \theta + u\cdot \nabla \theta
 +  |\nabla|^{\si_2} \theta =0,\qquad x\in \mathbb{R}^2, \,\, t>0,\\
 \nabla \cdot u=0.
 \end{cases}
 \end{eqnarray}
 where $u$ is the velocity of the fluid,  $\theta$ is its temperature, $p$ is the pressure and $\sigma_1, \sigma_2 > 0$ are the dissipation rates for the velocity and the temperature respectively, see \cite{AH, BS, C1, SH, JMWZ, W4, YJW} for background and various well-posedness results.  
 
 We consider  the equivalent vorticity formulation, with the usual scalar vorticity variable is given by  $\om=\partial_1 u_2- \partial_2 u_1$.    For the purposes of this work, we will only consider the diagonal case $\si_1=\si_2=\al$.  That is in vorticity formulation,  the system  consists of  the following coupled equations 
 \begin{eqnarray}\label{BSQ1}
 \begin{cases}
 \partial_t \om + u\cdot \nabla \om
 +  |\nabla|^{\al} \om = \partial_1 \theta,\qquad x\in \mathbb{R}^2, \,\, t>0,\\
 \partial_t \theta + u\cdot \nabla \theta
 +   |\nabla|^{\al} \theta =0,\qquad x\in \mathbb{R}^2, \,\, t>0,\\
 u= (\nabla^{\perp})^{- 1} \om,\ \nabla \cdot u=0.
 \end{cases}
 \end{eqnarray}

\subsection{Previous results}
   
As we have mentioned earlier, a lot of work has been done on the question of well-posedness, regularity of the solutions to these systems. We do not even attempt to overview the results, as this is only tangentially relevant for the current work, but the previously mentioned references contain lots of information about these issues.  As the purpose of this paper is to study the long time behavior of the said models, we discuss some recent works on the topic.   Most of the research has been done in the important (and classical) Navier-Stokes case in two and three dimensional cases. As the global regularity for this model remains a challenging open problem in 3D, some authors restricted themselves to weak solutions\footnote{which may be non-unique} or they considered eventual\footnote{that is, past eventual singularity formation} behavior of strong solutions. In this regard, we would like to reference the following works, \cite{C, GW, GW1, GK, MS, S2,S3, S4, SS1, SW1}. 

  In \cite{S3}, the author has exhibited lower time-decay bounds for the solutions, which match the upper bounds and are therefore sharp. The approach in \cite{GW, GW1}, for the same question, uses the method of the so-called scaled variables.  This was pioneered in \cite{GK, C}, although the idea really took of after the work   \cite{GW}.  It showed not only the optimal decay rates for the Navier-Stokes equation 
  ( this was actually previously established in \cite{BS}), but it provided an explicit asymptotic expansion of the solution, which explains the specific conditions on the initial data in  \cite{BS}, under which there are better decay rates. Recently, Goh and Wayne, \cite{GoW} have considered the Boussinesq model, with rapid rotation in 3D. They have shown, using the method of scaled variables, convergence to the Oseen vortex and associated leading order asymptotics. 
  
  In this paper, we follow this idea, to provide an explicit asymptotic expansion for the two models under consideration - the generalized quasi-geostrophic equation \eqref{qq} and the Boussinesq system (in vorticity formulation), \eqref{BSQ1}. Note that we work exclusively in two spatial dimensions. There are several reasons for this - 2D is the natural playground for \eqref{qq}, while the IVP for the Boussinesq system, the three (and higher) dimensional case, faces the same difficulties as the Navier-Stokes problem, namely absence of a global regularity theory. Moreover, we explore relatively low levels of dissipation, which in some sense, brings the global regularity theory to its limits, and we are still able to analyze the asymptotic behavior.   
  Another interesting feature that we deal with is the fractional dissipation. These have been studied  in the recent literature, but there are certain technical (and conceptual!)  difficulties associated with them, that we deal with by applying  advanced Fourier analysis methods. 
   \subsection{The scaled  variables}	We now introduce the scaled variables, for the models under consideration.  Basically, the method consists of introducing a new exponential time variable $\tau: e^\tau\sim t$ and the corresponding variables in $x$ are rescaled to accommodate this scaling, by keeping the linear part of the equation autonomous.  In this way, an algebraic decay in $t$ will manifest itself as an exponential decay in $\tau$. As is well-known, algebraic decays in time (especially non-integrable ones) are notoriously hard to propagate along  non-linear evolution equations, while any  (however small)  exponential decay, due to its integrability,  is more amenable to this type of analysis. 
   Here are the details. 
  \subsubsection{The scaled variables: the SQG equation}
  Consider the equation \eqref{qq}, and use the scaled  variables    to rewrite the variables in terms of
   \begin{equation}
   \xi= \f{x}{(1+ t)^{\f{1}{\al}}},\qquad \tau= \ln (1+ t).
   \end{equation}
      We define new functions $Z(\xi, \tau)$ and $U(\xi, \tau)$ correspond to $z(x, t)$ and $u(x, t)$ as follows:
   \begin{eqnarray}
   z(x, t)= \f{1}{(1+ t)^{1+\f{\be- 1}{\al}}} \ Z\left(\f{x}{(1+ t)^{\f{1}{\al}}}, \ln (1+ t)\right),\\ \label{sv}
   u(x, t)= \f{1}{(1+ t)^{1- \f{1}{\al}}} \ U\left(\f{x}{(1+ t)^{\f{1}{\al}}}, \ln (1+ t)\right). 
   \end{eqnarray}
   The choices of the parameters is clearly dictated by the stricture of the corresponding equation - the goal is to ensure an autonomous PDE in the new variables. Indeed, a straightforward calculation shows 
   \begin{eqnarray*}
   	z_t&=& \f{Z_{\tau}}{(1+t)^{2+\f{\be- 1}{\al}}}- \f{1}{\al}\ \f{1}{(1+t)^{2+\f{\be- 1}{\al}}} \f{x}{(1+ t)^{\f{1}{\al}}} \cdot \nabla_{\xi} Z- \f{1+\f{\be-1}{\al}}{(1+t)^{2+\f{\be- 1}{\al}}} Z,\\
   	|\nabla|^{\al} z&=& \f{1}{(1+t)^{2+\f{\be- 1}{\al}}} |\nabla|^{\al} Z, \\
   	u \cdot \nabla z &=& \f{1}{(1+t)^{2+\f{\be- 1}{\al}}} U \cdot \nabla_\xi Z.
   \end{eqnarray*}
   Hence,    $Z(\xi, \tau)$ satisfies the equation
   \begin{equation}\label{8}
   Z_{\tau}= \mathcal{L} Z- U \cdot\nabla_{\xi} Z
   \end{equation}
   where 
   \begin{equation}\label{lamb}
   \mathcal{L}Z= - |\nabla|^{\al} Z+ \f{1}{\al} \xi \cdot \nabla_{\xi} Z+ \left(1+\f{\be-1}{\al}\right)Z.
   \end{equation}
   Note that the relation $u=(|\nabla|^{\perp})^{- \be} z$ transforms into 
   $   U=(|\nabla|^{\perp})^{- \be} Z$. In addition, the property $\nabla\cdot u=0$ is retained, i.e.  $\nabla\cdot U=0$. 
   
   Next, we introduce the scaled variables for the Boussinesq system. 
  \subsubsection{The scaled variables: the Boussinesq  system}
Similar to the SQG case, we use the scaled variables 
$$
  \xi= \f{x}{(1+ t)^{\f{1}{\al}}},\qquad \tau= \ln (1+ t).
  $$
  We define new functions $W(\xi, \tau)$, $U(\xi, \tau)$ and $\Theta(\xi, \tau)$,  corresponding  to $\om(x, t)$,  $u(x, t)$ and $\theta(x, t)$ as follows 
  \begin{eqnarray*}
  \om(x, t) &=& \f{1}{(1+ t)} \ W\left(\f{x}{(1+ t)^{\f{1}{\al}}}, \ln (1+ t)\right)\\  
  u(x, t)&=& \f{1}{(1+ t)^{1- \f{1}{\al}}} \ U\left(\f{x}{(1+ t)^{\f{1}{\al}}}, \ln (1+ t)\right)\\
  \theta(x, t) &=&  \f{1}{(1+ t)^{2- \f{1}{\al}}} \ \Theta\left(\f{x}{(1+ t)^{\f{1}{\al}}}, \ln (1+ t)\right)
  \end{eqnarray*}
 Then, we calculate 
  \begin{eqnarray*}
  	\om_t&=& \f{W_{\tau}}{(1+t)^2}- \f{1}{\al}\ \f{1}{(1+t)^2} \f{x}{(1+ t)^{\f{1}{\al}}} \cdot \nabla_{\xi}  W- \f{1}{(1+t)^2} W,\\
  	|\nabla|^{\al} \om &=& \f{1}{(1+t)^2} \cdot |\nabla|^{\al} W,\
  	u \cdot \nabla \om= \f{1}{(1+t)^2} U \cdot \nabla W,\
  	\partial_1 \theta = \f{1}{(1+t)^2} \partial_1 \Theta.
  \end{eqnarray*}
  For the $\theta$ equation similar computation shows that
  \begin{eqnarray*}
  	\theta_t&=& \f{\Theta_{\tau}}{(1+t)^{3- \f{1}{\al}}}- \f{1}{\al}\ \f{1}{(1+t)^{3- \f{1}{\al}}} \f{x}{(1+ t)^{\f{1}{\al}}} \cdot \nabla_{\xi}  
  	 \Theta- \f{2-\f{1}{\al}}{(1+t)^{3- \f{1}{\al}}} \Theta,\\
  	|\nabla|^{\al} \theta&=& \f{1}{(1+t)^{3- \f{1}{\al}}}  |\nabla|^{\al} \Theta,\
  	u \cdot \nabla \theta= \f{1}{(1+t)^{3- \f{1}{\al}}} U \cdot \nabla \Theta.
  \end{eqnarray*}
 Therefore $W(\xi, \tau)$ and $\Theta(\xi, \tau)$ satisfy (with the $\cl$ defined above in \eqref{lamb}, but with $\be=1$) 
 \begin{eqnarray}
 \label{81}
 \begin{cases}
 W_{\tau}= \cl W- U \cdot\nabla_{\xi} W+ \partial_1 \Theta\\
 \Theta_{\tau}= (\cl+1-\f{1}{\al}) \Theta- (U \cdot\nabla_{\xi} \Theta)\\
 \end{cases}
 \end{eqnarray} 
 Clearly, the relations $\nabla \cdot u=0$ and $  u=(|\nabla|^{\perp})^{- 1} \om$ continue to hold for the capital letter variables as well, that is $\nabla\cdot U=0$ and $  U=(|\nabla|^{\perp})^{- 1} W$. In addition to the above equations we can define $p(x, t)= \f{1}{(1+ t)^{2- \f{2}{\al}}} \ P \left(\f{x}{(1+ t)^{\f{1}{\al}}}, \log (1+ t)\right)$ and find the following equation for $U(\xi, \tau)$, 
 \begin{equation}\label{UU}
 U_{\tau}= (\cl -\f{1}{\al}) U- (U \cdot\nabla_{\xi} U)- \nabla P+ \Theta \cdot e_2
 \end{equation}

  \subsection{Main results} 
  The main goal  of this work is to establish the sharp time decay rates of (various norms of) the solutions to \eqref{qq} and \eqref{BSQ1}. Our results actually  provide explicit asymptotic profiles, of which the precise asymptotic rates are a mere corollary. 
  
   Since it is clear that the equation for $\theta$ in \eqref{BSQ1} is basically\footnote{albeit  the relation of $u$ with $\theta$ is not a direct one, but through the vorticity $\om$}   \eqref{qq}, it is essential that we start with \eqref{qq}. This is the content of our first result, but in order to state it, we shall need to introduce a function $G: \hat{G}(p)=e^{-|p|^\al}$, see Section \ref{sec:g} for proper definitions and properties. This is a variant of the function $e^{-\f{|x|^2}{2}}$, or the Oseen vortex in the case $\al=2$. For the statement below, please refer to \eqref{weighted} for the definition of the weighted spaces $L^2(2)$. 
   \begin{theorem}(Global decay estimates for SQG)
   	\label{theo:10} 
   	Let $1< \al < 2$, and $\al+\be\leq 3$. Then, assuming that the initial data $z_0$ is in $L^2(2)\cap L^\infty$, the Cauchy problem \eqref{qq} has a unique, global solution in  $L^2(2)\cap L^\infty$. Moreover,    for all $\eps>0$, there is a constant $C=C_{\al, \be, \eps}$  and for all $p\in [1,2]$ and $t\geq 0$,  
   	\begin{equation}
   	\label{14}
  \|z(t,\cdot)- \f{\int_{\rtwo} z_0(x) dx}{(1+t)^{\f{2}{\al}}} G\left(\f{\cdot}{(1+t)^{\f{1}{\al}}}\right)  \|_{L^p} \leq  
   	\f{C_{\al, \be, \eps} \|z_0\|_{L^2(2)\cap L^\infty}}{(1+t)^{\f{3}{\al}-\f{2}{\al p}-\eps}}.
   	\end{equation} 
   	Moreover, if $\be>1$, we have that \eqref{14} holds for the full range of indices $1\leq p<\infty$. 
   	
   	For generic initial data,  that is $\int_{\rtwo} z_0(x) dx\neq 0$, we have 
   	$$
   	\|z(t, \cdot)\|_{L^p}\sim (1+t)^{-\f{2(p-1)}{\al p}}, \ \ 1\leq p\leq 2.
   	$$
   	which extends to all $1\leq p<\infty$, provided $\be>1$. 
   \end{theorem}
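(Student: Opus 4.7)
The plan is to follow a Gallay--Wayne type scheme in the scaled-variable formulation \eqref{8}--\eqref{lamb}, reducing the asymptotic expansion to a decay estimate for a remainder term in the eigenmode decomposition of $\mathcal{L}$. A preliminary step is the global well-posedness of \eqref{qq} in $L^2(2)\cap L^\infty$; in the range $\alpha>1$, $\alpha+\beta\leq 3$ this is standard via a maximum-principle bound for $\|z(t)\|_{L^\infty}$ (using $\nabla\cdot u=0$ and the positivity of $|\nabla|^{\alpha}$) together with a persistence argument propagating the weight $(1+|x|^2)$.

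The spectral backbone is the computation in Fourier variables, where
\[
\widehat{\mathcal{L}}=-|p|^{\alpha}-\tfrac{1}{\alpha}\,p\cdot\nabla_p+\lambda_0,\qquad \lambda_0:=\tfrac{\alpha+\beta-3}{\alpha}\le 0,
\]
so that $\hat G(p)=e^{-|p|^{\alpha}}$ is an eigenfunction of $\widehat{\mathcal{L}}$ with eigenvalue $\lambda_0$ and $\partial_j G$ with eigenvalue $\lambda_0-\tfrac{1}{\alpha}$. This yields a spectral gap of size $1/\alpha$ between the leading mode and the rest of the point spectrum on the codimension-one subspace $\{f\in L^2(2):\int f=0\}$. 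I then write
\[
Z(\tau,\xi)=\bar c\,e^{\lambda_0 \tau}G(\xi)+R(\tau,\xi),\qquad \bar c=\frac{\int z_0(x)\,dx}{\int G(\xi)\,d\xi},
\]
and observe that the mass identity $\int z(x,t)\,dx=\int z_0(x)\,dx$ (preserved since $\nabla\cdot u=0$) translates, under the scaling \eqref{sv}, to $\int Z(\tau)\,d\xi=\bar c\,e^{\lambda_0\tau}\int G$, so that $\int R(\tau)\,d\xi=0$. The remainder satisfies $R_\tau=\mathcal{L}R-U\cdot\nabla_\xi Z$ with $U=(|\nabla|^\perp)^{-\beta}Z$.

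The bulk of the work is the decay of $R$. Using Duhamel together with the linear bound $\|e^{\tau\mathcal{L}}f\|_{L^2(2)}\lesssim e^{(\lambda_0-1/\alpha)\tau}\|f\|_{L^2(2)}$ for $f$ with zero mean, smoothing estimates for the fractional semigroup to absorb the $\nabla_\xi$ on the nonlinear term, and a Hardy--Littlewood--Sobolev control of $U$ in terms of $Z$, a fixed-point argument delivers $\|R(\tau)\|_{L^2(2)}\lesssim e^{(\lambda_0-1/\alpha+\varepsilon)\tau}$. The $\varepsilon$ loss is the price of a sub-optimal but scale-invariant bound on the velocity in the weighted norm. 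Translating back via $\|z(t)-z_\infty(t)\|_{L^p}=(1+t)^{-a+2/(\alpha p)}\|Z-Z_\infty\|_{L^p}$, with $a=(\alpha+\beta-1)/\alpha$, and interpolating the weighted $L^2$ estimate with an $L^1$ bound (obtained by Cauchy--Schwarz against $(1+|\xi|^2)^{-1}$, valid precisely for $p\in[1,2]$) produces \eqref{14}. For $\beta>1$ the velocity $U=(|\nabla|^\perp)^{-\beta}Z$ gains full regularity from $Z$, which combined with the smoothing of $e^{\tau\mathcal{L}}$ allows a standard bootstrap of $\|R(\tau)\|_{L^\infty}$ and hence extends the $L^p$ decay to $p\in[1,\infty)$.

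The generic lower bound follows directly from the upper bound: the profile $\bar c(1+t)^{-2/\alpha}G(\cdot/(1+t)^{1/\alpha})$ has $L^p$ norm of order $(1+t)^{-2(p-1)/(\alpha p)}$, and for $\bar c\ne 0$ the quantitative estimate \eqref{14} shows this is genuinely the leading order. The principal technical obstacles are two-fold. First, the non-self-adjointness of $\mathcal{L}$ combined with the non-smooth Fourier symbol $|p|^{\alpha}$ at the origin rules out routine spectral theory and demands weighted Fourier-side analysis of $\hat Z$ to justify the semigroup bounds on $L^2(2)$. Second, the interaction of the fractional velocity operator $(|\nabla|^\perp)^{-\beta}$ with the weight $(1+|\xi|^2)$ in the nonlinear term $U\cdot\nabla_\xi Z$ is delicate; closing the estimate at the level of the spectral gap requires a Littlewood--Paley decomposition and is the step that forces both the constraint $\alpha+\beta\le 3$ and the small loss $\varepsilon$ in the exponent of \eqref{14}.
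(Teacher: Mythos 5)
Your overall architecture (scaled variables, decomposition $Z=\bar c\,e^{\lambda_0\tau}G+R$ with $\int R=0$, semigroup decay at rate $e^{(\lambda_0-1/\alpha+\eps)\tau}$ on the mean--zero subspace, Duhamel for $R$) matches the paper's, and the paper likewise closes the argument with a priori bounds plus a Gronwall-type lemma rather than an invariant-manifold construction. But there is one concrete missing ingredient without which your remainder estimate does not close. In the Duhamel term you must treat the full nonlinearity $U\cdot\nabla Z$ with $U=U_Z=\bar c\,e^{\lambda_0\tau}U_G+U_R$ and $Z=\bar c\,e^{\lambda_0\tau}G+R$; expanding, the leading piece is $\bar c^{\,2}e^{2\lambda_0\tau}\,U_G\cdot\nabla G$. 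This term is \emph{not} small in $R$: it decays only like $e^{2\lambda_0\tau}$, and since $\lambda_0=\frac{\alpha+\beta-3}{\alpha}$, one has $2\lambda_0>\lambda_0-\frac{1}{\alpha}$ precisely when $\alpha+\beta>2$ (which covers, e.g., the fractional Navier--Stokes case $\beta=1$ and in general most of the admissible range $\alpha+\beta\le 3$). Feeding $e^{2\lambda_0 s}$ into the Duhamel integral against the semigroup bound produces only $e^{2\lambda_0\tau}$, which is strictly worse than the claimed $e^{(\lambda_0-1/\alpha+\eps)\tau}$; no fixed-point or Gronwall scheme at that rate can absorb it. The proof is rescued by the exact algebraic cancellation $U_G\cdot\nabla G=0$, which holds because $G$ is radial and $U_G=(|\nabla|^\perp)^{-\beta}G$ is a rotation of a radial gradient, hence pointwise orthogonal to $\nabla G$ (this is the analogue of the Oseen vortex being an exact solution). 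The paper isolates this as equation \eqref{400} and uses it at the very first step of the estimate for $\tilde Z$; your proposal never mentions it, and this is a genuine gap rather than a presentational omission.

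Two smaller points. First, the complement of $\lambda_0$ in $\sigma(\mathcal{L})$ on $L^2(2)$ is a half-plane of \emph{essential} spectrum (the eigenfunctions $\widehat{\psi_\mu}(p)=|p|^{-\alpha\mu}e^{-|p|^\alpha}$ are not isolated), not residual point spectrum; this does not affect the argument since the quantitative input is the semigroup bound on $\{\hat f(0)=0\}$, which you correctly identify as requiring a direct weighted Fourier-side computation. Second, the constraint $\alpha+\beta\le 3$ enters mainly to guarantee $\lambda_0\le 0$, so that the a priori bounds $\|Z(\tau)\|_{L^2(2)\cap L^\infty}\lesssim e^{\lambda_0\tau}$ stay bounded and can seed the Gronwall iteration; attributing it to the Littlewood--Paley step is not accurate, though this is cosmetic.
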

  {\bf Remarks:} 
  \begin{itemize}
  	\item Our results extend those in \cite{CW}, as they provide an upper bound for the time decay, for weak solutions of the SQG. 
  	\item In \cite{GW, GW1}, the authors go one step further in deriving explicitly the next order asymptotic profiles. The analysis required for this step is performed in higher order weighted $L^2$ space. This cannot be done in this framework, since the function $G$ does not belong to the next order weighted space, namely $L^2(3)$, see Proposition \ref{lem1}. This is in sharp contrast with the case $\al=2$, considered in  \cite{GW, GW1}, where the function is in Schwartz class. 
  	\item Related to the previous point, we need to address a problem, where the function $G$ and the heat kernel of the semigroup $e^{\tau \cl}$ have limited decay at infinity. Thus, any attempt to use the dynamical system approach in \cite{GW} to construct stable manifolds faces serious obstacles. We take a  different  approach to the problem  in that we use {\it a priori} estimates and estimates on the evolution operator to establish the asymptotic decomposition. 
  \end{itemize}
  Our next result concerns \eqref{BSQ1}. 
  \begin{theorem}(Global decay estimates for Boussinesq)  
  	\label{theo:20} 
  	Let $\al \in (1, \f{3}{2})$. Consider the Cauchy problem for \eqref{BSQ1}, with initial data  $w_0, \theta_0 \in Y:=L^2(2)\cap L^\infty\cap H^1(\rtwo)$. Then, the Cauchy problem \eqref{BSQ1} is globally well-posed in $Y$ - that is for every $t>0$, the solution $(w(t), \theta(t))\in Y\times Y$. 
  	
  	 Moreover, for every $\de>0$, there exists $C=C(\al, \de, \|w_0\|_Y, \|\theta_0\|_Y)$, so that for all $p\in [1,2]$ and for all   $t>0$, 
  	\begin{eqnarray}
  	\label{state1}
  & & 	\|w(t, \cdot)- \f{\ga_2(0)}{(1+t)^{\f{3}{\al}-1}} \p_1 
  	G\left(\f{\cdot}{(1+t)^{\f{1}{\al}}}\right) - 
 	 \f{\ga_1(0)}{(1+t)^{\f{2}{\al}}}   G\left(\f{\cdot}{(1+t)^{\f{1}{\al}}} \right)  \|_{L^p} \leq  
 	\f{C_{\al,  \de} \|(w_0, \theta_0)\|_{Y}}{(1+t)^{\f{6}{\al}-3-\f{2}{\al p}-\de}}, \\
 & & 		\|\theta(t, \cdot)- \f{\ga_2(0)}{(1+t)^{\f{2}{\al}}}   
 G\left(\f{\cdot}{(1+t)^{\f{1}{\al}}}\right)    \|_{L^p} \leq  
 \f{C_{\al,  \de} \|(w_0, \theta_0)\|_{Y}}{(1+t)^{\f{5}{\al}-2-\f{2}{\al p}-\de}}, 
  	\end{eqnarray}
  	where $\ga_1(0)= \int_{\rtwo} w_0(x) dx,  \ga_2(0)= \int_{\rtwo} \theta_0(x) dx$.  In particular, if $\ga_2(0)\neq 0$, we have 
  	$$
  	\|w(t, \cdot)\|_{L^p}\sim \f{1}{(1+t)^{\f{3}{\al}-1-\f{2}{\al p}}}, 	\|\theta(t, \cdot)\|_{L^p}\sim \f{1}{(1+t)^{\f{2}{\al} - \f{2}{\al p}}}, 
  	$$
  \end{theorem}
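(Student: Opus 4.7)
The approach parallels that of Theorem~\ref{theo:10}, but runs the asymptotic expansion for the coupled pair $(W, \Theta)$ defined in \eqref{81}. The plan is to proceed in four stages: global well-posedness in $Y$; spectral identification of the two leading-order profiles via the semigroup $e^{\tau\cl}$; an ansatz-plus-Duhamel argument to bound the remainders in weighted $L^2$; and interpolation plus unscaling. For the first stage, subcritical fractional dissipation ($\al>1$) together with the $H^1$ component of $Y$ supplies enough regularity to run standard energy and commutator estimates for both $W$ and $\Theta$; propagation of the $L^2(2)$ norm then follows, as in the SQG step of Theorem~\ref{theo:10}, by weighting the equations by $(1+|\xi|^2)^2$ and absorbing the commutator of this weight with $|\nabla|^\al$.

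For the second stage, a direct Fourier computation on $\hat G(p)=e^{-|p|^\al}$ (with $\be=1$) yields the two key eigenrelations
\[
\cl\, G \;=\; (1-\tfrac{2}{\al})\, G, \qquad \cl(\p_1 G) \;=\; (1-\tfrac{3}{\al})\, \p_1 G.
\]
Testing the $\Theta$- and $W$-equations in \eqref{81} against the constant function $1$ gives closed scalar ODEs $\ga_1'(\tau) = (1-\tfrac{2}{\al})\ga_1$ and $\ga_2'(\tau) = (2-\tfrac{3}{\al})\ga_2$ for the total masses $\ga_1(\tau) := \int W\, d\xi$ and $\ga_2(\tau) := \int \Theta\, d\xi$, and unscaling recovers the $G$-profiles in \eqref{state1}. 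The novelty relative to the SQG case is the nontrivial forcing $\p_1 \Theta$ in the $W$-equation: since $\Theta \approx \ga_2(\tau) G$ one has $\p_1 \Theta \approx \ga_2(0)\, e^{(2-3/\al)\tau}\, \p_1 G$, and projecting the $W$-equation onto the $\p_1 G$-eigenspace of $\cl$ produces the scalar Duhamel ODE
\[
b'(\tau) \;=\; (1-\tfrac{3}{\al})\, b(\tau) \,+\, \ga_2(0)\, e^{(2-3/\al)\tau},
\]
whose dominant long-time behaviour for $\al \in (1,\tfrac{3}{2})$ is the secular response $b(\tau) \sim \ga_2(0)\, e^{(2-3/\al)\tau}$. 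Unscaling gives the secondary profile $\ga_2(0)(1+t)^{1-3/\al}\, \p_1 G(\cdot/(1+t)^{1/\al})$ claimed in \eqref{state1}.

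The third stage is the remainder estimate. With the ansatz
\[
\Theta \;=\; \ga_2(\tau) G + R_\Theta, \qquad W \;=\; \ga_1(\tau) G + b(\tau) \p_1 G + R_W,
\]
the remainders are, by construction, spectrally orthogonal to the $G$- and $\p_1 G$-eigenspaces of $\cl$. I would then set up a Duhamel representation $R(\tau) = e^{\tau\cl} R(0) + \int_0^\tau e^{(\tau-s)\cl} F(s)\, ds$, where the forcing $F$ collects: (i) faster-decaying linear residuals; (ii) the self-interactions of the leading profiles such as $b^2\, V_1 \cdot \nabla \p_1 G$, where $V_1 = (|\nabla|^\perp)^{-1} \p_1 G$; and (iii) the cross terms $U_R \cdot \nabla (\text{lead}) + U_{\text{lead}} \cdot \nabla R + U_R \cdot \nabla R$. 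The spectral gap to the next eigenvalue $1 - \tfrac{4}{\al}$, combined with the known rate of the leading-profile self-interactions in $L^2(2)$ (which for $\al\in(1,\tfrac{3}{2})$ turns out to be the bottleneck), dictates the effective decay exponent of $R$, and it is exactly this rate that matches the $t$-power in \eqref{state1}. The one-derivative gain $U = (|\nabla|^\perp)^{-1} W$ is crucial for keeping $\|U\|_{L^\infty}$ under control in terms of the $Y$-norms from stage one. Interpolation against the uniform $L^\infty$-control then yields the claimed $L^p$-rates for $p\in[1,2]$, and reverting \eqref{81} to physical variables produces \eqref{state1}.

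The principal obstacle will be the weighted $L^2(2)$ bound on the remainder under the fractional dissipation: the polynomial weight $(1+|\xi|^2)^2$ does not commute with the nonlocal operator $|\nabla|^\al$, and the resulting commutator is pseudodifferential of positive order $\al-1$, so the naive energy identity fails to dissipate. Taming this via a careful low/high Fourier frequency splitting, in the spirit of the SQG step, is the main technical hurdle and is presumably the source of the $\de$-loss in \eqref{state1}. A secondary, structural difficulty is the asymmetric coupling of \eqref{81}: $\Theta$ is only transported by $U$, while $W$ is both transported by $U$ and sourced by $\p_1 \Theta$. The natural sequencing is therefore to treat $\Theta$ first (an SQG-type scalar with $U$ regarded as an external divergence-free velocity to be controlled a posteriori), and then feed the resulting asymptotics of $\Theta$ into the $W$-equation as a Duhamel source, verifying along the way that the contribution of $\p_1 R_\Theta$ does not contaminate the extracted $\p_1 G$-mode.
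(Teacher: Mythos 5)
Your proposal follows essentially the same architecture as the paper's proof: global well-posedness and a priori bounds in $Y$ via weighted energy estimates with commutator control of $[|\nabla|^{\al/2},|\xi|^2]$ (the paper's Section \ref{sec:5} and Lemma \ref{L_-50}, plus an extra commutator estimate for $[\p_1|\nabla|^{-\al/2},|\xi|^2]$ needed to handle $\int|\xi|^4\,\p_1\Theta\, W$); closed ODEs for the masses $\ga_1,\ga_2$; treating $\Theta$ first as an SQG-type scalar and feeding its leading asymptotics $\ga_2(0)e^{(2-3/\al)\tau}G$ into the $W$-equation as a Duhamel source; and closing the remainder estimate in $L^2(2)$ via the generalized Gronwall lemma, with the quadratic self-interaction rate $e^{2(2-3/\al)\tau}$ as the bottleneck, followed by $L^2(2)\hookrightarrow L^p$, $1\le p\le 2$, and unscaling. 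Your scalar ODE $b'=(1-\f{3}{\al})b+\ga_2(0)e^{(2-3/\al)\tau}$ with $b(0)=0$ reproduces exactly the paper's explicit coefficient $b(\tau)=\ga_2(0)\bigl(e^{(2-3/\al)\tau}-e^{(1-3/\al)\tau}\bigr)$, so the extracted $\p_1 G$ profile is correct.

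The one step you should not phrase the way you did is the ``projection onto the $\p_1 G$-eigenspace'' and the claim that the remainders are ``spectrally orthogonal'' to it. For $\be=1$ the eigenvalue $1-\f{3}{\al}$ of $\cl$ associated with $\p_1 G$ sits exactly on the boundary of $\si_{ess}(\cl)=\{\la:\Re\la\le 1-\f{3}{\al}\}$; it is not isolated, there is no Riesz projection onto it, and $\p_1 G$ already lies inside the range of $\cq_0$. The paper circumvents this entirely: only the rank-one projection $\cp_0 f=\langle f,1\rangle G$ onto the isolated eigenvalue $1-\f{2}{\al}$ is used, the secular $\p_1 G$ term is obtained by explicitly evaluating $\int_0^\tau e^{(\tau-s)\cl}\p_1\bigl[\ga_2(0)e^{(2-3/\al)s}G\bigr]\,ds$ via the commutation $\nabla e^{\tau\cl}=e^{\tau/\al}e^{\tau\cl}\nabla$ and $e^{\ka\cl}G=e^{(1-2/\al)\ka}G$, and the remainder $W_1=\tilde W-b(\tau)\p_1 G$ is defined by plain subtraction. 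The estimate closes not because of any orthogonality but because the Duhamel forcing for $W_1$ decays like $e^{-2(3/\al-2-\de)s}$ while $e^{\tau\cl_0}$ decays like $e^{(1-3/\al+\eps)\tau}$ on mean-zero data, which for $\al\in(1,\f{3}{2})$ is strictly faster; Lemma \ref{gro} then yields the rate. With that reinterpretation your argument is the paper's argument, and your identification of the bottleneck and of the role of $\|U\|_{L^\infty}$ (which in the paper forces the $H^1$ component of $Y$, to get $L^p$, $p>2$, control of $W$) is accurate.
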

{\bf Remarks:} 
\begin{itemize}
	\item As in Theorem \ref{theo:10}, the results can be extended to provide asymptotic expansions for $w, \theta$ in 
	the norms $L^p, p\in (2, \infty)$, with the exact same statement.   
	\item Note that the  decay rate $(1+t)^{1-\f{3}{\al}}$ in the expression for $w$ is dominant over 
	$(1+t)^{-\f{2}{\al}}$. 
	\item For $\al\in(\f{4}{3}, \f{3}{2})$, the correction term 
	$ \f{\ga_1(0)}{(1+t)^{\f{2}{\al}}}   G\left(\f{\cdot}{(1+t)^{\f{1}{\al}}}\right)$ is faster decaying than the error term and we can state the result as follows 
	$$
		\|w(t, \cdot)- \f{\ga_2(0)}{(1+t)^{\f{3}{\al}-1}} \p_1 
		G\left(\f{\cdot}{(1+t)^{\f{1}{\al}}}\right)  \|_{L^p} \leq  
		\f{C_{\al,  \de} \|(w_0, \theta_0)\|_{Y}}{(1+t)^{\f{6}{\al}-3-\f{2}{\al p}-\de}},
	$$
	\end{itemize}
	
  The paper is organized as follows. In Section \ref{sec:2.1}, we introduce some basic Sobolev spaces, weighted $L^2$ spaces  and some relevant  estimates that will be useful in the sequel. In Section \ref{sec:3}, we study the operator $\cl$ - we establish the basic structure of its spectrum, as well as an explicit form of the semigroup $e^{\tau \cl}$. The semigroup is shown to act boundedly on certain weighted $L^2$ spaces.  This is  helpful for the study of the non-linear evolutions problem, but it also helps us identify the spectrum, through the Hille-Yosida theorem, see Section \ref{sec:3.6}. In Section \ref{sec:4}, we develop the local and global well-posedness theory for the generalized quasi-geostrophic equation, both in the original variables and then in the scaled variables. This is done via  standard    energy estimates methods.  Even at this level, the optimal decay estimates start to emerge, in the scaled variables context\footnote{But at this point, we cannot yet conclude the optimality of these estimates, as we are missing an estimate from below.}. Our asymptotic results for the quasi-geostrophic model  are in Section \ref{sec:6}. In it, we use the {\it a priori} information from Section \ref{sec:4}, together with new estimates for the Duhamel's operator   to derive the precise asymptotic profiles for the solutions. 
   For the Boussinesq system, we provide the necessary local and global well-posedness theory in Section \ref{sec:5}. Some of these results are basic and could have been recovered from earlier publications.  Others provide new  {\it a piori} estimates for the scaled variables system, which are used in Section \ref{sec:7}. In Section \ref{sec:7}, we provide the proof of our main result about the precise asymptotic profiles for the Boussinesq evolution.  \\
   {\bf Acknowledgement:} The authors wish to thank Ryan Goh and Jiahong Wu for stimulating discussions regarding these topics.

  \section{Preliminaries}

  \subsection{Fourier Transform, function spaces and mulitpliers} 
  \label{sec:2.1} 
  The Fourier transform and its inverse are taken in the form 
  $$
  \hat{f}(p)= \int_{\rn} f(x) e^{-i x\cdot p} dx, \ \ f(x) = (2\pi)^{-n} \int_{\rn} \hat{f}(p) e^{i x\cdot p} dp
  $$
  Consequently, since $\widehat{-\De f}(p)= |p|^2 \hat{f}(p)$, we define the operators $|\nabla|^a:=(-\De)^{a/2}, a>0$, via its action on the Fourier side $\widehat{|\nabla|^a f}(p)= |p|^a \hat{f}(p)$. More generally, the operators 
  $f(|\nabla|)$, for reasonable functions $f$,  are  acting as multipliers by $f(|p|)$.  
  We also make use of the following notation - {\it we say that ${\mathbf m}$ is a symbol of order $a, a\in \rone$}, if it is a  smooth function on $\rn\setminus \{0\}$, satisfying for all multi-indices $ \al \in {\mathbf N}^n$, 
  $$
  |\p^\al {\mathbf m}(\xi)|\leq C_\al |\xi|^{a-|\al|}. 
  $$
  It is actually enough to assume this inequality for a finite set of indices, say $|\al|\leq n$.  The prototype will be something of the form $m(\xi)=|\xi|^a$, but note that $a$ will be often negative in our applications.  We  schematically denote a symbol of order $a$ by ${\mathbf m}_a$. 
  
  The $L^p$ spaces are defined by the norm   $  \|f\|_{L^p}= \bigg( \int |f(x)|^p\ dx\bigg)^{\f{1}{p}}$, while  the weak 
  $L^p$ spaces are 
  \begin{equation*}
  L^{p, \infty}= \left\{f: \|f\|_{L^{p, \infty}}=\sup_{\la>0} \bigg \{\la \ |\{x: |f(x)|> \la\} |^{\f{1}{p}} \bigg\} < \infty \right\}.
  \end{equation*}
  In this context, recall the  Hausdorff--Young inequality which reads as follows: For 
  $p,  q, r \in (1, \infty)$ and $1+ \f{1}{p}= \f{1}{q}+ \f{1}{r}$  
  \begin{equation*}
  \|f * g\|_{L^p} \leq C_{p,q,r} \|f\|_{L^{q, \infty}} \|g\|_{L^r}.
  \end{equation*} 
  
  For an integer $n$ and $p\in (1, \infty)$,  the Sobolev spaces are the closure of the Schwartz functions in the norm $\|f\|_{W^{k, p}}= \|f\|_{L^p}+ \sum_{|\al| \leq k} \|\partial^{\al} f\|_{L^p}$,  while for a non-integer $s$ one takes   
  \begin{equation*}
  \|f\|_{W^{s, p}}= \|(1- \De)^{s/2} f\|_{L^p}\sim \|f\|_{L^p}+ \||\nabla|^s f\|_{L^p}.
  \end{equation*}
  The Sobolev embedding theorem states $\|f\|_{L^p(\rn)} \leq C \||\nabla|^s f\|_{L^q(\rn)}$, where $1<p<q<\infty$ and 
  $n (\f{1}{p}- \f{1}{q})= s$, with the usual modification for $p=\infty$, namely 
  $\|f\|_{L^\infty(\rn)} \leq C_s \|  f\|_{W^{s,q}(\rn)}$, $s> \f{n}{p}$.  In particular, an estimate that will be useful for us, is 
    \begin{equation}
    \label{30}
    \|(|\nabla|^{\perp})^{- \be} f\|_{L^p}\leq C \|f\|_{L^q}, \ \ 1<p<q<\infty,  \be=n (\f{1}{q}- \f{1}{p})
    \end{equation}
  This follows from the Mikhlin's criteria for $L^p, 1<p<\infty$ boundedness. Sometimes, we use the following replacement of \eqref{30}, when $p=\infty$ and $\be<n$, 
   \begin{equation}
   \label{31} 
  \|(|\nabla|^{\perp})^{- \be} f\|_{L^\infty}\leq C_\eps (\|f\|_{L^{\f{n}{\be}+\eps}}+\|f\|_{L^{\f{n}{\be}-\eps}}).
  \end{equation}
  We  provide a proof for this inequality in Appendix \eqref{2.2.2}. Note that these estimates hold in a more general setting, when $|\nabla|^{\perp})^{- \be}$ is replaced by an arbitrary symbol of order $-\be$, that is 
  \begin{equation}
  \label{32} 
  \|m_{-\be}(\nabla)  f\|_{L^\infty}\leq C_\eps (\|f\|_{L^{\f{n}{\be}+\eps}}+\|f\|_{L^{\f{n}{\be}-\eps}}).
  \end{equation}
  Another useful ingredient will be  the Gagliardo - Nirenberg interpolation inequality,
  \begin{equation*}
  \| |\nabla|^s f\|_{L^p} \leq \||\nabla|^{s_1} f\|^{\theta}_{L^q} \||\nabla|^{s_2} f\|^{1- \theta}_{L^r},
  \end{equation*}
  where   $s= \theta s_1+ (1- \theta) s_2$  and $\f{1}{p}= \f{1}{q}+ \f{1}{r}$.    
  
  For the optimal decay rates, we will   need to argue in the weighted spaces. For any $m \geq 0$ we define the Hilbert space $L^2(m)$ as follow
  \begin{equation}
  \label{weighted}
  L^2(m)= \bigg\{ f \in L^2 : \  \|f\|_{L^2(m)}=  \bigg( \int_{\rtwo} (1+ |x|^2)^m |f(x)|^2 dx\bigg)^{\f{1}{2}} < \infty \bigg\}
  \end{equation}
  One can show by means of H\"older's,  $L^2(2) \hookrightarrow L^p(\rtwo)$, whenever 
  $1 \leq  p \leq 2$.
  
  \subsection{The fractional Laplacian}
 
  First, we record the following kernel representation formula for negative powers of Laplacian. This is nothing, but a fractional integral - for $a\in (0,2)$, 
  \begin{equation}
  \label{921}
    |\nabla|^{-a} f(x) = c_a \int_{\rtwo} \f{f(y)}{|x-y|^{2-a}} dy.
  \end{equation}
For positive powers, we have a similar formula -  for  $ a\in (0,2)$, 
  $$
  |\nabla|^a f (x) = C_a p.v. \int_{\rtwo} \f{f(x)-f(y)}{|x-y|^{2+a}} dy. 
  $$
   see Proposition 2.1, \cite{CC}). 
   Next, we have the following result, due to Cordoba-Cordoba. 
   \begin{lemma}(Lemma 2.4, 2.5, \cite{CC}, Theorem 2, \cite{CL})
   	\label{le:90}
   	For $p: 1\leq p<\infty$, $a\in [0,2]$ and $f\in W^{a,p}(\rtwo)$, 
 \begin{equation}
 \label{a:20} 
  	\int_{\rtwo} |f(x)|^{p-2} f(x) [|\nabla|^a f](x) dx\geq 0.
 \end{equation}
   	If in addition, $p=2^n, n=1, 2, \ldots$, there is the stronger coercivity estimate 
   	\begin{equation}
   	\label{a:30} 
   	\int_{\rtwo} |f(x)|^{p-2} f(x) [|\nabla|^a f](x) dx\geq  \f{1}{p} \||\nabla|^{\f{a}{2}}[f^{\f{p}{2}}]\|_{L^2(\rtwo)}^2.
   	\end{equation}
   	Finally, for $p\in [1, \infty)$, $a\in (0,2)$, 
   	\begin{equation}
   	\label{a:31} 
   	\int_{\rtwo} |f(x)|^{p-2} f(x) [|\nabla|^a f](x) dx\geq  \f{1}{p} \|f\|_{L^{\f{2p}{2-a}}(\rtwo)}^2.
   	\end{equation}
   \end{lemma}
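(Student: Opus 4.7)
The plan is to deduce all three inequalities from the pointwise kernel representation of $|\nabla|^a$ recalled just before the lemma, combined with a symmetrization trick and a few pointwise algebraic inequalities. Only $a\in(0,2)$ needs attention; the case $a=0$ is trivial and $a=2$ follows from an integration by parts.

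The first step is to symmetrize. Using the p.v.\ formula for $|\nabla|^a f$ and swapping $x\leftrightarrow y$ in one copy of the resulting double integral, one obtains
\begin{equation*}
\int_{\rtwo}|f(x)|^{p-2}f(x)\,|\nabla|^a f(x)\,dx=\frac{C_a}{2}\iint\frac{(|f(x)|^{p-2}f(x)-|f(y)|^{p-2}f(y))(f(x)-f(y))}{|x-y|^{2+a}}\,dx\,dy,
\end{equation*}
so that the three parts of the lemma reduce to pointwise estimates of the integrand. For \eqref{a:20}, the monotonicity of $s\mapsto |s|^{p-2}s$ on $\rone$ makes the integrand pointwise non-negative, which is all that is required. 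For \eqref{a:30} with $p=2^n$, I would insert the algebraic inequality
\begin{equation*}
(|a|^{p-2}a-|b|^{p-2}b)(a-b)\geq \tfrac{2}{p}(|a|^{p/2-1}a-|b|^{p/2-1}b)^2,
\end{equation*}
proved by induction on $n$ starting from the equality at $p=2$, and then recognise the right-hand side, via the companion kernel formula for the Gagliardo seminorm, as $\tfrac{1}{p}\||\nabla|^{a/2}[|f|^{p/2}]\|_{L^2}^2$, after a short decomposition of $f$ into $\{f\geq0\}$ and $\{f<0\}$ to reconcile the sign in $|f|^{p/2-1}f$ with $|f|^{p/2}$. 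Finally, \eqref{a:31} would follow by composing \eqref{a:30} with the fractional Sobolev embedding $\dot H^{a/2}(\rtwo)\hookrightarrow L^{4/(2-a)}(\rtwo)$ applied to $g=|f|^{p/2}$; the extension to $p\in[1,\infty)$ not of dyadic form is then obtained by density and interpolation within the same bilinear form.

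The main obstacle I anticipate is the pointwise quadratic inequality underlying \eqref{a:30}, together with the sign bookkeeping that arises when $f$ changes sign and $p/2$ is odd. The cleanest way around both of these difficulties is the subordination argument of \cite{CC}: write $|\nabla|^a$ via the $\alpha$-stable heat kernel, apply convexity of $s\mapsto |s|^p$ under the positivity-preserving semigroup $e^{t\De}$, and integrate against the subordination measure. This variant bypasses the dyadic restriction on $p$ entirely, handles sign changes automatically, and is the route effectively taken in \cite{CC,CL}; I would rely on it to cover the full range $p\in[1,\infty)$ in \eqref{a:31} uniformly.
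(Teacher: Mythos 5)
The paper offers no proof of this lemma: it is imported verbatim from Cordoba--Cordoba (Lemmas 2.4, 2.5 of \cite{CC}) and Chamorro--Lemari\'e-Rieusset (Theorem 2 of \cite{CL}), so there is no in-paper argument to compare yours against, and I can only measure your sketch against those sources. Your symmetrization of the singular-integral representation, followed by monotonicity of $s\mapsto |s|^{p-2}s$, is the standard and correct route to \eqref{a:20} for $a\in(0,2)$, with the endpoints $a=0,2$ handled separately as you indicate. For \eqref{a:30}, your pointwise inequality is the Stroock--Varopoulos inequality, which in fact holds for every $p>1$ with the constant $4(p-1)/p^2\geq 2/p$ when $p\geq 2$; combined with $(|a|^{p/2-1}a-|b|^{p/2-1}b)^2\geq(|a|^{p/2}-|b|^{p/2})^2$ and the matching kernel formula for $\||\nabla|^{a/2}g\|_{L^2}^2$, this is sound. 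It is also a genuinely different mechanism from \cite{CC}, who iterate the pointwise convexity inequality $2\theta\,|\nabla|^a\theta\geq|\nabla|^a(\theta^2)$ exactly $n$ times --- which is where the restriction $p=2^n$ originates; your route removes that restriction for all $p\geq2$, an improvement you do not exploit.

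Two points need repair. First, the step you yourself flag is the genuinely weak one: deducing \eqref{a:31} for non-dyadic $p$ ``by density and interpolation within the same bilinear form'' is not an argument --- the functional $f\mapsto\int|f|^{p-2}f\,|\nabla|^af$ does not interpolate in $p$ in any standard sense --- and the fallback you propose (the subordination/convexity argument of \cite{CC}) only yields \eqref{a:20} and the dyadic iteration, not the $L^{\frac{2p}{2-a}}$ lower bound for general $p$; that case is precisely the content of Theorem 2 of \cite{CL}. The clean fix is already in your own toolbox: since Stroock--Varopoulos is a pointwise inequality valid for every $p>1$, your symmetrized double integral gives
\begin{equation*}
\int_{\rtwo}|f|^{p-2}f\,|\nabla|^af\,dx\;\geq\;\frac{4(p-1)}{p^2}\,\||\nabla|^{a/2}(|f|^{p/2})\|_{L^2}^2
\end{equation*}
for all $p>1$ without passing through \eqref{a:30}, and the embedding $\dot H^{a/2}(\rtwo)\hookrightarrow L^{4/(2-a)}(\rtwo)$ then finishes; only $p=1$ needs the separate treatment of \cite{CL}. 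Second, note that this Sobolev step produces $\||f|^{p/2}\|_{L^{4/(2-a)}}^2=\|f\|_{L^{2p/(2-a)}}^{p}$, i.e.\ the $p$-th power rather than the square; this is the homogeneity-consistent form (both sides are then of degree $p$ in $f$) and is the form actually used in the displayed estimates of Lemma \ref{lem3}, so \eqref{a:31} as printed carries a typo in the exponent that your derivation silently corrects.
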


  \subsection{The function $G$} 
  \label{sec:g}
  The function $G$ defined by $\hat{G}(p)=e^{-|p|^\al}, p\in \rtwo$ will be used frequently in the sequel. We list and prove some important properties. 
  
  	\begin{lemma}
  		\label{le:10} 
  		For any $p \in [2, \infty]$ and $\al\in (1,2)$, 
  		\begin{equation}\label{GG}
  		(1+ |\xi|^2)\ G(\xi), (1+ |\xi|^2)\nabla G(\xi) \in L_{\xi}^p
  		\end{equation} 
  		In particular, $G, \nabla G\in L^1(\rtwo)\cap L^\infty(\rtwo)$. 
  	\end{lemma}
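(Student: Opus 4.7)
My plan is to reduce the claimed $L^p$ estimates to the two pointwise tail bounds
\[
|G(\xi)|\lesssim (1+|\xi|)^{-2-\alpha},\qquad |\nabla G(\xi)|\lesssim (1+|\xi|)^{-3-\alpha},
\]
from which the conclusion follows by elementary integration.

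For the $L^\infty$ part, Fourier inversion gives $\|G\|_{L^\infty}+\|\nabla G\|_{L^\infty}\lesssim \int_{\mathbb R^2}(1+|p|)e^{-|p|^\alpha}\,dp<\infty$, so continuity and boundedness of $G$ and $\nabla G$ are immediate. The substance lies in the power-law decay at infinity, which I would obtain via Bochner's subordination principle. For $\alpha\in(0,2)$ there is a probability density $f_{\alpha/2}$ on $(0,\infty)$ (the one-sided $\alpha/2$-stable density at time one) with the classical two-sided asymptotics $f_{\alpha/2}(s)\sim c_\alpha s^{-1-\alpha/2}$ as $s\to\infty$ and super-exponential decay as $s\to 0^+$, such that
\[
e^{-|p|^\alpha}=\int_0^\infty e^{-s|p|^2}\,f_{\alpha/2}(s)\,ds.
\]
Combining this with the Gaussian identity $\mathcal F^{-1}(e^{-s|p|^2})(\xi)=(4\pi s)^{-1}e^{-|\xi|^2/(4s)}$ on $\mathbb R^2$ and Fubini gives the representation
\[
G(\xi)=\int_0^\infty \frac{1}{4\pi s}\,e^{-|\xi|^2/(4s)}\,f_{\alpha/2}(s)\,ds.
\]
The substitution $u=|\xi|^2/(4s)$ then converts the right-hand side for $|\xi|\geq 1$ into $|\xi|^{-(2+\alpha)}\int_0^\infty h(u)\,du$ (plus a harmless remainder), where $h$ is the bounded, integrable function assembled from $u e^{-u}$ and the rescaled tail of $f_{\alpha/2}$. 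Differentiating under the integral sign, each $\partial_{\xi_j}$ brings down a factor $-\xi_j/(2s)$; performing the same substitution on the resulting integral supplies one extra factor of $|\xi|^{-1}$, while the additional $s^{-1}$ weight is controlled by the decay of $f_{\alpha/2}$ at zero, yielding the companion bound for $|\nabla G(\xi)|$.

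With these pointwise bounds in hand, the $L^p$ conclusion is routine: $(1+|\xi|^2)G(\xi)\lesssim (1+|\xi|)^{-\alpha}$ lies in $L^p(\mathbb R^2)$ if and only if $\alpha p>2$, and since $\alpha>1$ this covers the full range $p\in[2,\infty]$ (the endpoint $p=2$ giving $2\alpha>2$, the endpoint $p=\infty$ being trivial, intermediate values by H\"older interpolation); similarly $(1+|\xi|^2)|\nabla G(\xi)|\lesssim (1+|\xi|)^{-1-\alpha}$ belongs to $L^p$ as soon as $(1+\alpha)p>2$, which is automatic for $p\geq 2$. The main technical obstacle is justifying the differentiation under the integral and controlling the error in the $|\xi|\to\infty$ expansion uniformly in $\xi$; both rest on the sharp two-sided behavior of $f_{\alpha/2}$ at $0$ and $\infty$, which is classical in stable-law theory. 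A more self-contained alternative, avoiding subordination altogether, would work directly with the radial Bessel representation $G(\xi)=(2\pi)^{-1}\int_0^\infty J_0(|\xi|r)e^{-r^\alpha}r\,dr$ and extract the $|\xi|^{-2-\alpha}$ decay by the scaling $r\mapsto r/|\xi|$ together with stationary phase applied to $J_0$, but this route is technically heavier.
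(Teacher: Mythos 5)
Your argument is correct, but it takes a genuinely different route from the paper. The paper never leaves the Fourier side: it writes $\||\xi|^2 G\|_{L^2}=\|\Delta_p\widehat G\|_{L^2}$ (Plancherel) and $\||\xi|^2G\|_{L^\infty}\leq\|\Delta_p\widehat G\|_{L^1}$ (Hausdorff--Young), computes $\Delta_p e^{-\rho^{\al}}=-\al(\al-1)\rho^{\al-2}e^{-\rho^{\al}}+\al^2\rho^{2(\al-1)}e^{-\rho^{\al}}$ in polar coordinates, and checks that the singular term $\rho^{\al-2}$ is square-integrable against $\rho\,d\rho$ precisely when $\al>1$ (and absolutely integrable for all $\al>0$); interpolation between $p=2$ and $p=\infty$ then gives the whole range, and the $\nabla G$ claims follow by replacing $e^{-|p|^{\al}}$ with $p\,e^{-|p|^{\al}}$, which only softens the singularity at the origin. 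You instead pass to physical space and extract the sharp pointwise tails $|G(\xi)|\lesssim(1+|\xi|)^{-2-\al}$, $|\nabla G(\xi)|\lesssim(1+|\xi|)^{-3-\al}$ via Bochner subordination through the one-sided $\al/2$-stable density; your substitution $u=|\xi|^2/(4s)$ and the two-sided bounds on $f_{\al/2}$ do give these estimates (the $1/u$ singularity at $u=0$ is absorbed by $f_{\al/2}(|\xi|^2/(4u))\lesssim u^{1+\al/2}|\xi|^{-2-\al}$), and the $L^p$ conclusions then follow as you say. The trade-off: your route imports classical but nontrivial facts from stable-law theory and requires some care with differentiation under the integral, whereas the paper's computation is elementary and makes the roles of $\al>1$ and $p\geq 2$ transparent; on the other hand your pointwise asymptotics are strictly stronger information --- they immediately show the decay $(1+|\xi|)^{-\al}$ of $(1+|\xi|^2)G$ is sharp, hence also explain the paper's remark that $G\notin L^2(3)$ for $\al\in(1,2)$, which the Fourier-side argument only sees indirectly through the non-smoothness of $\widehat G$ at the origin.
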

  	{\bf Note:} For $\al\in (1,2)$, the function $G$ does not belong to $L^2(3)$, due to the lack of smoothness of $\hat{G}$ at zero (or what is equivalent to the lack of decay of $G$ at $\infty$). 
  	\begin{proof}
  		For the $L^2$ estimate, $\|G\|_{L^2}= \|\hat{G}\|_{L^2}<\infty$.  Since $\widehat{G}$ is a radial function
  		\begin{eqnarray*}
  			\| |\xi|^2 G(\xi) \|_{L^2}= \| \De_p  \widehat{G}(p) \|_{L^2}= \| \De_p e^{- |p|^{\al}} \|_{L^2}= \| (\p_{\rho\rho}+ \f{1}{\rho} \partial_{\rho}) (e^{- \rho^{\al}})\|_{L^2(\rho d\rho)}.
  		\end{eqnarray*}
  		But, 	$(\p_{\rho \rho}+ \f{1}{\rho} \partial_{\rho}) (e^{- \rho^{\al}})= - \al(\al- 1) \rho^{\al- 2} e^{- \rho^{\al}}+ 
  		\al^2 \rho^{2(\al- 1)} e^{- \rho^{\al}}. $
  		Therefore,
  		$  			\| |\xi|^2 G(\xi) \|_{L^2}^2 \leq   I_1+ I_2$, where $I_1=\|\rho^{\al- 2} e^{- \rho^{\al}}\|_{L^2(\rho d\rho)}^2$, 
  		$I_2=\|\rho^{2(\al- 1)} e^{- \rho^{\al}}\|_{L^2(\rho d\rho)}^2$. We have  
  		\begin{eqnarray*}
  			I_1 \leq  \int_0^{1} \f{1}{\rho^{2(2- \al)- 1}}\ d \rho+ \int_1^{\infty} \rho^{2 (\al- 2)+1} e^{- 2 \rho^{\al}} \rho\ d \rho.
  		\end{eqnarray*}
  		Since $2(2- \al)- 1 < 1$,  the first term is bounded. The second term is also bounded by the exponential decay, whence $I_1$ is bounded. The second term, $I_2=\|\rho^{2(\al- 1)} e^{- \rho^{\al}}\|_{L^2(\rho d\rho)}^2$ is also bounded - no singularity at zero and exponential decay at $\infty$. This proves the $L^2$ estimate.   	
  			
  		For the $L^{\infty}$ estimate we can use the Hausdorf-Young's to bound $\|G\|_{L^\infty}\leq \|\hat{G}\|_{L^1}<\infty$. Similarly, 
  		\begin{eqnarray*}
  			\||\xi|^2  G(\xi)\|_{L^{\infty}} &\leq& \|\De_p \widehat{G}(p)\|_{L^1} \leq \al(\al- 1) \int_0^{\infty} \rho^{\al- 2} e^{- \rho^{\al}} \rho d \rho+ \al^2 \int_0^{\infty} \rho^{2(\al- 1)} e^{- \rho^{\al}} \rho d \rho\\ 
  			&\leq& \al(\al- 1) \int_0^{\infty} \rho^{\al- 1} e^{- \rho^{\al}}  d \rho+ \al^2 \int_0^{\infty} \rho^{2\al- 1} e^{- \rho^{\al}}  d \rho<\infty. 
  		\end{eqnarray*}
  		Now the interpolation between $L^2$ and $L^{\infty}$  yields $	(1+ |\xi|^2)\ G(\xi) \in L_{\xi}^p, 1\leq p\leq \infty$.  
  		
  		Regarding the claims about $\nabla G$, it is easy to see that  $\||\xi|^2 \nabla G\|_{L^2}=\|\De_p[p e^{-|p|^\al}]\|_{L^2}<\infty$. Indeed, the last conclusion follows easily from an identical argument as the one above, as the central issue was the singularity at zero for $ \|\De_p e^{-|p|^\al}\|_{L^2}$. Now the situation is better as we multiply by $p$, which actually alleviates the singularity at zero. Similar is the argument about $\||\xi|^2 \nabla G\|_{L^\infty}$, we omit the details. 
  	\end{proof}
  	The following Lemma will be used frequently in the next sections - it is an easy consequence of the Hausdorff-Young's inequality.  
  	\begin{lemma}\label{aa}
  		Let $\al > 0$, then for any $t > 0$ and $1 \leq p \leq \infty$,
  		\begin{eqnarray}
  		\label{2.7}
  		\|e^{- t |\nabla|^{\al}} f\|_{L^p} &\leq& C \|f\|_{L^p}\\
  		\label{2.8} 
  		\|e^{- t |\nabla|^{\al}} \nabla f\|_{L^p} &\leq& C t^{-\frac{1}{\al}} \|f\|_{L^p}
  		\end{eqnarray}
  	\end{lemma}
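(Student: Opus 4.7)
The plan is to realize $e^{-t|\nabla|^\al}$ as convolution with a kernel $K_t$ whose Fourier transform is $e^{-t|\xi|^\al}$, and then reduce both estimates to Young's convolution inequality together with an elementary scaling argument.

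First, I would note that $K_t(x) = (2\pi)^{-n}\int_{\rn} e^{ix\cdot\xi} e^{-t|\xi|^\al}\, d\xi$. The substitution $\xi = t^{-1/\al}\eta$ yields the scaling identity
\begin{equation*}
K_t(x) = t^{-n/\al} K_1\!\left(\f{x}{t^{1/\al}}\right),
\end{equation*}
and correspondingly $(\nabla K_t)(x) = t^{-(n+1)/\al}(\nabla K_1)(x/t^{1/\al})$. Changing variables back gives
\begin{equation*}
\|K_t\|_{L^1} = \|K_1\|_{L^1}, \qquad \|\nabla K_t\|_{L^1} = t^{-1/\al}\|\nabla K_1\|_{L^1}.
\end{equation*}
Thus both inequalities follow from Young's convolution inequality $\|K * f\|_{L^p} \leq \|K\|_{L^1}\|f\|_{L^p}$, applied to $K_t$ and to $\nabla K_t$ respectively, provided I can show $K_1, \nabla K_1 \in L^1(\rn)$.

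The only genuine step is therefore the integrability of $K_1$ and $\nabla K_1$. Since $\hat{K_1}(\xi)=e^{-|\xi|^\al}$ is radial, smooth away from the origin and Schwartz at infinity, $K_1$ is smooth. The $L^1$ bound follows by showing pointwise decay of the form $|K_1(x)|+|\nabla K_1(x)|\lesssim (1+|x|)^{-n-1}$ for large $|x|$. This is exactly the argument used to establish Lemma \ref{le:10}: one integrates by parts against $e^{ix\cdot\xi}$ enough times, picking up derivatives of $e^{-|\xi|^\al}$ which, by the computation performed there, are locally integrable near the origin and exponentially decaying at infinity for any $\al>0$. (One may equivalently invoke the classical fact that $K_1$ is the density of the symmetric $\al$-stable law, which is known to be in $L^1\cap L^\infty$ with derivatives in $L^1$.)

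The main obstacle, such as it is, is controlling the behavior of $K_1$ and its derivatives near $x=0$ and $|x|\to\infty$ uniformly; for $\al\in(0,2)$ the kernel is not Schwartz (it only decays polynomially), so one cannot simply quote the Gaussian case. Once the decay estimate $|\nabla^k K_1(x)|\lesssim (1+|x|)^{-n-\al}$ (for $k=0,1$) is in hand — obtained by the integration-by-parts argument already rehearsed in Lemma \ref{le:10} — the lemma is immediate from Young's inequality and the scaling above.
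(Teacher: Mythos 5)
Your proposal is correct and is essentially the paper's own proof: the paper likewise writes $e^{-t|\nabla|^{\al}}$ as convolution with $G_t$, where $\widehat{G_t}(p)=\widehat{G}(t^{1/\al}p)$, notes $\|G_t\|_{L^1}=\|G\|_{L^1}$ and $\|\nabla G_t\|_{L^1}=t^{-1/\al}\|\nabla G\|_{L^1}$ by scaling, and concludes by Young's inequality, with $G,\nabla G\in L^1$ supplied by Lemma \ref{le:10}. The only remark worth making is that Lemma \ref{le:10} is proved there for $\al\in(1,2)$, so the integrability of the kernel for the full range $\al>0$ claimed in the statement does require the $\al$-stable/stationary-phase argument you allude to rather than a direct quotation of that lemma.
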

  	
  	\begin{proof}
  Clearly, 
  		\begin{equation*}
  		e^{- t |\nabla|^{\al}} f= \int G_t(x- y) f(y) dy
  		\end{equation*}
  		where $\widehat{G_t}(p)= \widehat{G} (t^{\frac{1}{\al}} p)$. Then $\|e^{- t |\nabla|^{\al}} f\|_{L^p} \leq \|G_t\|_{L^1} \|f\|_{L^p}=C \|f\|_{L^p}$, where $C=\|G\|_{L^1(\rtwo)}$. 
  		\begin{equation*}
  		\|e^{- t |\nabla|^{\al}} \nabla f\|_{L^p} =t^{-\frac{1}{\al}}  \bigg\|\int \nabla G (t^{-\f{1}{\al}}(\cdot-y)) f(y)  dy \bigg\|_{L^p} \leq C t^{-\frac{1}{\al}}  \|f\|_{L^p}, 
  		\end{equation*}
  		where $C=\|\nabla G\|_{L^1(\rtwo)}$. 
  	\end{proof}
  \subsection{Kato-Ponce and commutator estimates }
  The classical by now product rule estimate, usually attributed to Kato-Ponce can be stated  as follows. 
  \begin{lemma}
  	\label{kp}
  	Let $a\in (0,1)$ and $1<p,q,r<\infty$, so that $\f{1}{p}=\f{1}{q}+\f{1}{r}$. Then, there exists $C=C_{p,q,r,a}$
  	$$
  	\||\nabla|^a[f g]\|_{L^p}\leq C_{p,q,r,a} (\| |\nabla|^a f\|_{L^q} \|g\|_{L^r}+\| |\nabla|^a g\|_{L^q} \|f\|_{L^r} )
  	$$
  \end{lemma}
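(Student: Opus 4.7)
The plan is to use a Littlewood-Paley paraproduct decomposition. Fix a standard Littlewood-Paley family $\{P_j\}_{j\in\zz}$, with $S_j=\sum_{k\leq j}P_k$, and write
\[
fg \;=\; \pi_1(f,g)+\pi_2(f,g)+\pi_3(f,g),
\]
where $\pi_1(f,g)=\sum_{j}(P_j f)(S_{j-N}g)$ is the high-low paraproduct, $\pi_2(f,g)=\sum_j (S_{j-N}f)(P_j g)$ is the symmetric low-high paraproduct, and $\pi_3(f,g)=\sum_{|j-k|\leq N}(P_j f)(P_k g)$ is the high-high ``diagonal'' piece. The $L^p$ bound for $|\nabla|^a (fg)$ is then reduced to estimating the three pieces separately.

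For $\pi_1$, observe that each summand $(P_j f)(S_{j-N}g)$ has spectrum in an annulus of size $\sim 2^j$, so $|\nabla|^a$ acts as an $L^p$-bounded Fourier multiplier on that annulus with norm $\sim 2^{aj}$. This lets me write $|\nabla|^a \pi_1(f,g)$ as a sum of $(|\nabla|^a P_j f)(S_{j-N}g)$ plus a commutator that behaves better. Applying H\"older with exponent $\tfrac1p=\tfrac1q+\tfrac1r$, together with the Littlewood-Paley square-function characterization of $L^q$ and the pointwise bound $|S_{j-N}g|\lesssim Mg$ (Hardy-Littlewood maximal function), followed by Fefferman-Stein's vector-valued maximal inequality, I obtain $\|\,|\nabla|^a\pi_1(f,g)\|_{L^p}\lesssim \|\,|\nabla|^a f\|_{L^q}\|g\|_{L^r}$. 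By symmetry, $\pi_2$ yields the other term on the right-hand side.

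The main obstacle is $\pi_3$, the high-high piece, because after multiplication the product $(P_j f)(P_k g)$ with $|j-k|\leq N$ has spectrum supported in a ball of radius $\lesssim 2^j$ rather than an annulus, so the derivative $|\nabla|^a$ cannot simply be pinned onto one factor. The standard trick is to decompose further by $|\nabla|^a\pi_3(f,g)=\sum_j\sum_{l\leq j+N'}P_l\bigl(|\nabla|^a[(P_j f)(\tilde P_j g)]\bigr)$, use the multiplier bound $\|P_l|\nabla|^a(\,\cdot\,)\|_{L^p\to L^p}\lesssim 2^{al}$, and then sum the geometric series in $l\leq j+N'$ to reduce to the square-function/maximal-function argument as before, placing the derivative on either factor. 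This is where the restriction $a<1$ (together with $p,q,r\in(1,\infty)$) intervenes to keep the geometric sums convergent and to justify the Mikhlin-type multiplier bounds.

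To make the argument rigorous one first proves the inequality for Schwartz $f,g$ (so that all Littlewood-Paley sums converge and the commutator identities are justified) and then extends to $W^{a,q}\cap L^r$ by density. All constants depend only on $a,p,q,r$ through the Mikhlin and Fefferman-Stein constants. I would not attempt to push $a=1$ or the endpoints $p=1,\infty$; they are not needed for the applications later in the paper, and genuine endpoint Kato-Ponce estimates require substantial additional work beyond the paraproduct scheme above.
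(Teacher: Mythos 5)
The paper does not prove this lemma at all: it is quoted as the ``classical'' Kato--Ponce product rule, with no argument supplied, so there is no in-paper proof to compare against. Your paraproduct sketch is the standard proof of the fractional Leibniz rule and its structure is correct: high-low and low-high pieces are handled by pinning $|\nabla|^a$ on the high-frequency factor via a Mikhlin multiplier on the annulus, a Littlewood--Paley square function in $L^q$, the pointwise domination $|S_{j-N}g|\lesssim Mg$, and H\"older with $\f{1}{p}=\f{1}{q}+\f{1}{r}$; the high-high piece is the genuine difficulty because the product lands in a ball rather than an annulus, and is resolved by the extra decomposition in $l$ and summation of $\sum_{l\leq j+N'}2^{al}\lesssim 2^{aj}$. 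One correction: that geometric sum converges because $a>0$, not because $a<1$; the hypothesis $a<1$ plays no role in this argument (the estimate in this form holds for all $a>0$), so you should not present it as the reason the diagonal term closes. To turn the sketch into a complete proof you would still need to justify the interchange of the $l$-sum with the $L^p$ norm in the high-high term (e.g.\ by keeping the square function inside the norm or by duality), since a naive triangle inequality in $l$ followed by Cauchy--Schwarz in $j$ does not directly recover $\||\nabla|^a f\|_{L^q}$ for $q>2$; this is standard but worth acknowledging.
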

  We also make use of the following Lemma from \cite{SH}.
  	\begin{lemma}
  		\label{L_-com1}
  		Let $s_1, s_2$ be two reals so that $0\leq s_1$ and  $0\leq s_2-s_1\leq 1$. Let  $p,q,r$ be related via the H\"older's  $\f{1}{p}=\f{1}{q}+\f{1}{r}$, where 
  		$2<q<\infty$, $1<p,r<\infty$. Finally, let $\nabla \cdot V=0$.   
  		Then for any $a\in [s_2-s_1, 1]$
  		\begin{eqnarray}
  		\label{201}
  		& & \||\nabla|^{-s_1}[|\nabla|^{s_2}, V \cdot \nabla] \vp\|_{L^p}\leq C \||\nabla|^a V\|_{L^q} \||\nabla|^{s_2-s_1+1-a}\vp\|_{L^r}
  		\end{eqnarray}
  		In addition, we have the following end-point estimate. For $s_1>0, s_2>0, s_3>0$ and $s_1<1, s_3<1, s_2<s_1+s_3$, there is
  		\begin{equation}
  		\label{25}
  		\||\nabla|^{-s_1}[|\nabla|^{s_2}, |\nabla|^{-s_3} V\cdot \nabla]\vp\|_{L^2}\leq C \|V\|_{L^\infty} \||\nabla|^{s_2-s_1+1-s_3}\vp\|_{L^2}. 
  		\end{equation}
  	\end{lemma}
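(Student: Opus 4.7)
The statement is a Kato–Ponce style commutator estimate, commuting $|\nabla|^{s_2}$ against the transport operator $V\cdot\nabla$. The natural plan is via Littlewood–Paley decomposition and Bony's paraproduct machinery. Denote the LP projectors by $\Delta_j$ (Fourier support $|\xi|\sim 2^j$) and low-frequency cutoffs $S_j=\sum_{k<j}\Delta_k$. Decompose
$$V\cdot\nabla\vp = \sum_j S_{j-2}V\cdot\Delta_j\nabla\vp + \sum_j \Delta_j V\cdot S_{j-2}\nabla\vp + \sum_{|j-k|\le 2}\Delta_j V\cdot \Delta_k\nabla\vp =: \Pi_{LH}+\Pi_{HL}+\Pi_{HH},$$
apply $[|\nabla|^{s_2},\cdot]$ term by term, and then $|\nabla|^{-s_1}$.

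The main work is on $\Pi_{LH}$. Since $S_{j-2}V$ lives at frequencies $\ll 2^j$ and $\Delta_j\nabla\vp$ at $\sim 2^j$, each summand has Fourier support in $|\xi|\sim 2^j$, and the corresponding commutator block equals $[|\nabla|^{s_2}, S_{j-2}V]\Delta_j\nabla\vp$. By Taylor expansion of the symbol $|\xi|^{s_2}$ around a frequency of size $2^j$ (equivalently, a kernel estimate for the commutator of a Mikhlin multiplier with multiplication), this block is controlled pointwise after Hölder by $C\,2^{j(s_2-1)}\|\nabla S_{j-2}V\|_{L^q}\|\Delta_j\nabla\vp\|_{L^r}$. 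Using the Bernstein bound $\|\nabla S_{j-2}V\|_{L^q}\lesssim 2^{j(1-a)}\||\nabla|^a V\|_{L^q}$ (valid because $a\le 1$) and the LP characterization $\|\Delta_j |\nabla|^\beta\vp\|_{L^r}\simeq 2^{j\beta}\|\Delta_j\vp\|_{L^r}$, the factor $|\nabla|^{-s_1}$ brings in $2^{-js_1}$, and the geometric series in $j$ converges provided $s_2-s_1+1-a\ge 0$, i.e., $a\le s_2-s_1+1$, which is automatic from $a\le 1$ and $s_2-s_1\ge 0$. One picks up precisely the claimed norm on $\vp$, namely $\||\nabla|^{s_2-s_1+1-a}\vp\|_{L^r}$, after a Hölder summation in $j$ using $q>2$ to ensure square-summability via Littlewood–Paley.

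The piece $\Pi_{HL}$ is handled symmetrically: the commutator yields essentially $(|\nabla|^{s_2}\Delta_j V)\cdot S_{j-2}\nabla\vp$, and the same Bernstein/paraproduct bookkeeping gives the bound with the roles of $V$ and $\vp$ swapped at the high end. For $\Pi_{HH}$ one uses $\nabla\cdot V=0$ to write $\Delta_j V\cdot\Delta_k\nabla\vp = \nabla\cdot(\Delta_j V\,\Delta_k\vp)$, then distributes $|\nabla|^{s_2-s_1+1}$ across the product by Bernstein (all inputs live at frequency $\sim 2^j$); the condition $s_2-s_1\le 1$ is exactly what makes Bernstein gain nonnegative powers of $2^j$ and keeps the sum summable after pairing with $|\nabla|^a$ on $V$ and $|\nabla|^{s_2-s_1+1-a}$ on $\vp$.

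For the endpoint \eqref{25}, the modification is to replace the $\||\nabla|^a V\|_{L^q}$ control by $\|V\|_{L^\infty}$ via the uniform bound $\|S_j V\|_{L^\infty}\lesssim \|V\|_{L^\infty}$, while the factor $|\nabla|^{-s_3}$ in front of $V$ provides the missing smoothing on each LP block, which is absorbed into $\vp$ through the exponent $s_2-s_1+1-s_3$; the strict inequalities $s_1,s_3<1$ and $s_2<s_1+s_3$ ensure convergence of every Littlewood–Paley sum in $j$. The main obstacle is the low-high commutator kernel bound: showing $[|\nabla|^{s_2}, S_{j-2}V]\Delta_j f$ transfers one derivative from $f$ to $V$ with the right constant. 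This is the standard Coifman–Meyer-type commutator estimate for symbols of class $S^{s_2}_{1,0}$ against multiplication operators, and is where the hypothesis $a\le 1$ enters naturally; everything else is algebraic bookkeeping of frequencies.
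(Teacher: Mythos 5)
The paper does not prove this lemma at all --- it is imported verbatim from the authors' earlier work \cite{SH} --- so there is no in-paper argument to compare against; your Bony paraproduct decomposition with a Coifman--Meyer-type commutator bound on the low-high piece, Bernstein bookkeeping on the high-low piece, and the use of $\nabla\cdot V=0$ to rewrite the high-high piece as a divergence is precisely the standard route by which such estimates (including the one in the cited reference) are established, and your accounting of where each hypothesis enters ($a\le 1$ for Bernstein on $S_{j-2}V$, $a\ge s_2-s_1$ for Bernstein on $S_{j-2}\nabla\vp$ in the high-low term, strict inequalities in \eqref{25} to get $\ell^1$ summability once $V$ is only in $L^\infty$) is consistent. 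The outline is sound as a proof sketch; the only step you defer rather than prove is the frequency-localized commutator kernel bound for $[|\nabla|^{s_2},S_{j-2}V]\Delta_j$, which is indeed classical.
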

  	\subsection{A variant of the Gronwall's inequality}
  	We shall need a version of the Gronwall's inequality as follows. 
  	\begin{lemma}
  		\label{gro}
  		Let $\si>0, \mu>0, \ka>0$ and $a\in [0,1)$. Let $A_1,A_2, A_3$ be three positive constants so that a function $I:[0, \infty)\to \rone_+$ satisfies  $I(\tau)\leq A_1 e^{-\ga \tau}$, for some real $\ga$  and 
  	\begin{equation}
  	\label{gron} 
	I(\tau)\leq A_2 e^{-\mu \tau} + A_3 \int_0^\tau \f{e^{-\si(\tau-s)}}{(\min(1, |\tau-s|)^a} e^{-\ka s} I(s) ds.
  	\end{equation}
  		Then, there exists $C=C_{a, \si, \mu, \ka, \ga}$, so that 
  		$$
  		I(\tau)\leq C_{a, \si, \mu, \ka, \ga}(1+|A_1|+|A_2|+|A_3|)  e^{- \mu \tau}.
  		$$
  	\end{lemma}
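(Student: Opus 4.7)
The strategy is a bootstrap on the decay exponent: starting from the a priori bound $I(\tau)\leq A_1 e^{-\gamma\tau}$, I improve the rate in increments of $\kappa$ until it reaches the target $\mu$.

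First, I would establish the auxiliary kernel estimate that, for every $\beta\geq 0$,
\begin{equation*}
\int_0^\tau \frac{e^{-\sigma(\tau-s)-\beta s}}{\min(1,|\tau-s|)^a}\,ds \;\leq\; K_{a,\sigma,\beta}\,e^{-\min(\sigma,\beta)\tau},
\end{equation*}
with a polynomial correction in the borderline case $\sigma=\beta$. This I would prove by the substitution $u=\tau-s$, reducing the integral to $e^{-\beta\tau}\int_0^\tau e^{(\beta-\sigma)u}\min(1,u)^{-a}\,du$, and splitting at $u=1$: on $[0,1]$ the singular factor $u^{-a}$ is integrable since $a<1$, while on $[1,\tau]$ the singularity is absent and the exponential factor $e^{(\beta-\sigma)u}$ controls the size.

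With that in hand, a single iteration step reads as follows: if $I(s)\leq B\,e^{-\lambda s}$ for all $s\geq 0$, then plugging into the integral inequality and applying the kernel estimate with $\beta=\lambda+\kappa$ yields
\begin{equation*}
I(\tau)\;\leq\;A_2 e^{-\mu\tau}+A_3 K B\,e^{-\min(\sigma,\lambda+\kappa)\tau}\;\leq\;(A_2+A_3 K B)\,e^{-\lambda'\tau},
\end{equation*}
where $\lambda'=\min(\mu,\sigma,\lambda+\kappa)$. I iterate starting from $\lambda_0=\gamma$ and $B_0=A_1$; each step raises the exponent by at least $\kappa$, so after $N\leq\lceil(\mu-\gamma)_+/\kappa\rceil+1$ steps the rate saturates at $\min(\mu,\sigma)=\mu$ (this being the regime relevant for the applications of this Lemma). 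The additive-multiplicative recursion $B_{n+1}=A_2+A_3 K B_n$ yields a final constant $B_N$ bounded by a quantity depending only on $a,\sigma,\mu,\kappa,\gamma$ multiplied by $(1+A_1+A_2+A_3)$.

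The main technical obstacle is the borderline case $\sigma=\lambda+\kappa$ possibly arising at an intermediate iteration step, which produces a factor $\tau^{1-a}$ in the kernel estimate rather than a constant. I would deal with this by a small perturbation $\lambda\mapsto\lambda-\varepsilon$ at the cost of a finite $\varepsilon$-dependent constant, or equivalently by absorbing the polynomial factor into a slightly worse exponential and running one extra iteration. Tracking the constants is then routine given the linear recursion for $B_n$ and the fact that the number of iterations $N$ is bounded in terms of $\gamma,\mu,\kappa$ only.
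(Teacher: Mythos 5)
Your proposal is correct and follows essentially the same route as the paper's own argument: a bootstrap on the decay exponent, with the kernel integral controlled by splitting at distance one from the endpoint $\tau$ (the paper splits at $s=\tau-1$, you substitute $u=\tau-s$ and split at $u=1$, which is the same estimate), the rate improved by $\kappa$ per iteration, and the resonant case $\si=\lambda+\ka$ handled by a small perturbation of the exponent. The only cosmetic difference is that you track the constants through the linear recursion $B_{n+1}=A_2+A_3KB_n$ whereas the paper normalizes $A_1,A_2,A_3$ away at the outset; both treatments share the same (harmless for the applications) looseness in how the final constant depends on $A_3$.
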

  	The proof of Lemma \ref{gro} is rather elementary, but we  provide it for completeness  in the Appendix. 
  
  \section{The operator $\cl$ in $L^2(2)$: spectral analysis and  semigroup estimates}

  \subsection{Spectral theory for $\cl$}
  \label{sec:3} 
  The following result discusses the spectrum of $\cl$. 
  \begin{proposition}
  	\label{prop:10} 
  	Let $\mathcal{L}$ be as defined in \eqref{lamb}, then 
  	\begin{enumerate}
  		\item  \emph{The discrete spectrum:} Let $k \in \mathbb{N} \cup \{0\}$ be fixed and $\sigma= (\sigma_1, \sigma_2)$ be such that $|\sigma|= \sigma_1 + \sigma_2= k$ .   Then the function $\phi_{\sigma} (\xi)$ defined by 
  		\begin{equation}
  		\phi_{\sigma}(\xi)=  \partial_1^{\sigma_1} \partial_2^{\sigma_2} G, 
  		\end{equation}
  		is an eigenfunction of the multiplicity greater or equal to ${k+ 1\choose k}= k+1$ related to the eigenvalue $\la_{k}= 1-\f{3-\be+k}{\al} $. In fact, 
  		$$
  		\sigma_d(\mathcal{L})\supseteq  \bigg\{\la_k \in \mathbb{C}: \la_k= 1-\f{3-\be+k}{\al};  k= 0, 1, 2, \cdots \bigg\}.
  		$$
  			\item \emph{The continuous spectrum:} Let $\mu \in \mathbb{C}$ be such that $\Re \mu  \leq  - \f{1}{\al}$ and define,
  			$\psi_{\mu} \in L^2$ such that 
  			\begin{equation}\label{con}
  			\widehat{\psi_{\mu}}(p)= |p|^{- \al \mu} e^{- |p|^{\al}}.
  			\end{equation}
  			Then $\psi_{\mu}$ is an  eigenfunction of the operator $\mathcal{L}$ with the corresponding eigenvalue\footnote{Note however that all this eigenvalues are not isolated, hence they are in the essential spectrum.} $\la=1+\mu - \f{3-\be}{\al}$.  In fact,
  			$$
  			\sigma_{ess}(\mathcal{L})\supseteq \bigg\{ \la \in \mathbb{C}:  \Re \la  \leq  1-  \f{4-\be}{\al}  \bigg \}.
  			$$

  	\end{enumerate}
  \end{proposition}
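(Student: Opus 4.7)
The cleanest approach is to pass to the Fourier side, where $\cl$ becomes a first-order linear differential operator of Euler type whose eigenfunctions can be read off using Euler's identity for homogeneous functions. I would first recast $\cl$ using $\widehat{\partial_j f}(p)=ip_j\hat f(p)$ and $\widehat{\xi_j f}(p)=i\partial_{p_j}\hat f(p)$.  A short calculation in which commuting $\partial_{p_j}$ past $p_j$ in the $\xi\cdot\nabla_\xi$ term produces an extra $-2/\al$ gives
\begin{equation}
\label{eq:Lhat}
\widehat{\cl Z}(p)=-|p|^{\al}\hat Z(p)-\tfrac{1}{\al}\,p\cdot\nabla_p\hat Z(p)+\tfrac{\al+\be-3}{\al}\hat Z(p),
\end{equation}
and note that the constant equals $1-(3-\be)/\al$, which is already the $k=0$, $\mu=0$ candidate eigenvalue.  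After this reduction the whole problem is to track how $p\cdot\nabla_p$ acts on candidate eigenfunctions of product form.

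For part (1), I would plug $\hat\phi_{\sigma}(p)=i^{|\sigma|}p_1^{\sigma_1}p_2^{\sigma_2}e^{-|p|^\al}$ into \eqref{eq:Lhat}.  Writing this as the product of the monomial $p^\sigma$, which is homogeneous of degree $k=|\sigma|$, and the radial factor $e^{-|p|^\al}$, Euler's identity plus a direct computation give
\[
p\cdot\nabla_p\bigl[p^\sigma e^{-|p|^\al}\bigr]=\bigl(k-\al|p|^\al\bigr)\,p^\sigma e^{-|p|^\al},
\]
so that the $\pm|p|^\al$ terms in \eqref{eq:Lhat} cancel exactly and one reads off the eigenvalue $\la_k=(\al+\be-3-k)/\al=1-(3-\be+k)/\al$.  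Linear independence of the $k+1$ monomials $p_1^{\sigma_1}p_2^{\sigma_2}$ with $|\sigma|=k$ immediately yields linear independence of $\phi_\sigma=\partial^\sigma G$, giving the multiplicity lower bound $k+1$.

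For part (2), I would apply the same procedure to $\hat\psi_{\mu}(p)=|p|^{-\al\mu}e^{-|p|^\al}$.  Since $|p|^{-\al\mu}$ is positively homogeneous of degree $-\al\mu$, Euler's identity gives $p\cdot\nabla_p\hat\psi_\mu=-\al(\mu+|p|^\al)\hat\psi_\mu$, and the same cancellation in \eqref{eq:Lhat} produces the eigenvalue $\la=1+\mu-(3-\be)/\al$.  To verify $\psi_\mu$ lies in the ambient function space, I would pass to polar coordinates: $\|\hat\psi_\mu\|_{L^2}^2\sim\int_0^\infty r^{1-2\al\Re\mu}e^{-2r^\al}dr<\infty$ needs only $\Re\mu<1/\al$, but the stricter condition $\Re\mu\leq-1/\al$ is dictated by the ambient space $L^2(2)$ on which $\cl$ is studied, because $(1+|\xi|^2)\psi_\mu\in L^2$ corresponds to $\Delta_p\hat\psi_\mu\in L^2$, whose leading singularity at $p=0$ is of order $|p|^{-\al\mu-2}$ and therefore requires exactly $\Re\mu<-1/\al$ for local $L^2$-integrability.

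The main obstacle is not the computation, which is purely algebraic, but the essential-spectrum inclusion: producing an eigenfunction only places $\la$ in $\sigma(\cl)$, not automatically in $\sigma_{ess}(\cl)$.  The resolution is that the affine map $\mu\mapsto 1+\mu-(3-\be)/\al$ sends the closed half-plane $\{\Re\mu\leq-1/\al\}$ bijectively onto $\{\Re\la\leq 1-(4-\be)/\al\}$, and the construction of part (2) yields an eigenfunction for \emph{every} $\la$ in the image.  In particular every such $\la$ is an accumulation point of other eigenvalues, so it cannot be isolated and therefore cannot lie in $\sigma_d(\cl)$; being in $\sigma(\cl)\setminus\sigma_d(\cl)$ it must belong to $\sigma_{ess}(\cl)$, which is exactly the claimed inclusion.
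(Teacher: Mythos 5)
Your proposal is correct and follows essentially the same route as the paper: both reduce $\cl$ to the first-order Euler-type operator on the Fourier side (the paper's formula \eqref{50}), identify $G$ and $\psi_\mu$ as eigenfunctions from that formula, verify $\psi_\mu\in L^2(2)$ by estimating the singularity of $\De_p\widehat{\psi_\mu}$ at the origin, and conclude the essential-spectrum inclusion from the non-isolation of the continuum of eigenvalues. The only cosmetic difference is that the paper obtains the higher eigenfunctions $\p_1^{\si_1}\p_2^{\si_2}G$ by differentiating the eigenvalue equation in physical space (each $\p_j$ shifting the eigenvalue by $-1/\al$), whereas you verify them directly on the Fourier side via Euler's identity for the homogeneous monomials $p^\si$; both give the same conclusion.
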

  
  \begin{proof}
  	{\it Regarding discrete spectrum}, we start with a calculation, which will allow us to identify some of the eigenvalues. 	Let $\phi_0(\xi)$ be a radial function, i.e.  $\widehat{\phi_0}(p)= g(|p|)$.  Then
  	\begin{eqnarray}
  	\nonumber
  		\widehat{\mathcal{L} \phi_0} (p)&=& \widehat{ - |\nabla|^{\al} \phi_0}+ \f{1}{\al} \widehat{\xi \cdot \nabla_{\xi} \phi_0} (p)+ \left(1+\f{\be-1}{\al}\right)\widehat{\phi_0}(p)=  \\
  			\nonumber
  		&=& - |p|^{\al} \widehat{\phi_0}(p)- \f{2}{\al} \widehat{\phi_0}(p) - \f{1}{\al} \sum_{j=1}^2 p_j \partial_j \widehat{ \phi_0}(p)+ \left(1+\f{\be-1}{\al}\right) \widehat{\phi_0}(p)=\\
  			\nonumber
  		&=&  - |p|^{\al} g(|p|)- \f{2}{\al} \widehat{\phi_0}(p) - \f{1}{\al} \sum_{j=1}^2 p_j\  g'(|p|)\ \f{p_j}{|p|}+\left(1+\f{\be-1}{\al}\right) \widehat{\phi_0}(p)= \\
  		\label{50}
  		&=&  \left(1+\f{\be-3}{\al}\right)\widehat{\phi_0}(p)+ \bigg( - |p|^{\al} g(|p|)- \f{1}{\al} |p| \  g'(|p|) \bigg)
  	\end{eqnarray}
  	Now if $g$ satisfies,
  	\begin{equation}\label{gg}
  	- |p|^{\al} g(|p|)- \f{1}{\al} |p| \  g'(|p|)= 0
  	\end{equation} 
  then clearly $\la=\left(1-\f{3-\be}{\al}\right)$ is an eigenvalue for $\cl$. The solution of \eqref{gg}, gives the   eigenfunction,   $  \widehat{\phi_0}(p)=   e^{-|p|^{\al}}$ or $\phi_0=G$. 
  
  Now, let $\phi_k$ be an eigenfunction corresponding to the eigenvalue $\la_k= \left(1-\f{3-\be+k}{\al}\right)$, that is 
  	\begin{equation}\label{eig1}
  	\mathcal{L} \phi_k(\xi)=  \left(1-\f{3-\be+k}{\al}\right) \phi_k
  	\end{equation}
  	Taking a derivative $\p_j$ in \eqref{eig1}, we obtain 
  \begin{eqnarray*}
\left(1-\f{3-\be+k}{\al}\right)  \partial_j \phi_k  &=& \p_j \mathcal{L} \phi_k(\xi) = - |\nabla|^{\al} \partial_j \phi_k  + \f{1}{\al} \partial_j (\xi \cdot \nabla \phi_k)+ \left(1-\f{3-\be+k}{\al}\right)  \partial_j \phi_k = \\
&=& - |\nabla|^{\al} \partial_j \phi_k + \f{1}{\al} \partial_j \phi_k+ \f{1}{\al} \xi \cdot \nabla  (\partial_j\phi_k)+ \left(1-\f{3-\be+k}{\al}\right)  \partial_j \phi_k(\xi)=\\&=& \cl[\p_j \phi_k]+ \f{1}{\al} \partial_j \phi_k. 
  \end{eqnarray*}
 It follows that 
 $$
 \cl[\p_j \phi_k]= \left(1-\f{3-\be+(k+1)}{\al}\right)  \partial_j \phi_k
 $$  
  	It follows that $\left(1-\f{3-\be+k+1}{\al}\right)$ is an eigenvalue, corresponding to   an eigenfunction $\p_j \phi_k$. Thus, we have identified a family of eigenvalues and eigenvectors as follows.   
  Fix $k \in \mathbb{N}$, and let $(\sigma_1, \sigma_2)$ be so that $\sigma_1+ \sigma_2= k$.  Then, by   induction, for the function $\phi_k:=\partial_1^{\si_1} \partial_2^{\si_2} \phi_0$, we have \eqref{eig1}. 
   Note that what we have proved so far does not guarantee that there is not any more discrete spectrum, but merely an inclusion, as stated. 
  
  {\it Regarding essential spectrum,} we compute $\widehat{\cl \psi_\mu}$. From the calculation \eqref{50}, we have 
  $$
  \widehat{\cl \psi_\mu}(p)= \left(\mu + 1+\f{\be-3}{\al}\right) \widehat{\psi}_\mu(p),
  $$ 
  	whence $\psi_\mu$ is an eigenfunction. Indeed, $\psi_\mu \in L^2(2)$, when $\Re\mu\leq -\f{1}{\al}$. This is easy to see with a computation similar to the ones performed in Lemma  \ref{le:10}. 
  	\begin{eqnarray*}
\||\xi|^2 \psi_\mu\|_{L^2}^2=\|\De_p \hat{\psi}_\mu\|_{L^2}^2=\int_0^\infty |(\p_{\rho \rho}+ \f{1}{\rho} \p_\rho) \rho^{-\al \mu} e^{-\rho^\al}|^2  \rho d\rho.
  	\end{eqnarray*}
  	The worst term (when $\al>1$)  is exactly $\int_0^1 \rho^{-(3+2\al \mu)} d\rho$, which converges  for $\Re\mu< -\f{1}{\al}$. 
  \end{proof}
  \subsection{The semigroup $e^{\tau \cl}$}
  The following proposition yields an explicit formula for the semigroup $e^{\tau \cl}$. This is an extension  of the formula established  in \cite{GW}.  
  \begin{proposition}
  	\label{prop:20} 
  	The operator $\cl$ defines a $C_0$ semigroup on $L^2(2)(\rtwo)$,  $e^{\tau \cl}$.  In fact, we have the following  formula for its action 	
  
 \begin{eqnarray}\label{semi1}
  	\widehat{(e^{\tau \cl} f)}(p)&=& e^{(1-\f{3-\be}{\al}) \tau} e^{- a(\tau) |p|^{\al}} \widehat{f}(e^{-\f{ \tau}{\al}} p),  \\ \label{semi2}
  (e^{\tau \cl}f) (\xi)&=& \f{e^{(1-\f{1-\be}{\al})\tau}}{a(\tau)^{\f{2}{\al}}} \int_{\rtwo} G \left(\f{\xi- \eta}{a(\tau)^{\f{1}{\al}}}\right) f(e^{\f{\tau}{\al}}\eta ) d \eta,
  	\end{eqnarray} 
  	where $a(\tau)= 1- e^{- \tau}$. In particular, for $1\leq p\leq \infty$, 
  	\begin{eqnarray}
  	\label{202} 
  	\|e^{\tau \cl} f\|_{L^p} &\leq & C e^{(1-\f{1-\be}{\al}-\f{2}{\al p})\tau} \|f\|_{L^p} \\
  	\label{203} 
  	\|e^{\tau \cl} \nabla f\|_{L^p} &\leq & C \f{e^{(1-\f{2-\be}{\al}-\f{2}{\al p})\tau}}{a(\tau)^{\f{1}{\al}}} \|f\|_{L^p}. 
  	\end{eqnarray}
  \end{proposition}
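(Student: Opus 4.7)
The plan is to take the Fourier transform of the evolution $\p_\tau Z = \cl Z$, which converts the dilation generator $\xi \cdot \nabla_\xi$ into a first-order transport operator. Using $\widehat{|\nabla|^\al Z}(p) = |p|^\al \hat Z(p)$ and $\widehat{\xi \cdot \nabla_\xi Z}(p) = -2\hat Z(p) - p \cdot \nabla_p \hat Z(p)$, I arrive at
\begin{equation*}
\p_\tau \hat Z + \f{1}{\al}\, p \cdot \nabla_p \hat Z + |p|^\al \hat Z = \left(1 - \f{3-\be}{\al}\right) \hat Z.
\end{equation*}
I would then solve this by the method of characteristics: along $p(\tau) = p_0 e^{\tau/\al}$ the transport term drops out, leaving an ODE $\f{d}{d\tau} \hat Z(p_0 e^{\tau/\al}, \tau) = (\la - e^\tau |p_0|^\al) \hat Z$ with $\la = 1-\f{3-\be}{\al}$, which integrates explicitly. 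Back-substituting $p_0 = e^{-\tau/\al} p$ so that $|p_0|^\al (e^\tau-1) = a(\tau)|p|^\al$ with $a(\tau) = 1 - e^{-\tau}$ yields \eqref{semi1}.

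For the physical-space formula \eqref{semi2}, I invert: since $e^{-a(\tau)|p|^\al} = \hat G(a(\tau)^{1/\al} p)$ has inverse transform $a(\tau)^{-2/\al} G(\cdot/a(\tau)^{1/\al})$, and the dilation $\hat f(e^{-\tau/\al} p)$ corresponds in physical space to $e^{2\tau/\al} f(e^{\tau/\al}\, \cdot\,)$, the product on the Fourier side becomes a convolution; collecting the exponents gives the factor $e^{(1 - (1-\be)/\al)\tau}$ since $\la + \f{2}{\al} = 1 - \f{1-\be}{\al}$. For \eqref{202}, Young's inequality applied to \eqref{semi2} gives the bound in terms of $\|G\|_{L^1}$ (finite by Lemma \ref{le:10}) and the rescaling of $f$ contributes a factor $e^{-2\tau/(\al p)}$. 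For \eqref{203}, I would integrate by parts in $\eta$, turning $(\nabla f)(e^{\tau/\al}\eta) = e^{-\tau/\al}\nabla_\eta [f(e^{\tau/\al}\eta)]$ into a derivative on $G$, producing an extra $a(\tau)^{-1/\al}$ in front and $\nabla G$ in place of $G$ (still integrable by Lemma \ref{le:10}); the exponent arithmetic $\la - \f{1}{\al} - \f{2}{\al p} = 1 - \f{2-\be}{\al} - \f{2}{\al p}$ matches the stated rate.

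The $C_0$-semigroup property on $L^2(2)$ is the most delicate point. The semigroup identity $e^{(\tau_1+\tau_2)\cl} = e^{\tau_1\cl} e^{\tau_2\cl}$ is immediate from \eqref{semi1} upon checking $a(\tau_1+\tau_2) = a(\tau_2) + e^{-\tau_2} a(\tau_1)$ together with the dilation identity $(e^{-\tau_1/\al})(e^{-\tau_2/\al}) = e^{-(\tau_1+\tau_2)/\al}$. The main obstacle is boundedness on the weighted space: I must control $\|(1+|\xi|^2)(e^{\tau\cl} f)(\xi)\|_{L^2}$, which I would do via \eqref{semi2} combined with Peetre's inequality $(1+|\xi|^2) \leq 2(1+|\xi-\eta|^2)(1+|e^{-\tau/\al}\eta|^2)$, splitting the kernel into two pieces and bounding the resulting convolutions by $\|G\|_{L^1}$ and $\||\cdot|^2 G\|_{L^1}$ (both finite by Lemma \ref{le:10}) against the $L^2(2)$ norm of $f$. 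Strong continuity $e^{\tau\cl} f \to f$ in $L^2(2)$ as $\tau \to 0^+$ then follows from the just-established uniform bound on bounded intervals, together with density of Schwartz functions in $L^2(2)$, on which continuity is transparent from \eqref{semi1} by dominated convergence on the Fourier side.
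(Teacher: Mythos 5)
Your derivation of \eqref{semi1} by passing to the Fourier side and solving along characteristics $p(\tau)=p_0e^{\tau/\al}$ is correct and arrives at exactly the same formula as the paper; the paper instead guesses the formula and verifies $\p_\tau\hat g=\widehat{\cl g}$ by direct differentiation, which is the same computation run backwards. Your treatment of \eqref{semi2}, \eqref{202} and \eqref{203} (Fourier inversion of a product into a convolution, Young's inequality with $\|G\|_{L^1}$, integration by parts in $\eta$ to move the derivative onto $G$ at the cost of $a(\tau)^{-1/\al}e^{-\tau/\al}$) coincides with the paper's argument, and your exponent bookkeeping $\la+\f{2}{\al}=1-\f{1-\be}{\al}$ is right. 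Where you genuinely diverge is the $C_0$-semigroup claim on $L^2(2)$: the paper's proof of this proposition does not actually establish weighted boundedness or strong continuity there (it defers the $L^2(2)$ operator bounds to Proposition \ref{lem1}, proved by a lengthy Fourier-side computation in the appendix, and to Proposition \ref{prop:43}). Your route --- the algebraic check $a(\tau_1+\tau_2)=a(\tau_2)+e^{-\tau_2}a(\tau_1)$ for the semigroup law, plus a Peetre-type splitting of the weight through the convolution kernel using $\|(1+|\cdot|^2)G\|_{L^1}<\infty$ from Lemma \ref{le:10} --- is a more elementary and self-contained way to get mere boundedness on $L^2(2)$, though it does not recover the sharp decay rates of Proposition \ref{lem1} that the paper needs later. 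One slip to fix: the inequality you quote, $(1+|\xi|^2)\leq 2(1+|\xi-\eta|^2)(1+|e^{-\tau/\al}\eta|^2)$, is false as written for large $\tau$ (take $\eta=\xi$ large). What you want is the standard $(1+|\xi|^2)\leq 2(1+|\xi-\eta|^2)(1+|\eta|^2)$ followed by $(1+|\eta|^2)\leq(1+|e^{\tau/\al}\eta|^2)$ for $\tau\geq 0$, so that the second factor is absorbed into the $L^2(2)$ norm of $f$ after the substitution $u=e^{\tau/\al}\eta$; with that correction your argument closes.
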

 \noindent {\bf Remark:} Note that $a(\tau)\sim \min(1, \tau)$. This will be used frequently in the sequel.  
  \begin{proof}
  	The generation of the semigroup would follow, once we prove that the function \\  $g: \hat{g}(\tau, p):=e^{(1-\f{3-\be}{\al}) \tau} e^{- a(\tau) |p|^{\al}} \widehat{f}(p \cdot e^{-\f{ \tau}{\al}})$ satisfies $\p_\tau \hat{g}(\tau,p)=\widehat{\cl g(\tau, \cdot)}$. Clearly, $\hat{g}(0, p)=\hat{f}(p)$, so $g(0, \xi)=f(\xi)$. Next, we compute $\p_\tau \hat{g}(\tau,p)$. We have 
  	\begin{eqnarray*}
  		\partial_{\tau} \widehat{g}(\tau, p) &=& \bigg[(1- \f{3-\be}{\al} - a'(\tau) |p|^{\al}) \widehat{f}(p \cdot e^{-\f{ \tau}{\al}})- \f{1}{\al}  e^{-\f{\tau}{\al}} p \cdot \nabla_p \widehat{f}(p \cdot e^{-\f{ \tau}{\al}})\bigg] e^{\tau(1- \f{3-\be}{\al})} e^{- a(\tau) |p|^{\al}}=\\
  		&=& \left(1+\f{\be-3}{\al}\right) \widehat{g}(p)  + (a(\tau)-1)   |p|^{\al} \widehat{g}(p) - \f{1}{\al} e^{-\f{\tau}{\al}} p \cdot \nabla_p \widehat{f}(p \cdot e^{-\f{\tau}{\al}})  e^{\tau(1-\f{3-\be}{\al})} e^{- a(\tau) |p|^{\al}},
  	\end{eqnarray*}
  	where we have used the relation $a'(\tau)=1-a(\tau)$.   	
  	Next, by \eqref{50}, we have 
  	\begin{eqnarray*}
\widehat{\cl g(\tau, \cdot)} &=& - |p|^{\al} \widehat{g}(p) - \f{1}{\al} \sum_{j=1}^2 p_j \partial_j \widehat{ g}(p)+ \left(1+\f{\be-3}{\al}\right) \widehat{g}(p).
  	\end{eqnarray*}
  	But, 
  	\begin{eqnarray*}
  	& & 	\f{1}{\al} \sum_{j=1}^2 p_j \partial_j \widehat{g}(p)=  \f{1}{\al} \sum_{j=1}^2 p_j \bigg( - \al a(\tau) p_j  |p|^{\al-2}  \widehat{f}(p \cdot e^{-\f{\tau}{\al}})+ e^{-\f{\tau}{\al}} \partial_j\widehat{f}(p \cdot e^{-\f{\tau}{\al}}) \bigg) 
  		e^{\tau(1-\f{3-\be}{\al})} e^{- a(\tau) |p|^{\al}}\\
  		&=&  - a(\tau)  |p|^{\al} \widehat{f}(p \cdot e^{-\f{\tau}{\al}}) e^{\tau(1-\f{3-\be}{\al})}   e^{- a(\tau) |p|^{\al}}+ \f{1}{\al} e^{-\f{\tau}{\al}} p \cdot \nabla_p \widehat{f}(p \cdot e^{-\f{\tau}{\al}})  e^{\tau(1-\f{3-\be}{\al})} e^{- a(\tau) |p|^{\al}}.
  	\end{eqnarray*}
  	Altogether, 
  	 	\begin{eqnarray*}
  	 		\widehat{\cl g(\tau, \cdot)} &=&  - |p|^{\al} \widehat{g}(p)+  \left(1+\f{\be-3}{\al}\right) \widehat{g}(p)+  a(\tau)  |p|^{\al}  \widehat{g}(p)  - \\
  	 		&-&  \f{1}{\al} e^{-\f{\tau}{\al}} p \cdot \nabla_p \widehat{f}(p \cdot e^{-\f{\tau}{\al}})  e^{\tau(1-\f{3-\be}{\al})} e^{- a(\tau) |p|^{\al}}.
  	 	\end{eqnarray*}
  	An immediate inspection reveals that $	\partial_{\tau} \widehat{g}(\tau, p)=	\widehat{\cl g(\tau, \cdot)} (p)$ and so the semigroup formula \eqref{semi1} is established. The formula \eqref{semi2} is just a Fourier inversion of \eqref{semi1}. 
  		Regarding the estimate \eqref{202},  we proceed as follows 
  		\begin{eqnarray*}
  			\|e^{\tau \cl} f\|_{L^p}  &\leq & e^{(1-\f{1-\be}{\al})\tau} \|G_{a(\tau)^{\f{1}{\al}}}\|_{L^1} \|f(e^{\f{\tau}{\al}}\cdot)\|_{L^p}= 
  			e^{(1-\f{1-\be}{\al}-\f{2}{\al p})\tau} \|G\|_{L^1} \|f\|_{L^p}. 
  		\end{eqnarray*}
  		For \eqref{203}, note  that integration by parts yields 
  		\begin{eqnarray*}
  	(e^{\tau \cl}\p_j f) (\xi)&=& \f{e^{(1-\f{1-\be}{\al})\tau}}{a(\tau)^{\f{2}{\al}}} \int_{\rtwo} G \left(\f{\xi- \eta}{a(\tau)^{\f{1}{\al}}}\right) (\p_j f)(e^{\f{\tau}{\al}}\eta ) d \eta=   \f{e^{(1-\f{2-\be}{\al})\tau}}{a(\tau)^{\f{3}{\al}}} \int_{\rtwo} \p_j G \left(\f{\xi- \eta}{a(\tau)^{\f{1}{\al}}}\right) f(e^{\f{\tau}{\al}}\eta ) d \eta,
  		\end{eqnarray*}
  		whence 
  		$$
  		\|(e^{\tau \cl}\nabla f) (\xi)\|_{L^p}\leq  \f{e^{(1-\f{2-\be}{\al}-\f{2}{\al p})\tau}}{a(\tau)^{\f{1}{\al}}}  \|\nabla G\|_{L^1} \|f\|_{L^p}.
  		$$
  \end{proof}
  	We need a variant of Proposition A.2 in \cite{GW}, which discusses the commutation of the semigroup with differential operators. 
  		\begin{lemma}\label{lem0}
  			We have the following commutation relation for $e^{\tau \cl}$ 
  			\begin{equation}
  			\label{70} 
  			\nabla e^{\tau \mathcal{L}}= e^{\f{\tau}{\al}} e^{\tau \mathcal{L}} \nabla
  			\end{equation}
  		\end{lemma}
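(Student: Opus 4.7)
The plan is to verify the identity directly on the Fourier side using the explicit semigroup formula \eqref{semi1} from Proposition \ref{prop:20}, since both sides are evolution operators acting linearly on, say, Schwartz functions, and the claim is essentially a scaling relation baked into the definition of $e^{\tau\cl}$.

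First, I would recall that $\widehat{\partial_j f}(p) = i p_j \hat{f}(p)$ and apply this to both sides. By \eqref{semi1},
\[
\widehat{\partial_j e^{\tau\cl} f}(p) = i p_j\, e^{(1-\frac{3-\be}{\al})\tau} e^{-a(\tau)|p|^\al} \hat{f}\bigl(e^{-\tau/\al} p\bigr).
\]
For the right-hand side, applying \eqref{semi1} to $\partial_j f$ instead of $f$ gives
\[
\widehat{e^{\tau\cl}\partial_j f}(p) = e^{(1-\frac{3-\be}{\al})\tau} e^{-a(\tau)|p|^\al} \cdot i\bigl(e^{-\tau/\al} p_j\bigr) \hat{f}\bigl(e^{-\tau/\al} p\bigr),
\]
where the extra factor $e^{-\tau/\al}$ arises from evaluating the symbol $i p_j$ at the rescaled frequency $e^{-\tau/\al} p$.

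Comparing the two expressions, I read off
\[
\widehat{\partial_j e^{\tau\cl} f}(p) = e^{\tau/\al}\,\widehat{e^{\tau\cl}\partial_j f}(p),
\]
which, after inverting the Fourier transform and doing this for each $j=1,2$, yields $\nabla e^{\tau\cl} = e^{\tau/\al} e^{\tau\cl} \nabla$. Since the semigroup $e^{\tau\cl}$ is bounded on $L^2(2)$ by Proposition \ref{prop:20}, the identity extends from Schwartz functions to all $f\in L^2(2)$ for which $\nabla f\in L^2(2)$ by density and continuity.

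There is no real obstacle here; the identity is essentially a manifestation of the fact that the scaled variable $\xi = x/(1+t)^{1/\al}$ introduces a factor of $(1+t)^{-1/\al} = e^{-\tau/\al}$ whenever a spatial derivative is taken, so the derivative and the semigroup commute only up to that scaling factor. The only thing to be mildly careful about is keeping the exponent of $e^\tau$ consistent on the Fourier side, since the symbol $e^{-a(\tau)|p|^\al}$ is evaluated at $p$ (not at $e^{-\tau/\al} p$), which is why the only contribution to the commutator is the $e^{-\tau/\al}$ coming from differentiating $\hat f(e^{-\tau/\al} p)$.
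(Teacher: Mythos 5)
Your proof is correct, and the Fourier-side computation checks out: differentiating $\widehat f(e^{-\tau/\al}p)$ is not what produces the factor (you never differentiate in $p$); rather, as you correctly note, the factor $e^{-\tau/\al}$ comes from evaluating the symbol $ip_j$ at the rescaled frequency $e^{-\tau/\al}p$ when the semigroup formula \eqref{semi1} is applied to $\p_j f$. The paper takes a genuinely different route: it sets $u=e^{\tau\cl}f$, differentiates the evolution equation $u_\tau=\cl u$ in $\xi_j$, and uses the generator-level commutation $\p_j(\xi\cdot\nabla_\xi u)=\p_j u+\xi\cdot\nabla_\xi(\p_j u)$ to conclude that $\p_j u$ solves $(\p_j u)_\tau=(\cl+\f{1}{\al})\p_j u$, hence $\p_j u=e^{\tau(\cl+\f{1}{\al})}\p_j f$. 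The paper's argument is more structural (it only uses the form of $\cl$ and would survive any perturbation of $\cl$ by a Fourier multiplier, since those commute with $\nabla$), whereas yours leans on the explicit formula \eqref{semi1} from Proposition \ref{prop:20}; since that formula is established before Lemma \ref{lem0} in the paper, there is no circularity, and your version has the advantage of being a one-line verification with no need to justify uniqueness for the differentiated Cauchy problem. Your closing density remark is the right way to pass from Schwartz functions to the class actually used later.
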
	
  		\begin{proof}
  			Let $u(x, \tau)= e^{\tau \mathcal{L}} f(x)$, then $u$ satisfies the following equation
  			\begin{eqnarray*}
  				\begin{cases}
  					u_{\tau}= \mathcal{L} u, \\ 
  					u(0,x)= f(x).
  				\end{cases}
  			\end{eqnarray*}
  			Clearly, taking a derivative $\p_j$ in \eqref{lamb} yields, for $j= 1, 2$
  			\begin{eqnarray*}
  				\begin{cases}
  					(\partial_j u)_{\tau}= \p_j(\cl u)= \mathcal{L} \partial_j u+ \f{1}{\al} \partial_j u, \\ 
  					\partial_j u(x, 0)= \partial_j f(x),
  				\end{cases}
  			\end{eqnarray*}
  			which has the solution
  		$
  			\partial_j u(x, \tau)= e^{\tau[\mathcal{L}+ \f{1}{\al}]} \partial_j f(x).
  			$
  			In other words 
  		$
  			\nabla e^{\tau \mathcal{L}}  = e^{\f{\tau}{\al}} e^{\tau \mathcal{L}} \nabla.$
  		\end{proof}
  			\subsection{Semigroup estimates} 
  	We need to address an important question, namely the behavior of the bounded 
  	operators $e^{\tau \cl}$ on $L^2(2)$.  The next Proposition does that. More precisely, we are interested in the decay of the operator norms $\|e^{\tau \cl}\|_{L^2(2)\to L^2(2)}$. Importantly,  good decay estimates only happen, when the functions have mean value zero. 
  
  		\begin{proposition}
  			\label{lem1} 
  			Let $f \in L^2(2)$, $\hat{f}(0)= 0$ and $\ga=(\ga_1, \ga_2) \in {\mathbf N}^2, |\ga|=0,1$ and $0<\eps <<1$. Then there exists $C=C_\eps>0$,  such that for any $\tau> 0$, 
  			\begin{equation}  
  			\label{18}
  	\|\nabla^{\ga} ( e^{\tau \cl} f)\|_{L^2(2)} \leq C \f{e^{\left(1-\f{4-\be}{\al}+\eps \right) \tau}}{a(\tau)^{\f{|\ga|}{\al}}} \|f\|_{L^2(2)},
  			\end{equation}
or 
	\begin{equation}  
	\label{1888}
	\|\nabla^{\ga} ( e^{\tau \cl} f)\|_{L^2(2)} \leq C \|f\|_{L^2(2)} \cdot \left\{ 
	\begin{array}{l}
	\tau^{-\f{|\ga|}{\al}},  \ \ \ \ \ \ \ \  \tau \leq 1 \\
	e^{\left(1-\f{4-\be}{\al}+\eps \right) \tau}, \ \ \ \ \tau > 1 
	\end{array}  
	\right.
	\end{equation}
  		\end{proposition}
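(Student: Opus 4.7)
The plan is to exploit the explicit Fourier representation \eqref{semi1} and pass to Fourier variables via Plancherel, using the identification
\[
\|g\|_{L^2(2)}^2 \;\sim\; \|\widehat g\|_{L^2}^2 + \|\nabla_p \widehat g\|_{L^2}^2 + \|\Delta_p \widehat g\|_{L^2}^2,
\]
which reduces the task to controlling these three Fourier norms of $\widehat{e^{\tau \cl} f}(p) = e^{\lambda \tau} e^{-a(\tau)|p|^{\al}} \widehat f(e^{-\tau/\al} p)$, with $\lambda := 1 - (3-\be)/\al$. The truly delicate ingredient is $\|\Delta_p \widehat{e^{\tau \cl} f}\|_{L^2}$; the other two norms are handled by strictly easier variants of the argument below.

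Expanding $\Delta_p$ via the product rule produces three contributions. Term (C), where both derivatives fall on $\widehat f(e^{-\tau/\al}p)$, generates $e^{-2\tau/\al}(\Delta\widehat f)(e^{-\tau/\al}p)$, whose $L^2$ norm after the change of variable $q = e^{-\tau/\al}p$ (Jacobian $e^{2\tau/\al}$) combines with the prefactor $e^{\lambda\tau}$ to yield exactly $e^{(1-(4-\be)/\al)\tau}\|\Delta\widehat f\|_{L^2} \lesssim e^{(1-(4-\be)/\al)\tau}\|f\|_{L^2(2)}$. The cross term (B), involving $\nabla e^{-a(\tau)|p|^\al}$ and $\nabla\widehat f(e^{-\tau/\al}p)$, carries the integrable weight $|p|^{\al-1}$ and is handled identically. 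The hard term is (A):
\[
[\Delta_p e^{-a(\tau)|p|^\al}]\,\widehat f(e^{-\tau/\al}p) = \bigl[-a(\tau)\al^2 |p|^{\al-2} + a(\tau)^2\al^2|p|^{2\al-2}\bigr] e^{-a(\tau)|p|^\al}\,\widehat f(e^{-\tau/\al}p),
\]
whose singular weight $|p|^{\al-2}$ fails to be $L^2$ near the origin when $\al<2$. This is precisely where the hypothesis $\widehat f(0)=0$ is essential. Since $f\in L^2(2)$ yields $\widehat f\in H^2(\rtwo)$, the 2D Sobolev embedding $H^2 \hookrightarrow C^{0,s}$ (any $s<1$) gives the pointwise bound
\[
|\widehat f(e^{-\tau/\al}p)| = |\widehat f(e^{-\tau/\al}p) - \widehat f(0)| \leq C_s\, e^{-s\tau/\al}\,|p|^{s}\,\|f\|_{L^2(2)},
\]
converting the integrand weight to $|p|^{2(\al-2)+2s}$, which is integrable near zero whenever $s>1-\al$ (allowed since $\al>1$). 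After the change of variable and optimization $s\to 1^{-}$, term (A) matches the claimed rate $e^{(1-(4-\be)/\al+\eps)\tau}$.

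For $|\ga|=1$, I would use the semigroup splitting $e^{\tau\cl} = e^{(\tau-s)\cl}\circ e^{s\cl}$ with $s=\min(\tau/2,1/2)$. The short-time gradient estimate $\|\nabla(e^{s\cl}f)\|_{L^2(2)} \lesssim a(s)^{-1/\al}\|f\|_{L^2(2)}$ is obtained by differentiating the kernel formula \eqref{semi2} and controlling the weight via $(1+|\xi|^2)\leq 2(1+|\xi-\eta|^2)+2(1+|\eta|^2)$, using that $G$ and $\nabla G$ belong to $L^1$ with the relevant weights by Lemma \ref{le:10}. Since $\widehat{\nabla(e^{s\cl}f)}(0)=0$ automatically, the $|\ga|=0$ analysis above then applies to $e^{(\tau-s)\cl}$ acting on $\nabla(e^{s\cl}f)$, producing the exponential rate over $(\tau-s)$; combining the two pieces gives \eqref{18}, and \eqref{1888} follows immediately from $a(\tau)\sim \min(1,\tau)$. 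The main obstacle throughout is term (A): the unavoidable $\eps$-loss reflects the endpoint failure of the 2D fractional Hardy inequality $\int |\widehat f|^2/|q|^2\,dq \lesssim \|\nabla\widehat f\|_{L^2}^2$, which is precisely why, as noted after Theorem \ref{theo:10}, $G\notin L^2(3)$ and one cannot access higher-order asymptotics in this weighted framework.
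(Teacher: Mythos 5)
Your treatment of the $|\ga|=0$ case is essentially the paper's own proof: the paper likewise passes to the Fourier side via \eqref{semi1}, splits $\|\cdot\|_{L^2(2)}$ into the $L^2$ norms of $\widehat{g}$, $\nabla_p\widehat{g}$, $\Delta_p\widehat{g}$, expands by the product rule into the same three contributions, and tames the singular weight $|p|^{\al-2}$ produced by $\Delta_p e^{-a(\tau)|p|^{\al}}$ with the pointwise bound $|\widehat{f}(q)|=|\widehat{f}(q)-\widehat{f}(0)|\leq C_\eps |q|^{1-\eps}\|f\|_{L^2(2)}$ (the paper gets this by interpolating Hausdorff--Young bounds on $\widehat{f}$ and $\nabla\widehat{f}$ rather than by $H^2\hookrightarrow C^{0,s}$; it is the same estimate). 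Your identification of this term as the source of the $\eps$-loss, and its link to $G\notin L^2(3)$, is also correct.

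The $|\ga|=1$ case is where you depart from the paper --- which simply carries the extra symbol $p^{\ga}$ through the same Fourier computation --- and where your argument has a genuine gap. You write $\nabla e^{\tau\cl}f$ as $e^{(\tau-s)\cl}\bigl[\nabla e^{s\cl}f\bigr]$ and apply the $|\ga|=0$ estimate to the outer factor. But $\nabla$ does not commute with the semigroup: by Lemma \ref{lem0}, $\nabla e^{(\tau-s)\cl}=e^{(\tau-s)/\al}\,e^{(\tau-s)\cl}\nabla$, so the correct identity is $\nabla e^{\tau\cl}f=e^{(\tau-s)/\al}\,e^{(\tau-s)\cl}\bigl[\nabla e^{s\cl}f\bigr]$. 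With $s=\min(\tau/2,1/2)$ and $\tau$ large the prefactor is $\approx e^{\tau/\al}$, and since using only $\widehat{\nabla e^{s\cl}f}(0)=0$ the outer evolution gives the rate $e^{(1-\f{4-\be}{\al}+\eps)(\tau-s)}$, your composition yields $e^{(1-\f{3-\be}{\al}+\eps)\tau}$, not the claimed $e^{(1-\f{4-\be}{\al}+\eps)\tau}$. Two repairs are available: (i) put the derivative on a \emph{short final} piece, $\nabla e^{\tau\cl}f=\nabla e^{s\cl}\bigl(e^{(\tau-s)\cl}f\bigr)$, for which your kernel bound $\|\nabla e^{s\cl}g\|_{L^2(2)}\lesssim a(s)^{-1/\al}\|g\|_{L^2(2)}$ ($s\leq 1$) suffices and no commutation factor appears; or (ii) keep your splitting but note that $\widehat{\nabla e^{s\cl}f}(q)=iq\,\widehat{e^{s\cl}f}(q)$ vanishes to order $2-\eps$ at the origin (as $e^{s\cl}f$ is itself mean-zero), which upgrades the outer rate to $e^{(1-\f{5-\be}{\al}+\eps)(\tau-s)}$ and exactly absorbs the factor $e^{(\tau-s)/\al}$. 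As written, though, the large-$\tau$ half of \eqref{18} for $|\ga|=1$ does not follow from your argument.
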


  \subsection{The decay estimates for $e^{\tau \cl}$ give a  description of the spectrum of $\cl$} 
  \label{sec:3.6} 
In this section, we show that the spectral inclusions in Proposition \ref{prop:10} are actually equalities. We also compute explicitly the Riesz projection $\cp_0$  onto the   eigenvalue of $\cl$ with the largest real part.  In Proposition \ref{prop:10}, we have already identified $G$ as being an eigenfunction for $\cl$ corresponding to an eigenvalue $\la_0=1-\f{3-\be}{\al}$. On the other hand, applying Proposition \ref{lem1}, for functions with $\hat{f}(0)=0$ and $\ga=(0,0)$, implies 
\begin{equation}
\label{110} 
\|e^{\tau \cl} f\|_{L^2(2)}\leq C_\eps  e^{\left(1-\f{4-\be}{\al}+\eps \right) \tau} \|f\|_{L^2(2)}.
\end{equation}
Denote the co-dimension one subspace $X_0=\{f\in L^2(2): \hat{f}(0)=0\}$. Clearly, the operator $\cl$ acts invariantly on $X_0$, since for every $f\in L^2(2): \int f(\xi)d\xi=0$, we have 
$ \int_{\rtwo}   \xi\cdot \nabla f d\xi=0$, whence $\int \cl f (\xi) d\xi=0$. 

Introduce  $\cl_0:=\cl|_{X_0}$, with domain $D(\cl_0)= D(\cl)\cap X_0=H^\al\cap X_0$.  By the Hille-Yosida theorem, this estimate \eqref{110} implies that the set $\{\la: \Re\la > \left(1-\f{4-\be}{\al} \right) \}$ is in the resolvent set of $\cl_0$, since the integral representing $(\la-\cl)^{-1}$, namely 
$
\int_0^\infty e^{-\la \tau} e^{\tau \cl} d\tau,
$
converges by  virtue of \eqref{110}. 
Combining this with the results from Proposition \ref{prop:10}, we conclude that  
$\si(\cl)\cap \{\la: \Re \la>\left(1-\f{4-\be}{\al} \right)\} $ is a singleton - the eigenvalue $\la_0=1-\f{3-\be}{\al}$, which  is simple, with eigenfunction $G$. We conclude that 
$$
\si(\cl)=\{1-\f{3-\be}{\al}\} \cup \si_{ess}(\cl); \ \ \si_{ess}(\cl)=\{\la: \Re\la\leq \left(1-\f{4-\be}{\al} \right)\},
$$
Moreover, its Riesz projection $\cp_0$,  a rank one operator,  is given by 
$$
\cp_0 f= \left(\int_{\rtwo} f(\xi) d\xi\right)G
$$
Clearly, such an operator is well-normalized, since $\cp_0^2 f = \dpr{G}{1} \cp_0 f= \hat{G}(0)  \cp_0 f=\cp_0 f$, since $\hat{G}(0)=1$. The projection $\cq_0:=Id-\cp_0$ over the complementary part of the spectrum, satisfies  $\cl_0=\cq_0 \cl \cq_0$.  Also, $\cq_0:L^2(2)\to X_0$. Now, \eqref{110} can be  reformulated as 
\begin{equation}
\label{405} 
\|\nabla^\ga e^{\tau \cl_0} f\|_{L^2(2)}\leq C_\eps  \f{e^{\left(1-\f{4-\be}{\al}+\eps \right) \tau}}{a(\tau)^{\f{|\ga|}{\al}}} \|f\|_{L^2(2)}.
\end{equation}
for any function $f$, since $e^{\tau \cl_0} f=e^{\tau \cl} \cq_0 f$ and the entry $\cq_0 f$ has mean value zero, so \eqref{110} is applicable.  
In addition, we can derive estimates for the action of the semigroup $e^{\tau \cl}$ on $L^2(2)$, without the   mean value zero property $\hat{f}(0)=0$. 
\begin{proposition}
	\label{prop:43}
	Let $f\in L^2(2)$ and $0<\eps<\f{1}{\al}$.  Then, there exists a constant $C=C_\eps$, so that 
	\begin{equation}
	\label{19}
	\|\nabla^\ga (e^{\tau \cl} f)\|_{L^2(2)}\leq C_\eps \f{e^{\left(1-\f{3-\be}{\al} \right) \tau}}{a(\tau)^{\f{|\ga|}{\al}}} \|f\|_{L^2(2)}.
	\end{equation}
\end{proposition}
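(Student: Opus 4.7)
The approach is to split $f$ along the spectral decomposition $f = \cp_0 f + \cq_0 f$ introduced just above the statement, and bound each piece separately using the tools already developed. The main observation is that the rank-one piece $\cp_0 f = \hat{f}(0)\, G$ is an exact eigenfunction, so its evolution is explicit, while the complementary piece $\cq_0 f \in X_0$ has mean value zero and falls under the scope of Proposition \ref{lem1}.

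First I would handle $\cp_0 f$. Since $G$ is an eigenfunction of $\cl$ with eigenvalue $\lambda_0 = 1 - \frac{3-\beta}{\alpha}$, we have
\begin{equation*}
\nabla^\gamma\bigl(e^{\tau \cl} \cp_0 f\bigr) = \hat{f}(0)\, e^{\lambda_0 \tau}\, \nabla^\gamma G,
\end{equation*}
so, using Lemma \ref{le:10} to control $\|\nabla^\gamma G\|_{L^2(2)}$ for $|\gamma| = 0, 1$, and the embedding $L^2(2) \hookrightarrow L^1$ to bound $|\hat{f}(0)| \leq \|f\|_{L^1} \leq C \|f\|_{L^2(2)}$, we obtain
\begin{equation*}
\|\nabla^\gamma(e^{\tau \cl} \cp_0 f)\|_{L^2(2)} \leq C\, e^{\left(1-\frac{3-\beta}{\alpha}\right)\tau}\, \|f\|_{L^2(2)}.
\end{equation*}
Since $a(\tau) \leq 1$ for all $\tau \geq 0$, we have $a(\tau)^{-|\gamma|/\alpha} \geq 1$, so this bound is absorbed into the target estimate.

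Next, I would treat $\cq_0 f$ by applying Proposition \ref{lem1}, which is legitimate because $\widehat{\cq_0 f}(0) = 0$ by construction. This yields
\begin{equation*}
\|\nabla^\gamma(e^{\tau \cl} \cq_0 f)\|_{L^2(2)} \leq C_\eps\, \frac{e^{\left(1-\frac{4-\beta}{\alpha}+\eps\right)\tau}}{a(\tau)^{|\gamma|/\alpha}}\, \|\cq_0 f\|_{L^2(2)}.
\end{equation*}
Since $\cq_0 = Id - \cp_0$ is bounded on $L^2(2)$ (via the $L^2(2) \hookrightarrow L^1$ embedding used to define $\cp_0$), we have $\|\cq_0 f\|_{L^2(2)} \leq C \|f\|_{L^2(2)}$. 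The constraint $\eps < \frac{1}{\alpha}$ guarantees that $1 - \frac{4-\beta}{\alpha} + \eps < 1 - \frac{3-\beta}{\alpha}$, so this contribution decays strictly faster (in the large $\tau$ regime) than the eigenfunction piece and is therefore majorized by $C_\eps \, a(\tau)^{-|\gamma|/\alpha} e^{(1-\frac{3-\beta}{\alpha})\tau} \|f\|_{L^2(2)}$.

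Adding the two contributions yields \eqref{19}. There is no real obstacle here; the work has already been done in Proposition \ref{lem1}, and the present statement is simply the natural upgrade obtained by peeling off the top (simple) eigenvalue via its rank-one Riesz projection $\cp_0$. The only minor point worth verifying carefully is that $\cp_0$ is bounded on $L^2(2)$, which reduces to the continuous embedding $L^2(2) \hookrightarrow L^1(\rtwo)$ recorded in Section \ref{sec:2.1}.
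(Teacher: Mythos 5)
Your proposal is correct and follows essentially the same route as the paper: decompose $f=\cp_0 f+\cq_0 f$, evolve the rank-one eigenfunction piece explicitly at rate $e^{(1-\f{3-\be}{\al})\tau}$, and apply Proposition \ref{lem1} to the mean-zero remainder, using $\eps<\f{1}{\al}$ to see that the latter contribution is dominated. The paper merely omits the $\nabla^\ga$ details that you spell out via Lemma \ref{le:10}; nothing substantive differs.
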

\begin{proof}
We use the decomposition 
$$
f=\cp_0 f+ \cq_0 f= \dpr{f}{1} G + [f-\dpr{f}{1} G]. 
$$
Thus, $$
e^{\tau \cl} f= \dpr{f}{1} e^{\tau(1-\f{3-\be}{\al})} G+ e^{\tau \cl_0} [ f]
$$
It follows that 
\begin{eqnarray*}
\|e^{\tau \cl} f\|_{L^2(2)} &\leq &  C |\dpr{f}{1}| e^{\tau(1-\f{3-\be}{\al})} \|G\|_{L^2(2)} + C_\eps  e^{\left(1-\f{4-\be}{\al}+\eps \right) \tau} \|f\|_{L^2(2)}\\
& \leq & C e^{\tau(1-\f{3-\be}{\al})}\|f\|_{L^2(2)},
\end{eqnarray*}
where we have used \eqref{405} and $|\dpr{f}{1}|\leq C \|f\|_{L^2(2)}$.  Similar estimates can be derived, as before,  for $\nabla^\ga e^{\tau \cl}$, we omit the details.

\end{proof}

 \section{Local and global well-posedness of the SQG}
 \label{sec:4} 
 The local and global theory of the Cauchy problem for SQG has been well-studied in the literature. Local and global well-posedness holds under very general conditions on initial data. Regardless, we will present a few results for our problem \eqref{qq}. This is necessary, since we assume a  non-standard relation between $u$ and $z$, but also because we need precise properties, beyond the scope of the well-posedness. Then, we will turn to properties of the rescaled equation, \eqref{8}. We will do so, both in $L^p$ spaces as well as in $L^2(2)$ spaces - the reason is that we will use some of our preliminary results as {\it a priori} estimates  in the subsequent Lemmas. 
 
 Our first results  are about the well-posedness of the standard model \eqref{qq} in $L^p$ spaces\footnote{The results can be made more precise, in individual $L^p$ spaces, rather than in {\it all} $L^p$ spaces. We will not do so here, because our goal is to extend to $L^2(2)$, which is yet smaller space.}.
 \subsection{Global well-posedness  and a priori estimates in $L^p$ spaces}
 \begin{lemma}
 	\label{thma}
 	Suppose that $z_0 \in L^1\cap L^\infty=:X$. Then, \eqref{qq} is globally well-posed in the space $X$. 
 		Moreover, for every $p\in [1, \infty]$, $t\to \|z(\cdot, t)\|_{L^p}$ is non-increasing  in time. 
 \end{lemma}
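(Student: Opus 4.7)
My strategy proceeds in three steps: (i) derive the key a priori identity that gives the monotonicity, (ii) construct a local solution via a Duhamel fixed point in $X$, (iii) use (i) to globalize.

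\emph{Step 1 (the $L^p$ identity).} For a sufficiently regular solution, testing \eqref{qq} against $|z|^{p-2}z$ and integrating yields, for $p\in[1,\infty)$,
\[
\tfrac{1}{p}\tfrac{d}{dt}\|z\|_{L^p}^p = -\int_{\rtwo}(u\cdot\nabla z)\,|z|^{p-2}z\,dx - \int_{\rtwo}|z|^{p-2}z\,|\nabla|^{\al}z\,dx.
\]
The advection integral equals $\tfrac{1}{p}\int u\cdot\nabla|z|^p\,dx = -\tfrac{1}{p}\int(\nabla\cdot u)|z|^p\,dx = 0$ since $\nabla\cdot u=0$, while the dissipation integral is nonnegative by the C\'ordoba--C\'ordoba inequality \eqref{a:20}. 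Hence $t\mapsto \|z(t)\|_{L^p}$ is non-increasing for every $p\in[1,\infty)$, and the $p=\infty$ case follows by letting $p\to\infty$ (or by the fractional maximum principle from the same source).

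\emph{Step 2 (local existence in $X$).} I work with the Duhamel formulation
\[
z(t) = e^{-t|\nabla|^{\al}}z_0 - \int_0^t e^{-(t-s)|\nabla|^{\al}}\nabla\cdot(u(s)z(s))\,ds,\qquad u=(|\nabla|^{\perp})^{-\be}z,
\]
having used $\nabla\cdot u=0$ to write $u\cdot\nabla z = \nabla\cdot(uz)$. By Lemma \ref{aa}, $\|e^{-\tau|\nabla|^{\al}}\nabla f\|_{L^p}\lesssim \tau^{-1/\al}\|f\|_{L^p}$, whose singularity is integrable at $\tau=0$ because $\al>1$. For the velocity I use \eqref{31} when $\be>0$ together with the interpolation inclusion $X\hookrightarrow L^r$ for all $r\in[1,\infty]$, which supplies the two $L^{2/\be\pm\eps}$ norms of $z$ needed to bound $\|u\|_{L^\infty}$; when $\be=0$, $(|\nabla|^{\perp})^{-\be}$ reduces to a Riesz-type operator and I control $u$ in finite $L^q$ via the Calder\'on--Zygmund theorem, closing the contraction in $C([0,T]; L^{p_0}\cap L^\infty)$ for a suitable finite $p_0$. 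In either case, $\|uz\|_{L^p}$ at $p=1,\infty$ is dominated by $\|u\|_{L^\infty}\|z\|_{L^p}$ (or its H\"older substitute), and a standard contraction produces a unique local solution on $[0,T_0]$ with $T_0 = T_0(\|z_0\|_X)$.

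\emph{Step 3 (globalization and conclusion).} The local solution is regular enough that Step 1 applies rigorously (justified, if necessary, by mollifying $z_0$ and uniformly passing to the limit along the $L^p$ bounds), so $\|z(t)\|_X \leq \|z_0\|_X$ on the existence interval. Because $T_0$ depends only on $\|z_0\|_X$, the standard continuation argument extends the solution to $[0,\infty)$; the monotonicity claim then follows by a second application of Step 1 to the global solution. The main obstacle is the $L^\infty$ bound on $u$: the operator $(|\nabla|^{\perp})^{-\be}$ is a singular integral of order $-\be$, and the endpoint $\be=0$ forces one to replace $L^\infty$ by a finite $L^q$ in the contraction, while for small $\be>0$ one must carefully exploit the interpolation window around the critical exponent $2/\be$ via \eqref{31}. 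The remaining estimates are routine.
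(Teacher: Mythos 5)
Your proposal follows essentially the same route as the paper: a Duhamel contraction in $X=L^1\cap L^\infty$ built on the smoothing bound \eqref{2.8} and the embedding \eqref{31} to control $\|u\|_{L^\infty}$, followed by the $L^p$ energy identity with the C\'ordoba--C\'ordoba positivity \eqref{a:20} to obtain the monotonicity and globalize by continuation. Your extra attention to the endpoint $\be=0$, where \eqref{31} degenerates and $u$ is only a Calder\'on--Zygmund image of $z$ so that $\|u\|_{L^\infty}$ is not controlled, flags a case the paper's own proof silently passes over, and closing the contraction in a finite $L^{p_0}$ there is a reasonable repair.
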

 	\begin{proof}  
 			We first prove the local existence of the strong solution in the space $C([0,T); X)$, that is, with $T$ to be determined, we are looking for a fixed point of the integral  equation 
 			\begin{equation}
 			\label{140} 
 			z(\xi)= e^{- t |\nabla|^{\al}} z_0 -  \int_0^t e^{- (t- s) |\nabla|^{\al}} \nabla (u \cdot z) \ ds. 
 			\end{equation}	
 		  According to Lemma \eqref{aa}  
 			$\|e^{- t |\nabla|^{\al}} z_0\|_{L^1\cap L^\infty}\leq C_0 \|z_0\|_{L^1\cap L^\infty}$.  For any   $T>0$ and  $t\in (0,T)$, consider 
 			\begin{eqnarray*}
 			Q(z_1, z_2):= \int_0^t e^{- (t- s) |\nabla|^{\al}} \nabla (u_1 \cdot z_2) \ ds,
 			\end{eqnarray*}
 			where  $u_1$ is  given by   $u_1= (\nabla^{\perp})^{- \be} z_1$. For $t\in (0,T)$, using \eqref{2.8}
 			\begin{eqnarray*}
 			& & 	\|Q(z_1(t), z_2(t))\|_{L^1}= \| \int_0^t e^{- (t- s) |\nabla|^{\al}} \nabla (u_1 \cdot z_2) \ ds\|_{L^1} \leq  C \int_0^t \frac{1}{(t- s)^{\frac{1}{\al}}} \| (u_1 \cdot z_2)\|_{L^1} \ ds \\
 				&\leq& C t^{1-\f{1}{\al}} \sup_{0\leq s\leq T}\|u_1(s, \cdot)\|_{L^{\infty}} \sup_{0\leq s\leq T}\|z_2(s, \cdot)\|_{L^1}
 				\leq  \\
 				&\lesssim & T^{1-\f{1}{\al}} \sup_{0\leq s\leq T}(\|z_1(s, \cdot)\|_{L^{\f{2}{\be}+\eps}} + \|z_1(s, \cdot)\|_{L^{\f{2}{\be}-\eps}})\sup_{0\leq s\leq T}\|z_2(s, \cdot)\|_{L^1}
 				\lesssim  T^{1-\f{1}{\al}} \sup_{0\leq s\leq T}\|z_1\|_X \sup_{0\leq s\leq T} \|z_2\|_X.
 			\end{eqnarray*}
 			where we have used the Sobolev embedding estimate \eqref{31}. 	Similarly, 	
 			$$
 				\|Q(z_1, z_2)\|_{L^\infty}\leq C T^{1-\f{1}{\al}} \sup_{0\leq s\leq T}\|u_1\|_{L^\infty} \sup_{0\leq s\leq T}\|z_2\|_{L^\infty}\leq C T^{1-\f{1}{\al}} \sup_{0\leq s\leq T}\|z_1\|_X \sup_{0\leq s\leq T}\|z_2\|_X.
 			$$
 			Finally, following similar path, we also have 
 			$$
 			\|Q(z_1,z_1)-Q(z_2, z_2)\|_X\leq C T^{1-\f{1}{\al}} (\|z_1\|_X+\|z_2\|_X) \|z_1-z_2\|_X.
 			$$
 			Upon  introducing   
 			 $Y_T:= \{z: \sup_{0\leq t\leq T}\|z(t, \cdot)\|_X \leq 2 C_0 \|z_0\|_X\}$ and taking into account the estimates above, we realize that the mapping   \eqref{140} has a fixed point in the metric space $C([0,T],X)$, for small enough $T=T(\|z_0\|_X)$.   In fact, the argument shows that $T\sim \|z_0\|_X^{-\f{\al}{\al-1}}$. 
 			 
 			  For the global existence, we need to show that the $t\to \|z(t, \cdot)\|_{L^p}$ does not blow up in finite time. In fact, we show that the $t\to \|z(t, \cdot)\|_{L^p}$ is non-increasing, which will allow us to conclude global existence as well.  To that end, we dot product the equation \eqref{qq} with $|z|^{p- 2}z, p\in (1, \infty)$ to get
 			  \begin{equation*}
 			  \frac{1}{p} \partial_t \|z\|^p_{L^p}+ \int_{\rtwo}  |\nabla|^{\al} z \cdot |z|^{p- 2} z d \xi = 0.
 			  \end{equation*}
 			  By the positivity estimate \eqref{a:20}, we have 
 			  $\int_{\rtwo}  |\nabla|^{\al} z \cdot |z|^{p- 2} z d \xi \geq 0$.    Therefore, 
 			$ 	  \partial_t \|z\|^p_{L^p} \leq 0$, and $ t\to \|z(t, \cdot)\|_{L^p}$ is non-increasing  in time. For 
 			$p=1, p=\infty$ the monotonicity follows from an approximation argument from the cases $1<p<\infty$.

 		\end{proof}
 
 Our next result is about {\it a priori} estimates in $L^p$ spaces, but this time in the rescaled variable formulation, \eqref{8}. Note that the global existence of the rescaled equation is not in question anymore, due to Lemma \ref{thma}. However, we show   precise decay estimates for the norm of the solution $Z$. This fairly elementary Lemma already shows the advantage of the rescaled variables approach and its far reaching consequences. 
 \begin{lemma}
 	\label{lem2}
 	Let  $Z_0 \in L^1\cap L^\infty(\rtwo)$, $\al \in (1, 2)$, $0 \leq \be < 2$ and $p\in [1, \infty)$.  Then the unique global strong solution $Z$ of \eqref{8} satisfies   
 	\begin{equation}
 	\label{f2}
 	\|Z(\tau)\|_{L^p} \leq \|Z_0\|_{L^p}  e^{-\tau ( \f{2}{p \al}  - 1-\f{\be-1}{\al} )  }.
 	\end{equation}
 \end{lemma}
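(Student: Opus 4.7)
The plan is to derive the bound via an energy estimate performed directly on the rescaled equation \eqref{8}, followed by Gr\"onwall. First, I observe that the global existence of $Z$ in $L^p$ is not an issue here: by the scaling relations \eqref{sv}, solutions of \eqref{8} are in one-to-one correspondence with solutions of \eqref{qq}, and global existence for \eqref{qq} in $L^1\cap L^\infty$ was already established in Lemma \ref{thma}. So we may treat $Z$ as a bona fide smooth-enough solution on $[0,\infty)$ and focus on the quantitative estimate.

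For $p\in(1,\infty)$, the plan is to pair \eqref{8} with $|Z|^{p-2}Z$ and integrate over $\rtwo$:
\begin{equation*}
\tfrac{1}{p}\,\partial_\tau \|Z\|_{L^p}^p = \int_{\rtwo} |Z|^{p-2}Z\,\bigl(\mathcal{L} Z - U\cdot\nabla_\xi Z\bigr)\,d\xi.
\end{equation*}
I will dispose of the various terms one at a time. The fractional diffusion contributes $-\int |Z|^{p-2}Z\,|\nabla|^\al Z\,d\xi\leq 0$, by the Cord\'oba--Cord\'oba positivity estimate \eqref{a:20}. The nonlinear transport term vanishes after integration by parts, using $\nabla\cdot U=0$:
\begin{equation*}
\int_{\rtwo} |Z|^{p-2}Z\,(U\cdot\nabla_\xi Z)\,d\xi = \tfrac{1}{p}\int_{\rtwo} U\cdot\nabla_\xi |Z|^p\,d\xi = -\tfrac{1}{p}\int_{\rtwo} (\nabla\cdot U)|Z|^p\,d\xi = 0.
\end{equation*}
The dilation term in $\cl$ gives, again by integration by parts (now against $\nabla\cdot\xi=2$),
\begin{equation*}
\tfrac{1}{\al}\int_{\rtwo} |Z|^{p-2}Z\,(\xi\cdot\nabla_\xi Z)\,d\xi = \tfrac{1}{p\al}\int_{\rtwo}\xi\cdot\nabla_\xi|Z|^p\,d\xi = -\tfrac{2}{p\al}\|Z\|_{L^p}^p,
\end{equation*}
while the zero-order coefficient yields $\bigl(1+\tfrac{\be-1}{\al}\bigr)\|Z\|_{L^p}^p$.

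Collecting these contributions produces the differential inequality
\begin{equation*}
\tfrac{1}{p}\,\partial_\tau \|Z\|_{L^p}^p \leq \Bigl(-\tfrac{2}{p\al}+1+\tfrac{\be-1}{\al}\Bigr)\|Z\|_{L^p}^p,
\end{equation*}
and Gr\"onwall's lemma gives exactly \eqref{f2}. For the endpoint $p=1$, I will take $p\to 1^+$ in the estimate just obtained, using that $\|Z(\tau)\|_{L^p}\to\|Z(\tau)\|_{L^1}$ by dominated convergence (justified by $Z\in L^1\cap L^\infty$ from Lemma \ref{thma}) and continuity of the exponent in $p$.

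The only genuinely delicate point is justifying the integration by parts and the Cord\'oba--Cord\'oba inequality at the relevant level of regularity; both are routine if $Z$ is, say, Schwartz, and in general follow by a standard approximation argument (mollifying the initial data and passing to the limit, or appealing to the $W^{\al,p}$ hypothesis in Lemma \ref{le:90}). No further obstacle arises, so the estimate \eqref{f2} is established.
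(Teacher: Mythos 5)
Your argument is correct and is essentially identical to the paper's proof: the same $L^p$ energy estimate pairing \eqref{8} with $|Z|^{p-2}Z$, the Cord\'oba--Cord\'oba positivity \eqref{a:20} for the dissipation, the divergence-free cancellation of the transport term, the $-\tfrac{2}{p\al}$ contribution from the dilation term, and Gr\"onwall. Your extra care with the explicit integration-by-parts computations and the $p\to 1^+$ limit only makes more precise what the paper does in one display.
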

 \begin{proof}
 	If we dot product \eqref{8} with $Z |Z|^{p- 2}$, we have by the positivity estimate \eqref{a:20}, \\ 
 	$	\int_{\rtwo}  |\nabla|^{\al} Z \cdot |Z|^{p- 2} Z d \xi \geq 0$. Furthermore, 
 	 using  the divergent free property of $U (\xi)$ 
 	\begin{eqnarray}
 	\nonumber
 	\f{1}{p} \f{d}{d \tau} \|Z\|^p_{L^p}  &\leq &  \f{1}{\al} \int (\xi \cdot \nabla_{\xi} Z) Z |Z|^{p- 2} \ d \xi - \int (U \cdot \nabla_{\xi} Z) Z |Z|^{p- 2} d \xi+  \\
 	\label{eng}
 	&+& \left(1+\f{\be-1}{\al}\right) \|Z\|^p_{L^p} =  
 	 \left(1+\f{\be-1}{\al} -\f{2}{\al p}\right) \|Z\|^p_{L^p},
 	\end{eqnarray}
 	therefore, we arrive at 
 	\begin{eqnarray*}
 		\f{1}{p} \f{d}{d \tau} \|Z\|^p_{L^p}+ (\f{2}{\al p} - 1-\f{\be-1}{\al} ) \|Z\|^p_{L^p} \leq  0.
 	\end{eqnarray*}
 	Now we use the Gronwall's inequality to finish the proof. 
 \end{proof}
 The Lemma above  shows \emph{a priori} bound for $\|Z(\tau, \cdot)\|_{L^p}$, 
 for any $p \in [1, \infty]$, and a decay rate for 
 $p < \f{2}{\al+\be-1}$, but it is not giving any decay rate for $p \geq \f{2}{\al+\be-1}$. On the other hand, as we shall see later,  the decay rate predicted by Lemma \ref{lem2} is in fact optimal for $p=1$ (but certainly not so, for any other value of $p$). We can bootstrap the results of Lemma \ref{lem2} in the next Lemma to find, what {\it it will turn out to be, the optimal decay rate\footnote{for generic data}  for any $ p \geq 1$}.
 \begin{lemma}
 	\label{lem3}
 	Let  $Z_0 \in L^1\cap L^\infty(\rtwo), 1\leq p\leq \infty$ and $\al\in (1,2)$, $\al+\be\leq 3$. 
 	 Then, there exists constant $C=C_{p,\al, \be}$, so that  the unique global strong solution $Z$ of \eqref{8}  
 	satisfies 
 	\begin{equation}\label{f3}
 	\|Z(\tau, \cdot)\|_{L^p} \leq   C_{p,\al, \be} \|Z_0\|_{L^1\cap L^\infty}  e^{- (\f{3-\be-\al}{\al}) \tau}.
 	\end{equation}
 \end{lemma}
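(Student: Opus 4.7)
The plan is to bootstrap the optimal $L^1$-decay from Lemma \ref{lem2} to every $L^p$ via Duhamel's formula, using the explicit semigroup representation of Proposition \ref{prop:20} together with the a priori bounds already in hand.

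The $p=1$ case is immediate: substituting $p=1$ into \eqref{f2} reduces the exponent to $\frac{2}{\alpha}-1-\frac{\beta-1}{\alpha}=\frac{3-\alpha-\beta}{\alpha}$, which is exactly the target rate. For $p>1$, I would use the divergence-free property of $U$ to write \eqref{8} in Duhamel form,
\begin{equation*}
Z(\tau)=e^{\tau\mathcal{L}}Z_0 - \int_0^{\tau} e^{(\tau-s)\mathcal{L}}\,\nabla\!\cdot\!(U(s)Z(s))\,ds,
\end{equation*}
and estimate each piece separately in $L^p$. Applying Young's inequality directly to the convolution representation \eqref{semi2}, together with the $L^q$ control of $G$ from Lemma \ref{le:10}, yields the refined linear bound
\begin{equation*}
\|e^{\tau \mathcal{L}} Z_0\|_{L^p} \leq \frac{C\,e^{-\tau(3-\alpha-\beta)/\alpha}}{a(\tau)^{2(1-1/p)/\alpha}}\,\|Z_0\|_{L^1},
\end{equation*}
which already carries the correct exponential rate (the singularity at $\tau=0$ is locally integrable since $2(1-1/p)/\alpha<1$ for $\alpha>1$).

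For the Duhamel integral, integration by parts (or the commutation relation \eqref{70}) combined with Young's inequality produces, for suitable $r$,
\begin{equation*}
\|e^{(\tau-s)\mathcal{L}}\,\nabla\!\cdot\! F\|_{L^p} \leq \frac{C\,e^{(\alpha+\beta-2-2/r)(\tau-s)/\alpha}}{a(\tau-s)^{(1+2/r)/\alpha}}\,\|F\|_{L^r},
\end{equation*}
whose kernel singularity is integrable at $s=\tau$ precisely when $r>2/(\alpha-1)$. I would then bound $\|U(s)Z(s)\|_{L^r}$ by Hölder, $\|UZ\|_{L^r}\leq \|U\|_{L^a}\|Z\|_{L^b}$ with $\tfrac{1}{a}+\tfrac{1}{b}=\tfrac{1}{r}$, and pass from $U$ back to $Z$ through the Sobolev-type estimate \eqref{30} (and \eqref{31} at the endpoint): $\|U\|_{L^a}\lesssim \|Z\|_{L^c}$ with $\tfrac{1}{c}-\tfrac{1}{a}=\tfrac{\beta}{2}$. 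The auxiliary norms $\|Z(s)\|_{L^c},\|Z(s)\|_{L^b}$ are controlled either directly by Lemma \ref{lem2} (in the regime where the exponent in \eqref{f2} is negative) or by interpolating between the optimally decaying $L^1$ norm and the at-worst exponentially growing $L^\infty$ norm; the latter comes from Lemma \ref{thma} in the original variables and translates, under the scaling, into $\|Z(s)\|_{L^\infty}\leq C e^{(\alpha+\beta-1)s/\alpha}\|z_0\|_{L^\infty}$.

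Once the four exponents $(r,a,b,c)$ are chosen to satisfy the Young, Sobolev, and integrability constraints simultaneously, keeping one factor of $\|Z(s)\|_{L^p}$ for Gronwall while a-priori-bounding the other factor, the resulting inequality for $I(\tau):=\|Z(\tau)\|_{L^p}$ fits the template of Lemma \ref{gro} with $\mu=(3-\alpha-\beta)/\alpha$ and some positive $\sigma,\kappa$. That lemma then delivers $I(\tau)\lesssim \|Z_0\|_{L^1\cap L^\infty}\,e^{-\tau(3-\alpha-\beta)/\alpha}$. The hardest part is the bookkeeping: the assumptions $\alpha\in(1,2)$ and $\alpha+\beta\leq 3$ are precisely what guarantee a consistent choice of $(r,a,b,c)$ exists; in borderline regimes (notably when $\alpha+\beta\geq 2$ with $\beta$ close to the upper constraint) one may additionally need to split the Duhamel integral into the region $s$ near $\tau$, treated with the $L^\infty$ endpoint estimate \eqref{31}, and the region far from $\tau$, treated with the standard $L^p$ embedding \eqref{30}, before closing with Lemma \ref{gro}.
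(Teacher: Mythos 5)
Your architecture (Duhamel's formula plus the semigroup smoothing of Proposition \ref{prop:20} and the generalized Gronwall Lemma \ref{gro}) is genuinely different from the paper's, which is a pure energy argument: pair the equation with $|Z|^{p-2}Z$, invoke the coercivity \eqref{a:31} of Lemma \ref{le:90} to put the gain term $\|Z\|_{L^{2p/(2-\al)}}$ on the left, interpolate $\|Z\|_{L^p}\le \|Z\|_{L^{2p/(2-\al)}}^{\ga}\|Z\|_{L^1}^{1-\ga}$, absorb via Young's, feed in the $L^1$ decay of Lemma \ref{lem2}, and close with the ordinary Gronwall (plus a limiting argument for $p=\infty$). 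Your linear-term estimate is correct and does give the right rate, but the nonlinear step has a gap that I do not believe any amount of bookkeeping can repair for large $p$: the exponent constraints you defer are mutually inconsistent.

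Concretely, to keep a factor $\|Z(s)\|_{L^p}$ for Gronwall you must place $U$ in some $L^a$, so the Duhamel kernel maps $L^r\to L^p$ with $\f{1}{r}=\f{1}{a}+\f{1}{p}$ and carries the singularity $a(\tau-s)^{-(1+2/a)/\al}$ (this is the sharp version of your power $(1+2/r)/\al$, which is even more singular). Integrability at $s=\tau$ forces $2/a<\al-1$. The Sobolev relation \eqref{30} then gives $\|U\|_{L^a}\lesssim \|Z\|_{L^c}$ with $2/c=2/a+\be<\al+\be-1$. But Lemma \ref{lem2} --- the only a priori information available at this stage --- yields decay of $\|Z(s)\|_{L^c}$ precisely when $2/c>\al+\be-1$; for $2/c<\al+\be-1$ that norm grows exponentially. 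Hence the factor $e^{-\ka s}$ in \eqref{gron} necessarily has $\ka<0$, Lemma \ref{gro} does not apply, and the Lyapunov-exponent bootstrap runs the wrong way. This is an exact, scaling-critical clash: no choice of $(r,a,b,c)$ and no near/far splitting of the integral avoids it, since the conflict lives in the region $s$ near $\tau$ where the singularity sits; and the alternatives (a priori bounding both factors, or iterating over an increasing sequence of exponents) run into the companion requirement $\si\ge\mu$, i.e.\ $r\le 2$, which contradicts integrability of the singularity once $p\ge 2/(2-\al)$. Two smaller points: the claim $2(1-1/p)/\al<1$ fails for $p>2/(2-\al)$ (harmless for the linear term, where one uses \eqref{f2} for $\tau\le 1$), and Lemma \ref{gro} as stated requires $\mu>0$, which fails at the endpoint $\al+\be=3$. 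The Duhamel--Gronwall scheme does succeed later, in Section \ref{sec:6}, but only because by then the stronger inputs $\|Z(s)\|_{L^2(2)}\lesssim e^{-\mu s}$ and $\|U(s)\|_{L^\infty}\lesssim e^{-\mu s}$ are available --- and those are consequences of the present lemma. Here the genuinely nonlinear coercivity \eqref{a:31} does work that the linear semigroup estimates cannot replicate.
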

 \begin{proof}
 Recall  that the estimate \eqref{a:31} is available to us.  Taking   dot product $|Z|^{p- 2} Z$ and taking into account 	   
 	\eqref{a:31} which implies $\int_{\rtwo}  |\nabla|^{\al} Z \cdot |Z|^{p- 2} Z d \xi\geq 
 	c_{p, \al}  \|Z\|^2_{L^{\f{2p}{2-\al}}}$. We further  add  $C\|Z\|_{L^p}^p$, for some large $C$, to be determined. 
 	 We have   	
 	$$
 		\f{1}{p} \f{d}{d \tau} \|Z\|^p_{L^p} + C\|Z\|_{L^p}^p+  c_{p, \al} \|Z\|^2_{L^{\f{2p}{2-\al}}}   \leq  
 		\left(C+1+\f{\be-1}{\al}-
 		\f{2}{\al p}\right) \|Z\|^p_{L^p}
 	$$
  By Gagliardo-Nirenberg's, with $\ga= \f{2 p- 2}{2p- 2+ \al} $, 
 	$
 	\|Z\|_{L^p}\leq \| Z\|_{L^{\f{2p}{2- \al}}}^\ga \|Z\|_{L^1}^{1-\ga},
 	$
 	 whence by  Young's inequality 
 	\begin{eqnarray*}
 		\f{1}{p} \f{d}{d \tau} \|Z\|^p_{L^p}&+&  C\|Z\|_{L^p}^p + c_{p, \al}   \| Z\|^p_{L^{\f{2p}{2- \al}}}   \leq 
 	 	\left( C+1+\f{\be-1}{\al}-
 	 	\f{2}{\al p}\right)  \|Z\|^{p \ga}_{L^{\f{2p}{2- \al}}} \|Z\|_{L^1}^{p(1- \ga)} \leq \\
 	 	&\leq &  \epsilon_0 \|Z\|^{p}_{L^{\f{2p}{2- \al}}}+ \f{\left( C+1+\f{\be-1}{\al}-
 	 		\f{2}{\al p}\right)^{\f{1}{1-\ga}} }{\epsilon_0^{\f{\ga}{1-\ga}}}   \|Z\|_{L^1}^{p}
 	\end{eqnarray*}
 	  and $\epsilon_0> 0$ is a fixed number, say we select it $\eps_0= c_{p, \al} $.  Then 
 	\begin{eqnarray*}
 		\f{1}{p} \f{d}{d \tau} \|Z\|^p_{L^p} + C \|Z\|^p_{L^p}\leq \f{\left( C+1+\f{\be-1}{\al}-
 			\f{2}{\al p}\right)^{\f{1}{1-\ga}}  }{\epsilon_0^{\f{\ga}{1-\ga}} }  \|Z\|_{L^1}^{p} \leq  
 		\f{\left( C+1+\f{\be-1}{\al}-\f{2}{\al p}\right)^{\f{1}{1-\ga}}  }{\epsilon_0^{\f{\ga}{1-\ga}} } \|Z_0\|_{L^1}^p  e^{-p\tau(\f{3-\be-\al}{\al}) },
 	\end{eqnarray*}
 	where we have used   Lemma \eqref{lem2} to estimate $ \|Z(\tau, \cdot)\|_{L^1}$. Denoting $\mu:=(\f{3-\be-\al}{\al})\geq 0$, select $C=\mu+1$. We have 
 	$$
 	I'(\tau)+ p(\mu+1) I(\tau) \leq D \|Z_0\|_{L^1}^p e^{-p\mu \tau},
 	$$
 	where $I(\tau)=\|Z(\tau)\|^p_{L^p}$, $D=p^{1+\f{\ga}{1-\ga}} \f{\left( \mu+2+\f{\be-1}{\al}-\f{2}{\al p}\right)^{\f{1}{1-\ga}}  }{c_\al^{\f{\ga}{1-\ga}} } $.  	Now we use the Gronwall's inequality to derive the estimate 
 	$$
 	I(\tau)\leq e^{-p(\mu+1)\tau} I(0)+\f{D}{p} \|Z_0\|_{L^1}^p e^{-p \mu \tau}.
 	$$
 	Taking $p^{th}$ root and  simplifying yields the final estimate 
 	$$
 	\|Z(\tau)\|_{L^p}\leq (\|Z_0\|_{L^p}+\left(\f{D}{p}\right)^{\f{1}{p}}  \|Z_0\|_{L^1}) e^{-\mu \tau}
 	\leq (1+ \left(\f{D}{p}\right)^{\f{1}{p}})  	\|Z_0\|_{L^1\cap L^\infty} e^{-\mu \tau}.
 	$$
 	For the case $p=\infty$, we take limits in the previous identity, for fixed $\tau>0$, as $p\to \infty$. Note that by the explicit form of $D_p$, 
 	$\lim_{p\to \infty}  \left(\f{D}{p}\right)^{\f{1}{p}}=1$, so \eqref{f3} holds true in this case with $C=2$. 
 \end{proof}

 \subsection{Global solutions and a priori estimates in $L^2(2)$}
 From the previous section, we know that the SQG equation in its standard form, namely  \eqref{qq}, has global solutions in $L^p$. Thus, the  rescaled equation \eqref{8} also has unique global (strong) solutions in $L^p$. We now would like to understand the Cauchy problem in the smaller space $L^2(2)$. In particular, even if the initial data is well-localized, say  $Z(0, \cdot) \in L^2(2)$, it is not {\it a priori} clear why the solution $Z(\tau)$ will stay in $L^2(2)$ for (any) later time $\tau>0$. In other words, one needs to start with  the local well-posedness for \eqref{8}, and then we shall upgrade it to a global one, by means of {\it a priori} estimates on $\|Z(\tau)\|_{L^2(2)}$. 
 \begin{theorem}\label{thm1}
 	Suppose that $Z_0 \in L^2(2)(\rtwo)\cap L^\infty(\rtwo)=:X$.   Then \eqref{8} has an unique global strong solution 
 	$Z \in C^0([0, \infty]; L^2(2)(\rtwo) \cap L^\infty(\rtwo))$,  with $Z(0)= Z_0$. In addition,  there is the {\it a priori} estimate 
 	\begin{equation}
 	\label{200} 
 	\|Z(\tau)\|_{L^2(2)\cap L^\infty} \leq   C e^{- \tau (\f{3-\al-\be}{\al})} \|Z_0\|_{L^2(2)\cap L^\infty},
 	\end{equation} 
 	where $C$ is an absolute constant. 
 \end{theorem}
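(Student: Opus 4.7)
I want to produce a unique global strong solution of \eqref{8} in $X := L^2(2)\cap L^\infty$ satisfying \eqref{200}. Denote $\la_0 := 1+(\be-3)/\al = -(3-\al-\be)/\al$, which is simultaneously the leading eigenvalue of $\cl$ (Proposition \ref{prop:10}) and the rate appearing on the right-hand side of \eqref{200}. The argument has four steps: local existence, a priori $L^\infty$ decay, a priori $L^2(2)$ decay, and continuation.

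\textbf{Step 1 (local existence).} I would run a standard Banach fixed point for Duhamel's formula
\[
Z(\tau) = e^{\tau\cl} Z_0 - \int_0^\tau e^{(\tau-s)\cl}\,\nabla\!\cdot\!\bigl(U(s) Z(s)\bigr)\,ds
\]
(using $\nabla\cdot U=0$ to put the nonlinearity in divergence form) in $C([0,T];X)$. Proposition \ref{prop:20} governs the $L^\infty$ component of the evolution and Proposition \ref{prop:43} the $L^2(2)$ component. The commutation $e^{\tau\cl}\nabla = e^{-\tau/\al}\nabla e^{\tau\cl}$ from Lemma \ref{lem0}, combined with Proposition \ref{prop:43}, gives
\[
\bigl\|e^{(\tau-s)\cl}\nabla\!\cdot(U Z)\bigr\|_{L^2(2)} \lesssim \frac{e^{(\tau-s)(\la_0-1/\al)}}{a(\tau-s)^{1/\al}}\,\|U\|_{L^\infty}\|Z\|_{L^2(2)},
\]
with a singularity at $s=\tau$ that is integrable because $\al>1$. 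The Sobolev embeddings \eqref{31}--\eqref{32} bound $\|U\|_{L^\infty}$ by lower-order $L^p$ norms of $Z$, all controlled inside $X$, so the map contracts for $T$ small.

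\textbf{Step 2 ($L^\infty$ decay).} Since $L^2(2)\hookrightarrow L^1$, one has $\|Z_0\|_{L^1\cap L^\infty}\lesssim \|Z_0\|_X$, and Lemma \ref{lem3} applies directly to yield
\[
\|Z(\tau)\|_{L^\infty} \leq C\,e^{\tau\la_0}\,\|Z_0\|_X,
\]
which is the $L^\infty$ half of \eqref{200}. Combining this with \eqref{31}--\eqref{32} and the $L^p$ bounds of Lemma \ref{lem3} also gives $\|U(\tau)\|_{L^\infty} \leq C\,e^{\tau\la_0}\,\|Z_0\|_X$.

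\textbf{Step 3 ($L^2(2)$ decay).} Feeding the $L^\infty$ bound of Step 2 into the Duhamel estimate of Step 1 and setting $J(\tau):=e^{-\tau\la_0}\|Z(\tau)\|_{L^2(2)}$, a direct cancellation $e^{(\tau-s)(\la_0-1/\al)}e^{s\la_0}e^{-\tau\la_0} = e^{-(\tau-s)/\al}$ yields
\[
J(\tau) \leq C\|Z_0\|_X \;+\; C\|Z_0\|_X \int_0^\tau \frac{e^{-(\tau-s)/\al}\,e^{-s|\la_0|}}{a(\tau-s)^{1/\al}}\,J(s)\,ds.
\]
Since $a(\tau)\sim \min(1,\tau)$, this is exactly the hypothesis of Lemma \ref{gro} with $\si=1/\al$, $\ka=|\la_0|$, $a=1/\al<1$, so $J(\tau)$ stays bounded uniformly and $\|Z(\tau)\|_{L^2(2)}\leq C e^{\tau\la_0}\|Z_0\|_X$.

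\textbf{Step 4 (globalization).} The a priori bounds of Steps 2--3 preclude blow-up of $\|Z(\tau)\|_X$ in finite time, so the local solution of Step 1 extends uniquely to $[0,\infty)$ by the standard continuation criterion, giving \eqref{200}.

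\textbf{Main obstacle.} The tight spot is Step 3: the semigroup on $L^2(2)$ gains the commutator factor $e^{-(\tau-s)/\al}$ only at the price of the singular weight $a(\tau-s)^{-1/\al}$, and the $L^\infty$ decay of $U$ extracted in Step 2 is exactly what is needed to make the kernel in the Gronwall inequality integrable. The endpoints $\al+\be=3$ (where $\la_0=0$ and the exponential decay in Lemma \ref{gro} degenerates) and $\be\downarrow 0$ (where the Sobolev embedding of $U$ into $L^\infty$ becomes borderline) must be handled with a small $\eps$-loss in the exponent.
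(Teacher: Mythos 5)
Your Steps 1, 2 and 4 match the paper's proof (local contraction via Duhamel with \eqref{202}, \eqref{203}, \eqref{19} and Lemma \ref{lem0}; $L^\infty$ and $L^p$ decay imported from Lemma \ref{lem3}), but your Step 3 takes a genuinely different route: the paper does \emph{not} run Duhamel plus Lemma \ref{gro} on the weighted norm. Instead it controls $J(\tau)=\int|\xi|^4|Z|^2\,d\xi$ by an energy estimate, pairing \eqref{8} with $|\xi|^4 Z$, splitting the weight in $\int|\xi|^4\,|\nabla|^{\al}Z\cdot Z\,d\xi$ and absorbing the errors via the commutator bound $\|[|\nabla|^{\al/2},|\xi|^2]f\|_{L^2}\lesssim\||\xi|^{2-\f{\al}{2}}f\|_{L^2}$ of Lemma \ref{L_-50}. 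The payoff of the energy route is a differential inequality $\f12 J'+(\f{4-\al-\be}{\al}-2\eps)J\leq C e^{-2\tau\f{3-\al-\be}{\al}}$ whose damping coefficient $\f{4-\al-\be}{\al}\geq\f1\al>0$ is strictly positive \emph{uniformly up to the endpoint} $\al+\be=3$; your route trades the commutator lemma for the weighted semigroup bound of Proposition \ref{prop:43}, whose decay rate is only $\la_0=-\f{3-\al-\be}{\al}$.

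This creates two concrete problems. First, at (and near) the endpoint $\al+\be=3$ you have $\ka=|\la_0|=0$, so Lemma \ref{gro} does not apply ($\ka>0$ is a hypothesis, and the constant degenerates as $\ka\to0$), and your integral inequality reduces to $J(\tau)\leq C+C\|Z_0\|_X\int_0^\tau K(\tau-s)J(s)\,ds$ with a convolution kernel of finite total mass $M$; for $C\|Z_0\|_X M\geq1$ (large data) the Volterra iteration only yields an exponentially \emph{growing} bound, not \eqref{200}. The ``small $\eps$-loss'' you invoke cannot repair this, since there is no exponent left to lose. Second, Lemma \ref{gro} requires as input a global-in-time bound $I(\tau)\leq A_1e^{-\ga\tau}$ for some real $\ga$; for $I(\tau)=\|Z(\tau)\|_{L^2(2)}$ that is precisely the global existence plus a priori exponential control you are in the middle of proving, so Steps 3 and 4 as written are circular. (This is fixable: on the maximal interval one first extracts a crude bound $\|Z(\tau)\|_{L^2(2)}\leq Ce^{C'\tau}$ from the same inequality by a finite-interval Volterra argument, closes the continuation, and only then invokes Lemma \ref{gro} — indeed this is exactly the role Theorem \ref{thm1} plays in the paper's later proof of Theorem \ref{theo:4}, where the energy-based a priori bound is what jump-starts Lemma \ref{gro}.) You should either supply that intermediate step and restrict to $\al+\be<3$, or switch to the weighted energy estimate with Lemma \ref{L_-50} as the paper does.
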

 \begin{proof}
 We set up a local well-posedness scheme for the  integral equation corresponding to \eqref{8}, with initial data $Z(0)=f$,  namely 
 \begin{equation}
 \label{210} 
 Z(\tau)= e^{\tau \mathcal{L}} f  -  \int_0^{\tau} e^{(\tau- s)\mathcal{L}} \nabla \cdot (U  Z)\ ds,
 \end{equation}
 	where $U= U_Z=  (|\nabla|^{\perp})^{- \be}  Z$. We have, according to \eqref{202} and \eqref{19}, 
 	$$
 	\|e^{\tau \mathcal{L}} f\|_{L^2(2)} + 	\|e^{\tau \mathcal{L}} f\|_{L^\infty} \leq C (e^{(1-\f{1-\be}{\al})\tau} + 
 	e^{(1-\f{3-\be}{\al})\tau})\|f\|_{L^2(2)\cap L^\infty}
 	$$
 	Thus, with $T\leq 1$ to be determined later, set 
 	$$
 	Y_T:=\{Z(\tau, \cdot)\in X: \sup_{0\leq s\leq T} \|Z(s, \cdot)\|_{X}\leq 2 C (e^{(1-\f{1-\be}{\al})} + 
 	e^{(1-\f{3-\be}{\al})})\|f\|_X \}, 
 	$$
 where the bound in $Y$ is selected to be twice the value of the bound above, at $\tau=1$. For the non-linear term, we have for each $\tau\in (0,T)$, 
 		\begin{eqnarray*}
 		& & 	\|\int_0^{\tau} e^{(\tau- s)\mathcal{L}} \nabla \cdot (U_{Z_1}  Z_2)\ ds\|_{L^\infty}  \leq  
 			C \int_0^\tau C \f{e^{(1-\f{2-\be}{\al})(\tau-s)}}{a(\tau-s)^{\f{1}{\al}}} \|U_{Z_1}(s) Z_2(s) \|_{L^\infty} ds\leq \\
 			&\leq &  
 			C \sup_{0\leq s\leq T} \|U_{Z_1}\|_{L^\infty}  \sup_{0\leq s\leq T} \|Z_2\|_{L^\infty} \int_0^\tau \f{1}{(\tau-s)^{\f{1}{\al}}} ds \leq \\
 			&\leq & C T^{1-\f{1}{\al}} \sup_{0\leq s\leq T}(\|Z_1\|_{L^{\f{2}{\be}+\eps}}+ \|Z_1\|_{L^{\f{2}{\be}-\eps}}) \sup_{0\leq s\leq T}\|Z_2\|_{L^\infty}\leq C T^{1-\f{1}{\al}} \sup_{0\leq s\leq T} \|Z_1\|_X  \sup_{0\leq s\leq T} \|Z_2\|_X,
 		\end{eqnarray*}
 	where we have used  \eqref{203}, $e^{(1-\f{2-\be}{\al})(\tau-s)}\leq 3$, 
 	$a(\tau-s)=1-e^{-(\tau-s)} \sim  (\tau-s)$, for $0<s<\tau\leq 1$, the Sobolev embedding estimate \eqref{31} and finally the fact that $X=L^2(2)\cap L^\infty \hookrightarrow L^1\cap L^\infty$. 
For the other norm in the definition of $X$, we have by Lemma \ref{lem0}, 
 	\begin{eqnarray*}
 & & 	 \|\int_0^{\tau} e^{(\tau- s)\mathcal{L}} \nabla \cdot (U_{Z_1} \cdot Z_2)\ ds\|_{L^2(2)}   =  \int_0^{\tau} 
 	 e^{-\f{(\tau- s)}{\al}}\| \nabla \cdot e^{(\tau-s) \mathcal{L}} (U_{Z_1} \cdot Z_2)\|_{L^2(2)}\ ds\\
 		&\leq& C \int_0^{\tau} \f{e^{-\f{(\tau- s)}{\al}} e^{(1- \f{3-\be}{\al})(\tau- s) }}{a(\tau- s)^{\f{1}{\al}}} 
 		\|  U_{Z_1}(s) \cdot Z_2(s)\|_{L^2(2)}\ ds \leq \\
 		&\leq & C  \sup_{0\leq s\leq T} \|U_{Z_1}(s)\|_{L^\infty}  \sup_{0\leq s\leq T} \|Z_2(s)\|_{L^2(2)} \int_0^{\tau} \f{1}{(\tau-s)^{\f{1}{\al}}} ds\leq     C T^{1-\f{1}{\al}} \sup_{0\leq s\leq T} \|Z_1\|_X  \sup_{0\leq s\leq T} \|Z_2\|_{L^2(2)}.
 	\end{eqnarray*}
 Having these two bilinear estimates allows us to conclude that for sufficiently small $T$, of the form 
 $T \sim  \|f\|_X^{-\f{\al}{\al-1}}$ (which should also be taken $T\leq 1$), we have local well-posedness in the space $X$. 
 	
 Regarding global existence in $X=L^2(2)\cap L^\infty$, we obviously need {\it a priori} estimates for the solution to prevent potential blow up. We already have those in $L^\infty$ and in $L^2$, by the results of Lemma \ref{lem3}. Thus, it remains to control the norm $J(\tau):=\int_{\rtwo} |\xi|^4 |Z(\tau, \xi)|^2 d\xi$. To this end, take a 
 dot product of the equation \eqref{8} with $|\xi|^4 Z$. We have 
 \begin{eqnarray*}
& & \p_\tau \f{1}{2} \int  |\xi|^{4} Z^2  d \xi+ \int |\xi|^{4} |\nabla|^{\al} Z \cdot Z d \xi = \\
&=& \f{1}{\al} \int (\xi \cdot \nabla_{\xi} Z) |\xi|^{4} Z\ d \xi- \int (U \cdot \nabla_{\xi} Z) |\xi|^{4} Z\ d \xi+ 
(1+\f{\be-1}{\al}) \int |\xi|^{4} Z^2 d \xi.
 \end{eqnarray*}
We first analyze the terms on the right hand-side. Integration by parts yields 
 	\begin{eqnarray*}
& &	\f{1}{\al} \int (\xi \cdot \nabla_{\xi} Z) |\xi|^{4} Z d \xi =  - \f{3}{\al} \int |\xi|^{4} Z^2 d \xi ; \ \ 
 \int (U \cdot \nabla_{\xi} Z) |\xi|^{4} Z\ d \xi = -2 \int |\xi|^{2} (\xi \cdot U)  Z^2 d \xi.
 	\end{eqnarray*}
 	Note that by Young's inequality, we have for all $\eps>0$ 
 	$$
 	|\int |\xi|^{2} (\xi \cdot U)  Z^2 d \xi|\leq C \int  |\xi|^3 \|U\|_{L^\infty} Z^2(\xi) d\xi\leq \eps \int |\xi|^4 Z^2(\xi)d\xi+ C \eps^{-3} \|U\|_{L^\infty}^{4} \|Z\|_{L^2}^2.
 	$$
 By the Sobolev embedding \eqref{31} and Lemma \ref{lem3}, we have 
 	$$
 	\|U\|_{L^\infty}\leq C(\|Z\|_{L^{\f{2}{\be}+\eps}}+ \|Z\|_{L^{\f{2}{\be}-\eps}})\leq C e^{-(\f{3-\be-\al}{\al})\tau},
 	$$
 	so for every $\eps>0$, we have the estimate 
 	$$
 	|\int |\xi|^{2} (\xi \cdot U)  Z^2 d \xi|\leq  \eps \int |\xi|^4 Z^2(\xi)d\xi+ C \eps^{-3} e^{-6\tau(\f{3-\be-\al}{\al})}.
 	$$
 	
 The term $\int |\xi|^{4} |\nabla|^{\al} Z \cdot Z d \xi$ will give rise to some harder error terms (involving commutators between the $|\nabla|^{\al/2}$ and the weights), which we need to eventually control. It turns out that the most advantageous way to reign in the error terms is to split the weight $|\xi|^4$ between the two entries. More precisely, 
 \begin{eqnarray*}
 & & \int |\xi|^{4} |\nabla|^{\al} Z \cdot Z d \xi  =  \int |\xi|^{2} |\nabla|^{\al} Z \cdot |\xi|^2 Z d \xi = 
 \dpr{|\xi|^{2} |\nabla|^{\al/2} [|\nabla|^{\al/2} Z]}{|\xi|^2 Z} = \\
 &=& \dpr{ |\nabla|^{\al/2} |\xi|^{2} [|\nabla|^{\al/2} Z]}{|\xi|^2 Z}-\dpr{ [|\nabla|^{\al/2}, |\xi|^{2}] [|\nabla|^{\al/2} Z]}{|\xi|^2 Z} = \\
 &=& \dpr{  |\xi|^{2} [|\nabla|^{\al/2} Z]}{|\nabla|^{\al/2}[|\xi|^2 Z]}-\dpr{ [|\nabla|^{\al/2}, |\xi|^{2}] [|\nabla|^{\al/2} Z]}{|\xi|^2 Z}=\\
 &=&  \dpr{  |\xi|^{2} |\nabla|^{\al/2} Z}{|\xi|^2 |\nabla|^{\al/2}  Z} + \dpr{  |\xi|^{2} |\nabla|^{\al/2} Z}{[|\nabla|^{\al/2},|\xi|^2]  Z}-\dpr{ [|\nabla|^{\al/2}, |\xi|^{2}] [|\nabla|^{\al/2} Z]}{|\xi|^2 Z}=\\
 &=& \int |\xi|^{4} ||\nabla|^{\f{\al}{2}} Z|^2 d \xi+ \dpr{  |\xi|^{2} |\nabla|^{\al/2} Z}{[|\nabla|^{\al/2},|\xi|^2]  Z}-\dpr{ [|\nabla|^{\al/2}, |\xi|^{2}] [|\nabla|^{\al/2} Z]}{|\xi|^2 Z}.
 \end{eqnarray*}
 	Denote the error terms  	
 	$
 	 	E:= \dpr{  |\xi|^{2} |\nabla|^{\al/2} Z}{[|\nabla|^{\al/2},|\xi|^2]  Z}-\dpr{ [|\nabla|^{\al/2}, |\xi|^{2}] [|\nabla|^{\al/2} Z]}{|\xi|^2 Z}. 	
 	 	$
 Putting it all together implies 
 \begin{eqnarray}
 \label{300}
& & 	\f{1}{2} J'(\tau) + (\f{4-\al-\be}{\al}-\eps) J(\tau)+\int |\xi|^{4} ||\nabla|^{\f{\al}{2}} Z|^2 d \xi   \leq    |E| +  C \eps^{-3} e^{-6\tau(\f{3-\be-\al}{\al})}\\
\nonumber 
	&\lesssim & \||\xi|^{2} |\nabla|^{\al/2} Z\|_{L^2} \|[|\nabla|^{\al/2},|\xi|^2]  Z\|_{L^2}+ 
	\|[|\nabla|^{\al/2}, |\xi|^{2}] [|\nabla|^{\al/2} Z]\|_{L^2} \||\xi|^2 Z\|_{L^2}+	
	 \eps^{-3} e^{-6\tau(\f{3-\be-\al}{\al})}. 
 \end{eqnarray}

 	At this point, it becomes clear that we need  to control the commutator expression above. In fact, we have the following Lemma. 
 	\begin{lemma}
 		\label{L_-50} 
 		Let  $\al\in (1,2)$. Then,  there is $C=C_{\al}$, so that 
 		\begin{equation}
 		\label{310} 
 	 \|[|\nabla|^{\al/2},|\xi|^2]  f\|_{L^2(\rtwo)}\leq C \| |\xi|^{2-\f{\al}{2}} f\|_{L^2(\rtwo)}.
 		\end{equation}
 	\end{lemma}
 	We postpone the proof of Lemma \ref{L_-50} for the Appendix, see Section \ref{app:10}. We finish the proof of Theorem \ref{thm1} based upon it. 
By  Gagliardo-Nirenberg's inequality 
 	$$
 	\| |\xi|^{2-\f{\al}{2}} g\|_{L^2}\leq  \| |\xi|^{2} g\|_{L^2}^{1-\f{\al}{4}} \|  g\|_{L^2}^{\f{\al}{4}}.
 	$$
 		Continuing with our arguments above (see \eqref{300}), we conclude from Lemma \ref{L_-50} that 
 	\begin{eqnarray*}
 	& & \f{1}{2} J'(\tau) + (\f{4-\al-\be}{\al}-\eps) J(\tau)+  \||\xi|^{2} |\nabla|^{\al/2} Z\|_{L^2}^2  \leq \eps \||\xi|^{2} |\nabla|^{\al/2} Z\|_{L^2}^2 + \eps \| |\xi|^2 Z\|_{L^2}^2 +   C_\eps \|Z\|_{L^2}^2
 	\end{eqnarray*}
 	All in all, for all $\eps<1$, we have by Lemma \ref{lem3}, 
 	$$
 	 \f{1}{2} J'(\tau) + (\f{4-\al-\be}{\al}-2 \eps) J(\tau)\leq C_\eps \|Z\|_{L^2}^2\leq C \|Z_0\|_{L^1\cap L^\infty}^2  e^{- 2\tau (\f{3-\be-\al}{\al})}.
 	$$
 By Gronwall's, we finally conclude that 
 $$
 J(\tau)\leq J(0) e^{-2\tau(\f{4-\al-\be}{\al}-2 \eps)}+ C \|Z_0\|_{L^1\cap L^\infty}^2  e^{- 2\tau (\f{3-\al-\be}{\al})}.
 $$	
 As a consequence 
 $$
 \||\xi|^2 Z(\tau)\|_{L^2}\leq C \|Z_0\|_{L^2(2)\cap L^\infty} e^{- \tau (\f{3-\al-\be}{\al})}.
 $$
 	This completes the proof of Theorem \ref{thm1}.  	
 \end{proof}

 \section{Local and global existence of the solutions to the Boussinesq system} 
 \label{sec:5}
 The results of this section closely mirror Section \ref{sec:4}. Consequently, we omit many of the arguments, when they are virtually the same. There are however a few important distinctions, which we will highlight herein. 
 \subsection{Global regularity for the vorticity $(\om, \theta)$ Boussinesq system in $L^p(\rtwo)$}
 Our first result is,  non-surprisingly, is  a local existence and uniqueness result in $L^p(\rtwo)$. Most of the claims in this Lemma are either well-known or follow classical arguments, but we provide a sketch of the proof for completeness. 
   \begin{lemma}
   	\label{thma1}
   	Suppose that $\om_0, \theta_0 \in L^p$, $1 \leq p \leq \infty$. Then
   	there exists $T= T(\|(\om_0, \theta_0)\|_{L^1 \cap L^{\infty}})$,  such that  unique strong solutions 
   	$\om, \theta \in C([0,T); L^1 \cap L^{\infty})$ exist. 
   	
   	Moreover, the solutions $\om(t), \theta(t)$ exist globally. In addition, the function $t\to \|\theta(t, \cdot)\|_{L^p}, 1\leq p\leq \infty$ is non-increasing, $\|\theta(t, \cdot)\|_{L^p}\leq \|\theta_0\|_{L^p}, 1<p<\infty$,  while 
   	$$
   	\|u(t, \cdot)\|_{L^2}\leq \|u_0\|_{L^2}+ t \|\theta_0\|_{L^2}.
   	$$
   \end{lemma}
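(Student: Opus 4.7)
The plan is to mirror the structure of Lemma \ref{thma}, adapting to the coupled Boussinesq system. I would write the system in mild (Duhamel) form,
\begin{eqnarray*}
\om(t) &=& e^{-t|\nabla|^\al} \om_0 - \int_0^t e^{-(t-s)|\nabla|^\al} \nabla \cdot (u\om)(s)\, ds + \int_0^t e^{-(t-s)|\nabla|^\al} \p_1 \theta(s)\, ds, \\
\theta(t) &=& e^{-t|\nabla|^\al} \theta_0 - \int_0^t e^{-(t-s)|\nabla|^\al} \nabla \cdot (u\theta)(s)\, ds,
\end{eqnarray*}
with $u = (\nabla^\perp)^{-1} \om$. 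For local existence in $X := L^1 \cap L^\infty$, I would set up a contraction in $C([0,T]; X \times X)$ for sufficiently small $T$, following the template of Lemma \ref{thma}: the heat-semigroup bounds of Lemma \ref{aa}, the loss of a $(t-s)^{-1/\al}$ factor coming from the $\nabla$, the Sobolev-type estimate \eqref{31} that converts $\om \in X$ into $u \in L^\infty$, and a parallel estimate for the $\p_1\theta$ source all combine to produce the required bilinear and linear bounds. The lifetime $T$ then depends only on $\|(\om_0, \theta_0)\|_{L^1 \cap L^\infty}$.

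The a priori bounds on $\theta$ follow exactly as in the SQG case: dotting the $\theta$-equation with $|\theta|^{p-2}\theta$ and using the Cordoba--Cordoba positivity \eqref{a:20} together with $\nabla \cdot u = 0$ gives $\f{d}{dt}\|\theta\|_{L^p}^p \leq 0$ for $1 < p < \infty$, and the $p=1,\infty$ endpoints follow by approximation. For the claimed $L^2$ bound on $u$, I would pair $u$ against the original (unscaled) velocity equation $\p_t u + u\cdot\nabla u + |\nabla|^\al u = -\nabla p + \theta \vec{e_2}$. Divergence-freeness kills the pressure and transport contributions, \eqref{a:20} handles the dissipation, and Cauchy--Schwarz on the buoyancy coupling yields $\f{1}{2}\f{d}{dt}\|u\|_{L^2}^2 + \||\nabla|^{\al/2} u\|_{L^2}^2 \leq \|\theta\|_{L^2}\|u\|_{L^2}$. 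Dropping the nonnegative dissipation term and dividing by $\|u\|_{L^2}$, together with the monotonicity of $\|\theta(t)\|_{L^2}$ already established, gives $\f{d}{dt}\|u\|_{L^2} \leq \|\theta_0\|_{L^2}$ and hence the stated linear-in-$t$ bound.

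The genuinely harder step is upgrading the local solution to a global one. Since $\theta$ already obeys the good transport-diffusion structure, the real obstacle is controlling $\om$, which is driven both by the forcing $\p_1\theta$ and by the non-local feedback $u = (\nabla^\perp)^{-1}\om$. The plan would be to run an $L^p$ energy estimate on $\om$ via \eqref{a:20}, treating the forcing as $\|\om\|_{L^p}^{p-1}\|\p_1\theta\|_{L^p}$ and iterating over the local existence intervals. A direct estimate of $\|\p_1\theta(t)\|_{L^p}$ is obtained by exploiting the smoothing effect of the fractional heat kernel, which gives a bound of order $t^{-1/\al}\|\theta_0\|_{L^p\cap L^\infty}$ on each short-time slab. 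This is the main obstacle: the $t^{-1/\al}$ factor must be integrable in time for the Gronwall continuation to close, which is precisely where the restriction $\al > 1$ (implicit throughout Section \ref{sec:5} and stated in Theorem \ref{theo:20}) is used. Once the $L^p$ bound on $\om$ is propagated, the local existence time at any point can be bounded away from zero, yielding global existence in $L^1 \cap L^\infty$.
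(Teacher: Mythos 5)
Your local-existence contraction, the monotonicity of $\|\theta(t,\cdot)\|_{L^p}$ via \eqref{a:20}, and the energy bound $\|u(t,\cdot)\|_{L^2}\leq\|u_0\|_{L^2}+t\|\theta_0\|_{L^2}$ all coincide with the paper's argument (one small remark: for the dissipation in the $u$-equation you only need $\dpr{|\nabla|^\al u}{u}=\||\nabla|^{\f{\al}{2}}u\|_{L^2}^2\geq 0$, i.e.\ Plancherel, not the Cordoba--Cordoba inequality).

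The gap is in your continuation step. You propose to close an $L^p$ energy estimate for $\om$ by bounding the forcing as $\|\p_1\theta(t)\|_{L^p}\lesssim t^{-\f{1}{\al}}\|\theta_0\|_{L^p\cap L^\infty}$ on each short-time slab, attributing this to the smoothing of the fractional heat kernel. That smoothing estimate (\eqref{2.8} of Lemma \ref{aa}) applies to the free semigroup only; for the nonlinear solution the Duhamel term in the $\theta$-equation is $\int_0^t e^{-(t-s)|\nabla|^\al}\nabla\cdot(u\theta)\,ds$, and applying $\p_1$ to it places two derivatives on the kernel, producing a factor $(t-s)^{-\f{2}{\al}}$ that is not integrable near $s=t$ for $\al<2$. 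So the asserted bound on $\|\p_1\theta\|_{L^p}$ does not follow as stated; it would require a separate weighted-in-time fixed-point argument that you have not set up. The repair --- and the route consistent with how the paper handles this term in the local theory --- is to avoid estimating $\p_1\theta$ pointwise in time altogether: keep the derivative on the semigroup in the Duhamel formula for $\om$, so that $\|\int_0^t e^{-(t-s)|\nabla|^\al}\p_1\theta\,ds\|_{L^1\cap L^\infty}\leq C\int_0^t (t-s)^{-\f{1}{\al}}\|\theta(s)\|_{L^1\cap L^\infty}\,ds\leq C\,t^{1-\f{1}{\al}}\|\theta_0\|_{L^1\cap L^\infty}$, using only the already-established monotonicity of $\|\theta\|_{L^p}$; combined with $\|u\|_{L^\infty}\lesssim\|\om\|_{L^1\cap L^\infty}$ from \eqref{31}, a singular Gronwall then bounds $\|\om(t)\|_{L^1\cap L^\infty}$ on any finite interval and the local solution iterates. (The paper itself is terse here: after the $\theta$ monotonicity and the $u$ energy bound it simply declares global regularity, deferring the quantitative vorticity estimate to Lemma \ref{L_-11}; but it nowhere relies on a $t^{-\f{1}{\al}}$ decay of $\|\p_1\theta\|_{L^p}$.)
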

 \begin{proof}
 	For the local existence,  we work in the space $X= L^1 \cap L^{\infty}= \cap L^p$. The strong solutions of the system of equations \eqref{BSQ1} are solutions of the integral equations 
 	\begin{equation}
 	\label{700}
 		\begin{cases}
 			\om(\xi, t)= e^{- t |\nabla|^{\al}} \om_0+ \int_0^t e^{- (t- s) |\nabla|^{\al}} \nabla (u \cdot \om) \ ds- 
 			\int_0^t e^{- (t- s) |\nabla|^{\al}} \partial_1 \theta \ ds,\\
 			\theta(\xi, t)= e^{- t |\nabla|^{\al}} \theta_0+ \int_0^t e^{- (t- s) |\nabla|^{\al}} \nabla (u \cdot \theta) \ ds.
 		\end{cases}
 	\end{equation}
 	By \eqref{2.7}, we have that 
 	$$
 	\|e^{- t |\nabla|^{\al}} \om_0\|_{X}+\|e^{- t |\nabla|^{\al}} \theta_0\|_{X}\leq C (\|\om_0\|_X+\|\theta_0\|_X)
 	$$
 	One can now consider the space $Y:= \{(\om, \theta): 
 	\sup_{0 \leq t \leq T}[\|\om\|_X +\|\theta\|_X ]\leq 2 C (\|\om_0\|_X+\|\theta_0\|_X)\}$.  For the bilinear forms 
 	$$
 	Q_1(\om_1, \om_2)=\int_0^t e^{- (t- s) |\nabla|^{\al}} \nabla (u \cdot \om) \ ds, 
 	Q_2(\om_1, \theta)=\int_0^t e^{- (t- s) |\nabla|^{\al}} \nabla (u \cdot \theta) \ ds
 	$$
 	where $u=(\nabla^\perp)^{-1} \om_1$, we establish the estimates 
 	\begin{eqnarray*}
	\|Q_1(\om_1, \om_2)-Q_1(\tilde{\om}_1, \tilde{\om}_2)\|_X &\leq &  C T^{1-\f{1}{\al}} (\|(\om_1, \om_2)\|_X+ 
	\|(\tilde{\om}_1, \tilde{\om}_2)\|_X)(\|\om_1-\tilde{\om}_1\|_X+ \|\om_2-\tilde{\om}_2\|_X) \\
	\|Q_2(\om_1, \theta)-Q_2(\tilde{\om}_1, \tilde{\theta})\|_X &\leq &  C T^{1-\f{1}{\al}} (\|(\om_1, \theta)\|_X+ 
	\|(\tilde{\om}_1, \tilde{\theta})\|_X)(\|\om_1-\tilde{\om}_1\|_X+ \|\theta-\tilde{\theta}\|_X)
 	\end{eqnarray*}
 	for $j=1,2$. This is done in an identical manner as in the proof of Lemma \ref{thma}. 
 	It remains to deal with the integral term $	\int_0^t e^{- (t- s) |\nabla|^{\al}} \partial_1 \theta \ ds$, for which we have 
 	$$
 	\|\int_0^t e^{- (t- s) |\nabla|^{\al}} \partial_1 (\theta- \tilde{\theta}) \ ds\|_{L^1\cap L^{\infty}} 
 	\leq C \int_0^t  \f{1}{(\tau- s)^{\f{1}{\al}}} \|\theta- \tilde{\theta}\|_{L^1\cap L^{\infty}} ds \leq C T^{1-\f{1}{\al}} 
 	\sup_{0<s<T}  \|\theta(s)- \tilde{\theta}(s)\|_{L^1\cap L^{\infty}},
 	$$
 	for $0<t<T$. All in all, we can guarantee that with an appropriate choice of $T$, the non-linear map given by \eqref{700}has a fixed point $\om, \theta$ in the space $X$. 
 	
 	Regarding the global well-posedness, we can continue the solution, as long as the norm $t\to \|\theta(t, \cdot)\|_{L^p}$ stay under control. First, for $1<p<\infty$, take dot product of the $\theta$ equation with $|\theta|^{p-2} \theta$, $p\in (1, \infty)$ and using the fact the positivity estimate \eqref{a:20}, we obtain 
 	$$
\f{1}{p}\p_t \|\theta(t, \cdot))\|^{p}_{L^p}  \leq 	\f{1}{p}\p_t \|\theta\|^{p}_{L^p}+ \int_{\rtwo} |\theta|^{p-2} \theta \cdot |\nabla|^\al \theta dx =0
 	$$
 	It follows that $t\to \|\theta(t, \cdot)\|_{L^p}$ is non-increasing in any interval $(0,t)$, whence the solution is global and 
 $	\|\theta(t, \cdot)\|_{L^p}\leq \|\theta_0\|_{L^p}$. For $p=1, p=\infty$, we use approximation arguments to establish the same result.

 Finally, we use this information to establish the global well-posedness of the $u$ equation in \eqref{BSQ10}. Taking dot product with  $u$, we obtain 
 $$
\f{1}{2} \p_t \|u(t, \cdot)\|_{L^2}^2 \leq  \f{1}{2} \p_t \|u(t, \cdot)\|_{L^2}^2 + \||\nabla|^{\f{\al}{2}} u\|_{L^2}^2 =\dpr{u_2}{\theta}\leq \|u_2\|_{L^2} \|\theta(t)\|_{L^2}\leq \|u_2(t)\|_{L^2} \|\theta_0\|_{L^2}
 $$
 	It follows that 
 	$$
 	\|u(t, \cdot)\|_{L^2}\leq \|u_0\|_{L^2}+ t \|\theta_0\|_{L^2},
 	$$
 	which provides the necessary bound to conclude global regularity, as stated. 
 \end{proof}
 The next Lemma provides a global existence and uniqueness result  for the $(\om, \theta)$ system. 
 \begin{lemma}
 	\label{L_-11}
 	Let $\al > 1$. Then, assuming $\om_0\in L^2, \theta_0\in H^{\f{\al}{2}}$, the Cauchy problem \eqref{BSQ1} has unique global solutions. In addition, for any $T> 0$,  there exists $C=C_{T, \|\om_0\|_{L^2}, \|\theta_0\|_{H^{\f{\al}{2}}}}> 0$,  so that the solutions satisfy 
 	\begin{eqnarray}
 	\label{710} 
 	\sup_{0 \leq t \leq T} \| \om\|_{L^2}+ \sup_{0 \leq t \leq T} \| |\nabla|^{\f{\al}{2}}\theta\|_{L^2} &\leq& C. 
 	\end{eqnarray}
 \end{lemma}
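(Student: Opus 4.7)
My plan is to tackle this in two stages: a standard local existence argument in $L^2\times H^{\al/2}$, and then an a priori energy estimate that upgrades it to global. For the local theory, I would write \eqref{BSQ1} in Duhamel form and set up a contraction mapping in $C([0,T^*];L^2\times H^{\al/2})$, using the semigroup bounds of Lemma \ref{aa}, the Kato--Ponce product rule (Lemma \ref{kp}) to distribute $|\nabla|^{\al/2}$ across the factors of $u\theta$, and the Sobolev embedding $H^{\al/2}(\rtwo)\hookrightarrow L^p$ for $p\in[2,4/(2-\al)]$ to control $\theta$ in the $L^p$ norms that appear when estimating the nonlinearity. This closely parallels the proof of Lemma \ref{thma1}, with the twist that $\theta$ lives one half-derivative higher than $\om$.

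To extend to any $T>0$ I would control $E(t):=\|\om(t)\|_{L^2}^2+\||\nabla|^{\al/2}\theta(t)\|_{L^2}^2$. By Lemma \ref{thma1}, the $L^p$ norms of $\theta$ are non-increasing, so $\|\theta(t)\|_{L^p}\le C\|\theta_0\|_{H^{\al/2}}$ for $p\in[2,4/(2-\al)]$. Testing the $\om$-equation against $\om$, using $\nabla\cdot u=0$ on the drift and the positivity \eqref{a:20} on the dissipation, yields
\[
\f{1}{2}\f{d}{dt}\|\om\|_{L^2}^2+\||\nabla|^{\al/2}\om\|_{L^2}^2=\dpr{\partial_1\theta}{\om}\leq \||\nabla|^{1-\al/2}\theta\|_{L^2}\,\||\nabla|^{\al/2}\om\|_{L^2}.
\]
Since $\al>1$ gives $1-\al/2<\al/2$, interpolation bounds $\||\nabla|^{1-\al/2}\theta\|_{L^2}$ in terms of $\|\theta_0\|_{L^2}$ and $\||\nabla|^{\al/2}\theta\|_{L^2}$, and Young's inequality absorbs half of the dissipation, leaving a differential inequality of the form $\f{d}{dt}\|\om\|_{L^2}^2+\f{1}{2}\||\nabla|^{\al/2}\om\|_{L^2}^2\leq C(1+\||\nabla|^{\al/2}\theta\|_{L^2}^2)$.

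For the second component of $E(t)$, I would apply $|\nabla|^{\al/2}$ to the $\theta$-equation and pair with $|\nabla|^{\al/2}\theta$, using $\nabla\cdot u=0$ to kill the top-order drift. What remains is the commutator $\dpr{[|\nabla|^{\al/2},u\cdot\nabla]\theta}{|\nabla|^{\al/2}\theta}$, which I would estimate via \eqref{201} in Lemma \ref{L_-com1} with $s_1=0$, $s_2=\al/2$, and a suitable $a\in[\al/2,1]$, choosing H\"older exponents so that Sobolev embeddings together with the identity $\nabla u\sim\om$ convert the $L^q$ and $L^r$ norms into combinations of $\|\om\|_{L^2}$, $\||\nabla|^{\al/2}\om\|_{L^2}$, $\||\nabla|^{\al/2}\theta\|_{L^2}$ and $\||\nabla|^{\al}\theta\|_{L^2}$ in which the exponents on the dissipation quantities are strictly less than $2$. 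Iterated scaled Young inequalities then absorb small multiples of $\||\nabla|^{\al/2}\om\|_{L^2}^2$ and $\||\nabla|^{\al}\theta\|_{L^2}^2$ into the two dissipation terms on the left-hand side.

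Summing the two differential inequalities and invoking Gronwall's lemma, together with the time integrability of the dissipation terms harvested in the previous step to tame any super-linear residuals in $E$, yields \eqref{710} on every $[0,T]$ and hence global existence. The main technical obstacle is the exponent bookkeeping in the commutator step: $a$ and the H\"older indices have to be chosen so that the power of $E$ on the right-hand side is either linear, or at worst super-linear with a coefficient that is $L^1_t$ against the dissipation. This is the only place where one has to fight with $\al$ being close to $1$, and it is what ensures the Gronwall loop actually closes for every $\al>1$ instead of producing a finite-time blow-up.
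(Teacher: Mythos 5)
Your overall architecture matches the paper's: a contraction argument modeled on Lemma \ref{thma1} for local existence, followed by an energy estimate on $\|\om\|_{L^2}^2+\||\nabla|^{\al/2}\theta\|_{L^2}^2$ in which $\dpr{\p_1\theta}{\om}$ is handled by shifting $|\nabla|^{\al/2}$ onto $\om$ and interpolating (your variant, absorbing into $\||\nabla|^{\al/2}\om\|_{L^2}^2$ and leaving a term linear in $E$, is fine). The gap is in the commutator step. You propose to apply Lemma \ref{L_-com1} with $s_1=0$, $s_2=\al/2$, which forces the $\theta$-factor on the right of \eqref{201} to carry $\al/2+1-a\geq \al/2$ derivatives. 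Pairing the result with $\||\nabla|^{\al/2}\theta\|_{L^{p'}}$ and converting everything to $L^2$-based norms by Sobolev/Gagliardo--Nirenberg, one finds (e.g.\ with $a=1$, $q=r=4$) a bound of the form $\|\om\|_{L^2}^{1-1/\al}\||\nabla|^{\al/2}\om\|_{L^2}^{1/\al}\||\nabla|^{\al/2}\theta\|_{L^2}^{2-1/\al}\||\nabla|^{\al}\theta\|_{L^2}^{1/\al}$; after Young's inequality the residual is $\sim\|\om\|_{L^2}\,\||\nabla|^{\al/2}\theta\|_{L^2}^{(2\al-1)/(\al-1)}$, whose exponent blows up as $\al\downarrow 1$ (it is $12$ at $\al=1.1$). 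No admissible choice of $a\in[\al/2,1]$ and H\"older indices with $s_1=0$ removes this: placing even $\al/2$ derivatives on $\theta$ in an $L^r$ norm with $r>2$ necessarily borrows from the dissipation $\||\nabla|^{\al}\theta\|_{L^2}$, and the conjugate Young exponent then inflates the power of $E$. Your fallback --- a super-linear term with an $L^1_t$ coefficient --- is not available either, because the only time-integrability of the dissipation you have at that stage comes from the very inequality you are trying to close, so it is conditional on $\int_0^T E$.

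The paper's fix is a different index choice in \eqref{201}: $s_1=s_2=\al/2$, $p=2$, $q=\f{8}{4-\al}$, $r=\f{8}{\al}$, i.e.\ one estimates $\||\nabla|^{-\al/2}[|\nabla|^{\al/2},u\cdot\nabla]\theta\|_{L^2}$ and dualizes the extra half-derivative onto the dissipation factor $\||\nabla|^{\al}\theta\|_{L^2}$. The payoff is that $\theta$ then enters with \emph{zero} derivatives, as $\|\theta\|_{L^{8/\al}}$, which is non-increasing in time by the maximum principle (your own Lemma \ref{thma1} observation), hence a constant determined by the data. The remaining factor $\|\nabla u\|_{L^{8/(4-\al)}}\leq C\||\nabla|^{\al/4}\om\|_{L^2}\leq C\||\nabla|^{\al/2}\om\|_{L^2}^{1/2}\|\om\|_{L^2}^{1/2}$ carries total dissipation degree $3/2<2$ after pairing, and Young's leaves exactly $C\|\theta_0\|_{L^{8/\al}}^4\|\om\|_{L^2}^2$ --- linear in $E$ --- so Gronwall closes for every $\al>1$. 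You should replace the $s_1=0$ step with this negative-smoothing version of the commutator estimate.
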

 {\bf Remark:} The constant $C_T$ obtained in this argument is exponential in $T$, which is very non-efficient. On the other hand, it is sufficient for our purposes in bootstrapping the solution. 
 \begin{proof}
 	The global regularity for \eqref{BSQ1} is of course very similar to the global regularity established in Lemma \ref{thma1}. 
  For the energy estimates, needed for \eqref{710}, 	
  we can dot product the first equation in \eqref{BSQ1} with $\om$ and the second one with $|\nabla|^{\al} \theta$ to get the following energy estimate
  \begin{eqnarray*}
  \f{1}{2} \f{d}{dt}\bigg(\|\om\|^2_{L^2}+ \||\nabla|^{\f{\al}{2}}\theta\|^2_{L^2}\bigg) +  \| |\nabla|^{\f{\al}{2}} \om\|^2_{L^2}+ \| |\nabla|^{\al}\theta\|_{L^2}^2 &\leq&  \bigg| \int \om \cdot \partial_1 \theta d \xi \bigg|+ 
  \bigg| \langle  [|\nabla|^{\f{\al}{2}}, u\cdot  \nabla] \theta, |\nabla|^{\f{\al}{2}} \theta\rangle\bigg|\\
  &:=& I_1+ I_2.
  \end{eqnarray*}
 	Then for some $0 < \ga< 1$,
 	\begin{eqnarray*}
 		I_1= \bigg| \int \om \cdot \partial_1 \theta d \xi \bigg| &\leq& \| |\nabla|^{\f{\al}{2}} \om\|_{L^2} \|\partial_1 |\nabla|^{-\f{\al}{2}} \theta\|_{L^2} \leq \epsilon \| |\nabla|^{\f{\al}{2}} \om\|^2_{L^2}+ C_{\epsilon} \|\partial_1 |\nabla|^{-\f{\al}{2}} \theta\|^2_{L^2}\\
 		& \leq&  \epsilon \| |\nabla|^{\f{\al}{2}} \om\|^2_{L^2}+ C_{\epsilon} \| |\nabla|^{\al} \theta\|^{2 \ga}_{L^2} \| \theta\|^{2(1- \ga)}_{L^2} \leq \epsilon \| |\nabla|^{\f{\al}{2}} \om\|^2_{L^2}+ \epsilon \| |\nabla|^{\al} \theta\|^{2}_{L^2}+ C_{\epsilon} \| \theta_0\|^2_{L^2}. 
 	\end{eqnarray*}
 	We also have
 	\begin{eqnarray*}
 		I_2=\bigg| \langle  [|\nabla|^{\f{\al}{2}}, u \cdot  \nabla] \theta, |\nabla|^{\f{\al}{2}} \theta\rangle\bigg| &\leq& \| |\nabla|^{- \f{\al}{2}} [|\nabla|^{\f{\al}{2}}, u\cdot  \nabla] \theta\|_{L^2} \| |\nabla|^{\al} \theta\|_{L^2} 
 	\end{eqnarray*}
 	We can make use of the inequality \eqref{201} with $a= 1, s_1= s_2= \f{\al}{2}, p= 2, q= \f{8}{4- \al}$ and $r= \f{8}{\al}$ to get
 	\begin{eqnarray*}
 		\| |\nabla|^{- \f{\al}{2}} [|\nabla|^{\f{\al}{2}}, u \cdot  \nabla] \theta(t)\|_{L^2} &\leq& C \|\theta\|_{L^{\f{8}{\al}}} 
 		\| \nabla u \|_{L^{\f{8}{4- \al}}} \leq 
 		C \|\theta_0\|_{L^{\f{8}{\al}}} \| \om \|_{L^{\f{8}{4- \al}}} 		
 		 \leq C \|\theta_0\|_{L^{\f{8}{\al}}} \| |\nabla|^{\f{\al}{4}} \om\|_{L^2}\\
 		 &\leq & C \|\theta_0\|_{L^{\f{8}{\al}}} \| |\nabla|^{\f{\al}{2}} \om\|_{L^2}^{\f{1}{2}} \|\om\|_{L^2}^{\f{1}{2}}.
 	\end{eqnarray*}
 	where we have used the Sobolev embedding and the Gagliardo-Nirenberg's inequality.  
 	Then,
 	\begin{eqnarray*}
 		I_2 \leq  \epsilon \| |\nabla|^{\f{\al}{2}} \om\|^2_{L^2}+ \epsilon \| |\nabla|^{\al} \theta\|^{2}_{L^2}+ C_\eps(\|\theta_0\|_{L^{\f{8}{\al}}}\|\om\|_{L^2}^{\f{1}{2}})^4.
 	\end{eqnarray*}
 	Therefore, for $\eps<\f{1}{2}$, we can hide the terms $\| |\nabla|^{\f{\al}{2}} \om\|^2_{L^2}$ and $\| |\nabla|^{\al} \theta\|^{2}_{L^2}$ and we obtain 
 	\begin{eqnarray*}
 		 	\f{d}{dt} \bigg(\|\om\|^2_{L^2}+  \||\nabla|^{\f{\al}{2}}\theta\|^2_{L^2}\bigg)  
 		 	\leq C \|\theta_0\|_{L^{\f{8}{\al}}}^4  \bigg(\|\om\|^2_{L^2}+  \||\nabla|^{\f{\al}{2}}\theta\|^2_{L^2}\bigg)  + C \|\theta_0\|_{L^2}^2.
 	\end{eqnarray*}
 	We use Gronwall's to conclude \eqref{710}.

 \end{proof}

 \subsection{Some  a priori estimates  for the scaled vorticity Boussinesq problem $(W, \Theta)$ in $L^p$}
 
 We now turn our attention to the scaled vorticity system. By the results of Lemma \ref{L_-11} and Lemma \ref{L_-12}, such solutions exist globally, by virtue of the change of variables.  Now that we have a global solution, together with the global estimate \eqref{dec:1}, we can actually obtain global {\it a priori} estimates for $\Theta$ in all $L^p$ spaces. 
 \begin{lemma}
 	\label{L_-12} 
 	Let $p \geq 1$,   and $ \Theta_0 \in L^1\cap L^\infty(\rtwo)\cap H^\al(\rtwo)$, $W_0\in L^2$. Then for any $\tau>0$, $\Theta \in C^0([0, \tau]; L^p)$,  there  
 	exists $C= C_{\al,p}$ such that 
 	\begin{equation}
 	\label{800}
 	\|\Theta(\tau, \cdot)\|_{L^p} \leq  C_{\al,p}  \|\Theta_0\|_{L^p(\rtwo)} e^{(2-\f{1}{\al}- \f{2}{\al p}) \tau}.
 	\end{equation}
 \end{lemma}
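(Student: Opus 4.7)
The plan is to mirror the energy-method argument from Lemma \ref{lem2}, working directly with the scaled $\Theta$-equation
\[
\Theta_{\tau}= \left(\cl+1-\tfrac{1}{\al}\right) \Theta- (U \cdot\nabla_{\xi} \Theta),
\]
where, since this is the Boussinesq setting, $\cl$ is \eqref{lamb} with $\be=1$, i.e. $\cl \Theta = -|\nabla|^\al \Theta + \f{1}{\al} \xi\cdot\nabla_\xi \Theta + \Theta$. Global existence of $\Theta$ in $L^p$ is already supplied by Lemma \ref{thma1} via the change-of-variables relating $(w, \theta)$ and $(W, \Theta)$, so all that remains is to establish the stated decay bound.

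First, for $1<p<\infty$, I would pair the equation with $|\Theta|^{p-2}\Theta$ and integrate over $\rtwo$. The transport term vanishes since $\nabla \cdot U = 0$: integrating by parts,
\[
\int (U\cdot \nabla_\xi \Theta)|\Theta|^{p-2}\Theta\, d\xi = \tfrac{1}{p}\int U\cdot \nabla(|\Theta|^p)\, d\xi = 0.
\]
The scaling term likewise integrates cleanly, producing $\f{1}{\al}\int (\xi\cdot \nabla_\xi \Theta) |\Theta|^{p-2}\Theta\, d\xi = -\f{2}{\al p}\|\Theta\|_{L^p}^p$. Finally, the fractional dissipation contributes a nonnegative quantity by Lemma \ref{le:90}, i.e. $\int |\nabla|^\al \Theta \cdot |\Theta|^{p-2}\Theta\, d\xi \geq 0$. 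Combining these ingredients with the zero-order coefficient $2-\f{1}{\al}$ in front of $\Theta$ gives
\[
\tfrac{1}{p}\tfrac{d}{d\tau}\|\Theta\|_{L^p}^p \leq \left(2-\tfrac{1}{\al} - \tfrac{2}{\al p}\right) \|\Theta\|_{L^p}^p,
\]
and Gronwall's inequality yields \eqref{800}.

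For the endpoint cases $p=1$ and $p=\infty$, I would argue by approximation: pass to the limit $p\to 1^+$ (respectively $p\to \infty$) in the inequality above on each fixed time interval $[0,\tau]$, using the monotonicity in $p$ of the $L^p$ norms on bounded sets together with the uniform bounds guaranteed by the global existence in $L^1 \cap L^\infty$ from Lemma \ref{thma1}. Alternatively, for $p=\infty$ one can invoke a maximum-principle/truncation argument directly on the $\Theta$ equation, noting that $-|\nabla|^\al$ satisfies a pointwise positivity property at extrema (as recorded in Lemma \ref{le:90}) and that $U\cdot \nabla \Theta$ is transport, so the only contribution is from the zero-order term $2-\f{1}{\al}$.

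No serious obstacle is expected: the calculation is a direct energy estimate, and all the dangerous nonlinear terms either integrate to zero by the divergence-free condition or produce a nonnegative contribution that can be discarded. The only subtlety is bookkeeping of the precise exponent, which comes from combining the $-\f{2}{\al p}$ from the scaling term with the $+2-\f{1}{\al}$ from the zero-order coefficient of $(\cl + 1 - \f{1}{\al})$; this is the arithmetic that produces the exponent $2-\f{1}{\al}-\f{2}{\al p}$ in \eqref{800}.
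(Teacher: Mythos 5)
Your proposal is correct and follows essentially the same route as the paper: pair the scaled $\Theta$-equation with $|\Theta|^{p-2}\Theta$, discard the nonnegative fractional dissipation via Lemma \ref{le:90}, use $\nabla\cdot U=0$ to kill the transport term, collect the $-\f{2}{\al p}$ from the scaling term with the $2-\f{1}{\al}$ zero-order coefficient, and apply Gronwall. The paper's proof is just a more terse version of the same computation (it does not even spell out the endpoint cases), so there is nothing to add.
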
 
 \begin{proof}
  We take a dot product of the $\Theta$ equation in \eqref{81}with $|\Theta|^{p-2} \Theta$, $p\geq 1$.  We obtain 
  $$
  \f{1}{p} \p_\tau \|\Theta\|_{L^p}^p+ \int_{\rtwo} |\nabla|^\al \Theta |\Theta|^{p-2} \Theta d\xi =(2-\f{1}{\al}- \f{2}{\al p}) \|\Theta\|_{L^p}^p.
  $$
  Recall however that $\int_{\rtwo} |\nabla|^\al \Theta |\Theta|^{p-2} \Theta d\xi\geq 0$, by Lemma \ref{le:90}. Thus, integrating this inequality  yields \eqref{800}. 
 \end{proof}
 Lemma \ref{L_-12} provides us with a decay rate for $\Theta(\tau, \cdot)$ for $1 \leq p < \f{2}{2 \al- 1}$, but clearly an increasing exponential bound for $p \geq \f{2}{2 \al- 1}$. However, we can use it  to get a decay rate for any $p \geq 1$. 
  \begin{lemma}
  		\label{L_-122} 
 	Let $p \geq 1$,   and $ \Theta_0 \in L^1\cap L^\infty(\rtwo)\cap H^\al(\rtwo)$, $W_0\in L^2$. Then for any $\tau>0$, $\Theta \in C^0([0, \tau]; L^p)$,  there  
 	exists $C= C_{\al,p}$ such that 
 	\begin{equation}
 	\label{8000}
 	\|\Theta(\tau, \cdot)\|_{L^p} \leq  C_{\al,p}  \|\Theta_0\|_{L^p(\rtwo)} e^{(2 - \f{3}{\al}) \tau}.
 	\end{equation}
 \end{lemma}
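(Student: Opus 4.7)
The proof follows the same bootstrap template as Lemma \ref{lem3}, with Lemma \ref{L_-12} at $p=1$ playing the role of the $L^1$ anchor. I begin by testing the $\Theta$ equation in \eqref{81} against $|\Theta|^{p-2}\Theta$. The transport term $\int (U\cdot\nabla\Theta)|\Theta|^{p-2}\Theta\,d\xi$ vanishes because $\nabla\cdot U=0$, and integration by parts converts the dilation term $\tfrac{1}{\al}\int(\xi\cdot\nabla\Theta)|\Theta|^{p-2}\Theta\,d\xi$ into $-\tfrac{2}{\al p}\|\Theta\|_{L^p}^p$. For the fractional dissipation term I apply the strong coercivity \eqref{a:31}, which produces a lower bound proportional to $\|\Theta\|_{L^{2p/(2-\al)}}^p$. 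The outcome is the differential inequality
\begin{equation*}
\f{1}{p}\p_\tau\|\Theta\|_{L^p}^p + \f{c_{p,\al}}{p}\|\Theta\|_{L^{2p/(2-\al)}}^p \leq \Bigl(2-\f{1}{\al}-\f{2}{\al p}\Bigr)\|\Theta\|_{L^p}^p,
\end{equation*}
which is precisely the analogue of the starting estimate used in Lemma \ref{lem3}, with the shift $(1+\tfrac{\be-1}{\al})$ replaced by $(2-\tfrac{1}{\al})$.

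Next I add $C\|\Theta\|_{L^p}^p$ on both sides with $C$ a large absolute constant (of order $\mu:=2-\tfrac{3}{\al}$) and then invoke the Gagliardo--Nirenberg interpolation
\begin{equation*}
\|\Theta\|_{L^p}\leq \|\Theta\|_{L^{2p/(2-\al)}}^{\gamma}\|\Theta\|_{L^1}^{1-\gamma},\qquad \gamma=\f{2p-2}{2p-2+\al},
\end{equation*}
raised to the $p$-th power, followed by Young's inequality with conjugate exponents $1/\gamma, 1/(1-\gamma)$. This bounds $\|\Theta\|_{L^p}^p$ by $\eps\|\Theta\|_{L^{2p/(2-\al)}}^p + C_\eps\|\Theta\|_{L^1}^p$; choosing $\eps$ small enough absorbs the fractional term into the coercive left-hand side.

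At this point the $L^1$ bound from Lemma \ref{L_-12} at $p=1$ gives precisely $\|\Theta(\tau,\cdot)\|_{L^1}\leq C\|\Theta_0\|_{L^1}e^{\mu\tau}$, with $\mu=2-\tfrac{3}{\al}$. Substituting yields the scalar ODE inequality
\begin{equation*}
\f{d}{d\tau}\|\Theta\|_{L^p}^p + p(\mu+1)\|\Theta\|_{L^p}^p \leq D\|\Theta_0\|_{L^1}^p e^{p\mu\tau}
\end{equation*}
with $D=D(p,\al)$ explicit. Standard Gronwall produces $\|\Theta(\tau)\|_{L^p}^p \leq e^{-p(\mu+1)\tau}\|\Theta_0\|_{L^p}^p+\tfrac{D}{p}\|\Theta_0\|_{L^1}^p e^{p\mu\tau}$, and taking $p$-th roots gives \eqref{8000} for $1<p<\infty$; the $p=\infty$ case follows by letting $p\to\infty$ after checking that $(D/p)^{1/p}\to 1$, exactly as in the concluding step of Lemma \ref{lem3}.

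I do not expect any genuine obstacle, since the argument is a direct transcription of the bootstrap used for the SQG case. The only step requiring any care is tracking the $p$-dependence of the Young constant $D$ to make the $p\to\infty$ limit legitimate, and monitoring the sign of $\mu=2-\tfrac{3}{\al}$ (it is positive exactly in the regime $\al<3/2$, but the inequality remains valid for all $\al>1$ with the exponential prefactor interpreted accordingly). All other manipulations -- the vanishing of the transport term, the coercivity estimate, the interpolation, and Gronwall -- are routine.
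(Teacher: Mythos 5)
Your proof is correct and follows essentially the same route as the paper's (which itself opens with ``Similar to Lemma \ref{lem3}''): coercivity via \eqref{a:31}, Gagliardo--Nirenberg interpolation of $\|\Theta\|_{L^p}$ against $\|\Theta\|_{L^1}$, Young's inequality to absorb the $L^{2p/(2-\al)}$ term, the $p=1$ decay from Lemma \ref{L_-12} as the anchor, and Gronwall; indeed you are slightly more careful than the paper's writeup, which drops the damping term before applying Gronwall. Two parenthetical slips worth fixing: $\mu=2-\f{3}{\al}$ is \emph{negative}, not positive, for $\al<\f{3}{2}$, and for $\al$ near $1$ the added damping constant must exceed $|\mu|=\f{3}{\al}-2$ (the choice $\mu+1$ fails once $|\mu|\geq\f{1}{2}$, i.e.\ $\al\leq\f{6}{5}$) so that the Gronwall convolution returns the forcing rate $e^{p\mu\tau}$ --- your phrase ``a large absolute constant'' is the right fix.
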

\begin{proof}
Similar to Lemma  \ref{lem3}, we have the following energy estimate 
	  $$
	\f{1}{p} \p_\tau \|\Theta \|_{L^p}^p+  c_{p,\al} \|\Theta\|_{L^{\f{2p}{2- \al}}}^{p} \leq (2-\f{1}{\al}- \f{2}{\al p}) \|\Theta\|_{L^p}^p.
	$$
	In other words 
	\begin{eqnarray*}
 \p_\tau \|\Theta\|_{L^p}^p&+& p c_{p,\al}  \|\Theta\|_{L^{\f{2p}{2- \al}}}^{p} \leq p (2-\f{1}{\al}- \f{2}{\al p}) \|\Theta\|_{L^p}^p \leq  (2-\f{1}{\al}- \f{2}{\al p})  \|\Theta\|^{\ga p}_{L^{\f{2p}{2- \al}}} \|\Theta\|_{L^1}^{(1- \ga)p}\\
	&\leq& \f{[p (2-\f{1}{\al}- \f{2}{\al p})]^{\f{1}{1- \ga}}}{\eps^{^{\f{\ga}{1- \ga}}}}  \|\Theta\|^{\ga p}_{L^{\f{2p}{2- \al}}} \|\Theta\|_{L^1}^{(1- \ga)p} \leq p (2-\f{1}{\al}- \f{2}{\al p}) \|\Theta\|_{L^1}^{p}+ \eps  \|\Theta\|^{ p}_{L^{\f{2p}{2- \al}}}. 
	\end{eqnarray*}
Now we use \eqref{800} with $p=1$ to get the following energy estimate 
	\begin{eqnarray*}
	\p_\tau \|\Theta\|_{L^p}^p&+& (pc_{p,\al}   - \eps) \|\Theta\|_{L^{\f{2p}{2- \al}}}^{p}  \leq p (2-\f{1}{\al}- \f{2}{\al p}) \|\Theta\|_{L^1}^{p} \leq p (2-\f{1}{\al}- \f{2}{\al p}) e^{p (2- \f{3}{\al}) \tau}. 
\end{eqnarray*}
Finally, we use Gronwall's inequality to finish the proof. 
\end{proof}

We can use above Lemma to find some decay rate for $U(\tau, \cdot)$. We need this to be able to get some bounds for $W$ in higher $L^p$ spaces.
\begin{lemma}
	\label{L_-1222} 
	Let  $U_0 \in L^2(\rtwo)$.  There  
	exists $C= C_{\al,p}$, such that  for any $\tau>0$, $U \in C^0([0, \tau]; L^2)$ and 
	\begin{equation}
	\label{888}
	\|U(\tau, \cdot)\|_{L^2} \leq  C_{\al,p}  \|U_0\|_{L^2(\rtwo)} e^{(2 - \f{3}{\al}) \tau}.
	\end{equation}
\end{lemma}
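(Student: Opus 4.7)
\textbf{Proof proposal for Lemma \ref{L_-1222}.}

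The plan is to run a direct $L^2$ energy estimate on the equation \eqref{UU} and then feed in the already-established decay bound for $\Theta$ from Lemma \ref{L_-122}.  Recall that (with $\be = 1$) the operator $\cl$ acts as $\cl U = -|\nabla|^{\al} U + \f{1}{\al}\,\xi\cdot\nabla_\xi U + U$, so \eqref{UU} reads
\begin{equation*}
U_\tau = -|\nabla|^{\al} U + \f{1}{\al}\,\xi\cdot\nabla_\xi U + \left(1-\f{1}{\al}\right)U - (U\cdot\nabla_\xi U) - \nabla P + \Theta\, e_2.
\end{equation*}
Taking the $L^2$ inner product with $U$, the transport term $\dpr{U\cdot\nabla U}{U}$ and the pressure term $\dpr{\nabla P}{U}$ vanish by $\nabla\cdot U=0$; the fractional Laplacian contributes $-\||\nabla|^{\al/2}U\|_{L^2}^2 \le 0$; and integration by parts gives $\dpr{\f{1}{\al}\xi\cdot\nabla U}{U} = -\f{1}{\al}\|U\|_{L^2}^2$ (because $\nabla\cdot\xi = 2$ in $\rtwo$).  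For the forcing term, Cauchy--Schwarz yields $|\dpr{\Theta e_2}{U}| \le \|\Theta\|_{L^2}\|U\|_{L^2}$.

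Collecting the contributions and discarding the dissipation produces
\begin{equation*}
\f{1}{2}\,\p_\tau \|U\|_{L^2}^2 \leq \left(1 - \f{2}{\al}\right)\|U\|_{L^2}^2 + \|\Theta\|_{L^2}\|U\|_{L^2},
\end{equation*}
which, after dividing by $\|U\|_{L^2}$, is the scalar ODE inequality
\begin{equation*}
\p_\tau \|U(\tau)\|_{L^2} \leq \left(1 - \f{2}{\al}\right)\|U(\tau)\|_{L^2} + \|\Theta(\tau)\|_{L^2}.
\end{equation*}
By Lemma \ref{L_-122} applied with $p=2$, we have $\|\Theta(\tau)\|_{L^2} \leq C_\al \|\Theta_0\|_{L^2} e^{(2-\f{3}{\al})\tau}$.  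An integrating-factor computation with weight $e^{-(1-\f{2}{\al})\tau}$ then gives
\begin{equation*}
\|U(\tau)\|_{L^2} \leq e^{(1-\f{2}{\al})\tau}\|U_0\|_{L^2} + C_\al \|\Theta_0\|_{L^2}\int_0^\tau e^{(1-\f{2}{\al})(\tau-s)} e^{(2-\f{3}{\al})s}\,ds.
\end{equation*}
Because $(2-\f{3}{\al}) - (1-\f{2}{\al}) = 1 - \f{1}{\al} > 0$ for $\al > 1$, the exponent $2 - \f{3}{\al}$ strictly dominates $1 - \f{2}{\al}$, and the integral evaluates to a constant multiple of $e^{(2-\f{3}{\al})\tau}$.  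Absorbing $\|\Theta_0\|_{L^2}$ into the constant (which is already allowed by the $\al$-dependent constant $C_{\al,p}$ in the statement, interpreted as depending on the initial data of the coupled system), we obtain the asserted bound
\begin{equation*}
\|U(\tau,\cdot)\|_{L^2} \leq C_{\al,p} \|U_0\|_{L^2} e^{(2-\f{3}{\al})\tau}.
\end{equation*}

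The only subtle point, and the mildly nontrivial step, is keeping track of which exponential rate wins after the Duhamel-type integration; once one observes that the forcing rate $2-\f{3}{\al}$ is strictly larger than the homogeneous rate $1-\f{2}{\al}$, the computation closes without obstacle.  Global existence of $U \in C([0,\tau];L^2)$ is not a separate matter: it follows from the global well-posedness already established in Lemma \ref{L_-11} together with the change of variables defining $U$ from $u$.
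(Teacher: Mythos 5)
Your proof is correct and follows essentially the same route as the paper: an $L^2$ energy estimate on \eqref{UU} in which the transport and pressure terms vanish by incompressibility, the dilation term contributes $-\f{2}{\al}\|U\|_{L^2}^2$, the forcing is controlled by the $\Theta$ decay from Lemma \ref{L_-122} with $p=2$, and one closes with Gronwall. The only (cosmetic) difference is that you divide by $\|U\|_{L^2}$ to get a first-order scalar inequality, whereas the paper applies Young's inequality $\|\Theta\|_{L^2}\|U\|_{L^2}\leq \eps\|U\|_{L^2}^2+C_\eps\|\Theta\|_{L^2}^2$ before invoking Gronwall.
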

\begin{proof}
	If we dot product the equation \eqref{UU} with $U$ we get the following relation
	\begin{eqnarray*}
\f{1}{2} \partial_{\tau} \|U\|_{L^2}^2+ \| |\nabla|^{\f{\al}{2}}U\|_{L^2}^2= \f{1}{\al} \int (\xi \cdot \nabla U) U d\xi+ (1- \f{1}{\al}) \|U\|_{L^2}^2+ \int \theta \cdot U d\xi.
	\end{eqnarray*}
Then
	\begin{eqnarray*}
 \partial_{\tau} \|U\|_{L^2}^2+2  \| |\nabla|^{\f{\al}{2}}U\|_{L^2}^2&=& 2 (1- \f{2}{\al}) \|U\|_{L^2}^2+ \int \theta \cdot U d\xi \leq 2 (1- \f{2}{\al}) \|U\|_{L^2}^2+ \|\Theta\|_{L^2} \|U\|_{L^2}\\
 &\leq& 2 (1- \f{2}{\al}+ \eps) \|U\|_{L^2}^2+ C_{\eps} \|\Theta\|^2_{L^2} \leq 2 (1- \f{2}{\al}+ \eps) \|U\|_{L^2}^2+ C_{\eps} e^{2(2- \f{3}{\al}) \tau}.
\end{eqnarray*}
We finish the proof by the Gronwall's inequality.
\end{proof}
 The next lemma provides {\it a priori} estimates for $W$ and $\Theta$ in $L^2$ spaces, which allows  us to conclude global regularity. 
\begin{lemma}
	\label{L_-133} 
	Let   $\al \in (1, \f{3}{2})$,  $W_0\in L^2$.  Then the solution $W$ of \eqref{81}, satisfies    
	\begin{eqnarray}
	\label{dec:1}
	&&\  \|W(\tau, \cdot)\|_{L^2}+ \|\Theta(\tau, \cdot)\|_{L^2}\leq  C   e^{(2- \f{3}{\al}) \tau},\\ 
	\label{estt:1}
	&&	\sup_{0\leq \tau<\infty} \int_0^{\tau} \bigg( \||\nabla|^{\f{\al}{2}} W(s)\|_{L^2}^2+ \| |\nabla|^{\f{\al}{2}} 
	\Theta(s)\|_{L^2}^2 \bigg)  d s \leq C
	\end{eqnarray}
	for some $C=C(\|W_0\|_{L^2}, \|\Theta_0\|_{L^2}, \al)$, independent on $\tau$.  
\end{lemma}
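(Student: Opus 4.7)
The strategy is coupled energy estimates for $(W, \Theta)$, leveraging the $L^2$ decay of $\Theta$ already proved in Lemma \ref{L_-122} applied at $p=2$, which directly gives $\|\Theta(\tau)\|_{L^2}\leq Ce^{(2-3/\al)\tau}$. First, I would dot the $\Theta$ equation in \eqref{81} with $\Theta$ and use $\nabla\cdot U=0$ for the transport term together with the computation $\dpr{(\cl+1-1/\al)\Theta}{\Theta}=-\||\nabla|^{\al/2}\Theta\|_{L^2}^2+(2-2/\al)\|\Theta\|_{L^2}^2$, producing the identity $\p_\tau\|\Theta\|_{L^2}^2+2\||\nabla|^{\al/2}\Theta\|_{L^2}^2=(4-4/\al)\|\Theta\|_{L^2}^2$. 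Since $2-3/\al<0$ for $\al<3/2$, the $L^2$ bound on $\Theta$ from Lemma \ref{L_-122} makes $\int_0^\infty\|\Theta(s)\|_{L^2}^2\,ds$ finite, hence upon integrating the $\Theta$ identity one gets $\int_0^\infty\||\nabla|^{\al/2}\Theta(s)\|_{L^2}^2\,ds\leq C$, which is the $\Theta$-part of \eqref{estt:1}.

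Next, dotting the $W$ equation with $W$ and using $\dpr{\cl W}{W}=-\||\nabla|^{\al/2}W\|_{L^2}^2+(1-1/\al)\|W\|_{L^2}^2$ (since $\be=1$) together with the vanishing of $\dpr{U\cdot\nabla W}{W}$, I arrive at
\begin{equation*}
\tfrac{1}{2}\p_\tau\|W\|_{L^2}^2+\||\nabla|^{\al/2}W\|_{L^2}^2=(1-\tfrac{1}{\al})\|W\|_{L^2}^2+\dpr{\p_1\Theta}{W}.
\end{equation*}
The crucial step is bounding the coupling $\dpr{\p_1\Theta}{W}$ without asking for a full derivative of $W$. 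By Plancherel and Cauchy--Schwarz, $|\dpr{\p_1\Theta}{W}|\leq \||\nabla|^{1-\al/2}\Theta\|_{L^2}\||\nabla|^{\al/2}W\|_{L^2}$; since $1-\al/2<\al/2$ for $\al>1$, the Gagliardo--Nirenberg inequality between $\|\Theta\|_{L^2}$ and $\||\nabla|^{\al/2}\Theta\|_{L^2}$ followed by Young gives $\||\nabla|^{1-\al/2}\Theta\|_{L^2}^2\leq\eps\||\nabla|^{\al/2}\Theta\|_{L^2}^2+C_\eps\|\Theta\|_{L^2}^2$. Summing the two energy inequalities with appropriate weights and absorbing the gradient terms on the left produces
\begin{equation*}
\p_\tau(\|W\|^2+\|\Theta\|^2)+c(\||\nabla|^{\al/2}W\|^2+\||\nabla|^{\al/2}\Theta\|^2)\leq C_1\|W\|^2+C_2\|\Theta\|^2,
\end{equation*}
and a Gronwall argument, combined with the decay of $\|\Theta(s)\|_{L^2}$ from Lemma \ref{L_-122}, yields the bounds claimed, with the integrated estimate \eqref{estt:1} falling out upon a further time integration.

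The main obstacle is that the coefficient $(1-1/\al)>0$ in the differential inequality for $\|W\|^2$ means that a pointwise Gronwall on the $W$-equation alone only gives $\|W(\tau)\|_{L^2}\leq Ce^{(1-1/\al)\tau}$, which is weaker than the desired $e^{(2-3/\al)\tau}$. To close at the sharper rate I would pass to the Duhamel representation $W(\tau)=e^{\tau\cl}W_0+\int_0^\tau e^{(\tau-s)\cl}[-U\cdot\nabla W+\p_1\Theta](s)\,ds$ and exploit the fact that $\widehat{\p_1\Theta}(0)=0$, so the forcing lies in the mean-zero invariant subspace on which $e^{\tau\cl_0}$ satisfies the improved decay bound \eqref{1888} from Proposition \ref{lem1} at rate $e^{(1-3/\al+\eps)\tau}$. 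Convolving this with the $\Theta$-decay $e^{(2-3/\al)s}$ and handling the transport nonlinearity $U\cdot\nabla W$ using the decay of $U$ from Lemma \ref{L_-1222} produces, after a time integration, the desired $e^{(2-3/\al)\tau}$ bound on $\|W(\tau)\|_{L^2}$, thereby upgrading the crude energy bound and completing the argument.
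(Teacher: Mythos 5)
Your setup is sound through the identification of the obstacle: the $\Theta$ energy identity, the integrability of $\int_0^\infty\|\,|\nabla|^{\al/2}\Theta\|_{L^2}^2\,ds$, the bound on the coupling $\dpr{\p_1\Theta}{W}$ via $\||\nabla|^{1-\al/2}\Theta\|_{L^2}\||\nabla|^{\al/2}W\|_{L^2}$ and Gagliardo--Nirenberg, and the observation that the positive coefficient $(1-\f{1}{\al})$ blocks a naive Gronwall on $\|W\|_{L^2}^2$ are all exactly as in the paper. The gap is in your proposed repair. Proposition \ref{lem1} (and its consequence \eqref{1888}) is an $L^2(2)\to L^2(2)$ estimate: the improved rate $e^{(1-\f{3}{\al}+\eps)\tau}$ for mean-zero data is proved using $|\hat f(q)-\hat f(0)|\leq C|q|^{1-\eps}\|f\|_{L^2(2)}$ and is a genuinely weighted-space phenomenon. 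The present lemma assumes only $W_0\in L^2$, and on plain $L^2$ the only available semigroup bound is \eqref{202}, which with $\be=1$, $p=2$ gives the \emph{growing} factor $e^{(1-\f{1}{\al})\tau}$ with no gain from mean-zero structure. So the Duhamel route you sketch cannot close under the stated hypotheses (and would in any case require controlling $\|U\cdot W\|_{L^2}$, i.e.\ $\|U\|_{L^\infty}$, which is not yet available at this stage of the bootstrap).

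The paper's resolution is more elementary and stays entirely inside the energy method: it uses the Biot--Savart relation to write $\|W\|_{L^2}\sim\|\nabla U\|_{L^2}$, interpolates
$\|\nabla U\|_{L^2}\leq \|U\|_{L^2}^{\ga}\||\nabla|^{1+\f{\al}{2}}U\|_{L^2}^{1-\ga}\sim \|U\|_{L^2}^{\ga}\||\nabla|^{\f{\al}{2}}W\|_{L^2}^{1-\ga}$,
and applies Young's inequality so that the offending term $(A+1-\f{1}{\al})\|W\|_{L^2}^2$ becomes $\eps\||\nabla|^{\f{\al}{2}}W\|_{L^2}^2+C_\eps\|U\|_{L^2}^2$; the first piece is absorbed by the dissipation and the second decays like $e^{2(2-\f{3}{\al})\tau}$ by Lemma \ref{L_-1222} (which you do cite, but for a different purpose). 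Adding $A(\|W\|_{L^2}^2+\|\Theta\|_{L^2}^2)$ to both sides with $A=2(\f{3}{\al}-2)>0$ then yields the decay \eqref{dec:1} by Gronwall and the integrated bound \eqref{estt:1} simultaneously. If you replace your Duhamel step with this interpolation trick, your argument becomes essentially the paper's proof.
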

\begin{proof}
	We dot product the first equation in \eqref{81} with $W$, and the second equation with $\Theta$. We also use the trick from  Lemma  \ref{lem3} - we add the term $A ( \|W\|_{L^2}^2+ \|\Theta\|_{L^2}^2)$, where  $A$ is a large constant to be determined. Then
	\begin{eqnarray*}
		\f{1}{2} \f{d}{d t} \bigg( \|W\|_{L^2}^2+ \|\Theta\|_{L^2}^2\bigg)&+&  A ( \|W\|_{L^2}^2+ \|\Theta\|_{L^2}^2)+  \||\nabla|^{\f{\al}{2}}W\|_{L^2}^2+ \| |\nabla|^{\f{\al}{2}} \Theta\|_{L^2}^2 \\
		&\leq& \big| \int \partial_1 \Theta W d\xi\big|+ (A+ 1- \f{1}{\al}) \|W\|_{L^2}^2+ (A+ 2- \f{2}{\al}) \|\Theta\|_{L^2}^2.
	\end{eqnarray*}
	But by Gagliardo-Nirenberg (and taking into account that $1-\f{\al}{2}<\f{\al}{2}$ ) and Young's inequalities, 
	\begin{eqnarray*}
		\big| \int \partial_1 \Theta W d\xi\big| &\leq& \| |\nabla|^{1- \f{\al}{2}} \Theta \|_{L^2}  \| |\nabla|^{ \f{\al}{2}} W \|_{L^2} \leq 
		\epsilon \| |\nabla|^{\f{\al}{2}} \Theta \|^2_{L^2}+ \epsilon \| |\nabla|^{ \f{\al}{2}} W \|^2_{L^2}+ C_{\epsilon} \| \Theta \|^2_{L^2}\\
		&\leq&	\epsilon \| |\nabla|^{\f{\al}{2}} \Theta \|^2_{L^2}+ \epsilon \| |\nabla|^{ \f{\al}{2}} W \|^2_{L^2}+ C_{\epsilon} e^{2(2- \f{3}{\al}) \tau}.
	\end{eqnarray*}
	where we have used the estimate for $\|\Theta\|_{L^2}$ from \eqref{800}, with $p=2$. We also have 
\begin{eqnarray*}
(A+ 1- \f{1}{\al}) \|W\|_{L^2}^2 &\leq& C (A+ 1- \f{1}{\al}) \|\nabla U\|_{L^2}^2 \leq C(A+ 1- \f{1}{\al}) \|U\|_{L^2}^{2\ga} \| |\nabla|^{1+ \f{\al}{2}} U\|_{L^2}^{2(1- \ga)}\\
 &\leq& C(A+ 1- \f{1}{\al}) \|U\|_{L^2}^{2\ga} \| |\nabla|^{\f{\al}{2}} W\|_{L^2}^{2(1- \ga)} \leq \epsilon \| |\nabla|^{ \f{\al}{2}} W \|^2_{L^2}+ \f{[C(A+ 1- \f{1}{\al})]^{\f{1}{1- \ga}}}{\eps^{\f{\ga}{1- \ga}}} \|U\|_{L^2}^2\\
&\leq& \epsilon \| |\nabla|^{ \f{\al}{2}} W \|^2_{L^2}+ \f{[C(A+ 1- \f{1}{\al})]^{\f{1}{1- \ga}}}{\eps^{\f{\ga}{1- \ga}}}\ e^{2(2- \f{3}{\al}) \tau}.
\end{eqnarray*}
Considering the estimate for $\|\Theta\|_{L^2}$ from \eqref{800}(with $p=2$) 
	\begin{eqnarray*}
		 \f{d}{d t} \bigg( \|W\|_{L^2}^2+  \|\Theta\|_{L^2}^2\bigg) &+& 2 A ( \|W\|_{L^2}^2+ \|\Theta\|_{L^2}^2)+  2 (1- 2 \epsilon) \||\nabla|^{\f{\al}{2}} W\|_{L^2}^2+ 2 (1-2  \epsilon) \| |\nabla|^{\f{\al}{2}} \Theta\|_{L^2}^2\\
		 &\leq&   \f{2[C(A+ 1- \f{1}{\al})]^{\f{1}{1- \ga}}}{\eps^{\f{\ga}{1- \ga}}}\ e^{2(2- \f{3}{\al}) \tau}.
	\end{eqnarray*}
	We choose $A= 2 (\f{3}{\al}- 2)$ (recall $\al<\f{3}{2}$).  Then the last relation has two consequences. First we can drop the term $2 (1- 2 \epsilon) \||\nabla|^{\f{\al}{2}}W\|_{L^2}^2+ 2 (1- 2 \epsilon) \| |\nabla|^{\f{\al}{2}} \Theta\|_{L^2}^2$, so 
	\begin{eqnarray*}
		\f{d}{d t} \bigg( \|W\|_{L^2}^2+ \|\Theta\|_{L^2}^2\bigg)+ 4 (\f{3}{\al}- 2) ( \|W\|_{L^2}^2+ \|\Theta\|_{L^2}^2)  \leq  \f{[C( \f{5}{\al}- 3)]^{\f{1}{1- \ga}}}{\eps^{\f{\ga}{1- \ga}}}\ e^{2(2- \f{3}{\al}) \tau}.
	\end{eqnarray*} 	
	and then  use the Gronwall's inequality for the following inequality and get the decay rate \eqref{dec:1}.  The second consequence is that we get 
	$$
 \int_0^\tau (\||\nabla|^{\f{\al}{2}} W(t)\|_{L^2}^2+  \| |\nabla|^{\f{\al}{2}} \Theta(t)\|_{L^2}^2) dt \leq \bigg( \|W_0\|_{L^2}^2+ 
	\|\Theta_0\|_{L^2}^2\bigg)+ \f{C_\eps}{2(\f{3}{\al}-2)}.
	$$
	This implies \eqref{estt:1}. 
\end{proof}

 We shall need some {\it a priori} estimates for $\|W\|_{L^p}$ for some $p>2$, as  these will be necessary in our subsequent considerations.  This turns out to be non-trivial.  It turns out that it is easier to control $\|W\|_{H^1}, \|\Theta\|_{H^1}$ and then use Sobolev embedding to control $\|W\|_{L^p}, \|\Theta\|_{L^p}, 1<p<\infty$.  In this way, we get the control needed, but we end up needing to require  smoother $H^1$ initial data. 
 \begin{proposition}
 	\label{prop:15} 
 	 $ W_0, \Theta_0 \in H^1$.  Then, the global solution satisfies 
 	 $W, \Theta \in C^0([0, \tau];H^1(\rtwo))$.  Moreover,  
 	\begin{eqnarray}
 	\label{560}
   \|W(\tau)\|_{H^1}+ \| \Theta(\tau)\|_{H^1} \leq C e^{(2- \f{3}{\al}) \tau}. 
 	\end{eqnarray}
 	$C=C(\|W_0\|_{H^1}, \|\Theta_0\|_{H^1}, \al)$, independent on $\tau$.  
 \end{proposition}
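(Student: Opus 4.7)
My plan is to promote the $L^{2}$ energy estimate of Lemma \ref{L_-133} to $H^{1}$, using the $L^{2}$ decay already established there as an input for a Gronwall argument. Specifically, I take the $L^{2}$-inner product of the $W$-equation in \eqref{81} with $-\Delta W$ and of the $\Theta$-equation with $-\Delta\Theta$. Using integration by parts, $\nabla\cdot U=0$, and the explicit form of $\mathcal{L}$ with $\beta=1$, a direct computation shows that the scaling term $\tfrac{1}{\alpha}\xi\cdot\nabla$ contributes zero in dimension two (its two IBP contributions cancel), the dissipative part yields $-\||\nabla|^{1+\alpha/2}W\|_{L^{2}}^{2}$ (resp.\ for $\Theta$), the zero-order part of $\mathcal{L}$ produces $\|\nabla W\|_{L^{2}}^{2}$ (resp.\ $(2-\tfrac{1}{\alpha})\|\nabla\Theta\|_{L^{2}}^{2}$), the transport term reduces after IBP to $\langle\nabla U\cdot\nabla W,\nabla W\rangle$ (and an analogous expression for $\Theta$), and the coupling contributes $\langle\partial_{1}\Theta,-\Delta W\rangle$.

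To close the estimate, I bound the transport and coupling terms by Gagliardo-Nirenberg plus Young's inequality, absorbing the highest-order piece into dissipation. For the transport term, combining the Biot--Savart bound $\|\nabla U\|_{L^{2}}\lesssim\|W\|_{L^{2}}$, the 2D Sobolev embedding $\|\nabla W\|_{L^{4}}\lesssim\||\nabla|^{3/2}W\|_{L^{2}}$, the interpolation
\[
\||\nabla|^{3/2}W\|_{L^{2}}^{2}\leq C\,\|\nabla W\|_{L^{2}}^{2(\alpha-1)/\alpha}\,\||\nabla|^{1+\alpha/2}W\|_{L^{2}}^{2/\alpha}
\]
(admissible since $\alpha>1$ places $3/2$ strictly between $1$ and $1+\alpha/2$), and Young's inequality with conjugate exponents $(\alpha/(\alpha-1),\alpha)$, I obtain
\[
|\langle\nabla U\cdot\nabla W,\nabla W\rangle|\leq\epsilon\,\||\nabla|^{1+\alpha/2}W\|_{L^{2}}^{2}+C_{\epsilon}\,\|W\|_{L^{2}}^{\alpha/(\alpha-1)}\,\|\nabla W\|_{L^{2}}^{2},
\]
and the analogous bound controls the $\Theta$-transport. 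For the coupling I rewrite $\partial_{1}=R_{1}|\nabla|$, apply Cauchy-Schwarz to split $|\nabla|^{2}W=|\nabla|^{\alpha/2}\cdot|\nabla|^{2-\alpha/2}W$, and interpolate $\||\nabla|^{2-\alpha/2}W\|_{L^{2}}$ between $\|\nabla W\|_{L^{2}}$ and $\||\nabla|^{1+\alpha/2}W\|_{L^{2}}$ to obtain a bound of the form $\epsilon(\||\nabla|^{1+\alpha/2}W\|_{L^{2}}^{2}+\||\nabla|^{1+\alpha/2}\Theta\|_{L^{2}}^{2})+C_{\epsilon}\|\Theta\|_{L^{2}}^{2}$. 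The linear remainders $\|\nabla W\|_{L^{2}}^{2},\|\nabla\Theta\|_{L^{2}}^{2}$ on the right-hand side of the identity are absorbed by the same type of interpolation into dissipation plus $\|W\|_{L^{2}}^{2}+\|\Theta\|_{L^{2}}^{2}$.

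Setting $f(\tau):=\|\nabla W(\tau)\|_{L^{2}}^{2}+\|\nabla\Theta(\tau)\|_{L^{2}}^{2}$, adding $A\,f(\tau)$ to both sides for a large constant $A>\tfrac{3}{\alpha}-2$ and reabsorbing once more into dissipation and $L^{2}$ forcing, I arrive at
\[
f'(\tau)+2A\,f(\tau)\leq C\,\|W(\tau)\|_{L^{2}}^{\alpha/(\alpha-1)}\,f(\tau)+C_{A}\,\bigl(\|W(\tau)\|_{L^{2}}^{2}+\|\Theta(\tau)\|_{L^{2}}^{2}\bigr).
\]
Because $\alpha\in(1,\tfrac{3}{2})$, Lemma \ref{L_-133} both supplies the forcing bound $\|W\|_{L^{2}}^{2}+\|\Theta\|_{L^{2}}^{2}\lesssim e^{2(2-3/\alpha)\tau}$ and makes $c(\tau):=\|W(\tau)\|_{L^{2}}^{\alpha/(\alpha-1)}$ integrable on $\mathbb{R}_{+}$, since its exponent $(2\alpha-3)/(\alpha-1)$ is strictly negative. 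An integrating-factor version of Gronwall, or the variant Lemma \ref{gro}, then yields $f(\tau)\leq C\,e^{2(2-3/\alpha)\tau}$, which combined with the $L^{2}$ bound gives \eqref{560}. Continuity $W,\Theta\in C^{0}([0,T];H^{1})$ follows from a standard local-in-time $H^{1}$ fixed point for the Duhamel formulation, analogous to the argument in Theorem \ref{thm1} using the semigroup estimates of Proposition \ref{prop:20}, extended globally by the a priori bound. The main obstacle is simultaneously controlling the three sources of right-hand-side growth---the positive zero-order coefficients, the transport nonlinearity, and the $\partial_{1}\Theta$ coupling---while preserving the sharp rate $e^{(2-3/\alpha)\tau}$; the range $1<\alpha<\tfrac{3}{2}$ is exactly what makes every interpolation exponent admissible and simultaneously forces $c(\tau)$ to be integrable.
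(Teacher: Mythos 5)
Your proposal is correct and follows the same overall strategy as the paper: an $H^1$ energy estimate (differentiating, or equivalently pairing with $-\Delta$), absorption of all top-order pieces into the fractional dissipation $\||\nabla|^{1+\al/2}\cdot\|_{L^2}^2$ via Gagliardo--Nirenberg and Young, and a Gronwall argument fed by the $L^2$ decay of Lemma \ref{L_-133}. The one genuine difference is the transport term: the paper rewrites it as $\dpr{\nabla\cdot(\p U\, W)}{\p W}$ and invokes the Kato--Ponce product rule (Lemma \ref{kp}) to land on a pure forcing term $C_\eps\|W\|_{L^2}^{3\al/(\al-1)}$, whereas you use the elementary H\"older splitting $L^2\times L^4\times L^4$ together with $\|\nabla U\|_{L^2}\lesssim\|W\|_{L^2}$, the embedding $\dot H^{1/2}\hookrightarrow L^4$, and interpolation of $\||\nabla|^{3/2}W\|_{L^2}$ between $\|\nabla W\|_{L^2}$ and $\||\nabla|^{1+\al/2}W\|_{L^2}$; this leaves a multiplicative Gronwall coefficient $\|W(\tau)\|_{L^2}^{\al/(\al-1)}$ rather than a forcing term. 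Your route is slightly more elementary (no commutator/product machinery) at the price of requiring the coefficient to be integrable in $\tau$, which your exponent computation $(2\al-3)/(\al-1)<0$ correctly verifies for $\al\in(1,\f{3}{2})$ -- exactly the range of the proposition -- so the final rate $e^{2(2-\f{3}{\al})\tau}$ for $\|\nabla W\|_{L^2}^2+\|\nabla\Theta\|_{L^2}^2$ comes out the same.
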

  \begin{proof}
 	Local well-posedness in the space $H^1$, for the original (unscaled) equations works as in Lemma \ref{L_-11}, so we omit it. Thus, we have local solutions for the scaled system as well. We now need to establish {\it a priori} estimates to show that these are global. 
 	
 	We differentiate each of the equations in \eqref{81}.  Then,  we dot product it with\footnote{Here $\p$ means either $\p_1$ or $\p_2$}  $\p W$ and $\p \Theta$ respectively. We add the two resulting equations to obtain the following energy inequality 
 	\begin{eqnarray*}
 	& & 	\f{1}{2} \f{d}{d t} \bigg( \|\p W\|_{L^2}^2+ \|\p \Theta\|_{L^2}^2\bigg)+  \||\nabla|^{\f{\al}{2}+1}W\|_{L^2}^2+ \| |\nabla|^{\f{\al}{2}+1} \Theta\|_{L^2}^2 \leq \\
 		&\leq & \big| \int \partial_1 \p \Theta \p W d\xi\big|+ (1- \f{1}{\al}) \|\p W\|_{L^2}^2+ 2(1- \f{1}{\al}) \|\p \Theta\|_{L^2}^2+
 		|\dpr{\p U \nabla W}{\p W}|+	|\dpr{\p U \nabla \Theta}{\p \Theta}|.
 	\end{eqnarray*}
 	By Gagliardo-Nirenbergs' and Young's 
 	$$
 	\|\p W\|_{L^2}^2+  \|\p \Theta\|_{L^2}^2\leq \eps(\| \nabla|^{\f{\al}{2}+1} W\|_{L^2}^2+ \|\nabla|^{\f{\al}{2}+1} \Theta\|_{L^2}^2)+ 
 	C_\eps(	\|  W\|_{L^2}^2+  \|  \Theta\|_{L^2}^2)
 	$$
 	Next, 
 	$$
 	\big| \int \partial_1 \p \Theta \p W d\xi\big|\leq C \||\nabla|^{\f{\al}{2}+1}\Theta\|_{L^2} \||\nabla|^{2-\f{\al}{2}}W\|_{L^2}\leq \eps(\| \nabla|^{\f{\al}{2}+1} W\|_{L^2}^2+ \|\nabla|^{\f{\al}{2}+1} \Theta\|_{L^2}^2)+ C_\eps 	\|  W\|_{L^2}^2,
 	$$
 	where in the last estimate we have used that $2-\f{\al}{2}<1+\f{\al}{2}$. Finally, 
 	\begin{eqnarray*}
|\dpr{\p U \cdot \nabla W}{\p W}| &=&  |\dpr{\nabla\cdot (\p U W)}{\p W}|\leq C\|\nabla|^{\f{\al}{2}+1} W\|_{L^2}
\||\nabla|^{1-\f{\al}{2}}(\p U\  W)\|_{L^2} \\
&\leq & \eps \|\nabla|^{\f{\al}{2}+1} W\|_{L^2}^2 + C_\eps \||\nabla|^{1-\f{\al}{2}}(\p U\  W)\|_{L^2} ^2 
 	\end{eqnarray*}
 	By product estimates, \eqref{kp} and Sobolev embedding
 		\begin{eqnarray*}
 & & 	\||\nabla|^{1-\f{\al}{2}}(\p U\  W)\|_{L^2} \leq C (\||\nabla|^{1-\f{\al}{2}} \p U\|_{L^{\f{8}{4-\al}}} \|W\|_{L^{\f{8}{\al}}}+ 
 	\||\nabla|^{1-\f{\al}{2}} W\|_{L^{\f{8}{4-\al}}} \|\p U\|_{L^{\f{8}{\al}}}  ) \\
 	&\leq & C \||\nabla|^{1-\f{\al}{4}} \p U\|_{L^2} \||\nabla|^{1-\f{\al}{4}} W\|_{L^2}\leq C \||\nabla|^{1-\f{\al}{4}} W\|_{L^2}^2 \leq 
 	\||\nabla|^{1+\f{\al}{2}} W\|_{L^2}^{\f{2-\f{\al}{2}}{1+\f{\al}{2}}} \|W\|_{L^2}^{\f{\f{3\al}{2}}{1+\f{\al}{2}}},
\end{eqnarray*}
where we have used $\p U\sim W$ (in all Sobolev spaces) and Gagliardo-Nirenberg's. This allows us to estimate by Young's 
$$
|\dpr{\p U \cdot \nabla W}{\p W}| \leq 2\eps \|\nabla|^{\f{\al}{2}+1} W\|_{L^2}^2+C_\eps \|W\|_{L^2}^{\f{3 \al}{\al- 1}}. 
$$
Clearly, the appropriate estimate, obtained in the same way holds for 
$$
|\dpr{\p U \nabla \Theta}{\p \Theta}|\leq 2\eps \|\nabla|^{3 \f{\al}{2}+1} \Theta\|_{L^2}^2+ C_\eps \|W\|_{L^2}^{\f{\al}{\al- 1}}. 
$$
All in all, we obtain 
$$
	\f{1}{2} \f{d}{d t} \bigg( \|\p W\|_{L^2}^2+ \|\p \Theta\|_{L^2}^2\bigg)+ (1-6 \eps)( \||\nabla|^{\f{\al}{2}+1}W\|_{L^2}^2+ \| |\nabla|^{\f{\al}{2}+1} \Theta\|_{L^2}^2) \leq C_\eps (\|W\|_{L^2}^{\f{3\al}{\al- 1}}+ \|  W\|_{L^2}^2+  \|  \Theta\|_{L^2}^2).
$$
Set $\eps=\f{1}{10}$. For every $A>0$, there is $c_{\al, A}$, so that $\||\nabla|^{\f{\al}{2}+1}W\|_{L^2}^2\geq A \|\p W\|_{L^2}^2-c_{A, \al}  \|W\|_{L^2}^2$ and similar for $\Theta$, so we end up with 
$$
\f{d}{d t} \bigg( \|\p W\|_{L^2}^2+ \|\p \Theta\|_{L^2}^2\bigg)+ A\bigg( \|\p W\|_{L^2}^2+ \|\p \Theta\|_{L^2}^2\bigg)\leq C_{A, \al}
 e^{2(2-\f{3}{\al})\tau}. 
$$
where we have used the exponential bounds from \eqref{dec:1}. Setting sufficiently large $A$, namely $A=2(\f{3}{\al}-2)$,  and applying Gronwall's yields the result. 
\end{proof}
As an immediate corollary, we have control of the $L^p$ norms for $W$. 
\begin{corollary}
	\label{cor:lp} 
	Let $W_0, \Theta_0\in H^1$. Then, for all $p\in (2, \infty)$, there is the bound 
	\begin{equation}
	\label{540}
		\|W(\tau, \cdot)\|_{L^p}\leq C(\|W_0\|_{H^1}, \|\Theta_0\|_{H^1}, \al,p)  e^{(2- \f{3}{\al}) \tau}. 
	\end{equation}
\end{corollary}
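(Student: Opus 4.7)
The plan is to deduce this corollary as an immediate consequence of Proposition \ref{prop:15}, together with the two-dimensional Sobolev embedding. Indeed, in $\rtwo$ we have the continuous embedding $H^1(\rtwo)\hookrightarrow L^p(\rtwo)$ for every $p\in [2,\infty)$, with embedding constant $C_p$ depending only on $p$ (via Gagliardo--Nirenberg, one has $\|f\|_{L^p}\leq C_p\|f\|_{L^2}^{2/p}\|\nabla f\|_{L^2}^{1-2/p}$, which trivially yields $\|f\|_{L^p}\leq C_p \|f\|_{H^1}$).

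First, invoke Proposition \ref{prop:15}, which gives the exponential bound
\[
\|W(\tau,\cdot)\|_{H^1}\leq C(\|W_0\|_{H^1},\|\Theta_0\|_{H^1},\al)\,e^{(2-\frac{3}{\al})\tau}.
\]
Then for any fixed $p\in(2,\infty)$, combining with the Sobolev embedding above yields
\[
\|W(\tau,\cdot)\|_{L^p}\leq C_p\,\|W(\tau,\cdot)\|_{H^1}\leq C(\|W_0\|_{H^1},\|\Theta_0\|_{H^1},\al,p)\,e^{(2-\frac{3}{\al})\tau},
\]
which is exactly \eqref{540}. Since Proposition \ref{prop:15} has already been proved and the Sobolev embedding is a standard tool, there is no further obstacle; the exponential rate $2-\tfrac{3}{\al}$ is inherited directly from the $H^1$ estimate without any loss, and no interpolation with a slower-decaying norm is needed. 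In this sense, the corollary is truly just a packaging of Proposition \ref{prop:15} in a form convenient for later sections.
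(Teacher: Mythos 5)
Your proposal is correct and coincides with the paper's intended argument: the text explicitly states before Proposition \ref{prop:15} that one controls $\|W\|_{H^1}$, $\|\Theta\|_{H^1}$ and then uses Sobolev embedding to control the $L^p$ norms, which is exactly your combination of the $H^1$ bound \eqref{560} with $H^1(\rtwo)\hookrightarrow L^p(\rtwo)$. Nothing further is needed.
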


 \subsection{Global regularity for the scaled vorticity Boussinesq problem $(W, \Theta)$ in $L^2(2)\cap L^\infty(\rtwo)$}
 The next Lemma is a local well-posedness result, which is a companion to Theorem \ref{thm1}. 
  	\begin{lemma}
  		\label{thm11}
  		Suppose that $W_0, \Theta_0 \in L^2(2)\cap L^\infty$. Then, there exists time 
  		$T=T(\|(W_0, \Theta_0)\|_{L^2(2)\cap L^\infty})$, so that  the system of equation  \eqref{81} has unique local solution 
  		 $W, \Theta \in C^0([0, T]; L^2(2)\cap L^\infty)$ with $W(0)= W_0$ and  $\Theta(0)= \Theta_0$.

  	\end{lemma}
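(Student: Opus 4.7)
The plan is to run a standard contraction-mapping argument on the Duhamel integral formulation of \eqref{81}. Writing $U=(\nabla^{\perp})^{-1}W$, I seek a fixed point of the map $\Phi(W,\Theta)=(\Phi_1,\Phi_2)$ defined by
\[
\Phi_1(W,\Theta)(\tau)=e^{\tau\cl}W_0 - \int_0^\tau e^{(\tau-s)\cl}\,\nabla\cdot(UW)(s)\,ds + \int_0^\tau e^{(\tau-s)\cl}\p_1\Theta(s)\,ds,
\]
\[
\Phi_2(W,\Theta)(\tau)=e^{\tau(\cl+1-1/\al)}\Theta_0 - \int_0^\tau e^{(\tau-s)(\cl+1-1/\al)}\nabla\cdot(U\Theta)(s)\,ds,
\]
on the closed ball $Y_T:=\{(W,\Theta)\in C([0,T];X\times X):\sup_{0\le s\le T}\|(W(s),\Theta(s))\|_X\le 2M\|(W_0,\Theta_0)\|_X\}$, where $X=L^2(2)\cap L^\infty$, $T\le 1$, and $M$ is the constant arising from the linear estimates at $\tau=1$.

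For the linear parts, Proposition \ref{prop:43} together with the $L^p$ semigroup estimates from Proposition \ref{prop:20} (with $\be=1$) yield, for $0\le\tau\le 1$, $\|e^{\tau\cl}f\|_{L^2(2)}\le Ce^{\tau(1-2/\al)}\|f\|_{L^2(2)}$ and $\|e^{\tau\cl}f\|_{L^\infty}\le Ce^{\tau}\|f\|_{L^\infty}$, together with analogous bounds for $e^{\tau(\cl+1-1/\al)}$ up to a harmless exponential prefactor. For the coupling integral $\int_0^\tau e^{(\tau-s)\cl}\p_1\Theta\,ds$, I push the derivative onto the semigroup and invoke the gradient bounds \eqref{203} and \eqref{19}; the resulting kernel carries a factor $a(\tau-s)^{-1/\al}\sim(\tau-s)^{-1/\al}$ which is integrable near $s=\tau$ since $\al>1$, giving
\[
\Bigl\|\int_0^\tau e^{(\tau-s)\cl}\p_1\Theta\,ds\Bigr\|_X\le CT^{1-1/\al}\sup_{0\le s\le T}\|\Theta(s)\|_X.
\]

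The nonlinear terms are handled exactly as in the proof of Theorem \ref{thm1}. Using $U=(\nabla^\perp)^{-1}W$ together with the Sobolev estimate \eqref{31} (with $\be=1$, $n=2$) gives $\|U\|_{L^\infty}\le C(\|W\|_{L^{2+\eps}}+\|W\|_{L^{2-\eps}})\le C\|W\|_X$, where the last inequality uses the embedding $L^2(2)\cap L^\infty\hookrightarrow L^p$ for all $1\le p\le\infty$. Combining this with the gradient semigroup bounds produces
\[
\Bigl\|\int_0^\tau e^{(\tau-s)\cl}\nabla\cdot(UW)(s)\,ds\Bigr\|_X \le CT^{1-1/\al}\sup_{0\le s\le T}\|W(s)\|_X^2,
\]
and an analogous bound for the $U\Theta$ term. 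Identical reasoning yields the bilinear Lipschitz estimates required for contraction. Choosing $T$ small enough that $CT^{1-1/\al}\|(W_0,\Theta_0)\|_X\le 1/4$ — explicitly, $T\sim\|(W_0,\Theta_0)\|_X^{-\al/(\al-1)}$ — makes $\Phi$ a contraction on $Y_T$ and produces the unique local solution.

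The only mildly new feature compared to Theorem \ref{thm1} is the linear coupling term $\int_0^\tau e^{(\tau-s)\cl}\p_1\Theta\,ds$, which is absent in the scalar SQG analysis; however, since $\al>1$ the singularity $(\tau-s)^{-1/\al}$ is integrable, so this term behaves no worse than the transport nonlinearity and requires no additional ideas. Thus the main (and essentially only) obstacle — balancing the derivative loss in $\p_1\Theta$ against the short-time gain $T^{1-1/\al}$ from the fractional heat kernel — is exactly the same as in the SQG case.
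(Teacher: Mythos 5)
Your proposal is correct and follows essentially the same route as the paper: a contraction on the Duhamel formulation in $X=L^2(2)\cap L^\infty$ on a ball calibrated by the linear semigroup bounds, with the nonlinear terms controlled via $\|U\|_{L^\infty}\lesssim \|W\|_{X}$ from \eqref{31} and the gradient semigroup estimates, and the coupling term $\int_0^\tau e^{(\tau-s)\cl}\p_1\Theta\,ds$ absorbed by the integrable singularity $(\tau-s)^{-1/\al}$ yielding the same $T^{1-1/\al}$ smallness factor. No substantive differences from the paper's argument.
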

 \begin{proof}
We are looking for  strong solutions in the space $X=L^2(2)\cap L^\infty$,  that is a solutions of the following system of integral equations 
\begin{eqnarray*}
	 W(\tau)&=& e^{\tau \mathcal{L}} W_0- \int_0^{\tau} e^{(\tau- s) \mathcal{L}} \nabla (U \cdot W) ds+ \int_0^{\tau} e^{(\tau- s) \mathcal{L}} (\partial_1 \Theta) ds,\\
	 \Theta(\tau)&=& e^{\tau (\cl+1-\f{1}{\al})} \Theta_0- \int_0^{\tau} e^{(\tau- s) (\cl+1-\f{1}{\al})} \nabla (U \cdot \Theta) ds		
\end{eqnarray*}	
For the free solutions, according to \eqref{19} and \eqref{202}, 
\begin{eqnarray*}
	\|e^{\tau \mathcal{L}} W_0\|_{L^2(2)\cap L^\infty}+ \|e^{\tau (\cl+1-\f{1}{\al})} \Theta_0\|_{L^2(2)\cap L^\infty} &\leq& 
	C e^\tau (\|W_0\|_{L^2(2)\cap L^\infty}+ \|\Theta_0\|_{L^2(2)\cap L^\infty}). 
\end{eqnarray*}
For $0<T<1$, to be determined,  introduce the space  
$$
Y_T:= \{(W, \Theta): \sup_{0 \leq \tau \leq T}[\|W(\tau, \cdot)\|_X + \|\Theta(\tau, \cdot)\|_X]\leq 2 C e  (\|W_0\|_{L^2(2)\cap L^\infty}+ \|\Theta_0\|_{L^2(2)\cap L^\infty}). \}.
$$
According to \eqref{70} and \eqref{19},  
\begin{eqnarray*}
	&& \|\int_0^{\tau} e^{(\tau- s) \mathcal{L}} \nabla (U \cdot W) ds\|_{L^2(2)\cap L^\infty} \leq \int_0^{\tau} \f{e^{-\f{(\tau- s)}{\al}} 
		(e^{(1- \f{2}{\al})(\tau- s) }+e^{\tau-s})}{a(\tau- s)^{\f{1}{\al}}} \|  U \cdot W\|_{L^2(2)\cap L^\infty}\ ds \\ 
	&\leq&  C  \sup_{0 \leq \tau \leq T}\|U W\|_{L^2(2)\cap L^\infty}\int_0^{\tau} 
	\f{1}{|\tau-s|^{\f{1}{\al}}} ds
	\leq C  T^{1-\f{1}{\al}}   \sup_{0 \leq \tau \leq T}\|U\|_{L^{\infty}} \sup_{0 \leq \tau \leq T}\|W\|_{L^2(2)\cap L^\infty}. 
\end{eqnarray*}
and similarly 
\begin{eqnarray*}
	&& \|\int_0^{\tau} e^{(\tau- s) (\cl+1-\f{1}{\al})} \nabla (U \cdot \Theta) ds\|_{L^2(2)\cap L^\infty}  	\leq C  T^{1-\f{1}{\al}}   \sup_{0 \leq \tau \leq T}\|U\|_{L^{\infty}} \sup_{0 \leq \tau \leq T}\|\Theta\|_{L^2(2)\cap L^\infty}. 
\end{eqnarray*}
Recalling that $U=(\nabla^\perp)^{-1} W$, we further estimate by \eqref{31}, 
$$
\|U\|_{L^{\infty}}\leq C(\|W\|_{L^{2+\eps}}+\|W\|_{L^{2-\eps}}) \leq C \|W\|_{L^2(2)\cap L^\infty},
$$
since $L^2(2) \hookrightarrow L^{2-\eps}$ and $L^2(2)\cap L^\infty\hookrightarrow L^1 \cap L^\infty\hookrightarrow L^{2+\eps}$. Finally, 
	\begin{eqnarray*}
		&& \|\int_0^{\tau} e^{(\tau- s) \mathcal{L}} (\partial_1  \Theta) ds\|_{L^2(2)\cap L^\infty} \leq  
		C T^{1-\f{1}{\al}}    \sup_{0 \leq \tau \leq T}\|\Theta\|_{L^2(2)\cap L^\infty}.\\
	\end{eqnarray*}		
Clearly, appropriate estimate hold for the differences, whence the integral equations provide a contraction mapping in the space $Y_T$, provided, $T^{1-\f{1}{\al}} << \f{1}{2 C e  (\|W_0\|_{L^2(2)\cap L^\infty}+ \|\Theta_0\|_{L^2(2)\cap L^\infty})}$.
 \end{proof}

 Our next result provides a global regularity for the $W, \Theta$ system in the space $L^2(2)$. 
 	\begin{lemma}
 		\label{thm22}
 	The system of equations \eqref{BSQ1}, with $W_0, \Theta_0 \in X=L^2(2)\cap L^\infty$, and also $W_0, \Theta_0\in H^1(\rtwo)$ has an unique global solution, in   space $X$. There exists $C=C(\|W_0\|_X, \|\Theta\|_X)$ such that  
 		\begin{eqnarray}
 			\label{907} 
 	\sup_{0\leq \tau<\infty} 	\|W(\tau, \cdot)\|_{L^2(2)} + \|\Theta(\tau, \cdot)\|_{L^2(2)}\leq C.
 		\end{eqnarray}
 	\end{lemma}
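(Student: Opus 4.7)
Since Lemma \ref{thm11} already provides local well-posedness in $X = L^2(2) \cap L^\infty$, global existence follows by a standard continuation argument once we establish a uniform-in-$\tau$ a priori bound on $\|W(\tau)\|_X + \|\Theta(\tau)\|_X$. The $L^\infty$ portion is already in hand: for $\Theta$, a maximum-principle argument as in Lemma \ref{thma1} gives $\|\Theta(\tau)\|_{L^\infty} \le \|\Theta_0\|_{L^\infty}$, while for $W$ one combines the $L^p$ bound from Corollary \ref{cor:lp} with a Sobolev-type interpolation. The $L^2$ portion is controlled by \eqref{dec:1}. Hence the only missing ingredient is a uniform bound for the weighted quantity
$$
J(\tau) := \||\xi|^2 W(\tau)\|_{L^2}^2 + \||\xi|^2 \Theta(\tau)\|_{L^2}^2.
$$

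To derive one, I would take the dot product of the first equation in \eqref{81} with $|\xi|^4 W$ and of the second with $|\xi|^4 \Theta$ and add them. Integration by parts on the drift $\frac{1}{\al}\xi\cdot\nabla$ term contributes linear coefficients $1-\frac{3}{\al}$ for $W$ and $2-\frac{4}{\al}$ for $\Theta$; both are strictly negative on $\al \in (1, \frac{3}{2})$, so these terms produce friction when moved to the left-hand side. The fractional dissipation $\int |\xi|^4 |\nabla|^\al W \cdot W \, d\xi$ is handled via the symmetric splitting $|\xi|^4 = |\xi|^2 \cdot |\xi|^2$ used in the proof of Theorem \ref{thm1}: moving one factor of $|\nabla|^{\al/2}$ across yields the positive definite contribution $\||\xi|^2 |\nabla|^{\al/2} W\|_{L^2}^2$ together with commutator remainders of the form $\dpr{|\xi|^2 |\nabla|^{\al/2} W}{[|\nabla|^{\al/2}, |\xi|^2] W}$, which Lemma \ref{L_-50} combined with Gagliardo-Nirenberg reduces to a small multiple of the dissipation plus lower-order $\|W\|_{L^2}$ terms. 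The transport contributions $\int |\xi|^4 (U\cdot\nabla W) W\,d\xi = -2\int (\xi\cdot U)|\xi|^2 W^2\,d\xi$ are estimated by Young's as $\eps J(\tau) + C_\eps \|U\|_{L^\infty}^4 \|W\|_{L^2}^2$; here $\|U\|_{L^\infty}$ decays exponentially via the Sobolev embedding \eqref{31} applied to the $L^{2\pm\eps}$ bounds of Corollary \ref{cor:lp}, which is precisely where the $H^1$ hypothesis enters.

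The most delicate piece is the coupling term $\int |\xi|^4 (\p_1 \Theta)\, W\, d\xi$. Integration by parts gives $-4\int \xi_1 |\xi|^2 W \Theta\, d\xi - \int |\xi|^4 \Theta\, \p_1 W\, d\xi$; the first summand is absorbable by Cauchy-Schwarz and the interpolation $\||\xi|\Theta\|_{L^2}^2 \le \|\Theta\|_{L^2}\||\xi|^2 \Theta\|_{L^2}$, giving at most $\eps J + C_\eps \|\Theta\|_{L^2}^2$. The second summand is the real obstacle: since $\al/2 < 1$, the weighted dissipation $\||\xi|^2 |\nabla|^{\al/2} W\|_{L^2}$ is not, by itself, enough smoothness to control $\||\xi|^2 \p_1 W\|_{L^2}$. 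I would bridge this derivative gap by interpolating $\||\xi|^2 \p_1 W\|_{L^2}$ between $\||\xi|^2 |\nabla|^{\al/2} W\|_{L^2}$ and the uniform-in-$\tau$ bound $\|W(\tau)\|_{H^1}\lesssim e^{(2-\frac{3}{\al})\tau}$ from Proposition \ref{prop:15}, then use Young's to absorb the top-order factor into the dissipation reservoir. Assembling these pieces produces a differential inequality of the form
$$
J'(\tau) + c\, J(\tau) \le C\bigl(\|W_0\|_X, \|\Theta_0\|_X\bigr)\, e^{-\mu \tau},
$$
with $c, \mu > 0$, after which Gronwall's inequality delivers the uniform bound \eqref{907}. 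The principal technical hurdle---and the only place where the $H^1$ hypothesis is genuinely exploited---is precisely the derivative gap in $\int |\xi|^4 \Theta\, \p_1 W \, d\xi$.
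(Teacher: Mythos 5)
Your overall architecture (local well-posedness from Lemma \ref{thm11} plus a uniform weighted a priori bound, with the drift term producing friction, the dissipation split as $|\xi|^2\cdot|\xi|^2$ via Lemma \ref{L_-50}, and the transport term controlled through $\|U\|_{L^\infty}$) matches the paper. But the step you yourself identify as the crux --- the coupling term $\int |\xi|^4 (\p_1\Theta)\, W\, d\xi$ --- contains a genuine gap. After your integration by parts you must bound $\int |\xi|^4\,\Theta\,\p_1 W\, d\xi$, and you propose to control $\||\xi|^2\p_1 W\|_{L^2}$ by interpolating between $\||\xi|^2|\nabla|^{\al/2}W\|_{L^2}$ and $\|W\|_{H^1}$. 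This interpolation cannot hold: counting derivatives, a bound of the form $\||\xi|^2\p_1 W\|_{L^2}\le \||\xi|^2|\nabla|^{\al/2}W\|_{L^2}^{1-\theta}\,\|\p W\|_{L^2}^{\theta}$ forces $(1-\theta)\f{\al}{2}+\theta=1$, i.e.\ $\theta=1$, at which point the weight on the right-hand side is $|\xi|^{2(1-\theta)}=1$ rather than $|\xi|^2$. The target carries simultaneously the top derivative order of one endpoint and the top weight of the other, which no convex combination of the two can produce. Since $\||\xi|^2\p_1 W\|_{L^2}$ is not controlled by any quantity available in the energy budget, the differential inequality does not close along your route.

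The paper circumvents this in two moves that your proposal is missing. First, it runs the weighted estimate for $\Theta$ \emph{alone} before touching $W$; because the $\Theta$ equation has no coupling term, this yields not only $\sup_\tau\||\xi|^2\Theta\|_{L^2}$ (up to a small power of $\sup_s\|W(s)\|_{L^2(2)}$, later absorbed) but crucially the time-integrated weighted dissipation $\int_0^\tau\||\xi|^2|\nabla|^{\al/2}\Theta(s)\|_{L^2}^2\,ds\le C$. Second, in the $W$ estimate it never integrates by parts to put a full derivative on $W$; instead it factors $\p_1=\bigl(\p_1|\nabla|^{-\al/2}\bigr)|\nabla|^{\al/2}$ and writes
$$
\dpr{\p_1\Theta}{|\xi|^4 W}=\dpr{\p_1|\nabla|^{-\al/2}\,|\xi|^2|\nabla|^{\al/2}\Theta}{|\xi|^2 W}+\dpr{[\p_1|\nabla|^{-\al/2},|\xi|^2]\,[|\nabla|^{\al/2}\Theta]}{|\xi|^2 W}.
$$
The first pairing puts only the operator $|\nabla|^{1-\al/2}$ (of order strictly below $\al/2$, since $\al>1$) onto $|\xi|^2 W$, which \emph{is} interpolable against $\||\xi|^2|\nabla|^{\al/2}W\|_{L^2}$ and $\||\xi|^2 W\|_{L^2}$, while the factor $\||\xi|^2|\nabla|^{\al/2}\Theta\|_{L^2}^2$ is integrable in $\tau$ by the preliminary $\Theta$ step. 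The second pairing requires the additional commutator estimate $\|[\p_1|\nabla|^{-a},|\xi|^2]f\|_{L^2}\le C\||\xi|^{1+a}f\|_{L^2}$ (equation \eqref{903}, proved in the appendix), which is not a consequence of Lemma \ref{L_-50} and is absent from your argument. Finally, note that the $H^1$ hypothesis is exploited where you first mention it (controlling $\|W\|_{L^{2+\eps}}$, hence $\|U\|_{L^\infty}$, via Corollary \ref{cor:lp}), not at the coupling term.
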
	
 	{\bf Remark:} The decay rate by a constant is very inefficient. One could improve the argument below, at a considerable technical price,  to obtain  better decay estimates. Since the results in Section \ref{sec:7} will supersede these anyway, we choose to present the simpler arguments.  
 \begin{proof}
 	The existence of a local solutions are guaranteed by Lemma \ref{thm11}. So, it remains to establish energy estimates, which keep the relevant $L^2(2)$ norms under control. Note that the unweighted portion of the norm has an  exponential decay,  by \eqref{800}and \eqref{dec:1}. So, it remains to control the weighted norms. 
 	
 	We run a preliminary argument only on the $\Theta$ variable. As usual, this is easier, due to the lack of problematic term $\p_1 \Theta$, which appears in the equation for $W$. 
We dot product the $\Theta$ equation in \eqref{81} with $|\xi|^{4} \Theta$. We have 
 	\begin{eqnarray*}
 		&&	\f{1}{2} \f{d}{d \tau} \int |\xi|^{4}  \Theta^2 d \xi+  \int |\xi|^{4} |\nabla|^{\al} \Theta \cdot \Theta d \xi
 		+ (\f{4}{\al}- 2) \int |\xi|^{4}  \Theta^2 d \xi
 		= - \int (U \cdot \nabla_{\xi} \Theta) |\xi|^4 \Theta d \xi.
 	\end{eqnarray*}	
 Then
 		\begin{eqnarray*}
 			- \int (U \cdot \nabla_{\xi} \Theta) |\xi|^4 \Theta d \xi&=& 2 \int |\xi|^2 (\xi \cdot U) \Theta^2 d \xi.
 		\end{eqnarray*}
 	But 
 	$$
 	\bigg| \int |\xi|^2 (\xi \cdot U) \Theta^2 d \xi \bigg| \leq C \int |\xi|^3 \|U\|_{L^{\infty}} |\Theta|^2 d\xi  \leq \epsilon \int |\xi|^4 |\Theta|^2 d\xi+ C \epsilon^{-3} \|U\|^4_{L^{\infty}} \|\Theta\|_{L^2}^2.
 	$$
 	Now, according to 	\eqref{30}, for every $\de>0$
 	\begin{eqnarray*}
 		\|U\|_{L^\infty}\leq C_\de(\|W\|_{L^{2+\de}}+\|W\|_{L^{2-\de}} )&\leq &  C_\de( e^{(2-\f{3}{\al})\tau}+\|W\|_{L^2}^{\f{2-2\de}{2-\de}} \|W\|_{L^1}^{\f{\de}{2-\de}}) \\
 		&\leq & C_\de+ C_\de \|W\|_{L^2(2)}^{\f{\de}{2-\de}}. 
 	\end{eqnarray*}
 	We also have 
 		\begin{eqnarray*}
 			& & 	\int |\xi|^{4} \Theta |\nabla|^{\al} \Theta   d \xi =   \dpr{|\xi|^2 |\nabla|^{\f{\al}{2}}  |\nabla|^{\f{\al}{2}} \Theta}{|\xi|^2 \Theta}=  \\
 			&=& 
 			\dpr{|\nabla|^{\f{\al}{2}}[|\xi|^2   |\nabla|^{\f{\al}{2}} \Theta]}{|\xi|^2 \Theta} - \dpr{[|\nabla|^{\f{\al}{2}},|\xi|^2]\ [ |\nabla|^{\f{\al}{2}} \Theta]}{|\xi|^2 \Theta} = \\ 
 			&=& 		\dpr{|\xi|^2   |\nabla|^{\f{\al}{2}} \Theta}{|\xi|^2 |\nabla|^{\f{\al}{2}} \Theta}  +
 			\dpr{|\xi|^2   |\nabla|^{\f{\al}{2}} \Theta}{[|\nabla|^{\f{\al}{2}},|\xi|^2]  \Theta} - \dpr{[|\nabla|^{\f{\al}{2}},|\xi|^2]\ [  |\nabla|^{\f{\al}{2}} \Theta]}{|\xi|^2 \Theta}
 			\\
 			&=& 	\int |\xi|^{4} | |\nabla|^{\f{\al}{2}} \Theta |^2 d \xi+ \langle |\xi|^2 |\nabla|^{\f{\al}{2}} \Theta, [|\nabla|^{\f{\al}{2}}, |\xi|^2] \Theta \rangle- \langle  [|\nabla|^{\f{\al}{2}}, |\xi|^2]\ [ |\nabla|^{\f{\al}{2}} \Theta], |\xi|^2  \Theta \rangle\\
 		\end{eqnarray*}	
 	Now if we define $I(\tau)=  \int |\xi|^4 \Theta^2 d\xi$, and put all above together we have the following relation
 	\begin{eqnarray*}
 		&&\f{1}{2} I'(\tau)+ \left(\f{4}{\al} - 2 - 10 \epsilon\right) I(\tau)  +    \int |\xi|^{4} ||\nabla|^{\f{\al}{2}} \Theta|^2 d \xi \\
 		&& \leq     |\langle |\xi|^2 |\nabla|^{\f{\al}{2}} \Theta, [|\nabla|^{\f{\al}{2}}, |\xi|^2] \Theta \rangle|+ |\langle  [|\nabla|^{\f{\al}{2}}, |\xi|^2] [ |\nabla|^{\f{\al}{2}} \Theta], |\xi|^2  \Theta \rangle |  + C_{\de, \eps} \|W(\tau, \cdot)\|_{L^2(2)}^{\f{4\de}{2-\de}}. 
 	\end{eqnarray*}
 	We can use Lemma \ref{L_-50} to get
 	\begin{eqnarray*}
 	& &   | \langle |\xi|^2 |\nabla|^{\f{\al}{2}} \Theta, [|\nabla|^{\f{\al}{2}}, |\xi|^2] \Theta \rangle| \leq   \| |\xi|^2 |\nabla|^{\f{\al}{2}} \Theta\|_{L^2}  \|[|\nabla|^{\f{\al}{2}}, |\xi|^2] \Theta\|_{L^2}\\
 		&\leq& \| |\xi|^2 |\nabla|^{\f{\al}{2}} \Theta\|_{L^2}  \| |\xi|^{2- \f{\al}{2}}  \Theta\|_{L^2} \leq \| |\xi|^2 |\nabla|^{\f{\al}{2}} \Theta\|_{L^2}  
 		\| |\xi|^{2}  \Theta\|^{1- \f{\al}{4}}_{L^2}  \|  \Theta\|^{ \f{\al}{4}}_{L^2}\\
 		&\leq& \epsilon (\| |\xi|^2 |\nabla|^{\f{\al}{2}} \Theta\|^2_{L^2}+  \||\xi|^2 \Theta\|_{L^2}^2)+ C_{\epsilon}.
 	\end{eqnarray*}
 For the other term we have 
  	\begin{eqnarray*}
 	& &    |\langle  [|\nabla|^{\f{\al}{2}}, |\xi|^2] [ |\nabla|^{\f{\al}{2}} \Theta], |\xi|^2  \Theta \rangle | \leq   \| |\xi|^2  \Theta\|_{L^2}  \| [|\nabla|^{\f{\al}{2}}, |\xi|^2] [|\nabla|^{\f{\al}{2}} \Theta]\|_{L^2}\\
 	&\leq& \| |\xi|^2 \Theta\|_{L^2}  \| |\xi|^{2- \f{\al}{2}}  [|\nabla|^{\f{\al}{2}} \Theta]\|_{L^2} \leq \| |\xi|^2  \Theta\|_{L^2}  
 	\| |\xi|^{2} |\nabla|^{\f{\al}{2}} \Theta\|^{1- \f{\al}{4}}_{L^2}  \|  |\nabla|^{\f{\al}{2}} \Theta\|^{ \f{\al}{4}}_{L^2}\\
 	&\leq& \epsilon (\| |\xi|^2 |\nabla|^{\f{\al}{2}} \Theta\|^2_{L^2}+  \||\xi|^2 \Theta\|_{L^2}^2)+ C_{\eps} \|  |\nabla|^{\f{\al}{2}} \Theta\|^2_{L^2}.
 \end{eqnarray*}

 	It follows that 
 	$$
 	\f{1}{2} I'(\tau)+ \left(\f{4}{\al} - 2 - 20 \epsilon\right) I(\tau)  +  (1-5 \eps)  \int |\xi|^{4} ||\nabla|^{\f{\al}{2}} \Theta|^2 d \xi \leq C_\eps+  C_{\de, \eps} \|W(\tau, \cdot)\|_{L^2(2)}^{\f{4\de}{2-\de}}+ C_{\eps} \|  |\nabla|^{\f{\al}{2}} \Theta\|^2_{L^2}. 
 	$$
 	Choose $\eps=\f{1}{200}$, apply  Gronwall's  and then  \eqref{estt:1}  implies that for every $\de>0$, there is $C_\de$, so that 
 	\begin{equation}
 	\label{899} 
 		\||\xi|^2 \Theta(\tau, \cdot)\|_{L^2}\leq C_\eps+ C_\de e^{-(\f{4}{\al}-2-\de)\tau} + C_{\de} \sup_{0<s<\tau}\|W(s, \cdot)\|_{L^2(2)}^{\f{2\de}{2-\de}}. 
 	\end{equation}
 	for every $\de>0$. In addition, we obtain the $L^2_\tau$ bound 
\begin{equation}
\label{900}
	\int_0^\tau \||\xi|^{2} |\nabla|^{\f{\al}{2}} \Theta(\tau, \cdot)\|_{L^2}^2 d \tau \leq C + C_{\de} \sup_{0<s<\tau}\|W(s, \cdot)\|_{L^2(2)}^{\f{4\de}{2-\de}}.
\end{equation}

 We are now ready for the bounds for $W$, which are always harder. 
 	If we dot product in \eqref{81}, the first equation with $|\xi|^{4} W$,   we have the energy equality 
 	\begin{eqnarray*}
 		&&	\f{1}{2} \f{d}{d \tau} \int |\xi|^{4}  W^2 d \xi+   \int |\xi|^{4} |\nabla|^{\al} W \cdot W d \xi
 		+ (\f{3}{\al}- 1) \int |\xi|^{4}  W^2 d \xi\\
 		&=& - \int (U \cdot \nabla_{\xi} W) |\xi|^4 W d \xi+  \int \partial_1 \Theta\ |\xi|^{4} W d \xi
 	\end{eqnarray*}
 	Then $- \int (U \cdot \nabla_{\xi} W) |\xi|^4 W d \xi = 2 \int |\xi|^2 (\xi \cdot U) W^2 d \xi$. 
 	We can bound this term as follows
 	\begin{eqnarray*}
 		\bigg| \int |\xi|^2 (\xi \cdot U) W^2 d \xi \bigg| &\leq& C \int |\xi|^3 \|U\|_{L^{\infty}} |W|^2 d\xi  \leq \epsilon \int |\xi|^4 |W|^2 d\xi+ C \epsilon^{-3} \|U\|^4_{L^{\infty}} \|W\|_{L^2}^2. 
 	\end{eqnarray*}
 Again, according to 	\eqref{30}, for every $\de>0$
  	\begin{eqnarray*}
  	 \|U\|_{L^\infty}\leq C_\de(\|W\|_{L^{2+\de}}+\|W\|_{L^{2-\de}} )\leq C( e^{(2-\f{3}{\al})\tau}+\|W\|_{L^2}^{\f{2-2\de}{2-\de}} \|W\|_{L^1}^{\f{\de}{2-\de}}). 
  	\end{eqnarray*}
 	Taking into account \eqref{800}, \eqref{540}, $L^2(2)\hookrightarrow L^1$  and Young's inequality,  allows us to estimate 
 	$$
 	\bigg| \int |\xi|^2 (\xi \cdot U) W^2 d \xi \bigg|   \leq 
 	2\eps \int |\xi|^4 |W|^2 d\xi +C_{\eps,\de}  \|W(\tau, \cdot)\|_{L^2(2)}^{\f{4\de}{2-\de}}. 
 	$$
 	We also have, similar to the $\Theta$ variable calculation,  	
 	\begin{eqnarray*}
 		\int |\xi|^{4} W |\nabla|^{\al} W   d \xi =   	\||\xi|^{2} | |\nabla|^{\f{\al}{2}} W\|_{L^2}^2 + \langle |\xi|^2 |\nabla|^{\f{\al}{2}} W, [|\nabla|^{\f{\al}{2}}, |\xi|^2] W \rangle- \langle  [|\nabla|^{\f{\al}{2}}, |\xi|^2] [ |\nabla|^{\f{\al}{2}} W], |\xi|^2  W \rangle\\
 	\end{eqnarray*}	 	 
Now if we take  $J(\tau)= \int |\xi|^4 W^2 d\xi$, and put all above together we have the following relation
\begin{eqnarray*}
	&&\f{1}{2} J'(\tau)+ \left(\f{3}{\al}-1- 10 \epsilon\right) J(\tau)  +   \int |\xi|^{4} ||\nabla|^{\f{\al}{2}} W|^2 d \xi  \\
	&& \leq | \langle |\xi|^2 |\nabla|^{\f{\al}{2}} W, [|\nabla|^{\f{\al}{2}}, |\xi|^2] W \rangle| + |\langle  [|\nabla|^{\f{\al}{2}}, |\xi|^2] [ |\nabla|^{\f{\al}{2}} W], |\xi|^2  W \rangle|  + \bigg| \int |\xi|^4 (\partial_1 \Theta)  W d\xi \bigg| + \\
	&+& C_\eps+C_{\eps,\de}  \|W(\tau, \cdot)\|_{L^2(2)}^{\f{4\de}{2-\de}}= I_1+ I_2+ I_3+  
	C_\eps+C_{\eps,\de}  \|W(\tau, \cdot)\|_{L^2(2)}^{\f{4\de}{2-\de}} 
\end{eqnarray*}
We can use Lemma \ref{L_-50} to get
\begin{eqnarray*}
	I_1&=&  | \langle |\xi|^2 |\nabla|^{\f{\al}{2}} W, [|\nabla|^{\f{\al}{2}}, |\xi|^2] W \rangle| \leq   \| |\xi|^2 |\nabla|^{\f{\al}{2}} W\|_{L^2}  \||[|\nabla|^{\f{\al}{2}}, |\xi|^2] W\|_{L^2}\\
	&\leq& \| |\xi|^2 |\nabla|^{\f{\al}{2}} W\|_{L^2}  \| |\xi|^{2- \f{\al}{2}}  W\|_{L^2} \leq \| |\xi|^2 |\nabla|^{\f{\al}{2}} W\|_{L^2}  
	\| |\xi|^{2}  W\|^{1- \f{\al}{4}}_{L^2}  \|  W\|^{ \f{\al}{4}}_{L^2}\\
	&\leq& \epsilon (\| |\xi|^2 |\nabla|^{\f{\al}{2}} W\|^2_{L^2}+  \||\xi|^2 W\|_{L^2}^2)+ C_{\epsilon},
\end{eqnarray*}
where we have used the bounds \eqref{dec:1} for $\|  W\|_{L^2}$. 
Next, regarding $I_2$, we have 
\begin{eqnarray*}
	I_2&=& |\langle  [|\nabla|^{\f{\al}{2}}, |\xi|^2] [ |\nabla|^{\f{\al}{2}} W], |\xi|^2  W \rangle| \leq \||\xi|^2  W\|_{L^2} \| [|\nabla|^{\f{\al}{2}}, |\xi|^2] [ |\nabla|^{\f{\al}{2}} W]\|_{L^2}  \\ 
	&\leq & \||\xi|^2  W\|_{L^2} \|  |\xi|^{2- \f{\al}{2}}  |\nabla|^{\f{\al}{2}} W\|_{L^2} \leq  \||\xi|^2  W\|_{L^2} \|  |\xi|^{2}  |\nabla|^{\f{\al}{2}} W\|^{1- \f{\al}{4}}_{L^2}  \|  |\nabla|^{\f{\al}{2}} W\|^{ \f{\al}{4}}_{L^2}  \\
	&\leq & 
	   \epsilon(\||\xi|^2 W\|^2_{L^2}+   \|  |\xi|^{2}  |\nabla|^{\f{\al}{2}} W\|^2_{L^2}) +  C_{\epsilon}  \|  |\nabla|^{\f{\al}{2}} W\|^2_{L^2}.
\end{eqnarray*}
   $I_3$ is normally a problematic term, but now we have the decay estimates for $\|\Theta\|_{L^2(2)}$, which we have proved in the first part of this Lemma. We have  
\begin{eqnarray*}
	I_3&=& \bigg| \langle \partial_1 \Theta, |\xi|^4 W  \rangle \bigg| \leq \bigg| \langle |\xi|^2 \partial_1 \Theta, |\xi|^2 W  \rangle \bigg| \leq \bigg| \langle \partial_1  |\nabla|^{-\f{\al}{2}} |\xi|^2 |\nabla|^{\f{\al}{2}} \Theta, |\xi|^2 W  \rangle \bigg|\\
	&+& \bigg| \langle [\partial_1  |\nabla|^{-\f{\al}{2}}, |\xi|^2] \ [|\nabla|^{\f{\al}{2}} \Theta], |\xi|^2 W  \rangle \bigg|:= I_{3,1}+ I_{3, 2}.
\end{eqnarray*}
 $I_{3, 1}$ is estimated as follows 
\begin{eqnarray*}
 	I_{3, 1}&=&  \bigg| \langle \partial_1  |\nabla|^{-\f{\al}{2}} |\xi|^2 |\nabla|^{\f{\al}{2}} \Theta, |\xi|^2 W  \rangle \bigg| \leq  C \||\xi|^2 |\nabla|^{\f{\al}{2}} \Theta\|_{L^2} \|  |\nabla|^{1-\f{\al}{2}} [|\xi|^2 W ]\|_{L^2}  \\
 	&\leq &  	 \||\xi|^2 |\nabla|^{\f{\al}{2}} \Theta\|_{L^2} \| |\xi|^2 W \|^{\f{2\al-2}{\al}}_{L^2}  
 	 \| |\nabla|^{\f{\al}{2}} [|\xi|^2 W ]\|_{L^2}^{\f{2-\al}{\al}}\leq C_\eps \||\xi|^2 |\nabla|^{\f{\al}{2}} \Theta\|_{L^2}^2+ \\
 	&+& \eps( \|  |\xi|^2 W \|_{L^2}^2+ \| |\nabla|^{\f{\al}{2}} [|\xi|^2 W ]\|_{L^2}^2)
\end{eqnarray*}
We bound the last term, by Lemma \ref{L_-50}, 
\begin{eqnarray*}
  \| |\nabla|^{\f{\al}{2}} [|\xi|^2 W ]\|_{L^2} &\leq & \|  |\xi|^2 |\nabla|^{\f{\al}{2}} W \|_{L^2}+  \| [|\nabla|^{\f{\al}{2}}, |\xi|^2] W \|_{L^2} \leq 
 \| |\xi|^2 |\nabla|^{\f{\al}{2}} W \|_{L^2}+ C \||\xi|^{2-\f{\al}{2}} W\|_{L^2} \\
  &\leq &  \| |\xi|^2 |\nabla|^{\f{\al}{2}} W \|_{L^2}+ C (\|W\|_{L^2}+\||\xi|^2 W\|_{L^2}).
\end{eqnarray*}
Collecting terms together yields the following estimate for $I_{3,1}$ and using \eqref{560}, 
$$
I_{3,1}\leq 2\eps(\||\xi|^2 W\|_{L^2}^2+ \| |\xi|^2 |\nabla|^{\f{\al}{2}} W \|_{L^2}^2)+ 
C_\eps \||\xi|^2 |\nabla|^{\f{\al}{2}} \Theta\|_{L^2}^2+C  e^{2(2-\f{3}{\al})\tau}.
$$

We can easily bound $I_{3, 2}$, provided we know an appropriate estimate for the commutator 
 $[\partial_1  |\nabla|^{-\f{\al}{2}}, |\xi|^2]$. In fact, this commutator is morally like $[|\nabla|^{1-\f{\al}{2}}, |\xi|^2]$, which was indeed  considered in Lemma \ref{L_-50}. However, there  does not appear to be an easy way to transfer the estimate \eqref{310} to it, so we state the relevant estimate  here 
 \begin{equation}
 \label{903} 
 \|[\partial_1  |\nabla|^{-a}, |\xi|^2] f\|_{L^2}\leq C \||\xi|^{1+a} f\|_{L^2}, \ \  a\in (0,1)
 \end{equation}
 The proof of \eqref{903} is postponed to the Appendix\footnote{In fact, it can be reduced to a similar expression as in the proof of \eqref{310}, so we prove them simultaneously.}. 
Assuming the validity of \eqref{903}, we proceed to bound $I_{3,2}$. 
\begin{eqnarray*}
	I_{3, 2}&=& \bigg| \langle [\partial_1  |\nabla|^{-\f{\al}{2}}, |\xi|^2] \ [|\nabla|^{\f{\al}{2}} \Theta], |\xi|^2 W  \rangle \bigg| \leq \| |\xi|^2 W\|_{L^2} \| [\partial_1  |\nabla|^{-\f{\al}{2}}, |\xi|^2] \ [|\nabla|^{\f{\al}{2}} \Theta]\|_{L^2}\\
	&\leq& \| |\xi|^2 W\|_{L^2} \| |\xi|^{1+ \f{\al}{2}} |\nabla|^{\f{\al}{2}} \Theta\|_{L^2} \leq \| |\xi|^2 W\|_{L^2} \| |\xi|^{2} |\nabla|^{\f{\al}{2}} \Theta\|^{\f{2+ \al}{4}}_{L^2} \| |\nabla|^{\f{\al}{2}} \Theta\|^{\f{2- \al}{4}}_{L^2} \\
	&\leq& \epsilon \| |\xi|^2 W\|^2_{L^2}+  \| |\nabla|^{\f{\al}{2}}\Theta\|^2_{L^2}+C_\epsilon \| |\xi|^{2} |\nabla|^{\f{\al}{2}} \Theta\|^2_{L^2}\\
	&\leq & 
	\epsilon \| |\xi|^2 W\|^2_{L^2}+ C+C_\de \|  W\|_{L^2(2)}^{\f{4\de}{2-\de}}+  C_\epsilon \| |\xi|^{2} |\nabla|^{\f{\al}{2}} \Theta\|^2_{L^2},
\end{eqnarray*}
where we have made use of \eqref{900}. 
Combining all the estimates, we obtain the following energy inequality 
\begin{eqnarray*}
	&&\f{1}{2} J'(\tau)+ \left(\f{3}{\al}-1- 20 \epsilon\right) J(\tau)  +  (1-5 \eps)  \int |\xi|^{4} ||\nabla|^{\f{\al}{2}} W|^2 d \xi  \\
	&\leq & 
	C_\eps +C_\de \|  W\|_{L^2(2)}^{\f{4\de}{2-\de}}+C_\epsilon (\| |\xi|^{2} |\nabla|^{\f{\al}{2}} \Theta\|^2_{L^2}+  \|  |\nabla|^{\f{\al}{2}} W\|^2_{L^2})
\end{eqnarray*}
Applying Gronwall's and taking into account the $L^2_\tau$ integrability results \eqref{estt:1} and  \eqref{900}, and 
$ \|  W\|_{L^2(2)}^2\leq J(\tau)+C$,   we conclude for every $\de>0$ 
\begin{eqnarray*}
 J(\tau) &\leq &  J(0) e^{-2(\f{3}{\al}-1- 20 \epsilon)\tau}+ C_\eps \tau e^{-2(\f{3}{\al}-1- 20 \epsilon)\tau}+ C_\de \sup_{0<s<\tau} J(\tau)^{\f{2\de}{2-\de}} +  \\
 &+& C_\eps \int_0^\tau (\| |\xi|^{2} |\nabla|^{\f{\al}{2}} \Theta(s, \cdot)\|^2_{L^2}+  \|  |\nabla|^{\f{\al}{2}} W(s, \cdot)\|^2_{L^2}) 
 ds \leq C_\eps + C_\de \sup_{0<s<\tau} J(\tau)^{\f{2\de}{2-\de}}
\end{eqnarray*}
Selecting small $\eps$ and solving this inequality for $\sup_{0<s<\tau} J(\tau)$ implies the $\sup_{0<s<\tau} J(\tau)\leq C$, for all times $\tau$. Inputting this last estimate in  \eqref{899} implies the desired  bound for $\|\Theta\|_{L^2(2)}$ as well. 
 \end{proof}

 \section{Global dynamics of the solutions of the SQG model}
 \label{sec:6} 
 Theorem \ref{thm1} already provides pretty good estimate about the behavior of the solutions to the rescaled equation \eqref{8}, in particular the solution $Z$  disperses at $\infty$, with the rate $e^{- \tau (\f{3-\al-\be}{\al})}$. An obviously   question is whether  or not this is optimal, that is whether there is a lower bound with  the same exponential function, at least for generic data. It turns out that this is indeed the case. In fact, we have a more precise result, namely an asymptotic expansion. 
 
 Before we continue with the formal statement of the main result, we need a simple algebraic  observation, which is important in the sequel. Recall the generalized Biot-Savart law that we imposed, $   u=u_z=(|\nabla|^{\perp})^{- \be} z$. This naturally transformed into the relation $U=U_Z=(|\nabla|^{\perp})^{- \be} Z$ between the ``scaled'' velocity $U$ and its vorticity $Z$. We claim that 
 \begin{equation}
 \label{400}
 U_G\cdot \nabla G=0.
 \end{equation}
 Indeed, since $G$ is a radial function\footnote{as the Fourier transform of a radial one}, say $G(\xi)=\zeta(|\xi|)$, we have that $\nabla G= (\xi_1, \xi_2) \f{\zeta'(|\xi|)}{|\xi|}.$   On the other hand, $U_G=(|\nabla|^{\perp})^{- \be} G= |\nabla|^{\perp} m_{-\be-1}(|\nabla|) G$, so $U_G= |\nabla|^{\perp} h(|\xi|)$, where $h$ is a radial function representing $[m_{-\be-1}(|\nabla|) G]$. That is,  $h(|\xi|)=[m_{-\be-1}(|\nabla|) G](\xi)$. It follows that $U_G=(-\xi_2, \xi_1) \f{h'(|\xi|)}{|\xi|}$. Thus, 
 $$
  U_G\cdot \nabla G=(-\xi_2, \xi_1) \f{h'(|\xi|)}{|\xi|} \cdot (\xi_1, \xi_2) \f{\zeta'(|\xi|)}{|\xi|}=0.
 $$
 We are now ready to state the main theorem of this section. 
 \begin{theorem}
 	\label{theo:4} 
 	Let $Z_0\in L^2(2)\cap L^\infty(\rtwo)$, $\eps>0$, $\al\in (1,2), \al+\be\leq 3$.  Denote $\ga(0):= \int_{\rtwo} Z_0(\xi) d \xi$. Then there exists $C_\eps > 0$ such that for any $\tau > 0$,
 	\begin{equation}
 	\label{450}
 	\|Z(\tau, \cdot) - \ga(0) e^{-\tau (\f{3-\al-\be}{\al})} G\|_{L^2(2)}  \leq C_\eps  e^{-\tau (\f{4-\al-\be}{\al}-\eps)}.
 	\end{equation}
Assuming in addition that $\be>1$, we also have 
 		\begin{equation}
 		\label{452}
 		\|\nabla[Z(\tau, \cdot) - \ga(0) e^{-\tau (\f{3-\al-\be}{\al})} G]\|_{L^2(2)}  \leq C_\eps  e^{-\tau (\f{4-\al-\be}{\al}-\eps)}.
 		\end{equation}
 	In particular if  $\int_{\rtwo} Z_0(\xi) d \xi= 0$, then
 $
 	\|Z\|_{L^2(2)} \leq C_\eps  e^{-\tau (\f{4-\al-\be}{\al}-\eps)}.
 $
 \end{theorem}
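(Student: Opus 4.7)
The plan is to set $R(\tau, \xi) := Z(\tau, \xi) - \ga(0) e^{\la_0 \tau} G(\xi)$, where $\la_0 = 1 - \f{3-\be}{\al}$ is the eigenvalue of $\cl$ attached to the eigenfunction $G$, and to show that $R$ inherits the decay associated with the next layer of the spectrum of $\cl$. Testing \eqref{8} against the constant function $1$ (using $\nabla \cdot U = 0$ and integrating by parts in the drift term) yields $\f{d}{d\tau}\int Z\,d\xi = \la_0 \int Z\,d\xi$, so $\int Z(\tau)\,d\xi = \ga(0) e^{\la_0 \tau}$; combined with $\hat G(0) = 1$ this gives $\int R(\tau)\,d\xi \equiv 0$, i.e.\ $R(\tau) \in X_0$ for all times. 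Thus the restricted semigroup $e^{\tau \cl_0}$ with its sharper decay \eqref{405} is the natural object for propagating $R$.

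Using $\cl G = \la_0 G$, the linearity of the Biot--Savart law so that $U_Z = U_R + \ga(0) e^{\la_0 \tau} U_G$, and crucially the algebraic identity $U_G \cdot \nabla G = 0$ established around \eqref{400}, a direct calculation produces
\begin{equation*}
R_\tau = \cl R - \nabla \cdot N, \qquad N(\tau) := U_R R + \ga(0) e^{\la_0 \tau}\bigl(U_R G + U_G R\bigr).
\end{equation*}
I would then pass to Duhamel form and apply the commutation $e^{\tau \cl}(\nabla \cdot f) = e^{-\tau/\al}\nabla \cdot (e^{\tau \cl} f)$ from Lemma \ref{lem0} to transfer the derivative off the semigroup. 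Combined with the gradient bound \eqref{19} on $e^{\tau \cl}$ and the homogeneous decay \eqref{405} on $e^{\tau \cl_0}$, this yields
\begin{equation*}
\|R(\tau)\|_{L^2(2)} \leq C_\eps e^{\nu_1 \tau}\|R(0)\|_{L^2(2)} + C\int_0^\tau \f{e^{(\la_0 - \f{1}{\al})(\tau-s)}}{a(\tau-s)^{1/\al}}\|N(s)\|_{L^2(2)}\,ds,
\end{equation*}
where $\nu_1 := \la_0 - \f{1}{\al} + \eps$ is precisely the target exponent.

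To close the bootstrap I invoke Theorem \ref{thm1}, which already supplies $\|Z(\tau)\|_{L^2(2)\cap L^\infty} \leq C e^{\la_0 \tau}$, and therefore $\|R(\tau)\|_{L^2(2)\cap L^\infty} \leq C e^{\la_0 \tau}$. Combined with the Sobolev-type estimate \eqref{31} applied to the Biot--Savart operator, one has $\|U_R(\tau)\|_{L^\infty} \leq C e^{\la_0 \tau}$, hence
\begin{equation*}
\|N(s)\|_{L^2(2)} \leq C e^{\la_0 s}\|R(s)\|_{L^2(2)} + C e^{2\la_0 s}.
\end{equation*}
Substituting this into the Duhamel inequality yields a convolution-type inequality that matches the hypotheses of the Gronwall variant Lemma \ref{gro}, applied with $\mu = -\nu_1$, $\si = \f{1}{\al} - \la_0$, $\ka = -\la_0$ and $a = 1/\al$; this produces \eqref{450}. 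For the gradient statement \eqref{452}, one repeats the analysis after differentiating the Duhamel identity, where the assumption $\be > 1$ is exactly what allows $(|\nabla|^\perp)^{-\be}$ to absorb more than a single derivative, supplying the extra smoothness needed to control $\nabla(U_R R)$ and $\nabla(U_G R)$ in $L^2(2)$.

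The principal obstacle lies in the regime $\al + \be > 2$, where $2\la_0 > \la_0 - \f{1}{\al}$ and the inhomogeneous forcing $e^{2\la_0 s}$ integrates to a contribution of order $e^{2\la_0 \tau}$, strictly worse than the target $e^{\nu_1 \tau}$. The resolution is iterative: each application of Lemma \ref{gro} peels off an additional factor of $e^{\la_0 s}$ from the worst remaining term, and since $\la_0 < 0$ whenever $\al + \be < 3$, finitely many iterations drive the rate down to $e^{\nu_1 \tau}$. The borderline case $\al + \be = 3$ (so $\la_0 = 0$) technically violates the hypothesis $\ka > 0$ of Lemma \ref{gro} and must be handled by a small perturbative shift of exponents or a tailored direct iteration.
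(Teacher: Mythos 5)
Your decomposition is exactly the paper's: $R=Z-\ga(0)e^{\la_0\tau}G$ coincides with $\tilde Z=\cq_0 Z$, the mean-zero property, the identity $U_G\cdot\nabla G=0$, the commutation of $\nabla$ with $e^{\tau\cl}$, the semigroup bounds \eqref{405} and \eqref{19}, and the Gronwall Lemma \ref{gro} with the same exponents $\mu,\si,\ka$ all appear in the paper's argument in the same roles. The one place you diverge is where you bound the nonlinearity, and that divergence creates a gap you then have to patch.

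Concretely, your estimate $\|N(s)\|_{L^2(2)}\le Ce^{\la_0 s}\|R(s)\|_{L^2(2)}+Ce^{2\la_0 s}$ gives away too much. Every term of $N=U_RR+\ga(0)e^{\la_0\tau}(U_RG+U_GR)$ contains a factor of $R$ or $U_R$, and each can be kept \emph{linear} in $\|R(s)\|_{L^2(2)}$ with prefactor $e^{\la_0 s}$: for $U_RR$ use $\|U_R\|_{L^\infty}\lesssim e^{\la_0 s}$ from \eqref{31} and the a priori bounds; for $U_RG$ use $\|U_RG\|_{L^2(2)}\lesssim\|R\|_{L^2(2)}$ (splitting $\be\le 1$ versus $\be>1$ as the paper does, exploiting the weighted integrability of $G$ from Lemma \ref{le:10}); for $U_GR$ use $\|U_G\|_{L^\infty}<\infty$. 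With all three terms linear, the Duhamel inequality is exactly of the form \eqref{gron} with no inhomogeneous forcing inside the integral, and a single application of Lemma \ref{gro} closes the argument. Your $Ce^{2\la_0 s}$ term is an artifact of substituting the a priori decay $\|R(s)\|_{L^2(2)}\le Ce^{\la_0 s}$ into one factor prematurely; it is this artifact that forces your iteration. Two consequences: first, Lemma \ref{gro} as stated does not accommodate an extra forcing term inside the integral, so you would have to compute that convolution by hand at each stage; second, and more seriously, your iteration gains a factor $e^{\la_0 s}$ per step and therefore stalls when $\la_0=0$, i.e.\ at $\al+\be=3$, which is inside the range the theorem asserts. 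The "perturbative shift of exponents" you invoke there is not a proof. Removing the spurious forcing term removes the need for the iteration entirely and with it the endpoint obstruction. Your treatment of \eqref{452} (differentiate the Duhamel identity, use $\be>1$ so that $\p U$ is controlled by $Z$ in $L^p$) matches the paper and is fine modulo the same remark.
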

 {\bf Remarks:} 
 \begin{itemize}
 	\item We would like to point out that the existence of solution $Z$ (and subsequently $\ga(\tau)$ and $\tilde{Z}(\tau)$) is not in question anymore, due to the results obtained in Theorem \ref{thm1}. The purpose of this theorem is just to obtain better {\it a priori}  estimates, in the form described in above.  
 	
 	\item The requirement $\be>1$, imposed so that \eqref{452} holds is likely only a technical one, but we cannot remove it with our methods.  	
 \end{itemize}

 \begin{proof}(Theorem \ref{theo:4}) 
 	
According to the results in Section \ref{sec:3.6}, $\la_0= 	-\f{3-\al-\be}{\al} \leq 0$ is an isolated and simple eigenvalue for the operator $\cl$ on $L^2(2)$, with eigenfunction $G$, while the rest of the spectrum is the essential spectrum, which we have identified before,  $\si_{ess}(\cl)=\{\la: \Re \la\leq -\f{4-\al-\be}{\al}\}$. We have also found  the spectral projection $\cp_0 f=\dpr{f}{1} G$ and $\cq_0=Id-\cp_0$. Thus, we can write 
\begin{equation}
\label{f101}
Z(\tau, \cdot)= \ga (\tau) G(\xi)+ \tilde{Z}(\tau, \cdot),
\end{equation}
 	where $\ga(\tau)=\dpr{Z(\tau, \cdot)}{1}=\int_{\rtwo} Z(\tau, \xi)d\xi$, $ \tilde{Z}(\tau)=\cq_0 Z(\tau, \cdot)$. Projecting the equation \eqref{8}, with respect to the spectral decomposition provided by $\cp_0$ and $\cq_0$, we obtain an ODE for $\ga$ and a PDE for 
 	$\tilde{Z}(\tau)$. More precisely, 
 	\begin{eqnarray*}
	\p_\tau \ga &=&  \dpr{\cl Z}{1} - \dpr{U\cdot \nabla Z}{1}= \dpr{- |\nabla|^{\al} Z+ \f{1}{\al} \xi \cdot \nabla_{\xi} Z+ \left(1+\f{\be-1}{\al}\right)Z}{1} - \dpr{\nabla(U\cdot  Z)}{1}=\\
	&=& \f{\al+\be-3}{\al} \ga(\tau).
 	\end{eqnarray*}
 Integrating this first order ODE yields the formula  $ \ga(\tau)=\ga(0) e^{-\tau\f{3-\al-\be}{\al}}$.  
 For the PDE governing $\tilde{Z}(\tau)$, and recalling $\cl_0=\cl\cq_0$, we   obtain 
 	\begin{equation*}
 	\tilde{Z}_{\tau}= \mathcal{L}_0 \tilde{Z}- \cq_0[U \cdot \nabla Z]= \mathcal{L}_0 \tilde{Z}- \cq_0[U \cdot 
 	\nabla ( \ga(0) e^{-\tau\f{3-\al-\be}{\al}} G+ \tilde{Z})].
 	\end{equation*}
  In its equivalent integral formulation, 
 \begin{equation}
 \label{410}
 \tilde{Z} (\tau)= e^{\tau \mathcal{L}_0} \tilde{Z}(0)- \int_0^{\tau} e^{(\tau- s) \mathcal{L}_0} \cq_0[U \cdot 
 \nabla (\ga (0)\ e^{-\tau\f{3-\al-\be}{\al}}  G+ \tilde{Z} (s, \cdot)] \ ds.
 \end{equation}	
 	Note  the commutation relation  $\cq_0 \nabla = \nabla$, whence one can remove $\cq_0$ in front of the nonlinearity. 	By \eqref{405}, we can estimate  
 	\begin{eqnarray*}
 		&&\|\tilde{Z}(\tau)\|_{L^2(2)} \leq \|e^{\tau \mathcal{L}_0} \tilde{Z}(0)\|_{L^2(2)}+ \int_0^{\tau}  
 		\|  e^{(\tau- s) \mathcal{L}_0} \bigg( (U_G+ U_{\tilde{Z}}) \nabla \cdot  (\ga (0)\ e^{-s\f{3-\al-\be}{\al}} G+ \tilde{Z}(s) \bigg) \|_{L^2(2)} ds\\
 		&\leq & \|e^{\tau \mathcal{L}_0} \tilde{Z}(0)\|_{L^2(2)}+ |\ga(0)| \int_0^{\tau} e^{- \frac{ (\tau- s)}{\al}}  e^{-s\f{3-\al-\be}{\al}} 
 		   \| \nabla \cdot e^{(\tau- s) \mathcal{L}_0}  (U_{\tilde{Z}} \cdot G) \|_{L^2(2)} ds+\\
 		&+&  \int_0^{\tau} e^{- \frac{ (\tau- s)}{\al}} \|\nabla \cdot e^{(\tau- s) \mathcal{L}} (U \cdot \tilde{Z}) \|_{L^2(2)} ds =: I_1+ I_2+ I_3,
 	\end{eqnarray*}
 	where we have used \eqref{400}. Clearly by \eqref{18}
 	\begin{eqnarray*}
 		I_1 \leq C e^{-\tau\left(\f{4-\be-\al}{\al}-\eps \right)}  \|\tilde{Z}(0)\|_{L^2(2)}  
 	\end{eqnarray*}
 	Regarding $I_2$, we have 
 	\begin{eqnarray*}
 		I_2 &\leq& |\ga(0)|  \int_0^{\tau} \f{e^{- \frac{ (\tau- s)}{\al}} e^{-s\f{3-\al-\be}{\al}} e^{-(\tau-s)\left(\f{3-\be-\al}{\al} \right)}  \| U_{\tilde{Z}} \cdot G \|_{L^2(2)}}{a(\tau- s)^{\f{1}{\al}}} ds 
 	\end{eqnarray*}
 	Now to bound $\| U_{\tilde{Z}} \cdot G\|_{L^2(2)}$ we look at two different cases, namely $0 \leq \be < 1$ and $1 \leq \be < 2$.
If  $0 \leq \be \leq  1$,  then we can use Lemma \ref{GG} to get
 	\begin{eqnarray*}
 		\| U_{\tilde{Z}} \cdot G\|_{L^2(2)}  &   \leq &  \|U_{\tilde{Z}}\|_{L^{\f{2}{1- \be}}} \| (1+ |\xi|^2) G\|_{L^{\f{2}{\be}}} \\
 		&\leq&C  \|U_{\tilde{Z}}\|_{L^{\f{2}{1- \be}}} \leq C \| |\nabla|^{\be} U_{\tilde{Z}}\|_{L^2} \leq C \|\tilde{Z}\|_{L^2} \leq \|\tilde{Z}\|_{L^2(2)} .
 	\end{eqnarray*}
 	If $1 \leq \be < 2$, then for some $0 <\epsilon << 1$ we have
 	\begin{eqnarray*}
 		\| U_{\tilde{Z}} \cdot G\|_{L^2(2)}  & \leq & \|U_{\tilde{Z}}\|_{L^{\f{2}{\epsilon}}} \| (1+ |\xi|^2)  G\|_{L^{\f{2}{1- \epsilon}}} \\
 		&\leq&C  \|U_{\tilde{Z}}\|_{L^{\f{2}{\epsilon}}} \leq C \| |\nabla|^{\be} U_{\tilde{Z}}\|_{L^{\f{2}{\be+ \epsilon}}} \leq C \|\tilde{Z}\|_{L^{\f{2}{\be+ \epsilon}}} \leq C \|\tilde{Z}\|_{L^2(2)}.
 	\end{eqnarray*}
 	In the last inequality we used the fact that for $1< p < 2$, $L^p \hookrightarrow L^2(2)$ and Lemma \eqref{GG}. 
 	Therefore 
 	\begin{eqnarray*}
 		I_2 \leq C  \int_0^{\tau}  \f{  e^{-(\tau-s)\left(\f{4-\be-\al}{\al} \right)}  e^{-s\f{3-\al-\be}{\al}}
 			 }{(\min(1, |\tau-s|)^{\f{1}{\al}}} \| \tilde{Z}(s)\|_{L^2(2)}ds. 
 	\end{eqnarray*}
 	Finally, we make use of \eqref{19} to get
 	\begin{eqnarray*}
 		I_3 &\leq& \int_0^{\tau}  \f{e^{- \frac{ (\tau- s)}{\al}} e^{-(\tau-s)\left(\f{3-\be-\al}{\al} \right)}  \|U(s)\|_{L^{\infty}} \|\tilde{Z}(s)\|_{L^2(2)}}{a(\tau-s)^{\f{1}{\al}}} \ ds \\
 		&\leq &  C \int_0^{\tau}  \f{ e^{-(\tau-s)\left(\f{4-\be-\al}{\al}-\eps \right)} 
 			e^{-s\left(\f{3-\be-\al}{\al}\right)}    }{(\min(1, |\tau-s|)^{\f{1}{\al}}} \|\tilde{Z}(s)\|_{L^2(2)}\ ds,
 		  	\end{eqnarray*}
 where we have  used that $a(\tau)\sim \min(1, \tau)$, the Sobolev inequality and Theorem \ref{thm1} to conclude  
 \begin{equation}
 \label{u:12} 
  \|U(s)\|_{L^{\infty}} \leq C(\|Z(s)\|_{L^{\f{2}{\be}+\eps}} + \|Z(s)\|_{L^{\f{2}{\be}-\eps}})\leq C  e^{-s\left(\f{3-\be-\al}{\al}\right)}.
 \end{equation}

  We are now in a position to use the Gronwal's inequality, more precisely the version displayed in Lemma \ref{gro}.  We apply it with $I(\tau)= \|\tilde{Z}(\tau)\|_{L^2(2)}$, $\mu=\f{4-\al-\be}{\al}-\eps, \si=\f{4-\al-\be}{\al}, \ka=\f{3-\al-\be}{\al}$ and $a=\f{1}{ \al}<1$, for $\eps<<1$. Recall that by the {\it a priori} estimates in Theorem \ref{thm1}, we have 
  $$
  \|\tilde{Z}(\tau)\|_{L^2(2)}\leq \|Z(\tau)\|_{L^2(2)}+ |\ga(0)| e^{-\tau(\f{3-\al-\be}{\al})} \|G\|_{L^2(2)}\leq C e^{-\tau(\f{3-\al-\be}{\al})}\leq C,
  $$
  for all $\tau>0$, since $3\geq \al+\be$.  Thus, all the requirements of  Lemma \ref{gro} are met and we obtain the bound 
  \begin{equation}
  \label{z} 
   \|\tilde{Z}(\tau)\|_{L^2(2)}\leq C_\eps e^{-\tau(\f{4-\al-\be}{\al}-\eps)}.
  \end{equation}
  
Regarding the proof of \eqref{452}, we proceed in a similar fashion. We need to control $\|\p \tilde{Z}\|_{L^2(2)}$, for large $\tau$, say $\tau\geq 1$. Applying $\p=\p_1, \p_2$ to the integral equation \eqref{410} and taking $\|\cdot\|_{L^2(2)}$, we obtain 
\begin{eqnarray*}
& &  \|\p \tilde{Z}(\tau)\|_{L^2(2)}  \lesssim  e^{-\tau(\f{4-\al-\be}{\al}-\eps)} \|\tilde{Z}(0)\|_{L^2(2)} +   \int_0^{\tau} \f{e^{- \frac{ (\tau- s)}{\al}}  e^{-s\f{3-\al-\be}{\al}} }{\min(1, \tau-s)^{\f{1}{\al}}} 
	\| e^{(\tau- s) \mathcal{L}_0}  \nabla (U_{\tilde{Z}} \cdot G) \|_{L^2(2)} ds\\
	&+&  \int_0^{\tau} \f{e^{- \frac{ (\tau- s)}{\al}}}{\min(1, \tau-s)^{\f{1}{\al}}}\| e^{(\tau- s) \mathcal{L}_0} \nabla (U \cdot \tilde{Z}) \|_{L^2(2)} ds\lesssim e^{-\tau(\f{4-\al-\be}{\al}-\eps)}+ \\
	&+& 
	\int_0^{\tau}  \f{e^{-s\f{3-\al-\be}{\al}} e^{-(\tau-s)(\f{5-\al-\be}{\al}-\eps)}}{\min(1, \tau-s)^{\f{1}{\al}}} \|\nabla[U_{\tilde{Z}}(s) G]\|_{L^2(2)} ds+	\int_0^{\tau}  \f{ e^{-(\tau-s)(\f{5-\al-\be}{\al}-\eps)}}{\min(1, \tau-s)^{\f{1}{\al}}} \|\nabla[U(s) \tilde{Z}(s)]\|_{L^2(2)} ds
\end{eqnarray*}
We estimate 
$
	 \|\nabla[U_{\tilde{Z}}(s) G]\|_{L^2(2)}\leq  \|\nabla U_{\tilde{Z}}(s) G\|_{L^2(2)}+
	 \|U_{\tilde{Z}}(s) \nabla  G\|_{L^2(2)}. 
$
Following the strategy above, for $\be\leq 1$ and then for $\be>1$, we arrive at 
$$
 \|\nabla[U_{\tilde{Z}}(s) G]\|_{L^2(2)}\lesssim \|\tilde{Z}(s)\|_{L^2(2)}+\|\p \tilde{Z}(s)\|_{L^2(2)}\lesssim e^{-s(\f{4-\al-\be}{\al}-\eps)}+ \|\p \tilde{Z}(s)\|_{L^2(2)},
$$
where we have used \eqref{z}. For the other term, it is relatively easy to bound 
$\|\nabla[U(s) \tilde{Z}(s)]\|_{L^2(2)}$, when $\be>1$, 
\begin{eqnarray*}
\|\p[U(s) \tilde{Z}(s)]\|_{L^2(2)}&\lesssim &  \|\p U(s)\|_{L^\infty} \|\tilde{Z}(s)\|_{L^2(2)}+ \|U(s)\|_{L^\infty} 
\|\p \tilde{Z}(s)\|_{L^2(2)}\\
&\lesssim &  e^{-s(\f{3-\al-\be}{\al})} e^{-s(\f{4-\al-\be}{\al}-\eps)}+e^{-s\left(\f{3-\be-\al}{\al}\right)}\|\p \tilde{Z}(s)\|_{L^2(2)}. 
\end{eqnarray*}
where we have used (recalling $U\sim |\nabla|^{-\be} Z$), 
$\|\p U(s)\|_{L^\infty}\leq C(\|Z\|_{L^{\f{2}{\be-1}+\eps}}+ \|Z\|_{L^{\f{2}{\be-1}-\eps}})\leq C e^{-s(\f{3-\al-\be}{\al})}$,  \eqref{z},  \eqref{u:12}. Plugging it together yields 
\begin{equation}
\label{dz} 
\|\p \tilde{Z}(\tau)\|_{L^2(2)}  \lesssim e^{-\tau(\f{4-\al-\be}{\al}-\eps)}+ \int_0^\tau  \f{ e^{-(\tau-s)(\f{5-\al-\be}{\al}-\eps)}  e^{-s(\f{3-\al-\be}{\al})}}{\min(1, \tau-s)^{\f{1}{\al}}} 
\|\p \tilde{Z}(s)\|_{L^2(2)}.
\end{equation}
This puts us in a position to use the Gronwal's Lemma \ref{gro}. Note that in order to do that, we 
need any {\it a priori} exponential bound on $\|\p Z(\tau)\|_{L^2(2)}$, similar to Theorem \ref{thm1} for 
$\|Z(\tau)\|_{L^2(2)}$. This is actually easy to achieve, one just has to differentiate the equation and perform very coarse energy estimates\footnote{which will give very inefficient exponential bounds on $\|\p Z(\tau)\|_{L^2(2)}$, but that is all we need to jump start Lemma \ref{gro}}. As a result, Lemma \ref{gro} applies and we obtain 
$$
\|\p \tilde{Z}(\tau)\|_{L^2(2)}  \lesssim e^{-\tau(\f{4-\al-\be}{\al}-\eps)},
$$
as is the statement of \eqref{452}. 

 \end{proof}
 It is now easy to conclude the main result, Theorem \ref{theo:10}. Realizing  that 
  $L^2(2)\hookrightarrow L^p, 1\leq p\leq 2$, one  just needs to translate the $L^p$ estimates for $Z$, 
   in the language of the original variable $z$.

  \section{Global dynamics of the solutions of the Boussinesq  model}
 \label{sec:7} 
 In this section,  we compute the optimal decay rate  in $L^2(2)$ for the solution of the Boussinesq model  \eqref{81}. Recall that the relevant operator $\cl$ has the form 
 $$
 \cl=-|\nabla|^\al+ \f{1}{\al}\xi\cdot \nabla_\xi + 1,
 $$
 with $\la_0(\cl)=1-\f{2}{\al}$ and $\si_{ess}(\cl)\subset \{\la: \Re \la\leq 1-\f{3}{\al} \}$. 
  \begin{theorem}
  	\label{osn} 
  Suppose $\al\in (1, \f{3}{2})$ and $W_0, \Theta_0 \in Y:=L^2(2)(\rtwo)\cap L^\infty(\rtwo)\cap H^1(\rtwo)$. Then  for every $\de>0$, there exists $C=C_\de(\|W_0\|_Y, \|\Theta_0\|_Y) > 0$,  such that for any $\tau > 0$, the solutions $W, \Theta$   for the system of equations \eqref{81} obey 
  	\begin{equation}
  	\label{940}
  	\|W- \ga_2(0)e^{-(\f{3}{\al}-2) \tau} \p_1 G- \ga_1(0) e^{-(\f{2}{\al}-1) \tau} G\|_{L^2(2)}+ \|\Theta- \ga_2(0) e^{-(\f{3}{\al}-2) \tau} G\|_{L^2(2)} \leq 
  	C e^{-2(\f{3}{\al} -2 -  \de) \tau}. 
  	\end{equation}
  	where  $\ga_1(0):= \int W_0 (\xi) d\xi$, and $\ga_2(0):= \int \Theta_0 (\xi) d\xi$.  
  	In particular, if $\widehat{W}_0(0)= \widehat{\Theta}_0(0)= 0$ then
  	\begin{equation}
  	\label{945} 
  	\|W\|_{L^2(2)}+	 \|\Theta\|_{L^2(2)} \leq C_\de   e^{-2(\f{3}{\al}-2- \de) \tau}. 
  	\end{equation} 
  \end{theorem}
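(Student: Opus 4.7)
The plan is to extend the strategy of Theorem \ref{theo:4} to the coupled $(W,\Theta)$ system, handling the additional linear forcing $\p_1\Theta$ in the vorticity equation. I will (i) extract the exact asymptotic profiles via the means of $W$ and $\Theta$, (ii) derive coupled Duhamel equations for the residuals on the mean-zero subspace $X_0$, and (iii) close a coupled integral inequality using Lemma \ref{gro}.

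\textbf{Linear profiles.} Integrating \eqref{81} against $1$ and using $\nabla\cdot U=0$, the means $\ga_1(\tau):=\int W\,d\xi$ and $\ga_2(\tau):=\int\Theta\,d\xi$ satisfy the closed ODEs $\ga_1'=(1-\f{2}{\al})\ga_1$ and $\ga_2'=(2-\f{3}{\al})\ga_2$, giving $\ga_j(\tau)=\ga_j(0)e^{\mu_j\tau}$ with $\mu_1:=1-\f{2}{\al}$, $\mu_2:=2-\f{3}{\al}$. Since $\cl G=\mu_1 G$, $\cl\p_1 G=(1-\f{3}{\al})\p_1 G$ (Proposition \ref{prop:10}) and $(\cl+1-\f{1}{\al})G=\mu_2 G$, the natural ansatz is
\begin{eqnarray*}
\Theta=\ga_2(0)e^{\mu_2\tau}G+\tilde\Theta,\qquad W=\ga_1(0)e^{\mu_1\tau}G+\ga_2(0)e^{\mu_2\tau}\p_1 G+\tilde W.
\end{eqnarray*}
All three linear terms are chosen so that, when substituted into \eqref{81}, the pure eigenmode contributions cancel (the coefficient of $\p_1 G$ in $W$ solves $A'=(1-\f{3}{\al})A+\ga_2$ up to a $e^{(1-\f{3}{\al})\tau}$ transient which decays faster than the target error and can be absorbed). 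By construction, $\int\tilde W(\tau)=\int\tilde\Theta(\tau)=0$ for all $\tau\geq 0$, so both residuals evolve inside the invariant subspace $X_0$ of Section \ref{sec:3.6}.

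\textbf{Residual estimates.} The pair $(\tilde W,\tilde\Theta)$ satisfies
\begin{eqnarray*}
\tilde\Theta_\tau=(\cl_0+1-\tfrac{1}{\al})\tilde\Theta-\nabla\cdot(U\Theta),\qquad \tilde W_\tau=\cl_0\tilde W+\p_1\tilde\Theta-\nabla\cdot(UW),
\end{eqnarray*}
with $U=\ga_1(0)e^{\mu_1\tau}U_G+\ga_2(0)e^{\mu_2\tau}U_{\p_1 G}+U_{\tilde W}$. Expanding the nonlinearities, the slowest would-be quadratic interaction $U_G\cdot\nabla G$ is killed by the cancellation \eqref{400}; a direct exponent count then identifies the cross terms $\ga_2(0)^2 e^{2\mu_2\tau}U_{\p_1 G}\cdot\nabla G$ (in the $\tilde\Theta$ equation) and $\ga_1\ga_2 U_{\p_1 G}\cdot\nabla G$ (in the $\tilde W$ equation) as producing forcing at rate exactly $e^{-(\f{6}{\al}-4)\tau}$, which fixes the target exponent. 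Passing to Duhamel form and applying the semigroup estimates \eqref{405} of Proposition \ref{lem1} on $X_0$, together with the $L^p$ \emph{a priori} bounds from Lemma \ref{L_-122}, Corollary \ref{cor:lp} and the $L^2(2)$ bound \eqref{907} of Lemma \ref{thm22}, each quadratic term is handled in $L^2(2)$ by splitting into an $L^\infty$ factor (controlled via \eqref{31}) and a weighted $L^2$ factor (using Lemma \ref{le:10} to absorb polynomial weights against $G$, $\nabla G$). Setting $I(\tau):=\|\tilde W(\tau)\|_{L^2(2)}+\|\tilde\Theta(\tau)\|_{L^2(2)}$ yields an inequality
\begin{eqnarray*}
I(\tau)\leq A_2 e^{-(\f{6}{\al}-4-\de)\tau}+A_3\int_0^\tau\f{e^{-\si(\tau-s)}}{\min(1,\tau-s)^{1/\al}}e^{-\ka s}I(s)\,ds,
\end{eqnarray*}
for suitable $\si,\ka>0$ and arbitrary small $\de$; the coarse bound \eqref{907} supplies the crude exponential a priori bound required by Lemma \ref{gro}, which then delivers $I(\tau)\lesssim e^{-(\f{6}{\al}-4-\de)\tau}$ and hence \eqref{940}.

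\textbf{Main obstacle.} The delicate point is the sharpness of the resonance between the forcing exponent and the semigroup decay. For $\tilde\Theta$ the relevant exponents are $b=\f{6}{\al}-4$ (forcing) and $a=\f{4}{\al}-2-\eps$ (semigroup); the Duhamel integral produces the target rate $b$ precisely because $a>b$, which is equivalent to $\al<2$. For the coupling term $\int_0^\tau e^{(\tau-s)\cl_0}\p_1\tilde\Theta\,ds$ in the $\tilde W$ equation, the analogous inequality reduces to $\f{3}{\al}-1>\f{6}{\al}-4$, i.e.\ $\al<\f{3}{2}$, which is exactly the hypothesis of the theorem; simultaneously, the singular kernel $a(\tau-s)^{-1/\al}$ coming from \eqref{405} must be kept integrable in the time variable, which is what necessitates the exponent $a\in[0,1)$ in Lemma \ref{gro}. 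Carrying out this sharp resonance argument while handling $\tilde W$ and $\tilde\Theta$ as a coupled system, without losing the target exponent, is the technical heart of the proof.
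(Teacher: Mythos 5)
Your proposal is correct and follows essentially the same route as the paper's proof: the decomposition along the means $\ga_1,\ga_2$, the extraction of the secondary profile $\p_1 G$ forced by the coupling term $\p_1\Theta$ (the paper computes the Duhamel integral explicitly and obtains precisely your coefficient $\ga_2(0)\(e^{(2-\f{3}{\al})\tau}-e^{(1-\f{3}{\al})\tau}\)$, whose transient you rightly absorb into the error), the cancellation \eqref{400}, and closure via the $X_0$ semigroup bounds, the {\it a priori} $L^p$ and $L^2(2)$ estimates, and Lemma \ref{gro} (the paper closes $\widetilde{\Theta}$ first and then $W_1$ rather than a genuinely coupled inequality, but this is cosmetic). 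The only inaccuracy is in your final exponent bookkeeping: both resonance inequalities you display reduce to $\al>1$ rather than to $\al<2$ and $\al<\f{3}{2}$ respectively — the hypothesis $\al<\f{3}{2}$ is instead what makes $\f{3}{\al}-2>0$, i.e.\ makes the target rate, the {\it a priori} rates $e^{(2-\f{3}{\al})\tau}$, and the exponent $\ka$ in Lemma \ref{gro} genuinely positive — but since all the needed conditions do hold for $\al\in(1,\f{3}{2})$, this does not affect the validity of the argument.
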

 \begin{proof}	
 	Using the spectral decomposition for $\cl$, described in Section \ref{sec:3.6}, write 
 	\begin{eqnarray}
 	\label{f1}
 	W(\tau)&=& \ga_1 (\tau) G(\xi)+ \widetilde{W}(\tau)\\\label{f11}
 	\Theta(\tau)&=& \ga_2 (\tau) G(\xi)+ \widetilde{\Theta}(\tau)
 	\end{eqnarray}
 	 where   $\ga_1 (\tau):= \langle W(\tau), 1\rangle$, $\ga_2 (\tau):= \langle \Theta(\tau), 1\rangle$, $\widetilde{W}= \cq_0  W(\tau, \cdot)$ and $\widetilde{\Theta}= \cq_0 \Theta (\tau, \cdot)$. Then, we derive the equations for $\ga_1, \ga_2$ as before - namely 
 	\begin{eqnarray*}
 		\p_\tau \ga_1 &=&   \langle W_{\tau}, 1\rangle=  \langle \cl W, 1\rangle-  \langle  U \cdot \nabla W, 1\rangle+ \langle  \partial_1 \Theta, 1\rangle\\
 		&=& \langle\cl W, 1\rangle
 		= \langle W, \mathcal{L^*} 1\rangle= (1- \f{2}{\al}) \langle W, 1\rangle= (1- \f{2}{\al}) \ \ga_1(\tau)
 	\end{eqnarray*}
 	Similarly, $\p_\tau \ga_2 = (2- \f{3}{\al}) \ \ga_2(\tau)$. Solving the ODE's results in the formulas 
 	\begin{eqnarray*}
 		\ga_1 (\tau)= \ga_1(0) e^{ (1- \f{2}{\al}) \tau}, 	\ga_2 (\tau)= \ga_2(0) e^{ (2- \f{3}{\al}) \tau}.
 	\end{eqnarray*}
 	For the projections over the essential spectrum, we have the following PDE's 
 	\begin{eqnarray*}
 		\widetilde{W}_{\tau}&=& \mathcal{L} \widetilde{W}- \cq_0 [U \cdot \nabla W- \partial_1 \Theta]= \mathcal{L} \widetilde{W}- \cq_0 [U \cdot \nabla (\ga_1 (0)\ e^{(1- \f{2}{\al}) \tau} G+ \widetilde{W})]+ \\
 		&+& \cq_0[\partial_1 (\ga_2 (0)\ e^{(1- \f{2}{\al}) \tau} G+ \widetilde{\Theta})],\\
 		\widetilde{\Theta}_{\tau}&=& (\cl+1-\f{1}{\al}) \widetilde{\Theta}- \cq_0[U \cdot \nabla \Theta]= (\cl+1-\f{1}{\al}) \widetilde{\Theta}- \cq_0[U \cdot \nabla (\ga_2 (0)\ e^{(2- \f{3}{\al}) \tau} G+ \widetilde{\Theta})].
 	\end{eqnarray*}
 We represent them via the Duhamel's formula 
 	\begin{eqnarray*}
 		\widetilde{W} (\tau)&=& e^{\tau \cl} \widetilde{W_0}-  \int_0^{\tau} e^{(\tau-s) \mathcal{L}} \cq_0 [U 
 		\cdot \nabla (\ga_1 (0)\ e^{(1- \f{2}{\al}) s} G+ \widetilde{W}(s))] \ ds +  \int_0^{\tau} e^{( \tau-s) \mathcal{L}} \cq_0 
 		[\partial_1 \Theta (s)] \ ds,\\
 		\widetilde{\Theta} (\tau)&=& e^{\tau (\cl+1-\f{1}{\al})} \widetilde{\Theta_0}-\int_0^{\tau} e^{(\tau-s) (\cl+1-\f{1}{\al})} \cq_0 
 		[U \cdot \nabla (\ga_2 (0)\ e^{(2- \f{3}{\al})s} G+ \widetilde{\Theta}(s))] \ ds.
 	\end{eqnarray*}
 	One term deserves a special attention, as it is explicit. Note that $\cq_0 \p_1 = \p_1$, since $\cp_0 \p_1=0$. Also for $\ka>0$, since $G$ is an eigenfunction, with eigenvalue $1-\f{2}{\al}$, we have $e^{\ka \mathcal{L}} G=e^{(1-\f{2}{\al})\ka} G$.  By Lemma \ref{lem0}, 
 		\begin{eqnarray*}
 			& &    \int_0^{\tau} e^{( \tau-s) \mathcal{L}} \cq_0 
 			[\partial_1 \Theta (s)] \ ds =   \int_0^{\tau} e^{( \tau-s) \mathcal{L}}  
 			[\partial_1[\ga_2 (0)\ e^{(2- \f{3}{\al})s} G+ \widetilde{\Theta}(s))]] \ ds= \\
 			&=& \ga_2 (0) \int_0^{\tau} e^{(2- \f{3}{\al})s} e^{-\f{\tau-s}{\al}} \p_1 e^{( \tau-s) \mathcal{L}}  
 			[ G] ds +  \int_0^{\tau} e^{-\f{\tau-s}{\al}} \partial_1   e^{( \tau-s) \mathcal{L}}  
 			\widetilde{\Theta}(s)  ds = \\
 			& = & \ga_2 (0) \p_1 G \int_0^{\tau} e^{(2- \f{3}{\al})s} e^{-\f{\tau-s}{\al}} e^{(1-\f{2}{\al})(\tau-s)} ds+  \int_0^{\tau} e^{-\f{\tau-s}{\al}} \partial_1 e^{( \tau-s) \mathcal{L}}  
 			  \widetilde{\Theta}(s) ds =\\
 			  &=&  \ga_2(0)(e^{(2-\f{3}{\al})\tau} - e^{(1-\f{3}{\al})\tau}) \p_1 G+  \int_0^{\tau} e^{-\f{\tau-s}{\al}} \partial_1 e^{( \tau-s) \mathcal{L}}  
 			  \widetilde{\Theta}(s) ds. 
 			 		\end{eqnarray*}
At this point, it makes more sense to introduce the new variable, 
 $$
 W_1(\tau, \xi):=\tilde{W}(\tau, \xi)-  \ga_2(0)(e^{(2-\f{3}{\al})\tau} - e^{(1-\f{3}{\al})\tau}) \p_1 G=: \tilde{W} -  e^{(2-\f{3}{\al})\tau} G_1(\tau, \xi).
 $$			 		
 Note that the decay rate $e^{(2-\f{3}{\al})\tau}$ along the $G_1$ direction  is slower than the decay rate $e^{(1-\f{2}{\al})\tau}$ of the evolution along the $G$ direction.  Also, $G_1$ is basically $\p_1 G$ multiplied by a bounded  function of $\tau$ and hence an element of $L^2(2)\cap L^\infty$. 
 For future reference, 
\begin{equation}
\label{973} 
 \|W_1\|_X - C e^{(2-\f{3}{\al})\tau}\leq \|\tilde{W}\|_X \leq  \|W_1\|_X +  C e^{(2-\f{3}{\al})\tau}. 
\end{equation}
for all Banach spaces in consideration  herein.  

We write the equations for $W_1$ and $\Tilde{\Theta}$ as follows 
 \begin{eqnarray*}
 W_1 (\tau)&=& e^{\tau  \mathcal{L}} \widetilde{W_0}-  \int_0^{\tau} e^{(\tau-s) \mathcal{L}} \cq_0 [U 
 	\cdot \nabla (\ga_1 (0)\ e^{(1- \f{2}{\al}) s} G+  e^{(2- \f{3}{\al}) s} G_1+ W_1(s))] \ ds +  \\
 	&+& \int_0^{\tau} e^{-\f{\tau-s}{\al}} \partial_1 e^{( \tau-s) \mathcal{L}}  
 	\widetilde{\Theta}(s) ds. \\
 	\widetilde{\Theta} (\tau)&=& e^{\tau  (\cl+1-\f{1}{\al})} \widetilde{\Theta_0}-\int_0^{\tau} e^{(\tau-s) (\cl+1-\f{1}{\al})} \cq_0 
 	[U \cdot \nabla (\ga_2 (0)\ e^{(2- \f{3}{\al})s} G+ \widetilde{\Theta}(s))] \ ds.
 \end{eqnarray*}
 	 Note that $U=e^{(1- \f{2}{\al}) s} U_G+ e^{(2- \f{3}{\al}) s} U_{G_1}+U_{W_1}$ and $U_G\cdot G=0$. 
 	 
 	 We start the estimates for $ \widetilde{\Theta}$ 
 	 \begin{eqnarray*}
 	\|\widetilde{\Theta}\|_{L^2(2)} &\leq &  C e^{(2-\f{4}{\al} + \de) \tau} \|\widetilde{\Theta}(0)\|_{L^2(2)} +  
 	|\ga_2(0)|  \int_0^{\tau}   e^{(2- \f{3}{\al}) s} 	 \|  e^{(\tau-s) (\cl+1-\f{1}{\al})} \cq_0 [U  \cdot  \nabla  G] \|_{L^2(2)}  \ ds+ \\
 	 &+&   \int_0^{\tau}   	 \|  e^{(\tau-s) (\cl+1-\f{1}{\al})} \cq_0 [U  \cdot  \nabla  \widetilde{\Theta}(s)]\|_{L^2(2)}  \ ds=:C e^{(2-\f{4}{\al} + \de) \tau}+J_1+J_2 
 	\end{eqnarray*}
For all $\de>0$ small enough, there is $C_\de$, 
 	  \begin{eqnarray*}
 J_1&=& 	  	 \int_0^{\tau}   e^{(2- \f{3}{\al}) s} 	 \|  e^{(\tau-s) (\cl+1-\f{1}{\al})}  \cq_0[U  \cdot  \nabla  G] \|_{L^2(2)}  \ ds \lesssim  
 	  	\|U_{G_1} G\|_{L^2(2)}    \int_0^{\tau}   \f{e^{(2-\f{5}{\al}+\de)(\tau-s)} e^{2(2- \f{3}{\al}) s} }{(a(\tau- s))^{\f{1}{\al}}}ds +  \\
 	  	&+&  \int_0^{\tau}   \f{e^{(2-\f{5}{\al}+\de)(\tau-s)} e^{(2- \f{3}{\al}) s} }{(\min(1, |\tau-s|)^{\f{1}{\al}}} 
 	  	\|U_{W_1}(s, \cdot) \cdot \nabla G\|_{L^2(2)} ds \lesssim  e^{2(2- \f{3}{\al}) \tau}+ \\
 	  	&+& 
 	  	\int_0^{\tau}   \f{e^{(2-\f{5}{\al}+\de)(\tau-s)} e^{(2- \f{3}{\al}) s} }{(\min(1, |\tau-s|)^{\f{1}{\al}}}  (e^{(2-\f{3}{\al})s})^{1-\eps}   ds\leq C_\de  e^{2(2-\f{3}{\al})\tau}. 
 	  \end{eqnarray*}
 	 where we have used    Lemma \ref{le:10}, Gagliardo-Nirenberg's, \eqref{dec:1}, $L^2(2)\hookrightarrow L^1$, \eqref{907}, to estimate 
 	  \begin{eqnarray*}
 	 \|U_{W_1} \nabla G\|_{L^2(2)}    &\leq &  \|U_{W_1}\|_{L^{\f{2}{\epsilon}}} \| (1+ |\xi|^2)|\nabla G|\|_{L^{\f{2}{1- \epsilon}}} \leq   C \|U_{W_1}\|_{L^{\f{2}{\epsilon}}} \leq  
 	 C\|W_1\|_{L^{\f{2}{1+ \epsilon}}} \\
 	 &\leq & C \|W_1\|_{L^2}^{1-\eps}  \|W_1\|_{L^1}^{\eps} \leq C (e^{(2-\f{3}{\al})s})^{1-\eps}.
 \end{eqnarray*}
 Similarly, 
  \begin{eqnarray*}
  J_2=  \int_0^{\tau}   	 \|  e^{(\tau-s) (\cl+1-\f{1}{\al})} \cq_0 [U  \cdot  \nabla  \widetilde{\Theta}(s)]\|_{L^2(2)}  \ ds \leq C 
    \int_0^\tau \f{e^{(2-\f{5}{\al}+\de)(\tau-s)}  }{(\min(1, |\tau-s|)^{\f{1}{\al}}} \|U(s)\|_{L^\infty} \|\widetilde{\Theta}(s)\|_{L^2(2)} ds
  \end{eqnarray*}
 	 Thus, we need a good estimate of $\|U(s)\|_{L^\infty}$.  We have by \eqref{31}   
 	  $$
 	  \|U(s, \cdot))\|_{L^\infty} \leq   C(\|W(s, \cdot)\|_{L^{2+\eps}}+
 	  \|W(s, \cdot)\|_{L^{2-\eps}}). 
 	  $$ 
 	  By the {\it a priori} estimate \eqref{540}, we have a good control  of $\|W(s, \cdot)\|_{L^{2+\eps}}$, namely 
 	  $
 	  \|W(s, \cdot)\|_{L^{2+\eps}}\leq C  e^{(2-\f{3}{\al})s}. 
 	  $
 	  For $\|W(s, \cdot)\|_{L^{2-\eps}}$, we can control it by \eqref{907}, but this is not efficient for our arguments - we need some, however small,   decay in $s$, which we can then input in the Gronwall's, \eqref{gron}. To achieve that, we proceed by Gagliardo-Nirenberg's estimate. Taking into account once again $L^2(2)\hookrightarrow L^1$, and the bounds 
 	  \eqref{dec:1}, 
 	  \begin{eqnarray*}
 	  	& & \|W(s, \cdot)\|_{L^{2-\eps}}\leq \|W(s, \cdot)\|_{L^{2}}^{\f{2-2\eps}{2-\eps}}  
 	  	\|W(s, \cdot)\|_{L^1}^{\f{\eps}{2-\eps}}\leq C ( e^{(2-\f{3}{\al})s})^{\f{2-2\eps}{2-\eps}}.
 	  \end{eqnarray*}
 	 All in all, for all $\de>0$, 
 	 \begin{equation}
 	 \label{u} \|U(s, \cdot)\|_{L^\infty} \leq   C_\de e^{-(\f{3}{\al}-2-\de)s}.
 	 \end{equation}
 	 This results in the following estimates for $J_2$ 
 	 $$
 	 J_2\leq  \int_0^\tau \f{e^{(2-\f{5}{\al}+\de)(\tau-s)} e^{-(\f{3}{\al}-2-\de)s}  }{(\min(1, |\tau-s|)^{\f{1}{\al}}}   \|\widetilde{\Theta}(s)\|_{L^2(2)} ds
 	 $$
 	 Combining all the estimates obtained about\footnote{note that with our restrictions on $\al$, $(\f{3}{\al}-2)<\f{4}{\al}-2$, so this is the slowest rate on the right hand sides of $\|\widetilde{\Theta}(\tau)\|_{L^2(2)}$.} $\|\widetilde{\Theta}(s)\|_{L^2(2)}$, 
 	 , we have 
 	 $$
 	 \|\widetilde{\Theta}(\tau)\|_{L^2(2)}\leq C e^{-2(\f{3}{\al}-2-\de)\tau} + \int_0^\tau \f{e^{(2-\f{5}{\al}+\de)(\tau-s)} e^{-(\f{3}{\al}-2-\de)s}  }{(\min(1, |\tau-s|)^{\f{1}{\al}}}   \|\widetilde{\Theta}(s)\|_{L^2(2)} ds
 	 $$
 	 Applying the Gronwal's, more precisely Lemma \ref{gro}, we conclude 
 	 $$
 	 \|\widetilde{\Theta}(\tau)\|_{L^2(2)}\leq C_\de  e^{-(\f{3}{\al}-2-\de)\tau},
 	 $$
 	 as stated.  
 	 For $W_1$, we get  
 	\begin{eqnarray*}
 		&&\|W_1\|_{L^2(2)} \leq C e^{-(\f{3}{\al}-1- \de) \tau} \|\widetilde{W_0}\|_{L^2(2)} + \\
 		&+& \int_0^{\tau} e^{- \frac{ (\tau- s)}{\al}} 
 		\| \nabla e^{(\tau- s) \mathcal{L}_0} [U  \cdot  (\ga_1 (0)e^{(1- \f{2}{\al}) s}  G+ e^{(2- \f{3}{\al}) s}G_1)+    
 		U \cdot  W_1 ]\|_{L^2(2)} ds \\
 		&+& \int_0^{\tau} e^{- \frac{ (\tau- s)}{\al}} \| \p_1 e^{(\tau-s) \mathcal{L}_0}  \widetilde{\Theta}(s)\|_{L^2(2)} ds 
 	\lesssim e^{(1- \f{3}{\al}+ \de) \tau}+ \\
		&+&   \int_0^{\tau} \f{ e^{(1- \f{3}{\al}+ \de) (\tau- s)} e^{(2- \f{3}{\al})s} \|U (|G|+|G_1|) \|_{L^2(2)} }{(a(\tau- s))^{\f{1}{\al}}} \ ds
			+ \int_0^{\tau}   \f{e^{(1- \f{3}{\al}+ \de) (\tau- s)} \|U\|_{L^\infty} \|W_1\|_{L^2(2)}}{(a(\tau- s))^{\f{1}{\al}}} \ ds\\
 		&+&\int_0^{\tau} \f{  e^{(1- \f{4}{\al}+ \de) (\tau- s)} \|\widetilde{\Theta}(s)    \|_{L^2(2)} }{(a(\tau- s))^{\f{1}{\al}}} \ ds=  e^{(1- \f{3}{\al}+ \de) \tau}  +  I_1+ I_2+ I_3 
 	\end{eqnarray*}
 	For $I_1$, we have 
 $$
 	 	 \|U (|G|+|G_1|) \|_{L^2(2)}  \leq  \|(e^{(1- \f{2}{\al}) s} U_G+ e^{(2- \f{3}{\al}) s} U_{G_1})(|G|+|G_1|) \|_{L^2(2)}  + 
 	 	 \| U_{W_1}(|G|+|G_1|) \|_{L^2(2)}.
$$
The first term is easily estimated, since $G, G_1  \in L^2(2)$ (whence $U_{G}, U_{G_1} \in L^\infty$ by Sobolev embedding and Lemma \ref{le:10})
$$
\|(e^{(1- \f{2}{\al}) s} U_G+ e^{(2- \f{3}{\al}) s} U_{G_1})(|G|+|G_1|) \|_{L^2(2)} \leq C e^{(2- \f{3}{\al}) s},
$$
whence the contribution of these terms is no more than 
$$
C \int_0^\tau \f{ e^{(1- \f{3}{\al}+ \de) (\tau- s)} e^{2(2- \f{3}{\al})s} }{\min(1, |\tau-s|)^{\f{1}{\al}}} ds \leq C e^{2\tau(2-\f{3}{\al})}.
$$
 For $U_{W_1}$ terms, we  can use Lemma \ref{le:10}, the Sobolev inequality and $L^2(2)\hookrightarrow L^{\f{2}{1+\eps}}$ to get
 	\begin{eqnarray*}
 	 	\|U_{W_1}(s) (|G|+|G_1|) \|_{L^2(2)} &=&   \|U_{W_1} \cdot (1+ |\xi|^2) (|G|+|G_1|)  \|_{L^2}  \leq \|U_{W_1}\|_{L^{\f{2}{\epsilon}}} \| (1+ |\xi|^2) (|G|+|G_1|) \|_{L^{\f{2}{1- \epsilon}}} \\
 		&\leq & C \|U_{W_1}\|_{L^{\f{2}{\epsilon}}} \leq C \| \nabla U_{W_1}\|_{L^{\f{2}{1+ \epsilon}}}  \leq 
 		C\|W_1\|_{L^{\f{2}{1+ \epsilon}}}\leq C \|W_1(s)\|_{L^2(2)}. 
 	\end{eqnarray*}
All together, the contribution of $I_1$ is estimated by 
$$
I_1 \leq C e^{-2(\f{3}{\al}-2)\tau}+  \int_0^{\tau} 
\f{ e^{-(\f{3}{\al}-1- \de) (\tau- s)} e^{-(\f{3}{\al}-2)s}}{\min(1, |\tau-s|)^{\f{1}{\al}}}  \|W_1(s)\|_{L^2(2)} ds
$$
	Regarding $I_2$, we first need an appropriate estimate on $\|U\|_{L^\infty}$, which is fortunately already given by \eqref{u}. 
	This then gives the bound for $I_2$, 
	$$
	I_2 \leq   \int_0^{\tau} 
	\f{ e^{-(\f{3}{\al}-1- \de) (\tau- s)}   e^{-(\f{3}{\al}-2-\de)s}  }{\min(1, |\tau-s|)^{\f{1}{\al}}}  \|W_1(s)\|_{L^2(2)} ds
	$$
Combining all estimates for $\|W_1(\tau)\|_{L^2(2)} $ yields 
$$
\|W_1(\tau, \cdot)\|_{L^2(2)} \leq C e^{-2(\f{3}{\al}-2)\tau}+ \int_0^{\tau} 
\f{ e^{-(\f{3}{\al}-1- \de) (\tau- s)}   e^{-(\f{3}{\al}-2-\de)s}  }{\min(1, |\tau-s|)^{\f{1}{\al}}}  \|W_1(s)\|_{L^2(2)} ds.
$$
	Applying Lemma \ref{gro}, with $\mu=2(\f{3}{\al}-2), \si=(\f{3}{\al}-1- \de) , \ka=(\f{3}{\al}-2-\de)$ yields 
	$$
	\|W_1(\tau, \cdot)\|_{L^2(2)} \leq C e^{-2(\f{3}{\al}-2)\tau}. 
	$$
	This is the statement of \eqref{940} and Theorem \ref{osn} is proved in full.

 	\end{proof}

 \appendix
  \section{Sobolev embedding at $L^\infty$: relation \eqref{31}}\label{2.2.2} 
Before we start the proof of \eqref{31}, we recall the following Bernstein inequality. Let $g$ satisfy 
$$
supp\ \hat{g} \subset \{\xi \in R^n: C_1 2^k \leq |\xi| \leq C_2 2^{k+1} \}
$$
for some $k$, and constants $C_1 \leq C_2$. Then for any $\al \geq 0$ and $ 1 \leq p \leq q \leq \infty$,
$$
C_1 2^{\al k} \|g\|_{L^q(\rn)} \leq \| |\nabla|^{\al} g\|_{L^q(\rn)} \leq C_2 2^{\al k+ n k (\f{1}{p}- \f{1}{q})} \|g\|_{L^p(\rn)}.
$$
Now let $\widehat{P_k f}(\xi)= \widehat{\psi}(2^{- k \xi}) \hat{f}(\xi)$, where $\hat{\psi} \in C^{\infty}$, 
$supp\ \hat{\psi}\subseteq  \{\xi \in R^n: \xi \in (\f{1}{2}, 2) \}$.   Then
$$
\|(\nabla^{\perp})^{- \be} f\|_{L^{\infty}} \leq \sum_{k=0}^{\infty} \|P_k ((\nabla^{\perp})^{- \be} f)\|_{L^{\infty}}+ \sum_{k=0}^{\infty} \|P_{-k} ((\nabla^{\perp})^{- \be} f)\|_{L^{\infty}}.
$$
We make use of the above Bernstein inequality several times to control each of these terms. Indeed,
\begin{eqnarray*}
 \sum_{k=0}^{\infty} \|P_k ((\nabla^{\perp})^{- \be} f)\|_{L^{\infty}} &\leq&  \sum_{k=0}^{\infty} 2^{-k \be} \|P_k f\|_{L^{\infty}} \leq \sum_{k=0}^{\infty} 2^{-k \be+ n k (\f{1}{\f{n}{\be}+ \de})} \|P_k f\|_{L^{\f{n}{\be}+ \de}}\\
 &\leq& \|f\|_{L^{\f{n}{\be}+ \de}} \sum_{k=0}^{\infty} 2^{-k \be(1- \f{n}{n+ \be \ga} ) } \leq C \|f\|_{L^{\f{n}{\be}+ \de}}.
\end{eqnarray*}
In the same way,
\begin{eqnarray*}
	\sum_{k=0}^{\infty} \|P_{-k} ((\nabla^{\perp})^{- \be} f)\|_{L^{\infty}} &\leq&  \sum_{k=0}^{\infty} 2^{k \be} \|P_{-k} f\|_{L^{\infty}} \leq \sum_{k=0}^{\infty} 2^{k \be- n k (\f{1}{\f{n}{\be}- \de})} \|P_k f\|_{L^{\f{n}{\be}- \de}}\\
	&\leq& \|f\|_{L^{\f{n}{\be}- \de}} \sum_{k=0}^{\infty} 2^{k \be(1- \f{n}{n- \be \ga} ) } \leq C \|f\|_{L^{\f{n}{\be}- \de}}.
\end{eqnarray*}
 \section{Generalized Gronwall's estimate: Lemma \ref{gro}} 
 The proof of Lemma \ref{gro} is straightforward, by a bootstrapping argument. We show that every Lyapunov exponent less than $-\mu$ can be bootstrapped lower. First,  relabeling    $I(\tau)\to (1+|A_1|+|A_2|+|A_3)^{-1} I(\tau)$, we may assume without loss of generality that $A_1=A_2=A_3=1$. Next, assume that $\ga<\mu$ is a Lyapunov  exponent, that is $I(\tau)\leq C e^{-\ga \tau}$. We know by the {\it a priori} assumed boundedness of $I(\tau)$ there is  such an exponent. 
 Applying this in \eqref{gron}, we obtain an improved estimate for $I(\tau)$. Indeed, 
 \begin{eqnarray*}
 I(\tau)\leq e^{-\mu \tau} + C e^{-\si \tau} \int_0^\tau \f{e^{s(\si-\ka-\ga)}}{(\min(1, |\tau-s|)^a} ds
 \end{eqnarray*}
 If $\si-\ka-\ga\neq 0$, we have for $\tau>1$, 
  \begin{eqnarray*}
 \int_0^\tau \f{e^{s(\si-\ka-\ga)}}{|(\min(1, |\tau-s|)^a} ds &\leq &  \int_0^{\tau-1}  e^{s(\si-\ka-\ga)}  ds+
 e^{\tau(\si-\ka-\ga)} e^{|\si-\ka-\ga|} \int_{\tau-1}^\tau \f{1}{|\tau-s|^a} ds \\
 &\leq & \f{e^{(\tau-1)(\si-\ka-\ga)}-1}{\si-\ka-\ga}+ C_{a, \si, \ka, \ga}  e^{\tau(\si-\ka-\ga)}. 
 \end{eqnarray*}
 whence the bound 
 $$
  I(\tau)\leq e^{-\mu \tau} + C_{a, \si, \ka, \ga}e^{-\tau(\ka+\ga)}.
 $$
 It follows that $\min(\mu, \ga+\ka)>\ga$ is a new, better Lyapunov exponent than $\ga$. 
 
 In general, we can keep $\si-\ka-\ga$ away from zero (and so the previous argument valid in all cases), if we readjust the $\ga$ if necessary. 
 
 In practice, starting with $\ga=0$, we jump immediately to $\ka$ by the previous argument, since $\si-\ka>0$, by assumption. Since $\ka<\mu$, we can apply the same argument again with $\ga=\ka$. At this point, either $2\ka>\mu$ and we finish off (by readjusting slightly $\ga$ by taking it smaller, like $\ga=\f{2\ka}{3}$, if it happens that, say  $|\si-2\ka|\leq \f{\ka}{2}$). If not, that is if $2\ka<\mu$, take  $\ga=2\ka$ to be our new Lyapunov exponent and repeat. Eventually, for some $n_0$, $n_0 \ka<\mu\leq (n_0+1) \ka$ and we will reach a Lyapunov exponent $\mu$. 
 
 \section{Commutator estimates with weights} 
 \label{app:10} 
 In this section, we prove \eqref{310} and \eqref{903}.  
 \subsection{Proof of \eqref{310}} 
 Recall, that for $s\in (0,2)$
 \begin{eqnarray*}
 	[|\nabla|^{s}, g] f (x)&=& |\nabla|^{s} (g f)- g \ |\nabla|^{s}f= c_s \int \f{f(x) g(x)- f(y) g(y)}{|x- y|^{2+ s}} dy- g(x) c_s\int \f{f(x)- f(y)}{|x- y|^{2+ s}} dy \\
 	&=& c_s \int \f{ f(y)( g(x)-  g(y) )}{|x- y|^{2+ s}} dy.
 \end{eqnarray*}
Introduce a smooth partition of unity, that is a function $\psi\in C^\infty_0(\rone)$, $supp\  \psi\subset (\f{1}{2},2)$, so that 
$$
\sum_{k=-\infty}^\infty \psi(2^{-k} |\xi|) = 1, \xi\in\rtwo, \xi\neq 0.
$$
Introduce another $C^\infty_0$ function $\Psi(z)=z^2 \psi(z)$, so that we can decompose 
$$
|\xi|^{2} = \sum_{k=-\infty}^\infty |\xi|^{2} \psi(2^{-k} |\xi|) = \sum_{k=-\infty}^\infty 2^{2k} \Psi(2^{-k} |\xi|). 
$$
We can then write 
\begin{eqnarray*}
F(\xi) &:=&  [|\nabla|^{\f{\al}{2}},|\xi|^2] f   \sum_k 2^{2 k}  [|\nabla|^{\f{\al}{2}}, \Psi (2^{- k} \cdot)]  f(\xi)  = 
 \sum_k 2^{2k}   \int  \f{f(y) (\Psi(2^{-k}\xi)- \Psi(2^{-k}y))}{|\xi- y|^{2+ \f{\al}{2}}} dy. 
\end{eqnarray*}
Introducing  
$$
F_k:= \int  \f{|f(y)| |\Psi(2^{-k}\xi)- \Psi(2^{-k}y)|}{|\xi- y|^{2+ \f{\al}{2}}} dy, 
$$
we need to control 
\begin{eqnarray*}
\|F\|_{L^2}^2 &=&  \sum_l \int_{|\xi|\sim 2^l} |F(\xi)|^2 d\xi=\sum_l \int_{|\xi|\sim 2^l} \left|\sum_k 2^{2k} F_k(\xi)\right|^2 d\xi=\\
&=& \sum_l \int_{|\xi|\sim 2^l} \left|\sum_{k>l+10}  2^{2k} F_k(\xi)\right|^2 d\xi+\sum_l \int_{|\xi|\sim 2^l} \left|\sum_{k=l-10}^{l+10}  2^{2k} F_k(\xi)\right|^2 d\xi+\\
&+& \sum_l \int_{|\xi|\sim 2^l} \left|\sum_{k<l-10}  2^{2k} F_k(\xi)\right|^2 d\xi=:K_1+K_2+K_3
\end{eqnarray*}

We first consider the cases $k>l+10$.  One can estimate easily $F_k$ point-wise.  More specifically, since in the denominator of the expression for $F_k$, we have $|\xi-y|\geq \f{1}{2} |\xi|\geq 2^{k-3}$, 
$$
|F_k(\xi)|\leq 2^{-k(2+\f{\al}{2})} \int  |f(y)||\Psi(2^{-k}y)| dy\leq C 2^{-k(1+\f{\al}{2})} \|f\|_{L^2(|y| \sim 2^k)},
$$
whence 
\begin{eqnarray*}
 & & K_1 \leq 
  \sum_l 2^{2l} \sum_{k_1>l+10} \sum_{k_2>l+10} 
  2^{k_1(1-\f{\al}{2})} \|f\|_{L^2(|y| \sim 2^{k_1})} 2^{k_2(1-\f{\al}{2})} \|f\|_{L^2(|y| \sim 2^{k_2})}\\
  &\leq &   \sum_{k_1} \sum_{k_2} 2^{2\min(k_1,k_2)}2^{k_1(1-\f{\al}{2})}  \|f\|_{L^2(|y| \sim 2^{k_1})} 2^{k_2(1-\f{\al}{2})} \|f\|_{L^2(|y| \sim 2^{k_2})} \\
  &\leq & C \sum_k 2^{k(4-\al)}  \|f\|_{L^2(|y| \sim 2^{k})}^2 \leq C  \| |\xi|^{2-\f{\al}{2}} f\|^2.
\end{eqnarray*}
where we have used $\sum_{l: l<\min(k_1,k_2)-10}  2^{2l} \leq C 2^{2\min(k_1,k_2)}$. 

For the case $k<l-10$, we perform similar argument, since 
$$
|F_k(\xi)|\leq C 2^{-l(2+\f{\al}{2})} 2^k \|f\|_{L^2(|y| \sim 2^k)}. 
$$
So, 
\begin{eqnarray*}
	& & K_3 \leq C \sum_l 2^{2l} 2^{-l(4+\al)} \sum_{k_1<l-10} \sum_{k_2<l-10} 2^{3k_1} \|f\|_{L^2(|y| \sim 2^{k_1})} 2^{3k_2} \|f\|_{L^2(|y| \sim 2^{k_2})} \\
	&\leq &C  \sum_{k_1} \sum_{k_2} 2^{3k_1} \|f\|_{L^2(|y| \sim 2^{k_1})} 2^{3k_2} \|f\|_{L^2(|y| \sim 2^{k_2})} 2^{-(2+\al)\max(k_1,k_2)} \\
	&\leq & C \sum_k 2^{k(4-\al)}  \|f\|_{L^2(|y| \sim 2^{k})}^2 \leq C  \| |\xi|^{2-\f{\al}{2}} f\|^2.
\end{eqnarray*}
Finally, for the case $|l-k|\leq 10$, we use 
$$
|\Psi (2^{- k} \xi)- \Psi (2^{- k} y)| \leq 2^{- k} |\xi- y| |\nabla \Psi (2^{- k} (\xi- y))| \leq C 2^{- k} |\xi- y|,
$$
so that 
$$
|F_k(\xi)|\leq C 2^{-k} \int_{|y| \sim 2^k} \f{|f(y)|}{|\xi- y|^{1+ \f{\al}{2}}} dy = C 2^{-k} |f| \chi_{|y| \sim 2^k} * \f{1}{|\cdot|^{1+ \f{\al}{2}}}.
$$
Thus, by H\"older's 
\begin{eqnarray*}
& & K_2  \leq C \sum_k \int_{|\xi|\sim 2^k} 2^{2k} \left||f| \chi_{|y| \sim 2^k} * \f{1}{|\cdot|^{1+ \f{\al}{2}}}\right|^2 d\xi\leq 
C \sum_k 2^{2k} \||f| \chi_{|y| \sim 2^k} * \f{1}{|\cdot|^{1+ \f{\al}{2}}}\|_{L^2(|\xi|\sim 2^k)}^2 \\
&\leq & C \sum_k 2^{k(4-\al)} \||f| \chi_{|y| \sim 2^k} * \f{1}{|\cdot|^{1+ \f{\al}{2}}}\|_{L^{\f{4}{\al}}(|\xi|\sim 2^k)}^2 \leq 
C \sum_k 2^{k(4-\al)}   \|f\|_{L^2(|\xi| \sim 2^k)}^2 \leq C  \| |\xi|^{2-\f{\al}{2}} f\|^2.
\end{eqnarray*}
where we have used the Hausdorf-Young's inequality 
$$
\|f \chi_{|y| \sim 2^k} * \f{1}{|\cdot|^{1+ \f{\al}{2}}}\|_{L^{\f{4}{\al}}}   \leq C \|\f{1}{|\cdot|^{1+ \f{\al}{2}}} \|_{L^{\f{4}{2- \al}, \infty}} \ \|f\|_{L^2(|\xi| \sim 2^k)}\leq C \|f\|_{L^2(|\xi| \sim 2^k)}.
$$
 \subsection{ Proof of \eqref{903}} 
For the proof of \eqref{903}, recall the representation formula \eqref{921}. We will reduce to the same expressions as above. With the partition of unity displayed above, write 
\begin{eqnarray*}
[\p_1 |\nabla|^{-a}, |\xi|^2] f (\xi) &=&  c_a \sum_{k=-\infty}^\infty  2^{2k} [\p_1 |\nabla|^{-a}, \psi(2^{-k}\cdot)] f = \\
&=& 
c_a \sum_{k=-\infty}^\infty  2^{2k} [\p_{\xi_1} \int_{\rtwo} \f{\psi(2^{-k} y) f(y)}{|\xi-y|^{2-a}} dy - 
\psi(2^{-k} \xi)\p_{\xi_1} \int_{\rtwo} \f{  f(y)}{|\xi-y|^{2-a}} dy]=\\
&=& c_a(a-2)  \sum_{k=-\infty}^\infty  2^{2k} \int_{\rtwo} \f{\xi_1-y_1}{|\xi-y|}\f{(\psi(2^{-k} y)-\psi(2^{-k}\xi))  f(y)}{|\xi-y|^{2-a}} dy
\end{eqnarray*}
Taking absolute values and estimating yields the bound 
$$
|[\p_1 |\nabla|^{-a}, |\xi|^2] f (\xi)|\leq C_a \sum_{k=-\infty}^\infty  2^{2k} \int_{\rtwo}  \f{|\psi(2^{-k} y)-\psi(2^{-k}\xi)| |f(y)|}{|\xi-y|^{3-a}} dy
$$
This is of course exactly the same expression as before for the $F_k$, with $a:=1-\f{\al}{2}$. Therefore, we can apply the same estimates to obtain 
$$
\|[\p_1 |\nabla|^{-a}, |\xi|^2] f\|_{L^2(\rtwo)}\leq C \||\xi|^{1+a} f\|_{L^2}.
$$
This establishes \eqref{903}. 

\section{Semigroup estimates:  Proposition \eqref{lem1}}
	\begin{proof}
	We have 
	\begin{eqnarray*}
		&&\|\partial^{\ga} (e^{\tau \cl}  f)\|^2_{L^2(2)} \leq \int_{\rtwo} |\partial^{\ga} (e^{\tau \mathcal{L}} f)|^2 d \xi+  \int_{\rtwo} | |\xi|^2 \partial^{\ga} (e^{\tau \mathcal{L}} f)|^2 d \xi\\
		&=&	e^{2(1-\f{3-\be}{\al})\tau}   \bigg( \int_{\rtwo} |p^{\ga} [e^{- a(\tau) |p|^{\al}}  \widehat{f}(p  e^{-\f{\tau}{\al}})]|^2 dp+  \int_{\rtwo} | \Delta_p[p^{\ga} e^{- a(\tau) |p|^{\al}}  \widehat{f}(p  e^{-\f{\tau}{\al}})]|^2 dp \bigg)\\
		&=&e^{2(1-\f{3-\be}{\al})\tau}   \bigg( \int_{\rtwo} |p^{\ga} [e^{- a(\tau) |p|^{\al}}  \widehat{f}(p  e^{-\f{\tau}{\al}})]|^2 dp+ \ga^2 \int_{\rtwo} |p^{ |\ga|- 1} \nabla_p[e^{- a(\tau) |p|^{\al}}  \widehat{f}(p  e^{-\f{\tau}{\al}})]|^2 dp \\
		&+&  \int_{\rtwo} |p^{\ga} \Delta_p[e^{- a(\tau) |p|^{\al}}  \widehat{f}(p  e^{-\f{\tau}{\al}})]|^2 dp \bigg). \\
	\end{eqnarray*}
	At this point it is clear that it is better,   to divide both sides by $e^{2(1-\f{3-\be}{\al})\tau}$. Then,  we want to control the right hand side of the following 
	\begin{eqnarray*}
		\f{\|\partial^{\ga} (e^{\tau \cl}  f)\|^2_{L^2(2)}}{e^{2(1-\f{3-\be}{\al})\tau}} &\leq&
		\int_{\rtwo} |p^{\ga} [e^{- a(\tau) |p|^{\al}}  \widehat{f}(p  e^{-\f{\tau}{\al}})]|^2 dp
		+ \ga^2 \int_{\rtwo} | |p|^{ |\ga|- 1} \nabla_p[e^{- a(\tau) |p|^{\al}}  \widehat{f}(p  e^{-\f{\tau}{\al}})]|^2 dp \\   
		&+&  \int_{\rtwo} |p^{\ga} \Delta_p[e^{- a(\tau) |p|^{\al}}  \widehat{f}(p  e^{-\f{\tau}{\al}})]|^2 dp:= J_1+ J_2+ J_3. \nonumber 
	\end{eqnarray*}
	\subsection{\textbf{Estimate for $J_1$}}   To control $J_1$ we divide the argument into two different cases, $\tau \leq 1$ and $\tau > 1$. In the case of $\tau \leq 1$, we have 
	\begin{eqnarray*}
		J_1&=& \int_{\rtwo} |p^{\ga} [e^{- a(\tau) |p|^{\al}}  \widehat{f}(p e^{-\f{\tau}{\al}})]|^2 dp = 
	 \int_{\rtwo} |q|^{2 |\ga|} e^{-2  a(\tau) |q \cdot e^{\f{\tau}{\al}}|^{\al} }  |\widehat{f}(q)|^2 dq\\
		&\leq&    \int\limits_{\{q: 0 \leq 2  a(\tau) |q \cdot e^{\f{\tau}{\al}}|^\al  \leq 1\}}\ \ |q|^{2 |\ga|} e^{-2  a(\tau) |q \cdot e^{\f{\tau}{\al}}|^{\al} }  |\widehat{f}(q)|^2 dq \\
		&+&   \sum_{j=1}^{\infty} \int\limits_{\{q: \ j  \leq 2  a(\tau) |q \cdot e^{\f{\tau}{\al}}|^\al \leq j+ 1\}} |q|^{2 |\ga|} e^{-2  a(\tau) |q \cdot e^{\f{\tau}{\al}}|^{\al} }  |\widehat{f}(q)|^2 dq \\ 
		&=&  J^1_1+ J^2_1.
	\end{eqnarray*}	 
	We can estimate, since $\tau\leq 1$, 
	\begin{eqnarray*}
		J^1_1  &\leq &     \int_{0 \leq |q| \leq \f{e^{-\f{ \tau}{\al}}}{(2 a(\tau))^{\f{1}{\al}}}} |q|^{2|\ga|} |\hat{f}(q)|^2 dq
		\leq   \f{1}{(a(\tau))^{\f{2 |\ga|}{\al}}} \|f\|^2_{L^2} \leq   C \ \f{e^{ \f{- 2 \tau}{\al}(1- \eps)}}{(a(\tau))^{\f{2 |\ga|}{\al}}} \|f\|^2_{L^2(2)}.
	\end{eqnarray*} 
	We treat $J_1^2$ in a similar manner. Indeed, again for $\tau\leq 1$, 
	\begin{eqnarray*}
		J^2_1 &\leq&    \sum_{j=1}^{\infty} e^{- j} \int\limits_{ j \leq 2 a(\tau)|q e^{\f{\tau}{\al}}|^{\al} \leq  (j+1)}  |q|^{2|\ga|} |\hat{f}(q)|^2dq \\
		&\leq&   \f{C}{(a(\tau))^{\f{2 |\ga|}{\al}}} \sum_{j=1}^{\infty}
		e^{- (j+1)} (j+1)^{\f{2 |\ga|}{\al}}\int\limits_{ j \leq 2 a(\tau)|q e^{\f{\tau}{\al}}|^{\al} \leq  (j+1)}  |\hat{f}|^2 dq
		\leq   \\
		&\leq & \f{C}{(a(\tau))^{\f{2 |\ga|}{\al}}} \|f\|^2_{L^2}\sum_{j=1}^{\infty} 	e^{- (j+1)} (j+1)^{\f{2 |\ga|}{\al}} \leq        C \f{e^{-\f{2 \tau}{\al}(1- \eps)}}{(a(\tau))^{\f{2 |\ga|}{\al}}} \|f\|^2_{L^2(2)}
	\end{eqnarray*}	
	After putting together the estimates for $J_1^1$ and $J_1^2$, we get 	$
	J_1 \leq C \f{e^{-\f{2 \tau}{\al} (1- \epsilon) } \|f\|^2_{L^2(2)}}{a(\tau)^{\f{2 |\ga|}{\al}}}	$. 
	
Regarding the case $\tau > 1$, first note that in this range of $\tau$, $a(\tau)\geq \f{1}{2}$. Moreover, 
\begin{eqnarray*}
|\hat{f}(q)- \hat{f}(0)| \leq  2 \|\hat{f}\|_{L^{\infty}}, \ \ 
	|\hat{f}(q)- \hat{f}(0)| \leq  |q| \|\nabla \hat{f}\|_{L^{\infty}},
\end{eqnarray*}
whence  by interpolation, we conclude that for every $\eps>0$, we have 
\begin{equation}
\label{80} 
|\hat{f}(q)- \hat{f}(0)| \leq C_\eps|q|^{1- \epsilon} \| |\nabla|^{1- \epsilon} \hat{f}\|_{L^{\infty}} \leq C_{\epsilon} |q|^{1- \epsilon} \|f\|_{L^2(2)},
\end{equation}
where in the last inequality we have used that by Hausdorf-Young's \\ 
$ 	\| |\nabla|^{1- \epsilon} \hat{f}\|_{L^{\infty}}\leq \int_{\rtwo} |\xi|^{1-\eps} |f(\xi)| d\xi\lesssim  \|f\|_{L^2(2)}$. Therefore,
	\begin{eqnarray*}
	J_1 &=&  \int_{\rtwo} |p^{\ga} [e^{- a(\tau) |p|^{\al}}  \widehat{f}(p e^{-\f{\tau}{\al}})]|^2 dp
	=  e^{ \f{2 \tau}{\al} (|\ga|+ 1)} \int_{\rtwo} e^{-2  a(\tau) |q \cdot e^{\f{\tau}{\al}}|^{\al} } |q|^{2 |\ga|} |\widehat{f}(q)|^2 dq\\
&\leq& e^{ \f{2 \tau}{\al} (|\ga|+ 1)} \|f\|^2_{L^2(2)} \int_{\rtwo} e^{-2  a(\tau) |q \cdot e^{\f{\tau}{\al}}|^{\al} } |q|^{2 (|\ga|+1- \eps)} dq\\	
	&\leq& e^{ \f{2 \tau}{\al} (|\ga|+ 1)} \|f\|^2_{L^2(2)} \int\limits_{\{q: 2  a(\tau) |q \cdot e^{\f{\tau}{\al}}|^\al  \leq 1\}}\ \ e^{-2  a(\tau) |q \cdot e^{\f{\tau}{\al}}|^{\al} } |q|^{2 (|\ga|+1- \eps)} dq \\
	&+&  e^{ \f{2 \tau}{\al} (|\ga|+ 1)}\|f\|^2_{L^2(2)} \sum_{j=1}^{\infty} \int\limits_{\{q: \ j  \leq 2  a(\tau) |q \cdot e^{\f{\tau}{\al}}|^\al \leq j+ 1\}} e^{-2  a(\tau) |q \cdot e^{\f{\tau}{\al}}|^{\al} } |q|^{2 (|\ga|+1 - \eps)}  dq =  J^1_1+ J^2_1.
\end{eqnarray*}		
	Now 
	\begin{eqnarray*}
J_1^1&=& e^{ \f{2 \tau}{\al} (|\ga|+ 1)} \|f\|^2_{L^2(2)} \int\limits_{\{q: 2  a(\tau) |q \cdot e^{\f{\tau}{\al}}|^\al  \leq 1\}}\ \ e^{-2  a(\tau) |q \cdot e^{\f{\tau}{\al}}|^{\al} } |q|^{2 (|\ga|+1- \eps)} dq \\
&\leq& e^{ \f{2 \tau}{\al} (|\ga|+ 1)} \|f\|^2_{L^2(2)} \int\limits_{\{q: 2  a(\tau) |q \cdot e^{\f{\tau}{\al}}|^\al  \leq 1\}}\ \  |q|^{2 (|\ga|+ 1- \eps)} dq \leq \\
&\leq & C e^{ \f{2 \tau}{\al} (|\ga|+ 1)} \|f\|^2_{L^2(2)} \int\limits_0^{\f{e^{-\f{\tau}{\al}}}{(2a(\tau))^{\f{1}{\al}}}}\ \  r^{2 (|\ga|+ 1- \eps)+ 1} dq \leq  C \f{e^{-\f{2 \tau}{\al} (1- \epsilon) } \|f\|^2_{L^2(2)}}{a(\tau)^{\f{2}{\al}(|\ga| + 2- \epsilon)}} \leq C \f{e^{-\f{2 \tau}{\al} (1- \epsilon) } \|f\|^2_{L^2(2)}}{a(\tau)^{\f{2 |\ga|}{\al}}}.
	\end{eqnarray*}
In a similar way,
	\begin{eqnarray*}
	J_1^2&=& e^{ \f{2 \tau}{\al} (|\ga|+ 1)}\|f\|^2_{L^2(2)} \sum_{j=1}^{\infty} \int\limits_{\{q: \ j  \leq 2  a(\tau) |q \cdot e^{\f{\tau}{\al}}|^\al \leq j+ 1\}} e^{-2  a(\tau) |q \cdot e^{\f{\tau}{\al}}|^{\al} } |q|^{2 (|\ga|+ 1 - \eps)}  dq  \\
	&\leq& e^{ \f{2 \tau}{\al} (|\ga|+ 1)} \|f\|^2_{L^2(2)} \sum_{j=1}^{\infty} e^{-j} \int\limits_{\{q: \ j  \leq 2  a(\tau) |q \cdot e^{\f{\tau}{\al}}|^\al \leq j+ 1\}} |q|^{2 (|\ga|+ 1 - \eps)}  dq\\ &\leq& e^{ \f{2 \tau}{\al} (|\ga|+ 1)} \|f\|^2_{L^2(2)} \sum_{j=1}^{\infty} e^{-j} \int^{{(\f{j+1}{a(\tau)})^{\f{1}{\al}} e^{-\f{\tau}{\al}}}}_{(\f{j}{a(\tau)})^{\f{1}{\al}} e^{-\f{\tau}{\al}}} r^{2 (|\ga|+ 1 - \eps)+ 1}  dr \leq \\
	&\leq & C \f{e^{-\f{2 \tau}{\al} (1- \epsilon) }}{a(\tau)^{\f{2}{\al}(|\ga| + 2- \epsilon)}} \|f\|^2_{L^2(2)} \sum_{j=1}^{\infty} e^{-j} (j+1)^{2 (|\ga|+ 2 - \eps)} \leq   C \f{e^{-\f{2 \tau}{\al} (1- \epsilon) } \|f\|^2_{L^2(2)}}{a(\tau)^{\f{2 |\ga|}{\al}}}.
	\end{eqnarray*}	
Therefore for $\tau > 1$ we have
$
J_1 \leq C \f{e^{-\f{2 \tau}{\al} (1- \epsilon) } \|f\|^2_{L^2(2)}}{a(\tau)^{\f{2 |\ga|}{\al}}}.
$
	\subsection{\textbf{Estimate for $J_2$}}  To control $J_2$ first note that
	\begin{eqnarray}\label{nabla1}
	\nabla e^{- a(\tau) |p|^{\al}}= - \al \ a(\tau) \ p |p|^{\al- 2} e^{- a(\tau) |p|^{\al}}.
	\end{eqnarray}
	Therefore,
	\begin{eqnarray*}
		J_2& \leq &  \al^2 |\ga|^2 a(\tau)^2 \int_{\rtwo} | |p|^{ |\ga|- 1} |p|^{\al- 1} \ e^{- a(\tau) |p|^{\al}}  \ \widehat{f}(p  e^{\f{-\tau}{\al}})|^2 dp\\
		&+& |\ga|^2 e^{\f{-2 \tau}{\al}} \int_{\rtwo} |p^{|\ga|} \  e^{- a(\tau) |p|^{\al}} \cdot (\nabla \widehat{f})(p  e^{\f{-\tau}{\al}} )|^2 dp:= I_1+ I_2.
	\end{eqnarray*}
	\subsubsection{Estimate for $I_1$} 
	To control the first term $I_1$ we proceed as follows
	\begin{eqnarray*}
		\f{I_1}{a(\tau)^2} &\leq&   \int_{\rtwo} e^{- 2 a(\tau) |p|^{\al}} |p|^{2(\al+ |\ga|- 2)} 
		|\widehat{f}(p \cdot e^{-\f{ \tau}{\al}})|^2 dp
		= \\
		&=& e^{ \f{2 \tau}{\al} (\al+ |\ga|- 1)} \int_{\rtwo} e^{-2  a(\tau) |q \cdot e^{\f{\tau}{\al}}|^{\al} } |q|^{2 (\al+ |\ga|- 2)} |\widehat{f}(q)|^2 dq\\
		&\leq& e^{ \f{2 \tau}{\al} (\al+ |\ga|- 1)}  \int\limits_{\{q: 2  a(\tau) |q \cdot e^{\f{\tau}{\al}}|^\al  \leq 1\}}\ \ e^{-2  a(\tau) |q \cdot e^{\f{\tau}{\al}}|^{\al} } |q|^{2 (\al+ |\ga|- 2)} |\widehat{f}(q)|^2 dq \\
		&+&  e^{ \f{2 \tau}{\al} (\al+ |\ga|- 1)}\sum_{j=1}^{\infty} \int\limits_{\{q: \ j  \leq 2  a(\tau) |q \cdot e^{\f{\tau}{\al}}|^\al \leq j+ 1\}} e^{-2  a(\tau) |q \cdot e^{\f{\tau}{\al}}|^{\al} } |q|^{2 (\al+ |\ga|- 2)} |\widehat{f}(q)|^2 dq \\ 
		&=&  I^1_1+ I^2_1.
	\end{eqnarray*}	 
	We can estimate 
	\begin{eqnarray*}
		I^1_1  &\leq &   e^{ \f{2 \tau}{\al} (\al+ |\ga|- 1)} \int_{|q| \leq \f{e^{-\f{ \tau}{\al}}}{(2 a(\tau))^{\f{1}{\al}}}} |q|^{2(\al+ |\ga|- 2)} |\hat{f}(q)|^2 dq
		=  \\
		&=&  e^{ \f{2 \tau}{\al} (\al+ |\ga|- 1)} \int_{|q| \leq \f{e^{-\f{ \tau}{\al}}}{(2 a(\tau))^{\f{1}{\al}}}}|q|^{2(\al+ |\ga|- 2)} |\hat{f}(q) - \hat{f}(0)|^2  dq\\
	\end{eqnarray*}
Using the relation \eqref{80}, we obtain 
	\begin{eqnarray*}
		I_1^1 &\leq & e^{ \f{2 \tau}{\al} (\al+ |\ga|- 1)}  \|f\|^2_{L^2(2)} \int_{|q| \leq \f{e^{-\f{ \tau}{\al}}}{(2 a(\tau))^{\f{1}{\al}}}} |q|^{2(\al+\ga-2)} |q|^{2(1- \epsilon)} dq= \\
		&=& C  e^{ \f{2 \tau}{\al} (\al+ |\ga|- 1)} \|f\|^2_{L^2(2)} \int_0^{\f{e^{-\f{ \tau}{\al}}}{(2 a(\tau))^{\f{1}{\al}}}} r^{2(\al+ |\ga|- \epsilon)-1} \ dr=    C\f{ e^{- \f{2 \tau}{\al}(1- \epsilon)} \|f\|^2_{L^2(2)} }{a(\tau)^{2 (1+ \f{|\ga|-\eps}{\al})}} .
	\end{eqnarray*}
	therefore, recalling that $a(\tau)\leq 1$, 
	$
	I^1_1 \leq \f{e^{  -\f{2 \tau}{\al} (1 - \epsilon)} \|f\|^2_{L^2(2)}}{a(\tau)^{2 (1+ \f{|\ga|}{\al})}}.
	$
	
	We treat $I_1^2$ in a similar manner. Again, using \eqref{80}, 
	\begin{eqnarray*}
		I^2_1 &\leq&  e^{\f{2 \tau}{\al} (\al+ |\ga|- 1)} \sum_{j=1}^{\infty} e^{- j} \int\limits_{ j \leq 2 a(\tau)|q e^{\f{\tau}{\al}}|^{\al} \leq  (j+1)}  |q|^{2(\al+ |\ga|- 2)} |\hat{f}(q)- \hat{f}(0)|^2dq \\
		&\leq&   e^{\f{2 \tau}{\al} (\al+ |\ga|- 1)} \sum_{j=1}^{\infty}
		e^{- j} \int\limits_{ j \leq 2 a(\tau)|q e^{\f{\tau}{\al}}|^{\al} \leq  (j+1)} |q|^{2(\al+ |\ga|- 2)} |q|^{2(1- \epsilon)} \|f\|^2_{L^2(2)} dq\\
		&\leq&  e^{\f{2 \tau}{\al} (\al+ |\ga|- 1)} \|f\|^2_{L^2(2)} \sum_{j=1}^{\infty} e^{- j }\int\limits_{ j \leq 2 a(\tau)|q e^{\f{\tau}{\al}}|^{\al} \leq  (j+1)} |q|^{2(\al+ |\ga| - 1- \epsilon)}  dq\\
		&\leq &  C \f{e^{\f{2 \tau}{\al} (\al+ |\ga|- 1)} \|f\|^2_{L^2(2)}}{a(\tau)^{2(1+ \f{|\ga|}{\al})}} \sum_{j=1}^{\infty} e^{- j} 
		j^{\f{2(\al+ |\ga|- \epsilon)}{\al}} e^{- \f{2 \tau}{\al}(\al+ |\ga|- \epsilon)} \leq C \f{e^{-\f{2 \tau}{\al}(1- \epsilon) } \|f\|^2_{L^2(2)} }{a(\tau)^{2(1+ \f{|\ga|}{\al})}}.
	\end{eqnarray*}	
	After putting together the estimates for $I_1^1$ and $I_1^2$ we get
	$
	I_1 \leq C \f{e^{-\f{2 \tau}{\al} (1- \epsilon) } \|f\|^2_{L^2(2)}}{a(\tau)^{\f{2 |\ga|}{\al}}}.
	$
	
	\subsubsection{Estimate for $I_2$} 
	\begin{eqnarray*}
		I_2 &\leq&  C e^{-\f{2 \tau}{\al}} \int_{\rtwo} | \ |p|^{  |\ga|- 1}e^{- a(\tau) |p|^{\al}}  (\nabla \widehat{f})(p  e^{\f{-\tau}{\al}})|^2 dp= \\
		&=& e^{ \f{2 \tau}{\al} (  |\ga|- 1)} \int_{\rtwo} |q|^{2 ( |\ga|- 1)} e^{-2  a(\tau) |q \cdot e^{\f{\tau}{\al}}|^{\al} }  |\nabla \widehat{f}(q)|^2 dq\\
		&\leq&  e^{ \f{2 \tau}{\al} ( |\ga|- 1)}   \sum_{j=0}^{\infty} 
		\int\limits_{\{q: \ j  \leq 2  a(\tau) |q \cdot e^{\f{\tau}{\al}}|^\al \leq j+ 1\}} \ \ \bigg( |q|^{2 (  |\ga|- 1)} e^{-2  a(\tau) |q \cdot e^{\f{\tau}{\al}}|^{\al} }  |\nabla \widehat{f}(q)|^2 dq\bigg)=\\
		&=&  I^1_2+ I^2_2.
	\end{eqnarray*}	
	For $I_2^1$, we have by H\"older's 	
	\begin{eqnarray*}
		I_2^1 &\leq&   e^{ \f{2 \tau}{\al} ( |\ga|- 1)} \int_{0 \leq |q| \leq \f{e^{\f{- \tau}{\al}}}{(2 a(\tau))^{\f{1}{\al}}}} |q|^{2( |\ga|- 1)} |\nabla \hat{f}(q)|^2 dq \leq \\
		&\leq&  C e^{ \f{2 \tau}{\al} ( |\ga|- 1)} \|\nabla \hat{f}\|^2_{L^{\f{2}{\epsilon}}} \bigg(\int_{0 \leq |r| \leq \f{e^{\f{- \tau}{\al}}}{(2 a(\tau))^{\f{1}{\al}}}}  \ \ r^{\f{2( |\ga|- 1)}{1- \epsilon}+ 1} dr \bigg)^{1- \epsilon} \\
		&=& C e^{ \f{2 \tau}{\al} ( |\ga|- 1)} \|\nabla \hat{f}\|^2_{L^{\f{2}{\epsilon}}}  \f{e^{- \f{2 \tau}{\al} ( |\ga|- 1)- \f{2 \tau}{\al} (1- \epsilon)}}{(2 a(\tau))^{ \f{2 |\ga|}{\al}- \f{2 \epsilon}{\al}}}
		\leq C \f{e^{-\f{2 \tau}{\al} (1- \epsilon)}}{(a(\tau))^{ \f{2 |\ga|}{\al}}} \|\nabla \hat{f}\|^2_{L^{\f{2}{\epsilon}}}. 
	\end{eqnarray*}
	By Sobolev embedding, we have $\|\nabla \hat{f}\|^2_{L^{\f{2}{\epsilon}}}\leq C
	\|\nabla \hat{f}\|^2_{H^{1-\eps}(\rtwo)}\leq C \|(1-\De) \hat{f}\|^2_{L^2}=C \|f\|^2_{L^2(2)}.$ Therefore 
	\begin{eqnarray*}
		I_2^1 
		\leq C \f{e^{-\f{2 \tau}{\al} (1- \epsilon)}}{(a(\tau))^{ \f{2 |\ga|}{\al}}} \|f\|^2_{L^2(2)}. 
	\end{eqnarray*}
	
	For $I_2^2$, we estimate 
	\begin{eqnarray*}
		I_2^2 &\leq&  e^{ \f{2 \tau}{\al} ( |\ga|- 1)} \sum_{j=1}^{\infty} e^{- j} \int_{ j \leq 2 a(\tau)|q e^{\f{\tau}{\al}}|^{\al} \leq  (j+1)} |q|^{2( |\ga|- 1)} |\nabla \hat{f}|^2 dq \\
		&\leq&   e^{ \f{2 \tau}{\al} ( |\ga|- 1)} \|\nabla \hat{f}\|^2_{L^{\f{2}{\epsilon}}} \sum_{j=1}^{\infty}
		e^{- j} \bigg[\int_{ j \leq 2 a(\tau)|q e^{\f{\tau}{\al}}|^{\al} \leq  (j+1)} |q|^{\f{2 ( |\ga|- 1)}{1- \epsilon}} dq \bigg]^{1- \epsilon}.
	\end{eqnarray*}
	But, 
	\begin{eqnarray*}
		&&\int_{ j \leq 2 a(\tau)|q e^{\f{\tau}{\al}}|^{\al} \leq  (j+1)} |q|^{\f{2 ( |\ga|- 1)}{1- \epsilon}} dq  \leq C \bigg(\f{j}{a(\tau)}\bigg)^{\f{2( |\ga|- 1)}{1- \epsilon}+ 2} \ \  e^{\f{- \f{2 \tau}{\al}( |\ga|- 1)}{1- \epsilon}- \f{2 \tau}{\al}},
	\end{eqnarray*}
	so using again the bound $\|\nabla \hat{f}\|_{L^{\f{2}{\epsilon}}}\leq C \|f\|_{L^2(2)}$, 
	\begin{eqnarray*}
		I_2^2 &\leq& C e^{ \f{2 \tau}{\al} ( |\ga|- 1)} \|f\|_{L^2(2)}^2 \sum_{j=1}^{\infty}
		e^{- j} \bigg[\left(\f{j}{a(\tau)}\right)^{\f{2( |\ga|- 1)}{1- \epsilon}+ 2} e^{\f{- \f{2 \tau}{\al}(\al+ |\ga|- 1)}{1- \epsilon}- \f{2 \tau}{\al}} \bigg]^{1- \epsilon}\\
		&\leq& C \f{e^{- \f{2\tau (1- \epsilon)}{\al}} \|f\|_{L^2(2)}^2}{a(\tau)^{2\f{|\ga|}{\al}-2\eps}} \sum_{j=1}^{\infty}
		e^{- j} j^{2 \al+ 2 |\ga|- 2 \epsilon} 
		\leq C \f{e^{-\f{2 \tau}{\al} (1- \epsilon)} \|f\|^2_{L^2(2)}}{a(\tau)^{ \f{2 |\ga|}{\al}}}.
	\end{eqnarray*}
	Hence after putting together the estimates for $I_2^1$ and $I_2^2$ we have
	$
	I_2 \leq C \f{e^{-\f{2 \tau}{\al} (1- \epsilon)} \|f\|^2_{L^2(2)}}{(a(\tau))^{\f{2 |\ga|}{\al}}}.
	$	
	\subsection{\textbf{Estimate for $J_3$}}				
	\begin{eqnarray*}
		J_3&=&  \int_{\rtwo} |p^{|\ga|} \Delta_p[e^{- a(\tau) |p|^{\al}}  \widehat{f}(p  e^{-\f{\tau}{\al}})]|^2 dp 
		\leq \int_{\rtwo} |p^{|\ga|} \ \Delta_p [e^{- a(\tau) |p|^{\al}}]  \ \widehat{f}(p  e^{\f{-\tau}{\al}})|^2 dp\\
		&+& 2 \int_{\rtwo} |p^{|\ga|} \ \nabla e^{- a(\tau) |p|^{\al}} \cdot \nabla ( \widehat{f}(p  e^{\f{-\tau}{\al}}) )|^2 dp
		+  \int_{\rtwo} |p^{|\ga|}\ e^{- a(\tau) |p|^{\al}}  \Delta_p(\widehat{f}(p  e^{\f{-\tau}{\al}}))|^2 dp.
	\end{eqnarray*}
	By \eqref{nabla1} we have, 
	\begin{eqnarray*}
		&&\Delta_p[ e^{- a(\tau) |p|^{\al}}]=   \sum_{j=1}^2  \partial_j \bigg ( - \al \ a(\tau) \ p_j |p|^{\al- 2} e^{- a(\tau) |p|^{\al}} \bigg )=\\
		&=& - \al \ a(\tau)\sum_{j=1}^2 \bigg ( |p|^{\al- 2} + (\al- 2) \ \f{p^2_j}{|p|} |p|^{\al- 3} + \ p_j |p|^{\al- 2} (- \al \ a(\tau)) \f{p_j}{|p|} |p|^{\al - 1} \bigg ) e^{- a(\tau) |p|^{\al}}\\
		&=& \bigg(- \al^2 \ a(\tau)  |p|^{\al- 2} + \   \al^2 \ a(\tau)^2 |p|^{2(\al - 1)} \bigg) e^{- a(\tau)  |p|^{\al}}.\\
	\end{eqnarray*}	
	Hence, by allowing for a slight abuse of notations by using $\ga$, which is a multi-index instead of $|\ga|$, its length, 
	\begin{eqnarray*}
	J_3	  &\lesssim &  a(\tau)^2  \int_{\rtwo} | \ |p|^{\al+ |\ga|- 2}e^{- a(\tau) |p|^{\al}}  \widehat{f}(p  e^{-\f{\tau}{\al}})|^2 dp +  \\
		&+& a(\tau)^4  \int_{\rtwo} | \ |p|^{2(\al- 1)+ |\ga|}e^{-  a(\tau) |p|^{\al}}  \widehat{f}(p  e^{-\f{\tau}{\al}})|^2 dp\\
		&+& a(\tau)^2   e^{- \f{2 \tau}{\al}}  \int_{\rtwo} | \ |p|^{\al+ |\ga|- 1}e^{- a(\tau) |p|^{\al}}  (\nabla \widehat{f})(p  e^{-\f{\tau}{\al}})|^2 dp +   \\ 
		&+&  e^{-\f{4 \tau}{\al}} \int_{\rtwo} |  |p|^{\ga} e^{- a(\tau) |p|^{\al}}  (\Delta \widehat{f})(p  e^{-\f{\tau}{\al}})|^2 dp\\
		&:=& I_3+ I_4+ I_5+ I_6,
	\end{eqnarray*}	
	\subsubsection{Estimate for $I_3$ and $I_4$}	 
	
	\begin{eqnarray*}
		\f{I_3}{a(\tau)^2} &\leq&   \int_{\rtwo} e^{- 2 a(\tau) |p|^{\al}} |p|^{2(\al+ |\ga|- 2)} 
		|\widehat{f}(p \cdot e^{-\f{ \tau}{\al}})|^2 dp
		= \\
		&=& e^{ \f{2 \tau}{\al} (\al+ |\ga|- 1)} \int_{\rtwo} e^{-2  a(\tau) |q \cdot e^{\f{\tau}{\al}}|^{\al} } |q|^{2 (\al+ |\ga|- 2)} |\widehat{f}(q)|^2 dq\\
		&\leq& e^{ \f{2 \tau}{\al} (\al+ |\ga|- 1)}  \int\limits_{\{q: 2  a(\tau) |q \cdot e^{\f{\tau}{\al}}|^\al  \leq 1\}}\ \ e^{-2  a(\tau) |q \cdot e^{\f{\tau}{\al}}|^{\al} } |q|^{2 (\al+ |\ga|- 2)} |\widehat{f}(q)|^2 dq \\
		&+&  e^{ \f{2 \tau}{\al} (\al+ |\ga|- 1)}\sum_{j=1}^{\infty} \int\limits_{\{q: \ j  \leq 2  a(\tau) |q \cdot e^{\f{\tau}{\al}}|^\al \leq j+ 1\}} e^{-2  a(\tau) |q \cdot e^{\f{\tau}{\al}}|^{\al} } |q|^{2 (\al+ |\ga|- 2)} |\widehat{f}(q)|^2 dq \\ 
		&=&  I^1_3+ I^2_3.
	\end{eqnarray*}	 
	By comparing $I_3$ with $I_1$ it is clear that $I^1_3= I_1^1$ and $I^2_3= I_1^2$, and we treat them in the same way. Hence 
	$
	I_3 \leq C \f{e^{-\f{2 \tau}{\al} (1- \epsilon) } \|f\|^2_{L^2(2)}}{a(\tau)^{\f{2 |\ga|}{\al}}}.
	$
	
	The estimate for $I_4$ proceeds in an   identical	manner, but we have a slightly different power of $p$, so we present it here briefly. 
	\begin{eqnarray*}
		\f{ I_4}{a(\tau)^4}  &\leq &  \int_{\rtwo} | \ |p|^{2(\al- 1)+ |\ga|} e^{-a(\tau) |p|^{\al}}  \widehat{f}(p  e^{\f{-\tau}{\al}})|^2 dp=\\
		&=&  e^{ \f{ \tau}{\al} (4\al+ 2|\ga|- 2)}   \sum_{j=0}^{\infty} \int\limits_{\{q: \ j  \leq 2  a(\tau) |q \cdot e^{\f{\tau}{\al}}|^{\al} \leq j+ 1\}} \ \ \bigg(e^{-2  a(\tau) |q \cdot e^{\f{\tau}{\al}}|^{\al} } |q|^{4 (\al- 1)+ 2 |\ga|} |\widehat{f}(q)|^2 dq\bigg):= I_4^2+ I_4^2.
	\end{eqnarray*}
	Denoting by  $I_4^1$ the integral corresponding to $2  a(\tau) |q \cdot e^{\f{\tau}{\al}}|^{\al} \leq 1$ and the rest with $I_2^2$, we have by \eqref{80}, $|\hat{f}(q)|=|\hat{f}(q)-\hat{f}(0)|\leq C|q|^{1-\eps} \|f\|_{L^2(2)}$,     
	\begin{eqnarray*}
		I_4^1 &\leq& e^{ \f{ \tau}{\al} (4\al+ 2|\ga|- 2)} \int\limits_{\{q: 2  a(\tau) |q \cdot e^{\f{\tau}{\al}}|^{\al} \leq 1\}} |q|^{4 (\al- 1)+ 2 |\ga|} |\widehat{f}(q)|^2 dq\leq \\
		&\leq& e^{ \f{ \tau}{\al} (4\al+ 2|\ga|- 2)} \|f\|^2_{L^2(2)}\int\limits_{\{q: 2  a(\tau) |q \cdot e^{\f{\tau}{\al}}|^{\al} \leq 1\}} |q|^{4 (\al- 1)+ 2 |\ga|+ 2 (1- \epsilon)} dq  \leq  C \f{e^{-\f{2 \tau}{\al} (1- \epsilon)} \|f\|^2_{L^2(2)} }{(a(\tau))^{4+ \f{2 |\ga|}{\al}}}.
	\end{eqnarray*}
	For $I_4^2$, we have 
	\begin{eqnarray*}
		I_4^2 &\leq& e^{ \f{ \tau}{\al} (4\al+ 2|\ga|- 2)}  \sum_{j=1}^{\infty} \int_{\{q: \ j  \leq 2  a(\tau) |q \cdot e^{\f{\tau}{\al}}|^{\al} \leq j+ 1\}} |q|^{4 (\al- 1)+ 2 |\ga|} e^{-2  a(\tau) |q \cdot e^{\f{\tau}{\al}}|^{\al} }  |\widehat{f}(q)|^2 dq\\
		&\leq& e^{ \f{ \tau}{\al} (4\al+ 2|\ga|- 2)}  \sum_{j=1}^{\infty} e^{- j}\int_{\{q: \ j  \leq 2  a(\tau) |q \cdot e^{\f{\tau}{\al}}|^{\al} \leq j+ 1\}} |q|^{4 (\al- 1)+ 2 |\ga|}   |\widehat{f}(q)- \widehat{f}(0)|^2 dq\\
		&\leq& e^{ \f{ \tau}{\al} (4\al+ 2|\ga|- 2)}  \|f\|^2_{L^2(2)}\sum_{j=1}^{\infty} e^{- j}\int_{\{q: \ j  \leq 2  a(\tau) |q \cdot e^{\f{\tau}{\al}}|^{\al} \leq j+ 1\}} |q|^{4 (\al- 1)+ 2 |\ga|}   |q|^{2(1- \epsilon)} dq\\
		&\leq& e^{ \f{ \tau}{\al} (4\al+ 2|\ga|- 2)}  \|f\|^2_{L^2(2)} \f{e^{- \f{\tau}{\al} (4 \al+ 2|\ga|- 2 \epsilon)}}{(a(\tau))^{4+ \f{2 |\ga|}{\al}- \f{ 2 \epsilon}{\al}}} \sum_{j=1}^{\infty} e^{- j} j^{2(2 \al+ |\ga|- \epsilon)} \leq  C \  \f{e^{- \f{2 \tau}{\al} (1- \epsilon)}}{(a(\tau))^{4+ \f{2 |\ga|}{\al}}}  \|f\|^2_{L^2(2)}.
	\end{eqnarray*}
	Therefore 
	$
	I_4 \leq C \ \f{ e^{- \f{2 \tau}{\al} (1- \epsilon)}  \|f\|^2_{L^2(2)}}{(a(\tau))^\f{2 |\ga|}{\al}}. 
	$
	\subsection{Estimate for $I_5$} 
	\begin{eqnarray*}
		\f{ I_5}{a(\tau)^2} &\leq&  C e^{-\f{2 \tau}{\al}} \int_{\rtwo} | \ |p|^{\al+ |\ga|- 1}e^{- a(\tau) |p|^{\al}}  (\nabla \widehat{f})(p  e^{\f{-\tau}{\al}})|^2 dp= \\
		&=& e^{ \f{2 \tau}{\al} (\al+ |\ga|- 1)} \int_{\rtwo} |q|^{2 (\al+ |\ga|- 1)} e^{-2  a(\tau) |q \cdot e^{\f{\tau}{\al}}|^{\al} }  |\nabla \widehat{f}(q)|^2 dq\\
		&\leq&  e^{ \f{2 \tau}{\al} (\al+ |\ga|- 1)}   \sum_{j=0}^{\infty} 
		\int\limits_{\{q: \ j  \leq 2  a(\tau) |q \cdot e^{\f{\tau}{\al}}|^\al \leq j+ 1\}} \ \ \bigg( |q|^{2 (\al+ |\ga|- 1)} e^{-2  a(\tau) |q \cdot e^{\f{\tau}{\al}}|^{\al} }  |\nabla \widehat{f}(q)|^2 dq\bigg)=\\
		&=&  I^1_5+ I^2_5.
	\end{eqnarray*}	
	For $I_5^1$, we have by H\"older's 	
	\begin{eqnarray*}
		I^1_5 &\leq&   e^{ \f{2 \tau}{\al} (\al+ |\ga|- 1)} \int_{|q| \leq \f{e^{\f{- \tau}{\al}}}{(2 a(\tau))^{\f{1}{\al}}}} |q|^{2(\al+ |\ga|- 1)} |\nabla \hat{f}(q)|^2 dq \leq \\
		&=& C e^{ \f{2 \tau}{\al} (\al+ |\ga|- 1)} \|\nabla \hat{f}\|^2_{L^{\f{2}{\epsilon}}}  \f{e^{- \f{2 \tau}{\al} (\al+ |\ga|- 1)- \f{2 \tau}{\al} (1- \epsilon)}}{(2 a(\tau))^{2+ \f{2 |\ga|}{\al}- \f{2 \epsilon}{\al}}}
		\leq C \f{e^{-\f{2 \tau}{\al} (1- \epsilon)}}{(a(\tau))^{2+  \f{2 |\ga|}{\al}}} \|\nabla \hat{f}\|^2_{L^{\f{2}{\epsilon}}}. 
	\end{eqnarray*}
	However, by Sobolev embedding, we have as before $\|\nabla \hat{f}\|^2_{L^{\f{2}{\epsilon}}}\leq  C \|f\|^2_{L^2(2)}.$ 
	
	For $I_5^2$, we estimate 
	\begin{eqnarray*}
		I^2_5 &\leq&  e^{ \f{2 \tau}{\al} (\al+ |\ga|- 1)} \sum_{j=1}^{\infty} e^{- j} \int_{ j \leq 2 a(\tau)|q e^{\f{\tau}{\al}}|^{\al} \leq  (j+1)} |q|^{2(\al+ |\ga|- 1)} |\nabla \hat{f}|^2 dq \\
		&\leq&   e^{ \f{2 \tau}{\al} (\al+ |\ga|- 1)} \|\nabla \hat{f}\|^2_{L^{\f{2}{\epsilon}}} \sum_{j=1}^{\infty}
		e^{- j} \bigg[\int_{ j \leq 2 a(\tau)|q e^{\f{\tau}{\al}}|^{\al} \leq  (j+1)} |q|^{\f{2 (\al+ |\ga|- 1)}{1- \epsilon}} dq \bigg]^{1- \epsilon}.
	\end{eqnarray*}
	But, 
	\begin{eqnarray*}
		&&\int_{ j \leq 2 a(\tau)|q e^{\f{\tau}{\al}}|^{\al} \leq  (j+1)} |q|^{\f{2 (\al+ |\ga|- 1)}{1- \epsilon}} dq  \leq C \bigg(\f{j}{a(\tau)}\bigg)^{\f{2(\al+ |\ga|- 1)}{1- \epsilon}+ 2} \ \  e^{\f{- \f{2 \tau}{\al}(\al+ |\ga|- 1)}{1- \epsilon}- \f{2 \tau}{\al}},
	\end{eqnarray*}
	so using again the bound $\|\nabla \hat{f}\|_{L^{\f{2}{\epsilon}}}\leq C \|f\|_{L^2(2)}$, 
	\begin{eqnarray*}
		I_5^2 &\leq& C e^{ \f{2 \tau}{\al} (\al+ |\ga|- 1)} \|f\|_{L^2(2)}^2 \sum_{j=1}^{\infty}
		e^{- j} \bigg[\left(\f{j}{a(\tau)}\right)^{\f{2(\al+ |\ga|- 1)}{1- \epsilon}+ 2} e^{\f{- \f{2 \tau}{\al}(\al+ |\ga|- 1)}{1- \epsilon}- \f{2 \tau}{\al}} \bigg]^{1- \epsilon}\\
		&\leq& C \f{e^{- \f{2\tau (1- \epsilon)}{\al}} \|f\|_{L^2(2)}^2}{a(\tau)^{2+2\f{|\ga|}{\al}-2\eps}} \sum_{j=1}^{\infty}
		e^{- j} j^{2 \al+ 2 |\ga|- 2 \epsilon} 
		\leq C \f{e^{-\f{2 \tau}{\al} (1- \epsilon)} \|f\|^2_{L^2(2)}}{a(\tau)^{2+ \f{2 |\ga|}{\al}}}.
	\end{eqnarray*}
	Hence after putting together the estimates for $I_5^1$ and $I_5^2$ we have
	$
	I_5 \leq C \f{e^{-\f{2 \tau}{\al} (1- \epsilon)} \|f\|^2_{L^2(2)}}{(a(\tau))^{\f{2 |\ga|}{\al}}}.
	$
	\subsection{Estimate for $I_6$}
	In the same way we can get the estimate for $I_6$. Indeed,
	\begin{eqnarray*}
		I_6 &\leq&  e^{-\f{4 \tau}{\al}} \int_{\rtwo} \bigg| \ |p|^{ |\ga|} e^{- a(\tau) |p|^{\al}}  (\De \widehat{f})(p  e^{\f{-\tau}{\al}})\bigg|^2 dp= e^{ \f{2 \tau}{\al} (  |\ga|- 1)} \int_{\rtwo} |q|^{2 |\ga|} e^{-2  a(\tau) |q \cdot e^{\f{\tau}{\al}}|^{\al} }  |\De \widehat{f}(q)|^2 dq\\
		&\leq&  e^{ \f{2 \tau}{\al} (  |\ga|- 1)} \bigg[\int_{\{q: 2  a(\tau) |q \cdot e^{\f{\tau}{\al}}| \leq 1\}}+ \sum_{j=1}^{\infty} \int_{\{q: \ j  \leq 2  a(\tau) |q \cdot e^{\f{\tau}{\al}}| \leq j+ 1\}} \bigg]\ \ \bigg( |q|^{2 |\ga|} e^{-2  a(\tau) |q \cdot e^{\f{\tau}{\al}}|^{\al} }  |\De \widehat{f}(q)|^2 dq\bigg)\\
		&=&  I^1_6+ I^2_6.
	\end{eqnarray*}	
	For $I_6^1$, 	
	\begin{eqnarray*}
		I^1_6 &\leq&   e^{ \f{2 \tau}{\al} (  |\ga|- 1)} \int_{|q| \leq \f{e^{-\f{ \tau}{\al}}}{(2 a(\tau))^{\f{1}{\al}}}} |q|^{2 |\ga|} |\De \hat{f}(q)|^2 dq
		=  \f{e^{ \f{2 \tau}{\al} (  |\ga|- 1)} e^{- \f{2 \tau |\ga|}{\al}}}{(a(\tau))^{\f{2 |\ga|}{\al}}}  \int_{|q| \leq \f{e^{-\f{ \tau}{\al}}}{(2 a(\tau))^{\f{1}{\al}}}}  \ |\De \hat{f}(q)|^2 dq \leq \\
		&\leq & 
		C \f{e^{- \f{2 \tau}{\al}}}{(a(\tau))^{\f{2 |\ga|}{\al}}}  \|f\|^2_{L^2(2)}. 
	\end{eqnarray*}
	For $I_6^2$, we have 
	\begin{eqnarray*}
		I^2_6 &\leq&  e^{ \f{2 \tau}{\al} (  |\ga|- 1)} \sum_{j=1}^{\infty} e^{- j} \int_{ j \leq 2 a(\tau)|q e^{\f{\tau}{\al}}|^{\al} \leq  (j+1)} |q|^{2 |\ga|} |\De \hat{f}(q)|^2 dq \\
		&\leq&   \f{e^{ \f{2 \tau}{\al} (  |\ga|- 1)} e^{- \f{2 \tau |\ga|}{\al}}}{a(\tau)^{\f{2 |\ga|}{\al}}} \sum_{j=1}^{\infty}
		e^{- j} (j+1)^{\f{2 |\ga|}{\al}} \int  |\De \hat{f}(q)|^2 dq 
		\leq C \f{e^{- \f{2 \tau}{\al}}}{(a(\tau))^{\f{2 |\ga|}{\al}}}  \|f\|^2_{L^2(2)}.
	\end{eqnarray*}
	Therefore,
	$$
	I_6 \leq C \f{e^{- \f{2 \tau}{\al}}}{(a(\tau))^{\f{2 |\ga|}{\al}}}  \|f\|^2_{L^2(2)} \leq C \f{e^{- \f{2 \tau}{\al}(1- \epsilon)}}{(a(\tau))^{\f{2 |\ga|}{\al}}}  \|f\|^2_{L^2(2)}.
	$$
	Putting it all together finishes off the proof.  	  		
\end{proof}


\end{document}